\documentclass[11pt]{amsart}

\input{LouisTimoDefs.tex}

%Command to not generate a shitload of files
%\nofiles

%\tiny \scriptsize \footnotesize \small  \normalsize     %%%%%  sizes 

 \RequirePackage{datetime} %comment out when arxiving or submitting

\allowdisplaybreaks[1]
\setcounter{tocdepth}{1}   %% only sections in toc  

\begin{document}
 
%\usdate
\title[Corner growth model]{Joint distribution of Busemann functions in the exactly solvable corner growth model}
%\footnotetext{\today,\ \currenttime}

\author[W.-T.~Fan]{Wai-Tong (Louis) Fan}
\address{Wai-Tong (Louis) Fan\\ Indiana University\\  Mathematics Department\\ Rawles Hall, 831 E. Third St.
	\\  Bloomington, IN 47405\\ USA.}
\email{waifan@iu.edu}
\urladdr{https://sites.google.com/site/louisfanmath/}
\thanks{Part of this research was done while W.-T.\ Fan was a Van Vleck visiting assistant professor at UW-Madison. W.-T.\ Fan was partially supported by National Science Foundation grant DMS-1804492.}

\author[T.~Sepp\"al\"ainen]{Timo Sepp\"al\"ainen}
\address{Timo Sepp\"al\"ainen\\ University of Wisconsin-Madison\\  Mathematics Department\\ Van Vleck Hall\\ 480 Lincoln Dr.\\   Madison WI 53706-1388\\ USA.}
\email{seppalai@math.wisc.edu}
\urladdr{http://www.math.wisc.edu/~seppalai}
\thanks{T.\ Sepp\"al\"ainen was partially supported by  National Science Foundation grants  DMS-1602486 and DMS-1854619 and by  the Wisconsin Alumni Research Foundation.} 

\keywords{Busemann function, Catalan number, Catalan triangle, cocycle, competition interface, corner growth model,  
directed percolation, geodesic, last-passage percolation, M/M/1 queue, multiclass fixed point, rhobar distance, TASEP}
\subjclass[2000]{60K35, 65K37} 
%\date{April 30, 2014}
%\date{September 22, 2014}
\date{\today} 
%\date{Last modified on \today\ at\ \filemodprinttime{\jobname}}
%\date{October 13, 2015}

\begin{abstract}
The 1+1 dimensional corner growth model with exponential weights is a centrally important  exactly solvable model in the Kardar-Parisi-Zhang class of statistical mechanical models.  While significant progress has been made on the fluctuations of the growing random shape, understanding of the optimal paths, or geodesics,  is less developed.  The Busemann function is a useful analytical tool for studying geodesics.  This paper describes the joint distribution of the Busemann functions, simultaneously in all directions of growth.   As applications of this description  we derive a marked point process representation for the Busemann function across a single lattice edge  and calculate some marginal distributions of Busemann functions and semi-infinite geodesics.  
%The marginals of this measure are identified  as the unique spatially ergodic fixed points of multiclass last-passage percolation processes.  
\end{abstract}
\maketitle

\maketitle
\tableofcontents

\section{Introduction}

\subsection*{The corner growth model in the Kardar-Parisi-Zhang class}
The planar corner growth model (CGM)   is a directed last-passage percolation (LPP) model on the planar integer lattice $\Z^2$ whose paths are allowed to take nearest-neighbor steps $\evec_1$ and $\evec_2$.   In the exactly solvable case the  random weights attached to the vertices of $\Z^2$   are i.i.d.\ exponentially or geometrically distributed random variables.

The exact solvability of the exponential and geometric CGM 
%planar corner growth model (CGM)  
%with exponential or geometric weights 
has been fundamental to the 20-year progress in the study of the 
%capture of deep properties of the 
1+1 dimensional  Kardar-Parisi-Zhang (KPZ) universality class.  After the initial breakthrough by Baik, Deift and Johansson \cite{baik-deif-joha-99} on planar LPP on Poisson points, the Tracy-Widom limit of the geometric and exponential CGM followed in Johansson's article \cite{joha}.  This work relied on techniques that today would be called {\it integrable probability}, a subject that applies ideas from representation theory and integrable systems  to study stochastic models.  A large literature has followed.  Recent reviews appear in  \cite{corw-18-bull,corw-16-rev,corw-12-rev}. 

A different  line of work was initiated in \cite{bala-cato-sepp} that gave a probabilistic proof of the KPZ exponents of the exponential CGM,   following the seminal work  \cite{cato-groe-06} on the planar Poisson LPP. The proof utilized  the tractable stationary version of the CGM and developed estimates  by  coupling perturbed versions of the CGM process.    This opening  led to the first proofs of   KPZ exponents  for the asymmetric simple exclusion process (ASEP) \cite{bala-sepp-aom} and the  KPZ equation \cite{bala-quas-sepp}, to the discovery of the first exactly solvable positive-temperature lattice  polymer model  \cite{sepp-12-aop-corr}, to a  proof of KPZ exponents for a class of zero-range processes outside known exactly solvable models \cite{bala-komj-sepp-12},  and most recently to Doob transforms and martingales  in random walk in random environment (RWRE)  that manifest KPZ  behavior \cite{bala-rass-sepp-18-arxiv}.  The estimates from  \cite{bala-cato-sepp} have also been applied to coalescence times of geodesics \cite{pime-16} and to the local behavior of Airy processes \cite{pime-17-arxiv}. 

\subsection*{Joint distribution of Busemann functions} 
The present article places the stationary CGM into a larger context   by describing the natural coupling of all the stationary CGMs.   This coupling arises from the  joint distribution of the Busemann functions in all directions of  growth.  %the interior of the quadrant. 

 Let $G_{x,y}$ denote the last-passage value between points $x$ and $y$ on the lattice $\Z^2$ (precise definition follows in \eqref{m:G807} in Section 2).   The Busemann function $\Bus^\rho_{x,y}$ is the limit of increments $G_{v_n,y}-G_{v_n,x}$  as $v_n$ is taken to infinity in the direction parametrized by $\rho$.   In a given direction this limit exists almost surely.   These limits  are extended to a  process $B^{\rcbullet}$ by taking limits in the parameter $\rho$.  
 Finite-dimensional  distributions  of $B^\rcbullet$ are    identified   as the unique invariant distributions of  multiclass LPP processes.  These distributions are conveniently described in terms of mappings that represent FIFO (first-in-first-out) queues.     Key points of the development  are (i)  an intertwining between two types of multiclass processes, called the {\it multiline process} and the {\it coupled process}, and (ii)  a triangular array representation of the intertwining mapping.  
 
 Properties of the joint Busemann process $B^\rcbullet$ discovered here (in particular,  Theorem \ref{B-th5}) are applied in a forthcoming work \cite{janj-rass-sepp-future} to describe the overall structure of the geodesics of the exponential corner growth model.   The joint distribution is a necessary tool for a full picture because in a fixed direction semi-infinite geodesics are almost surely unique and coalesce (these facts are reviewed in Section \ref{s:geod} below) but   there are exceptional random directions of non-uniqueness.  Through the joint distribution one  captures the jumps of the Busemann function as the direction varies.  These   correspond to jumps in coalescence points and non-uniqueness of geodesics.  
  A separate line of   work \cite{fan-sepp-future} develops the joint distribution of Busemann functions for the positive-temperature log-gamma polymer model.

% \note{Something about what is achieved.} 
%
%\note{  We focus on the exponential case and comment briefly on the geometric case.  Do we? }

%\subsection*{Recent work on the Airy sheet}  
\subsection*{An analogue of the Busemann function on the Airy sheet}  
%Tantalizing %connections 
%similarities
An interesting similarity appears between our paper and recent work on the universal objects that arise from LPP.
 Basu, Ganguly and Hammond   \cite{basu-gang-hamm-arxiv} study  an analogue of the Busemann function in the Brownian last-passage model.  Instead of the lattice scale and all spatial directions, they look at a difference of last-passage values  on the scale $n^{2/3}$ into a fixed macroscopic direction, 
 where universal objects such as Airy processes arise.  Translated to the CGM, their object of interest is the weak limit $Z(z)$  of the scaled difference 
$n^{-1/3} [ G_{(n^{2/3},0),(n+zn^{2/3},n)} - G_{(-n^{2/3},0),(n+zn^{2/3},n)}   + 4n^{2/3} ] $ 
 that they call the {\it difference weight profile}. 
 In terms of the Airy sheet $\{W(x,y): x,y\in\R\}$ constructed recently by Dauvergne, Ortmann and Vir\'ag \cite{dauv-ortm-vira-arxiv},  the limit $Z(z)= W(1,z)-W(-1,z)$. 
 
  The limit $Z(\cdot)$ is a continuous process, while the Busemann process we construct is a jump process.  But like the Busemann process,  the limit  $Z(\cdot)$ is constant in a neighborhood of each point, except for a small  set of exceptional points.   In both settings  this constancy  reflects the same underlying phenomenon, namely   the coalescence of geodesics.  In our lattice setting  Theorem \ref{B-th5} gives a precise description of these exceptional directions  in terms of an inhomogeneous Poisson process.

%\subsection*{Literature notes}  
%\subsection*{Related work}  
%\subsection*{Related  literature}  
%\subsection*{Relation to past work}  
\subsection*{Past work}    We mention related past work on queues,  particle systems,  and the CGM.

% We relate  our paper to existing  literature. %\\[3pt]   
%  \subsubsection*{Multiclass measures for particle systems} 

\smallskip 

{\it Queueing fixed points.}  We formulate a queueing operator as a mapping of  bi-infinite sequences of interarrival times and service times into a bi-infinite sequence of interdeparture times (details in Section \ref{s:queue}).  When the service times are i.i.d.\ exponential (memoryless, or $\cdot/M/1$ queue),  it is classical that i.i.d.\ exponential times are preserved by the mapping from the interarrival process to the interdeparture process, subject to the stability condition that the mean interarrival time exceed the mean  service time.
% and, as always, the interarrival and service processes are independent.  
Anantharam \cite{anan+corr} %\cite{MR1202520+MR1272742} 
proved the uniqueness of this fixed point  and Chang \cite{MR1303943} gave a shorter argument.  (\cite{MR1325044} cites also an unpublished manuscript of Liggett and Shiga.) 
Convergence to the fixed point was proved in  \cite{MR1325044}. These results were partially extended to  general $\cdot/G/1$ queues in \cite{mair-prab-03,MR2016618}. 

% What new queueing invariance results are we providing?  Uniqueness results in queuing theory, namely Anantharam \cite{MR1202520+MR1272742} and Chang \cite{MR1303943}, assert that  $(\eta^i_k)_{k\in Z}$ are i.i.d. $\operatorname{Exp}(\lambda_i^{-1})$ for some $\lambda_i\in (\paramb,\infty)$, for each $i$. For more recent results see Mountford and Prabhakar \cite{MR1325044} and Prabhakar \cite{MR2016618}.  
 
We look at LPP processes with multiple classes of input, but this is {\it not} the same as a multiclass queue that serves  customers in different priority classes. 
In queueing terms,  the present paper describes the unique invariant distribution in a situation where a single memoryless queueing operator transforms a vector of interarrival processes into a vector of interdeparture processes.   It is fairly evident  a priori that this operation cannot preserve an  independent collection  of  interarrival processes because they are correlated  after passing through the same queueing operator.  (For example, this operation preserves monotonicity.)   It turns out that the queueing mappings themselves provide a way to describe the structure of the invariant distribution.

\smallskip 

  {\it Multiclass measures for particle systems.} 
 In a series of remarkable papers \cite{ferr-mart-06, ferr-mart-07, ferr-mart-09} P.~A.~Ferrari and J.~B.~Martin developed queueing descriptions of the 
stationary distributions of the multiclass totally  asymmetric simple exclusion process (TASEP) and the Aldous-Diaconis-Hammersley process. 
%   As is well-known, the CGM and TASEP can be mapped to each other.    However, the coupling of CGMs through their weights  used in our paper is different from the coupling of particle systems via their Poisson clocks.  
The intertwining that establishes  our Theorem \ref{th:eta-ex} became possible after the discovery of a way to apply the ideas of Ferrari and Martin  to the CGM.   We use the terms multiline process and coupled process to highlight the analogy with their work.

 %\\[3pt]   
  % \subsubsection*{Busemann functions and semi-infinite geodesics}

\smallskip 

  {\it Busemann functions and semi-infinite geodesics.}
%  The starting point of the theorem is the limit \eqref{B-Glim} that establishes    
Existence and properties of Busemann functions and semi-infinite geodesics are reviewed in Sections  \ref{s:cgm}--\ref{s:geod}. 
Two strategies exist for proving the existence   of Busemann functions for the exponential CGM.      

(i)  Proofs by   Ferrari and Pimentel  \cite{ferr-pime-05} and Coupier  \cite{coup-11} relied  on  C.~Newman's approach to geodesics  \cite{howa-newm-01,lice-newm-96, newm-icm-95}. This strategy  is feasible   because the   exact solvability  shows that the shape function \eqref{gpp} satisfies the required  curvature hypotheses.    
 
 (ii) A direct argument from the stationary growth  model to  the Busemann limit    was introduced in \cite{geor-rass-sepp-yilm-15} for the log-gamma polymer, and applied to the exponential CGM in the lecture notes \cite{sepp-cgm-18}.  An application of this strategy  to the CGM with general i.i.d.\ weights appears in \cite{geor-rass-sepp-17-geod,geor-rass-sepp-17-buse} where the role of the regularity of the shape function becomes explicit.  
 
  A   sampling of other significant work on Busemann functions and geodesics can be found  in \cite{Cat-Pim-13, Fer-Mar-Pim-09,  Cat-Pim-12, Bak-Cat-Kha-14, Hof-05, Hof-08}. 
 
 %The type of distributional information on Busemann functions and  geodesics given in this paper has not been presented before. 

\subsection*{Organization of the paper} 

Section \ref{s:prel}  collects preliminaries on the CGM and queues.  The main results for Busemann functions and semi-infinite geodesics are stated in Section \ref{s:Bus}.  
%  The remainder of the paper develops the ingredients for the proofs. 
 Section \ref{s:D} proves a key lemma for the queueing operator.   Section \ref{s:m-class} introduces   the multiline process,  the coupled process, and the multiclass LPP process, and then  states and proves results on their invariant distributions.   
 The key intertwining between the multiline process and the coupled process  appears in equation \eqref{int-twi} in the proof of Theorem \ref{th:eta-ex} in Section \ref{s:inv-cpl}.   Section \ref{s:Bpf} proves the results of Section \ref{s:Bus}.  For the proof of  Theorem \ref{B-th5}, Section \ref{s:array} introduces   a triangular array  representation  for the intertwining mapping.   Auxiliary matters on queues and exponential distributions are relegated to Appendices \ref{a:queue} and \ref{a:exp}.

 \subsection*{Notation and conventions}  
Points $x=(x_1,x_2),y=(y_1,y_2)\in\R^2$ are ordered coordinatewise: $x\le y$ iff  $x_1\le y_1$  and $x_2\le y_2$.    The $\ell^1$ norm is $\abs{x}_1=\abs{x_1}+\abs{x_2}$.   Subscripts indicate restricted subsets of the reals and integers: for example 
% $\R_{>1}=(1,\infty)$ and  
$\Z_{>0}=\{1,2,3,\dotsc\}$.    Boldface  notation for vectors: $\evec_1=(1,0)$, $\evec_2=(0,1)$,   and members of the simplex $[\evec_2,\evec_1]=\{t\evec_1+(1-t)\evec_2: 0\le t\le 1\}$ are denoted by $\uvec$. % and $\vvec$.  

For $n\in\Z_{>0}$, $[n]=\{1,2,\dotsc,n\}$, with the convention that $[n]=\varnothing$ for $n\in\Z_{\le0}$.  A finite  integer interval is denoted by $\lzb m,n\rzb=\{m,m+1,\dotsc,n\}$,  and  $\lzb m,\infty\lzb=\{m,m+1, m+2, \dotsc\}$.  
  
  For $0<\alpha<\infty$, $X\sim$ Exp$(\alpha)$ means that random variable $X$ has exponential distribution with rate $\alpha$, in other words $P(X>t)=e^{-\alpha t}$ for $t>0$ and $E(X)=\alpha^{-1}$.   In the discussion   we parametrize exponential variables with their mean.   For $0<\rho<\infty$,  $\nu^\rho$ is the probability distribution on  the space $\R_{\ge0}^\Z$ of  bi-infinite sequences under which the coordinates are i.i.d.\ exponential variables with common mean $\rho$.  Higher-dimensional product measures are denoted by $\nu^{(\rho_1, \rho_2,\dotsc,\rho_n)}= \nu^{\rho_1}\otimes\nu^{\rho_2}\otimes\dotsm\otimes\nu^{\rho_n}$. 
  
  For $0\le p\le 1$, $X\sim$ Ber$(p)$ means that random variable $X$ has Bernoulli distribution with parameter $p$,  in other words $P(X=1)=p=1-P(X=0)$. 

In general, $E^\mu$ represents expectation under a measure $\mu$.

\section{Preliminaries}  \label{s:prel} 

%This section introduces the main objects of the discussion:  the planar corner growth model (CGM) which is a special case of  last-passage percolation (LPP), Busemann functions, their connection with semi-infinite geodesics,  and certain mappings of sequences associated with FIFO (first-in-first-out) queues.  

Section \ref{s:cgm} below introduces the main objects of discussion:  the planar corner growth model (CGM), which is a special case of  last-passage percolation (LPP), and Busemann functions.  Section \ref{s:geod} explains the significance of Busemann functions in the description of directed semi-infinite geodesics and the asymptotic direction of the competition interface.  The  somewhat technical Section \ref{s:queue} defines  FIFO (first-in-first-out) queueing mappings that are used in Section \ref{s:Bus} to describe the joint distribution of the Busemann functions.  To be sure, the distribution of the Busemann functions could be described by plain mathematical formulas without their queueing content. But the queueing context gives the mathematics meaning that can help comprehend the results.  

%\subsection{The corner growth model and Busemann functions} 
%\subsection{Busemann functions and geodesics in the corner growth model} 
\subsection{Busemann functions in the corner growth model} 
\label{s:cgm}

The  setting for  the exponential CGM  is the  following.    $\OSP$ is  a probability space with generic sample point $\w$.   A  group of measure-preserving measurable bijections   $\{\theta_x\}_{x\in\Z^2}$ acts on $\OSP$.   Measure preservation means that $\P(\theta_xA)=\P(A)$ for all sets $A\in\kS$ and $x\in\Z^2$.    $\Yw=(\Yw_x)_{x\in\Z^2}$ is a random field of   independent and identically distributed  {Exp}(1) random weights defined on $\Omega$ that satisfies $\Yw_x(\theta_y\w)=\Yw_{x+y}(\w)$ for  $x,y\in\Z^2$ and $\w\in\Omega$.  

The canonical choice for the sample  space is the product space $\Omega=\R_{\ge0}^{\Z^2}$ with its Borel $\sigma$-algebra $\kS$,    generic sample point $\w=(\w_x)_{x\in\Z^2}$,  translations $(\theta_x\w)_y=\w_{x+y}$, and coordinate random variables $\Yw_x(\w)=\w_x$.  Then  $\P$ is the  i.i.d.\ product measure  on $\Omega$ under which each $\Yw_x$ is an Exp(1) random variable.   

\begin{figure}[t]  
	\begin{center}
		\begin{minipage}{0.45\textwidth}
			\begin{tikzpicture}[>=latex, scale=0.7]  %[>=latex] 
			%			\definecolor{sussexg}{rgb}{0,0.5,0.5}
			%			\definecolor{sussexp}{rgb}{0.6,0,0.6} Uncomment for colorful version
			
			\definecolor{sussexg}{gray}{0.7}
			\definecolor{sussexp}{gray}{0.4} %comment out for colorful version

			%AXES
			%\draw[<->](0,4.8)--(0,0)--(6.8,0);
			\draw[->] (0,0)--(0,-5);
			\draw[->] (0,0)--(-7,0);
			
			% PATH
			\draw[line width = 3.2 pt, color=sussexp](0,0)--(0,-2)--(-1,-2)--(-1,-3)--(-3,-3)--(-4,-3)--(-4,-4)--(-6,-4);
			
			% LATTICE
			\foreach \x in { 0,...,-6}{
				\foreach \y in {0,...,-4}{
					\fill[color=white] (\x,\y)circle(1.8mm); 
					% Above: Extra layer of circles to make the path disconnected at sites. Comment out for a different effect
					
					\draw[ fill=sussexg!65](\x,\y)circle(1.2mm);
				}
			}	
			
			%LABELS
			\foreach \x in {0,...,-6}{\draw(\x, 0.3)node[above]{\small{$\x$}};}
			\foreach \x in {0,...,-4}{\draw(0.3, \x)node[right]{\small{$\x$}};}	
			
			\end{tikzpicture}
		\end{minipage}
		\begin{minipage}{0.28\textwidth} %% can tune this to center the figure
			
			\begin{tikzpicture}[>=latex,scale=0.7]
			\draw[->,line width=1.5pt] (0,0)--(-3,0);
			\draw[->,line width=1.5pt] (0,0)--(0,-3);
			\draw[->,line width=1.5pt] (0,0)--(-2,-1);
			\draw[dashed] (-3,0)--(0,-3);
			%\draw [-] (-3,0.1) -- (-3,-0.1);
			%\draw [-] (0.1,-3) -- (-0.1,-3);
			\draw(-3,0.48)node{\large$-\evec_1$};
			\draw(0.8,-2.8)node{\large$-\evec_2$};
			\draw(-2.7,-1.3)node{\large $\uvec(\rho)$};
			\draw(-4,-0.2)node{\large$\rho=1$};
			\draw(0,-3.6)node{\large$\rho=\infty$};
			%\node (a) at (-2.4, -0.3) {};
			%\node (b) at (-0.3, -2.4) {};
			%\draw[dashed,-> ] (a)  to [out=-90,in=180] (b);
			\end{tikzpicture}
		\end{minipage}		
	\end{center}
	\caption{ \small Left: The thickset line segments define an element of $\Pi_{(-6,-4),(0,0)}$. Right:  As the parameter  $\rho$ increases from 1 to $\infty$,  vector $\uvec(\rho)$ of \eqref{u-rho} sweeps the directions from $-\evec_1$ to $-\evec_2$  in the third quadrant.}\label{fig:cgm} 
\end{figure}

%%%%%%%%%%%%   earlier picture of a NE path 
%
%\begin{figure}[h] 
%	\begin{center}
%		\begin{tikzpicture}[>=latex, scale=0.65]  %[>=latex] 
%%			\definecolor{sussexg}{rgb}{0,0.5,0.5}
%%			\definecolor{sussexp}{rgb}{0.6,0,0.6} Uncomment for colorful version
%
%			\definecolor{sussexg}{gray}{0.7}
%			\definecolor{sussexp}{gray}{0.4} %comment out for colorful version
%			
%			
%			%AXES
%			\draw[<->](0,4.8)--(0,0)--(6.8,0);
%			
%			% PATH
%			\draw[line width = 3.2 pt, color=sussexp](0,0)--(0,2)--(1,2)--(1,3)--(3,3)--(4,3)--(4,4)--(6,4);
%			
%			% LATTICE
%			\foreach \x in { 0,...,6}{
%				\foreach \y in {0,...,4}{
%					\fill[color=white] (\x,\y)circle(1.8mm); 
%					% Above: Extra layer of circles to make the path disconnected at sites. Comment out for a different effect
%					
%					\draw[ fill=sussexg!65](\x,\y)circle(1.2mm);
%				}
%			}	
%			
%			%LABELS
%			\foreach \x in {0,...,6}{\draw(\x, -0.3)node[below]{\small{$\x$}};}
%			\foreach \x in {0,...,4}{\draw(-0.3, \x)node[left]{\small{$\x$}};}	
%			
%		\end{tikzpicture}
%	\end{center}
%	\caption{ \small  The thickset line segments define an element of $\Pi_{(0,0),(6,4)}$. }\label{fig:cgm} 
%  \end{figure}
%

   For $u\le v$ on $\Z^2$ (coordinatewise ordering)  let   $\Pi_{u,v}$ denote  the set of up-right paths $x_{\cbullet}=(x_i)_{i=0}^{\abs{v-u}_1}$ from $x_0=u$  to $x_{\abs{v-u}_1}=v$  with steps $x_i-x_{i-1}\in\{\evec_1,\evec_2\}$.  (The left diagram of Figure \ref{fig:cgm} illustrates.)       Define the last-passage percolation (LPP) process 
   \be\label{m:G807} 
G_{u,v} =\max_{x_\bbullet\in\Pi_{u,v}} \sum_{i=0}^{\abs{v-u}_1} \Yw_{x_i}  
\quad\text{for $u\le v$ on $\Z^2$.} 
\ee
For $v\in u+\Z_{>0}^2$ we have the inductive equation 
\be\label{G-ind18} G_{u,v}=G_{u,v-\evec_1}\vee G_{u,v-\evec_2}+\Yw_v  . \ee  

 The convention of this paper is that growth proceeds in the {\it southwest} direction (into the {\it third quadrant}  of the plane).  
 % With this convention the Busemann functions have a natural relationship to queueing  
 Thus the well-known shape theorem (Theorem 5.1 in \cite{mart-04}, Theorem 3.5 in \cite{sepp-cgm-18}) of the CGM takes the following  form.   With probability one,  
\be\label{sh-thm} 
\lim_{r\to\infty} \; \sup_{x\in(\Z_{\le0})^2: \, \abs{x}_1\ge r} \frac{\abs{G_{x,0}- \gpp(x)}}{\abs{x}_1}  =0  
\ee
with the concave, continuous and one-homogeneous shape function (known since \cite{rost})  
\be\label{gpp}  \gpp(x)=\bigl( \sqrt{\abs{x_1}} +\sqrt{\abs{x_2}}\;\bigr)^2 
\quad\text{for} \ \ x=(x_1,x_2)\in\R_{\le 0}^2.  
\ee 

Busemann functions are limits of differences $G_{v,x}-G_{v,y}$  of last-passage values from two fixed points $x$ and $y$  to a common point  $v$ that is taken to infinity in a particular direction.   These limits are described by relating the direction $\uvec$ that $v$ takes to a real parameter $\rho$ that specifies the distribution of the limits:   a bijective  mapping  
  between directions  $\uvec=(u_1,u_2)\in\;] \!-\evec_1,-\evec_2[$ in the open third quadrant of the plane and parameters $\rho\in(1,\infty)$ is defined by the equations 
 \be\label{u-rho}    \uvec=\uvec(\rho)=-\,\biggl(  \frac1{1+(\rho-1)^2}\,,\,  \frac{(\rho-1)^2}{1+(\rho-1)^2} \biggr) 
 \ \Longleftrightarrow \ 
 \rho= \rho(\uvec)= \frac{\sqrt{-u_1}+ \sqrt{-u_2}}{\sqrt{-u_1}}. 
  \ee
(See the right diagram of Figure \ref{fig:cgm} for an illustration.)

  The existence and properties of Busemann functions are summarized in the following theorem.    By definition,  a {\it down-right lattice path}  $\{y_k\}$ satisfies  $y_k-y_{k-1}\in\{\evec_1, -\evec_2\}$ for all $k$. 
  
\begin{theorem} \label{B-th-zero} On the probability space $\OSP$ there exists a cadlag  process $\Bus^\rho=(\Bus^\rho_{x,y})_{x,y\in\Z^2}$  with state space $\R^{\Z^2\times\Z^2}$,    indexed by ${\rho\in(1,\infty)}$, with the following properties.  

\smallskip 

{\rm (i) Path properties.}   There is a single event $\Omega_0$ such that $\P(\Omega_0)=1$ and the following properties hold for all $\w\in\Omega_0$, for all $\lambda, \rho\in(1,\infty)$ and $x,y,z\in\Z^2$.  
\be\label{B-mono}  \text{If}\quad \lambda<\rho\quad\text{then} \quad \Bus^\lambda_{x,x+\evec_1}\le \Bus^\rho_{x,x+\evec_1}
\quad\text{and}\quad 
 \Bus^\lambda_{x,x+\evec_2}\ge \Bus^\rho_{x,x+\evec_2}.   
 \ee
  \be\label{B-add}    \Bus^\rho_{x,y}+\Bus^\rho_{y,z}=\Bus^\rho_{x,z}.  \ee
\be\label{B-reco}   \Yw_x =  \Bus^\rho_{x-\evec_1,x}\wedge\Bus^\rho_{x-\evec_2,x}.   
\ee
Cadlag property:  the path $\rho\mapsto \Bus^\rho_{x,y}$ is right continuous and has left limits.  

\smallskip 

{\rm (ii) Distributional properties.}  %  Fix $\rho\in(1,\infty)$.  The process $\{\Bus^\rho_{x,x+\evec_1}, \Bus^\rho_{x,x+\evec_2}\}_{x\in\Z^2}$ is stationary and ergodic separately under the shifts in $\evec_1$ and $\evec_2$ directions. 
Each process $\Bus^\rho$ is stationary under lattice shifts.  
The marginal distributions of nearest-neighbor increments are 
\be\label{B-distr}    \Bus^\rho_{x-\evec_1,x}\sim {\rm Exp}(\rho^{-1}) \quad\text{and}\quad  
 \Bus^\rho_{x-\evec_2,x} \sim {\rm Exp}(1-\rho^{-1}) . \ee
  Along any down-right path $\{y_k\}_{k\in\Z}$ on $\Z^2$, for fixed $\rho\in(1,\infty)$ the increments $\{\Bus^\rho_{y_k,y_{k+1}}\}_{k\in\Z}$ are independent.  

\smallskip 

{\rm (iii) Limits.}  Fix $\rho\in(1,\infty)$ and let $\uvec=\uvec(\rho)$ be the vector determined by \eqref{u-rho}.  Then there exists an event $\Omega_0^{(\rho)}$ such that $\P(\Omega_0^{(\rho)})=1$ and the following  holds:  
for any sequence $\{u_n\}$ in $\Z^2$  such that  $\abs{u_n}_1\to\infty$ and $u_n/n\to \uvec$  and for any  $\w\in\Omega_0^{(\rho)}$,  
\be\label{B-Glim}
\Bus^\rho_{x,y}=\lim_{n\to\infty} [G_{u_n, y}-G_{u_n,x}]  . 
\ee
Continuity from the left   at a fixed $\rho\in(1,\infty)$ holds with probability one:    
 $\ddd\lim_{\lambda\nearrow\rho} \Bus^\lambda_{x,y}=\Bus^\rho_{x,y} $ almost surely.  
\end{theorem}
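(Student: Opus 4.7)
The plan is to first pin down $\Bus^\rho$ for each fixed $\rho \in (1,\infty)$ using the stationary version of the corner growth model, and then assemble a joint cadlag process indexed by $\rho$ by working on a countable dense set and extending by right continuity using monotonicity in $\rho$.

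For a fixed $\rho$, I would enlarge the probability space to support a boundary-augmented system: independent $\mathrm{Exp}(\rho^{-1})$ weights on the negative horizontal axis and independent $\mathrm{Exp}(1-\rho^{-1})$ weights on the negative vertical axis, independent of the bulk field $\Yw$. The resulting stationary LPP process $G^\rho$ has the classical Burke-type property that along any down-right lattice path the nearest-neighbor increments are mutually independent with the exponential marginals in \eqref{B-distr}; this can be verified by induction on a finite rectangle via the $M/M/1$ queueing correspondence. The characteristic direction of this stationary model is exactly $\uvec(\rho)$, and a now-standard sandwich argument (as in \cite{geor-rass-sepp-yilm-15,sepp-cgm-18}) shows, using the shape theorem \eqref{sh-thm} and the strict concavity of $\gpp$, that for any sequence $u_n/n \to \uvec(\rho)$ the limit $\Bus^\rho_{x,y} = \lim_n [G_{u_n,y} - G_{u_n,x}]$ exists almost surely and equals the corresponding stationary increment. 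This yields \eqref{B-add}, \eqref{B-reco}, and \eqref{B-distr} at each fixed $\rho$.

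To couple all directions at once, I would fix a countable dense set $D \subset (1,\infty)$ (say the rationals) and intersect the full-measure events over $D$. For any $\lambda<\rho$ in $D$, the monotonicity \eqref{B-mono} on a single almost sure event follows by comparing two stationary CGMs built on the same bulk field $\Yw$: a larger $\rho$ means a stochastically smaller boundary on the horizontal axis and a larger one on the vertical axis, so the standard LPP monotonicity under these boundary conditions gives the asserted inequalities in the bulk. With monotonicity in hand across $D$, define
\[ \Bus^\rho_{x,y} := \lim_{\lambda \searrow \rho,\, \lambda \in D} \Bus^\lambda_{x,y}, \qquad \rho \in (1,\infty). \]
Right continuity and existence of left limits are then automatic. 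Additivity \eqref{B-add} and recovery \eqref{B-reco} are pointwise identities that pass to pointwise limits; the marginal distributions and independence along down-right paths propagate to all $\rho$ because the exponential parameters depend continuously on $\rho$. The Busemann limit at an irrational $\rho$ is recovered by sandwiching $G_{u_n,y}-G_{u_n,x}$ between the already-known limits at bracketing values $\lambda<\rho<\lambda'$ in $D$.

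The main obstacle will be the almost sure left continuity at a deterministic $\rho$. The cadlag construction leaves open a random countable set of jumps, and one must show that any fixed $\rho$ is almost surely not one of them. I would attack this by a first-moment argument: by \eqref{B-distr}, $\rho \mapsto \E[\Bus^\rho_{x,x+\evec_1}] = \rho$ is continuous, while monotone convergence gives $\E[\Bus^{\rho-}_{x,x+\evec_1}] = \rho$ as well. Hence $\E\bigl[\Bus^\rho_{x,x+\evec_1} - \Bus^{\rho-}_{x,x+\evec_1}\bigr] = 0$, and since this integrand is nonnegative by \eqref{B-mono} it must vanish almost surely. The symmetric argument handles the $\evec_2$ increment, and additivity \eqref{B-add} then propagates left continuity to all pairs $(x,y)$ on the same full-measure event.
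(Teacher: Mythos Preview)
The paper does not prove this theorem; immediately after the statement it writes ``The theorem above is proved as Theorem 4.2 in lecture notes \cite{sepp-cgm-18}.''  Your sketch is exactly the strategy of those notes, which the introduction labels approach (ii): build the stationary exponential CGM for each fixed $\rho$, deduce the Busemann limit \eqref{B-Glim} by the sandwich/squeeze argument between two stationary models with nearby parameters, establish monotonicity over a countable dense set of parameters, and then take right limits in $\rho$ to get the cadlag process.  The first-moment argument for left continuity at a fixed $\rho$ is also the standard one.

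One slip worth correcting: with the paper's conventions, $\Bus^\rho_{x-\evec_1,x}\sim\mathrm{Exp}(\rho^{-1})$ has mean $\rho$, so a larger $\rho$ gives a stochastically \emph{larger} boundary on the horizontal axis and a \emph{smaller} one on the vertical axis, not the reverse as you wrote.  This is precisely what drives \eqref{B-mono} in the direction stated.  Relatedly, the phrase ``standard LPP monotonicity under these boundary conditions'' hides a genuine step: you need to couple the two stationary systems on the same bulk weights with pathwise ordered boundaries (e.g.\ via inverse-CDF coupling of the exponentials) and then propagate the inequality through the recursion \eqref{G-ind18}; alternatively, once the Busemann limits are known to exist for each rational $\rho$, the monotonicity on horizontal edges follows from the planar crossing inequality for LPP increments $G_{u,x+\evec_1}-G_{u,x}$ as the base point $u$ moves along the antidiagonal.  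Either route is fine, but it should be made explicit.
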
  

The theorem above is proved as Theorem 4.2 in lecture notes  \cite{sepp-cgm-18}. 
The central point of the theorem is the limit \eqref{B-Glim}, on account of which we  call $\Bus^\rho$ the {\it Busemann function} in direction $\uvec$.    We record some observations. 

Additivity \eqref{B-add} implies  that $\Bus^\rho_{x,x}=0$ and $\Bus^\rho_{x,y}=-\Bus^\rho_{y,x}$.  
 The {\it weights recovery} property \eqref{B-reco} can be seen   from  \eqref{G-ind18} and  limits \eqref{B-Glim}: 
\begin{align*}
\Bus^\rho_{x-\evec_1,x}\wedge\Bus^\rho_{x-\evec_2,x}&= \lim_{n\to\infty} [G_{u_n, x}-G_{u_n,x-\evec_1}]  \wedge [G_{u_n, x}-G_{u_n,x-\evec_2}] \\
&=  \lim_{n\to\infty} [G_{u_n, x}-G_{u_n,x-\evec_1}\vee G_{u_n,x-\evec_2}] =\Yw_x. 
 \end{align*}

%The distribution of the process $\{\Bus^\rho\}_{1<\rho<\infty}$ is described precisely in Theorem in Section.   Here we state consequences of that  characterization.   

\begin{lemma}\label{B-reco-lm}  With probability one,  $\forall x\in\Z^2$ there exists   random $\rho^*(x)\in(1,\infty)$  such that
\be\label{B-reco3}\begin{aligned}  
%&\forall x\in\Z^2 \ \exists \text{ random } \rho^*(x)>1 \text{ such that } \\
\Bus^\rho_{x-\evec_1,x}&=\Yw_x<\Bus^\rho_{x-\evec_2,x} \quad \text{for} \quad \rho\in(1,\rho^*(x))\\
\text{and}\quad \Bus^\rho_{x-\evec_2,x}&=\Yw_x<\Bus^\rho_{x-\evec_1,x} \quad \text{for} \quad\rho\in(\rho^*(x),\infty).  
\end{aligned}\ee 
The distribution function of $\rho^*(x)$ is $\P\{ \rho^*(x)\le \lambda\}=1-\lambda^{-1}$ for $1\le\lambda<\infty$. 
\end{lemma}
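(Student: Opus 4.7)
The plan is to identify $\rho^*(x)$ with the unique sign change of the function $D(\rho):=\Bus^\rho_{x-\evec_1,x}-\Bus^\rho_{x-\evec_2,x}$. By \eqref{B-mono}, $D$ is non-decreasing in $\rho$, and the cadlag property from Theorem \ref{B-th-zero}(i) makes it right-continuous with left limits. By weights recovery \eqref{B-reco}, $D(\rho)<0$ forces $\Yw_x=\Bus^\rho_{x-\evec_1,x}<\Bus^\rho_{x-\evec_2,x}$, while $D(\rho)>0$ forces $\Yw_x=\Bus^\rho_{x-\evec_2,x}<\Bus^\rho_{x-\evec_1,x}$. Thus \eqref{B-reco3} reduces to showing that a.s.\ $D$ changes sign at a single point of $(1,\infty)$.

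First I would verify that $D$ assumes both strict signs. For any $M>0$, $\P\{\Bus^\rho_{x-\evec_2,x}<M\}=1-e^{-(1-\rho^{-1})M}\to 0$ as $\rho\searrow 1$; together with the monotonicity of $\rho\mapsto\Bus^\rho_{x-\evec_2,x}$, this forces $\Bus^\rho_{x-\evec_2,x}\to+\infty$ a.s.\ as $\rho\searrow 1$. Meanwhile $\Bus^\rho_{x-\evec_1,x}$, being non-decreasing in $\rho$ and bounded below by $\Yw_x\ge 0$, has an a.s.\ finite monotone limit as $\rho\searrow 1$. Hence $D(\rho)\to-\infty$ a.s., and a symmetric argument yields $D(\rho)\to+\infty$ as $\rho\to\infty$. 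Consequently $\rho_-:=\sup\{\rho:D(\rho)<0\}$ and $\rho_+:=\inf\{\rho:D(\rho)>0\}$ both lie in $(1,\infty)$ and satisfy $\rho_-\le\rho_+$ by monotonicity of $D$.

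The crux is the a.s.\ equality $\rho_-=\rho_+$. Here I would invoke Theorem \ref{B-th-zero}(ii) on a bi-infinite down-right path that passes through $x-\evec_1, x, x-\evec_2$ in order (successive steps $\evec_1$ then $-\evec_2$). The increments $\Bus^\rho_{x-\evec_1,x}\sim{\rm Exp}(\rho^{-1})$ and $\Bus^\rho_{x,x-\evec_2}=-\Bus^\rho_{x-\evec_2,x}$ are then independent, so $\Bus^\rho_{x-\evec_1,x}$ and $\Bus^\rho_{x-\evec_2,x}$ are independent exponentials of rates $\rho^{-1}$ and $1-\rho^{-1}$. In particular $\P\{D(\rho)=0\}=0$ for each fixed $\rho>1$. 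If $\rho_-<\rho_+$ then by monotonicity $D\equiv 0$ on $(\rho_-,\rho_+)$, forcing $D(q)=0$ for some rational $q$ in this interval; a countable union over $q\in\Q\cap(1,\infty)$ rules this out. Define $\rho^*(x):=\rho_-=\rho_+$, and handle all $x\in\Z^2$ by a further countable union.

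The distribution then follows from the same competing-exponentials input: off the null event $\{\rho^*(x)=\lambda\}$,
\[
\P\{\rho^*(x)>\lambda\}=\P\{\Bus^\lambda_{x-\evec_1,x}<\Bus^\lambda_{x-\evec_2,x}\}=\frac{\lambda^{-1}}{\lambda^{-1}+(1-\lambda^{-1})}=\lambda^{-1},
\]
whence $\P\{\rho^*(x)\le\lambda\}=1-\lambda^{-1}$. The principal obstacle is ruling out a flat interval on which $D\equiv 0$; this is precisely where the independence of the two nearest-neighbor increments, extracted from Theorem \ref{B-th-zero}(ii), does the essential work.
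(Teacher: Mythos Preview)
Your argument is correct and is essentially the same as the paper's: both use monotonicity \eqref{B-mono} together with the marginal rates \eqref{B-distr} to force $D(\rho)=\Bus^\rho_{x-\evec_1,x}-\Bus^\rho_{x-\evec_2,x}$ to run from $-\infty$ to $+\infty$, then use independence of the two increments along a down-right path to rule out $D(q)=0$ at rational $q$, and finally read off \eqref{B-reco3} from weights recovery \eqref{B-reco} and the distribution from the competing-exponentials computation. One small remark: your hedge ``off the null event $\{\rho^*(x)=\lambda\}$'' is unnecessary, since right-continuity of $D$ gives $D(\rho^*(x))\ge 0$ and hence $\{\rho^*(x)>\lambda\}=\{D(\lambda)<0\}$ exactly.
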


\begin{proof} 
 Monotonicity \eqref{B-mono} and the exponential rates \eqref{B-distr} force $\Bus^\rho_{x-\evec_2,x}\nearrow\infty$ almost surely as $\rho\searrow 1$ and $\Bus^\rho_{x-\evec_1,x}\nearrow\infty$ almost surely as $\rho\nearrow\infty$.    Edges $\{x-\evec_1,x\}$ and $\{x-\evec_2,x\}$ are part of a down-right path, and hence  $\Bus^\rho_{x-\evec_1,x}$ and $\Bus^\rho_{x-\evec_2,x}$ are independent  exponential random variables for each fixed $\rho$.  Consequently, with probability one,  they are  distinct for each rational $\rho>1$.  By monotonicity again there is a unique real $\rho^*(x)\in(1,\infty)$  such that for rational $\lambda\in(1,\infty)$,
\be\label{B-reco6}\begin{aligned}  
\lambda<\rho^*(x)&\text{ implies }    \Bus^\lambda_{x-\evec_1,x}<\Bus^\lambda_{x-\evec_2,x}, \\
\text{and}\quad \lambda>\rho^*(x)&\text{ implies }    \Bus^\lambda_{x-\evec_1,x}>\Bus^\lambda_{x-\evec_2,x}. 
\end{aligned}\ee 
By monotonicity the same holds for real $\lambda$.  
\eqref{B-reco3} follows from weights recovery  \eqref{B-reco}.   The distribution function comes from \eqref{B-reco3}, independence of $\Bus^\rho_{x-\evec_1,x}$ and $\Bus^\rho_{x-\evec_2,x}$, and \eqref{B-distr}. 
%  monotonicity \eqref{B-mono}, independence of $\Bus^\rho_{x-\evec_1,x}$ and $\Bus^\rho_{x-\evec_2,x}$ for each fixed $\rho$,   and the marginal distributions \eqref{B-distr}.  
\end{proof}

In particular, for a fixed $x$ the processes 
$\{\Bus^\rho_{x-\evec_1,x}\}_{1<\rho<\infty}$ and $\{\Bus^\rho_{x-\evec_2,x}\}_{1<\rho<\infty}$ 
%$\Bus^\rcbullet_{x-\evec_1,x}$ and $\Bus^\rcbullet_{x-\evec_2,x}$ 
are not independent of each other, even though for a fixed $\rho$ the random variables $\Bus^\rho_{x-\evec_1,x}$ and $\Bus^\rho_{x-\evec_2,x}$ are independent.    
Vector $\uvec(\rho^*(x))$ is the asymptotic direction of the competition interface emanating from $x$ (see Remark \ref{r:cif} further below).

The  process $\Bus=\{\Bus^\rho_{x,y}\}$  is a Borel function of the weight configuration $Y$.  Limits \eqref{B-Glim} define $\Bus^\rho$ as a function of $Y$ for a countable dense set of $\rho$ in $(1,\infty)$.  The remaining $\rho$-values $\Bus^\rho_{x,y}$ can then be defined as right limits.  Shifts $\theta_u$ act on the weights by $(\theta_u\Yw)_x=\Yw_{x+u}$. 
 The limits \eqref{B-Glim} give stationarity and ergodicity of $B$ as stated in this lemma.  
 
 \begin{lemma}\label{B-lm-erg}   Fix $\rho_1,\dotsc,\rho_n\in(1,\infty)$ and $y_1,\dotsc,y_n\in\Z^2$.   Let   $A_x=(\Bus^{\rho_1}_{x,x+y_1},\dotsc, \Bus^{\rho_n}_{x,x+y_n})$.   
 % and define the $\R^n$-valued  process  $A=\{A_x\}_{x\in\Z^2}=\{(\Bus^{\rho_1}_{x,x+y_1},\dotsc, \Bus^{\rho_n}_{x,x+y_n})\}_{x\in\Z^2}$. 
    Let $0\ne u\in\Z^2$.  Then   %$A$ 
 the  $\R^n$-valued  process  $A=\{A_x\}_{x\in\Z^2}$  
    is stationary and  ergodic under the  shift  $\theta_u$.
 \end{lemma}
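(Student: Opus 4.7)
The plan is to reduce the claim to the (standard) ergodicity of the i.i.d.\ weight field $Y$ under the shift $\theta_u$, by first establishing that the Busemann functions intertwine with lattice shifts:
\[
\Bus^\rho_{x,y}(\theta_u Y) = \Bus^\rho_{x+u,\, y+u}(Y) \qquad \P\text{-a.s., for every } x,y\in\Z^2 \text{ and } \rho\in\{\rho_1,\dotsc,\rho_n\}.
\]
Granted this, the vector $A_x$ satisfies $A_x(\theta_u Y) = A_{x+u}(Y)$, and both stationarity and ergodicity of the $\R^n$-valued field $A$ under $\theta_u$ will follow by transfer from the corresponding properties of $Y$.

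To derive the intertwining, I would start from the identification $(\theta_u Y)_x = Y_{x+u}$ and note directly from the definition \eqref{m:G807} that $G_{a,b}(\theta_u Y) = G_{a+u,\, b+u}(Y)$, by relabeling the up-right paths from $a$ to $b$ as up-right paths from $a+u$ to $b+u$. Then, choosing a sequence $v_n\in\Z^2$ with $v_n/n\to\uvec(\rho)$, the limit representation \eqref{B-Glim} gives
\[
\Bus^\rho_{x,y}(\theta_u Y) = \lim_{n\to\infty}\bigl[G_{v_n+u,\, y+u}(Y) - G_{v_n+u,\, x+u}(Y)\bigr] = \Bus^\rho_{x+u,\, y+u}(Y),
\]
since the shifted sequence $v_n+u$ still satisfies $(v_n+u)/n\to\uvec(\rho)$. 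The one technicality is to work on a single event of full probability on which \eqref{B-Glim} applies simultaneously at each $\rho_i$ and for every translate $\theta_u^k Y$, $k\in\Z$; a countable intersection of the full-probability events $\Omega_0^{(\rho_i)}$ from Theorem \ref{B-th-zero}(iii) with their $\theta_u^k$-preimages is such an event.

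Stationarity of $A$ under $\theta_u$ is then immediate from the intertwining together with the fact that $\theta_u$ preserves the distribution of $Y$. For ergodicity, I would invoke the classical fact that the i.i.d.\ product measure $\P$ on $\R_{\ge0}^{\Z^2}$ is mixing (hence ergodic) under every nonzero lattice shift $\theta_u$: for any two cylinder events based on finite coordinate sets, the supports become disjoint after translating one of them by $\theta_u^k$ for $|k|$ large, which is possible precisely because $u\ne 0$. Thus, given any $\theta_u$-invariant event $E$ in $(\R^n)^{\Z^2}$ for the process $A$, the pullback $\{Y : A(Y)\in E\}$ is a $\theta_u$-invariant event in $\R_{\ge 0}^{\Z^2}$, which must have $\P$-probability $0$ or $1$. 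I do not expect a substantive obstacle in this argument; the only care needed is the bookkeeping of $\P$-null sets, so that the equivariance identity $A\circ\theta_u = \theta_u\circ A$ is valid on one $\theta_u$-invariant event of full measure rather than event-by-event.
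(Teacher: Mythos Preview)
Your proposal is correct and follows essentially the same route as the paper: establish the shift-covariance $A_x = A_0\circ\theta_x$ via the translation identity $G_{a,b}(\theta_u Y)=G_{a+u,b+u}(Y)$ and the Busemann limit \eqref{B-Glim}, then transfer stationarity and ergodicity from the i.i.d.\ field $Y$. Your extra remarks on null-set bookkeeping and on why the i.i.d.\ measure is mixing under nonzero shifts are fine elaborations of points the paper leaves implicit.
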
 
 
 \begin{proof}  Since the i.i.d.\ process $Y$   is stationary and  ergodic under every shift, it suffices to show that $A_x=A_0\circ\theta_x$ as functions of $Y$.   Let $\uvec^i\in\,]-\evec_1,-\evec_2[$  be associated to $\rho_i$ via \eqref{u-rho} and fix sequences $\{u^1_m\},\dotsc,\{u^n_m\}$ in $\Z^2$  such that, as $m\to\infty$,   $\abs{u^i_m}_1\to\infty$ and $u^i_m/m\to \uvec^i$  for each $i\in[n]$.   Then almost surely, 
 \begin{align*}
 A_x&=(\Bus^{\rho_1}_{x,x+y_1},\dotsc, \Bus^{\rho_n}_{x, x+y_n})
=  \lim_{m\to\infty}  \bigl( [G_{u^1_m, x+y_1}-G_{u^1_m,x}], \dotsc,  [G_{u^n_m, x+y_n}-G_{u^n_m,x}]\bigr) \\
&=  \lim_{m\to\infty}  \bigl( [G_{u^1_m-x, y_1}-G_{u^1_m-x,0}], \dotsc,  [G_{u^n_m-x,y_n}-G_{u^n_m-x,0}]\bigr) \circ\theta_x \\
&=(\Bus^{\rho_1}_{0, y_1},\dotsc, \Bus^{\rho_n}_{0, y_n})  \circ\theta_x  =  A_0 \circ\theta_x. 
\qedhere  \end{align*}
\end{proof} 

\medskip

\subsection{Semi-infinite geodesics in the corner growth model} \label{s:geod} 
Let  $x_\bbullet=\{x_k\}$ be a   finite or infinite south-west directed   nearest-neighbor path  on $\Z^2$  ($x_{k+1}\in\{x_k-\evec_1, x_k-\evec_2\}$).  Then  $x_\bbullet$ is a {\it geodesic} if it gives a maximizing path between any two of its points:  for any $k<\ell$ in the index set of  $x_\bbullet$, 
\[  G_{x_\ell, x_k}=\sum_{i=k}^\ell \Yw_{x_i} . \]

Given $\rho\in(1,\infty)$,  define from each $x\in\Z^2$ the  semi-infinite, south-west directed  path $\bgeodrx=\{\bgeodrx_k\}_{k\in\Z_{\ge0}}$ that starts at $x= \bgeodrx_0$ and chooses a step from $\{-\evec_1,-\evec_2\}$ by following the minimal increment of $\Bus^\rho$:  for $k\ge 0$, 
\be\label{bg6} 
\bgeodrx_{k+1}=\begin{cases}   \bgeodrx_{k} - \evec_1, &\text{if } \ \Bus^\rho_{\bgeodrx_{k}-\evec_1,\,\bgeodrx_{k} } <  \Bus^\rho_{\bgeodrx_{k}-\evec_2,\,\bgeodrx_{k} } 
%\Bus^\alpha(\bgeodrx_{k},\bgeodrx_{k} + \evec_1, \w)\le  \Bus^\alpha(\bgeodrx_{k},\bgeodrx_{k} + \evec_2,\w) 
\\[5pt]  
 \bgeodrx_{k} - \evec_2, &\text{if } \   \Bus^\rho_{\bgeodrx_{k}-\evec_2,\,\bgeodrx_{k}} \le   \Bus^\rho_{\bgeodrx_{k}-\evec_1,\,\bgeodrx_{k} } . 
%\Bus^\alpha(\bgeodrx_{k},\bgeodrx_{k} + \evec_2, \w) <  \Bus^\alpha(\bgeodrx_{k},\bgeodrx_{k} + \evec_1, \w) . 
\end{cases} 
  \ee
The tie-breaking rule in favor of $-\evec_2$  is chosen simply to make $\bgeodrx$ a cadlag function of $\rho$.     For a given $\rho$ 
 equality on the right-hand side  happens with probability zero.    Pictorially, to each point $z$  attach the arrow that points from $z$ to $\bgeodrz_1$.   For each $x$  the path  $\bgeodrx$ is constructed by starting at $x$ and  following the arrows.   

The additivity \eqref{B-add} and weights recovery \eqref{B-reco} imply that $\bgeodrx$ is a (semi-infinite) geodesic:   let $\ell>k\ge 0$ and  suppose  $\{y_i\}_{i=k}^\ell$ is a south-west directed path from $y_k=\bgeodrx_k$ to $y_\ell=\bgeodrx_\ell$. Then 
\begin{align*}
\sum_{i=k}^\ell \Yw_{y_i} &\le \sum_{i=k}^{\ell-1} \Bus^\rho_{y_{i+1},y_i} +      \Yw_{y_\ell} = \Bus^\rho_{y_{\ell},y_k} +      \Yw_{y_\ell}  =  \Bus^\rho_{\bgeodrx_\ell, \bgeodrx_k}  +      \Yw_{\bgeodrx_\ell}   =  \sum_{i=k}^{\ell-1} \Bus^\rho_{\bgeodrx_{i+1},\bgeodrx_i} +      \Yw_{\bgeodrx_\ell} %\\  &
= \sum_{i=k}^\ell \Yw_{\bgeodrx_i} . 
\end{align*}
% can be used to check that    for any $\ell>k\ge 0$,  
Thus 
 \be\label{GB7} 
G_{\bgeodrx_\ell, \bgeodrx_k}  =\sum_{i=k}^\ell \Yw_{\bgeodrx_i} 
=  \Bus^\rho_{\bgeodrx_\ell, \bgeodrx_k} +  \Yw_{\bgeodrx_\ell} . 
\ee
 We call   $\bgeodrx$ a {\it Busemann geodesic}. 
%(See for example  Lemma 4.1 in \cite{geor-rass-sepp-17-geod}.) 
%Thus   $\bgeodrx$ is a (semi-infinite) geodesic.   We call   $\bgeodrx$ a {\it Busemann geodesic}. 
% which means that $\bgeodrx$ is a maximizing path between any two of its points.  

% for any $k<\ell$,  the segment $\{\bgeodrx_i\}_{i=k}^\ell$ is the maximizing path  between   $\bgeodrx_\ell$ and $\bgeodrx_k$:

We state the key properties of semi-infinite geodesics in the next theorem.

\begin{theorem} \label{geod-thm-1}  Fix $\rho\in(1,\infty)$ and let $\uvec=\uvec(\rho)$ be the direction associated to $\rho$ by \eqref{u-rho}.   The following properties hold with probability one. 

{\rm (i) Directedness.}  $\forall x\in\Z^2$  $\ddd\lim_{k\to\infty} \bgeodrx_k/k = \uvec$.  

{\rm (ii) Uniqueness.}  Let $x_\bbullet=\{x_k\}_{k\in\Z_{\ge0}}$ be any semi-infinite geodesic that satisfies   $x_k/k\to\uvec$ as $k\to\infty$.  Then $x_\bbullet=\bgeodr{x_0}$.  

{\rm (iii) Coalescence.}   $\forall x,y\in\Z^2$  the paths  $\bgeodrx$ and $\bgeodr{y}$ coalesce: 
$\exists z=z^\rho(x,y)\in\Z^2$ such that $\bgeodrx\cap\bgeodr{y}=\bgeodr{z}$.   

\end{theorem}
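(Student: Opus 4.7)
My plan is to establish the three properties in the order (i) directedness, (iii) coalescence, (ii) uniqueness, treating (iii) as the key technical input to (ii).

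For directedness (i), I would combine the shape theorem with an ergodic evaluation of the Busemann function along the geodesic. The identity \eqref{GB7} gives $G_{\bgeodrx_n,x}=\Bus^\rho_{\bgeodrx_n,x}+\Yw_{\bgeodrx_n}$. Along any subsequence on which $\bgeodrx_n/n\to\vvec$, the shape theorem \eqref{sh-thm} yields $G_{\bgeodrx_n,x}/n\to\gpp(\vvec)$. Via additivity \eqref{B-add} I split $\Bus^\rho_{\bgeodrx_n,x}$ along two axis-aligned legs and apply the strong law of large numbers to the i.i.d.\ horizontal and vertical Busemann increments (Theorem \ref{B-th-zero}(ii) together with the rates \eqref{B-distr}) to get $\Bus^\rho_{\bgeodrx_n,x}/n\to \rho|\vvec_1|+\tfrac{\rho}{\rho-1}|\vvec_2|$. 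Equating the two limits, and noting that the right-hand expression is the affine function tangent to the strictly concave $\gpp$ at $\uvec(\rho)$, forces $\vvec=\uvec(\rho)$; since every subsequential limit agrees, $\bgeodrx_n/n\to\uvec$.

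For coalescence (iii), I would use the forest structure together with a planarity lemma and a Licea--Newman / Burton--Keane style density argument. Because \eqref{bg6} gives one outgoing arrow from each site, the family $\{\bgeodr{w}\}_{w\in\Z^2}$ is a south-west directed forest, and any two paths either merge or stay forever disjoint. A crossing of two Busemann geodesics would yield, via additivity \eqref{B-add} and weights recovery \eqref{B-reco}, two distinct point-to-point geodesics with the same endpoints, which is impossible almost surely for continuously distributed weights; so disjoint means non-crossing. Assuming now that non-coalescence has positive probability, part (i) furnishes two disjoint infinite subtrees both asymptotic to the single direction $\uvec$, and translation ergodicity (Lemma \ref{B-lm-erg}) makes the density of roots of such disjoint subtrees positive on each antidiagonal. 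A Burton--Keane mass-transport count then contradicts the finite capacity of a large antidiagonal segment, and coalescence follows. Once two paths meet at some site $z$, additivity forces them to agree from $z$ onward, giving $\bgeodrx\cap\bgeodr{y}=\bgeodrz$.

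For uniqueness (ii), I would combine coalescence with the almost sure distinctness of the two Busemann increments into each site. By \eqref{B-distr} the increments $\Bus^\rho_{z-\evec_1,z}$ and $\Bus^\rho_{z-\evec_2,z}$ are independent exponentials with different rates, hence distinct at every $z$ off a single null set. Let $x_\bbullet$ be a semi-infinite geodesic with $x_k/k\to\uvec$. The geodesic property gives $G_{x_n,x_0}=\Yw_{x_0}+G_{x_n,x_1}$, so $x_1$ is the arg-max between $G_{x_n,x_0-\evec_1}$ and $G_{x_n,x_0-\evec_2}$. Using coalescence (iii) applied to deterministic reference sequences $v_n\to\infty$ in direction $\uvec$, together with the Busemann limit \eqref{B-Glim} and the fact that all semi-infinite geodesics directed into $\uvec$ eventually flow into $\bgeodrx$, the sign of $G_{x_n,x_0-\evec_1}-G_{x_n,x_0-\evec_2}$ stabilizes to the sign of $\Bus^\rho_{x_0-\evec_2,x_0}-\Bus^\rho_{x_0-\evec_1,x_0}$; by distinctness this uniquely determines $x_1=\bgeodrx_1$. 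Iterating at each $x_k$ gives $x_\bbullet=\bgeodrx$.

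The main obstacle is the coalescence step (iii): the crossing lemma and the Burton--Keane count need careful setup in a directed lattice whose arrow field is generated by the full Busemann process, and controlling the joint behavior of two a priori non-coalescing trees is the most delicate part, since no easy independence of increments between the two trees is available.
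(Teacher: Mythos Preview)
The paper does not supply its own proof of this theorem; the paragraph following the statement simply cites the literature (\cite{geor-rass-sepp-17-geod} for (i) and \cite{coup-11,ferr-pime-05,sepp-arxiv-18,geor-rass-sepp-17-geod} for (ii)--(iii)). So there is no in-house argument to compare against, and your outline is broadly in line with those references: the ergodic-plus-shape argument for (i) is exactly what the paper alludes to, and a Licea--Newman/Burton--Keane density argument is one of the standard routes to (iii).

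Two points to tighten. First, in (iii) your crossing lemma is unnecessary and slightly misstated: since \eqref{bg6} attaches a single outgoing arrow to each vertex, two Busemann geodesics that ever share a vertex coincide from that vertex on, so the forest structure is immediate without any appeal to uniqueness of finite geodesics; the ``two distinct point-to-point geodesics'' contradiction is not the right picture because the paths in question are semi-infinite, not between two common endpoints. Second, your argument for (ii) contains a circularity: you invoke ``the fact that all semi-infinite geodesics directed into $\uvec$ eventually flow into $\bgeodrx$,'' which is precisely the uniqueness being proved. The clean repair uses the strength of \eqref{B-Glim} as stated in Theorem~\ref{B-th-zero}(iii): the full-probability event $\Omega_0^{(\rho)}$ is fixed first and the limit then holds for \emph{every} sequence $u_n$ with $u_n/n\to\uvec$, in particular for the random sequence $x_n$ along your arbitrary $\uvec$-directed geodesic. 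This gives $G_{x_n,x_0-\evec_1}-G_{x_n,x_0-\evec_2}\to\Bus^\rho_{x_0-\evec_2,x_0}-\Bus^\rho_{x_0-\evec_1,x_0}$ directly, and the identification $x_1=\bgeodr{x_0}_1$ follows from the a.s.\ distinctness of the two nearest-neighbor Busemann increments at every lattice site; then iterate. In particular, with this fix (ii) does not actually need (iii) as an input.
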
 

It is clear from the construction \eqref{bg6} that once $\bgeodrx$ and $\bgeodr{y}$    come together, they stay together.  
We call $z^{\rho(\uvec)}(x,y)$ the {\it coalescence point} of the unique  $\uvec$-directed semi-infinite geodesics from $x$ and $y$.   The Busemann function satisfies 
\be\label{B-G15}   \Bus^\rho_{x,y}=G_{z^\rho(x,y), y}- G_{z^\rho(x,y), x}\quad\text{a.s.} 
\ee
It is important to note that parts (ii) and (iii)  of Theorem \ref{geod-thm-1} are true with probability one only  for a given $\uvec$ and not simultaneously for all directions.  

  Theorem \ref{geod-thm-1}(i) follows from an ergodic theorem for Busemann functions and the shape theorem \ref{sh-thm} (see for example Theorem 4.3 in \cite{geor-rass-sepp-17-geod}). 
   Theorem \ref{geod-thm-1}(ii)-(iii) were established for the exponential CGM in  \cite{coup-11,ferr-pime-05}.   Article \cite{sepp-arxiv-18} gives an alternative derivation of  Theorem \ref{geod-thm-1} based on the properties of the stationary exponential CGM. Versions of  Theorem \ref{geod-thm-1}   for the CGM with general weights appear  in \cite{geor-rass-sepp-17-geod}. 
   % The proofs rest on Newman's approach to studying geodesics that can be applied due to the known curvature of the limit shape. 
 %   An alternative proof  is in  Theorems 2.1 and 2.4 of \cite{geor-rass-sepp-17-geod}.  

\begin{remark}[Competition interface]   \label{r:cif}     The {\it geodesic tree} emanating from $x$ consists of all the geodesics between $x$ and points $y\in x+\Z^2_{\le0}$ south and west of $x$.    The semi-infinite geodesics $\bgeodrx$ are infinite rays in this tree.    Every geodesic to $x$ comes through either $x-\evec_1$ or $x-\evec_2$.  This dichotomy splits the tree into two subtrees.  Between the two subtrees lies a unique path $\{\varphi^x_n\}_{n\in\Z_{\ge0}}$  on the dual lattice $(\tfrac12,\tfrac12)+\Z^2$ that starts at $\varphi^x_0=x- (\tfrac12,\tfrac12)$. $\varphi^x_n$ is a.s.\ uniquely defined as the point in $x- (\tfrac12,\tfrac12)+\Z^2_{\le0}$ that satisfies   $\abs{x-\varphi^x_n}_1=n+1$ and 
\[   G_{\varphi^x_n+(-\frac12, \frac12), \, x-\evec_1}-  G_{\varphi^x_n+(-\frac12, \frac12), \, x-\evec_2}>0>G_{\varphi^x_n+(\frac12, -\frac12), \, x-\evec_1}-  G_{\varphi^x_n+(\frac12, -\frac12), \, x-\evec_2}. \] 
(Use the convention $G_{x,y}=-\infty$ if $x\le y$ fails.)   The competition interface has a random asymptotic direction:  
\be\label{cif-lln}   \lim_{n\to\infty}  \frac{\varphi^x_n}{n}=\uvec(\rho^*(x)) \quad \text{ almost surely} \ee 
where the limit is described in \eqref{u-rho} and Lemma \ref{B-reco-lm}.  
This was first proved in \cite{ferr-pime-05}. The limit came from the study of geodesics with Newman's approach.  Identification of the limit  came via a mapping of $\varphi^x$  to a second class particle in the rarefaction fan of  TASEP whose limit had been identified in \cite{ferr-kipn-95}.   An alternative proof that relies on the stationary LPP processes was given in \cite{geor-rass-sepp-17-geod}. 
\qedrm\end{remark}  

 \medskip   

%\section{Multiclass last-passage percolation} 

%\subsection{Last-passage percolation and queues} 
\subsection{Queues} 
\label{s:queue}

We begin with a standard  formulation of  a  queue that obeys  FIFO (first-in-first-out) discipline.  This treatment   goes back to classic works of Lindley \cite{lind-52} and Loynes \cite{loyn-62}.  Modern  references that connect queues with LPP include   \cite{glyn-whit, bacc-boro-mair,drai-mair-ocon}.    

The inputs are   two bi-infinite sequences: the {\it arrival process}   $I=(I_k)_{k\in\Z}$ and the {\it service process} $\w=(\w_j)_{j\in\Z}$ in $\R_{\ge0}^\Z$. They are assumed to  satisfy 
\begin{equation}\label{Iw}
\lim_{m\to-\infty}  \sum_{i=m}^0 (\w_i-I_{i+1})=-\infty.
\end{equation}
The interpretation is that $I_j$ is the time between the arrivals of customers $j-1$ and $j$ and    $\w_j$ is the service time of customer $j$.  From these inputs   three outputs  $\wt I=(\wt I_k)_{k\in\Z}$, $J=(J_k)_{k\in\Z}$ and $\wt\w=(\wt\w_k)_{k\in\Z}$,   also elements of $\R_{\ge0}^\Z$,  are constructed  as follows. 

Let $G=(G_k)_{k\in\Z}$ be any function on $\Z$ that satisfies $I_k=G_k-G_{k-1}$.   Define the sequence  $\wt G=(\wt G_\ell)_{\ell\in\Z}$ by 
\be\label{m:800}
\wt G_\ell=\sup_{k:\,k\le \ell}  \Bigl\{  G_k+\sum_{i=k}^\ell \w_i\Bigr\},  \quad \ell\in\Z. 
 \ee
 Under assumption \eqref{Iw} the supremum in \eqref{m:800} is assumed at some finite $k$. 
 The {\it interdeparture time} between   customers $\ell-1$ and $\ell$ is defined by 
\be\label{m:801}  \wt I_\ell =  \wt G_\ell - \wt G_{\ell-1}  \ee 
and the  sequence  $\wt I=(\wt I_k)_{k\in\Z}$  is the {\it departure process}.  
          The {\it sojourn time} $J_k$ of customer $k$ is defined by 
  \be\label{m:J} J_k=\wt G_k -  G_k, \quad k\in\Z.     \ee
   The third output %  is    $\wt \w=(\wt\w_k)_{k\in\Z}$ defined by 
\be\label{m:R}   \wt \w_k=I_k\wedge J_{k-1},   % (\wt G_{k-1}-G_{k-1}), 
\quad k\in\Z,  \ee  
is the amount of time customer $k-1$ spends as the last customer in the queue.

  $\wt I$, $J$ and $\wt\w$ are well-defined nonnegative real sequences,  and they do not  depend  on the choice of the function $G$ as long as $G$  has  increments  $I_k=G_k-G_{k-1}$.  
The  three mappings are denoted by  % the departure process $\wt I=\Dop(I,\w)$, the sojourn times $J=\Sop(I,\w)$ and a sequence $\wt \w=\Rop(I,\w)$. 
 \be\label{m:DSR}   \wt I=\Dop(I,\w),  \quad J=\Sop(I,\w), \quad\text{and} \quad \wt \w=\Rop(I,\w). \ee
  The queueing story is good  for imbuing the mathematics with meaning, but it is not necessary for the sequel.  
 
     From
 \be\label{m:806} \wt G_k=\w_k+  G_k\vee \wt G_{k-1} \ee 
follow the useful  iterative equations
  \be\label{m:IJ5}  
 \wt I_k=\w_k+(I_k-J_{k-1})^+
 \quad\text{and}\quad 
 J_k=\w_k+(J_{k-1}-I_k)^+ .  
  \ee
The difference of the two equations above gives a ``conservation law'' 
\be\label{m:cons}  I_k+J_k=J_{k-1}+\wt I_k. \ee

We  extend the queueing operator $\Dop$  to  mappings   $\Dop^{(n)}: (\R_{\ge0}^{\Z})^n \to \R_{\ge0}^{\Z}$  of multiple sequences into a single  sequence.  Let $\zeta,\zeta^1,\zeta^2,\dotsc$ denote elements of $\R_{\ge0}^{\Z}$.  Then, as long as  the actions below are well-defined, let    
	\be\label{m:D67}  \begin{aligned}
		\Dop^{(1)}(\zeta)&=\Dop(\zeta,0)=\zeta, \\  
		\Dop^{(2)}(\zeta^1, \zeta^2) &= \Dop\bigl(\Dop^{(1)}(\zeta^1), \zeta^2\bigr)= \Dop(\zeta^1, \zeta^2),\\   
		\Dop^{(3)}(\zeta^1, \zeta^2, \zeta^3) &= \Dop\bigl(\Dop^{(2)}(\zeta^1,\zeta^{2}), \zeta^3\bigr) = \Dop\bigl(\Dop(\zeta^1,\zeta^{2}), \zeta^3\bigr),  \\
		\text{and in general}\quad   \Dop^{(n)}(\zeta^1, \zeta^2,\dotsc,  \zeta^n) &= \Dop\bigl(\Dop^{(n-1)}(\zeta^1,\dotsc, \zeta^{n-1}), \zeta^n\bigr)  \quad \text{ for $n\ge 2$.}   
	\end{aligned} \ee
	% in-line version:    $\Dop^{(1)}(\zeta)=\zeta$,    $\Dop^{(2)}(\zeta^1, \zeta^2) = \Dop(\Dop^{(1)}(\zeta^1), \zeta^2)= \Dop(\zeta^1, \zeta^2)$, and for $i\ge 3$,    $\Dop^{(i)}(\zeta^1, \zeta^2,\dotsc,  \zeta^i) = \Dop(\Dop^{(i-1)}(\zeta^1,\dotsc, \zeta^{i-1}), \zeta^i)$.  
In queueing terms,  $ \Dop^{(n)}(\zeta^1, \zeta^2,\dotsc,  \zeta^n)$ is the departure  process that results from feeding arrival process $\zeta^1$ through a series of $n-1$  service stations.  For  $i=2,3,\dotsc,n$,  $\zeta^i$ is the service process at station $i$.    Departures from station $i$ are the arrivals at station $i+1$.  The final output is the departure  process from the last station whose service process is $\zeta^n$.  

We record some inequalities which are to be understood  coordinatewise:   
for example,  $I'\ge I$ means  that $I'_k\ge I_k$ $\forall k\in\Z$.  

\begin{lemma} 	\label{Dop-lm1}
	Assuming that the mappings below are well-defined,  we have the following inequalities. 
	\be\label{m:D6}   \Dop(I,\w) \ge \w.    %\wt I_k \ge \w_k \quad \forall k\in\Z 
	\ee
	\be\label{m:D8}   \text{ If    $I'\ge I$  then  $\Dop(I',\w)\ge \Dop(I,\w)$. }\ee
		\be\label{m:D6.4}  \text{For $n\ge 2$, }\   \Dop^{(n)}(\zeta^1, \zeta^2, \zeta^3,\dotsc,  \zeta^n) \ge   \Dop^{(n-1)}(\zeta^2, \zeta^3,\dotsc,  \zeta^n) .  
	\ee
\end{lemma}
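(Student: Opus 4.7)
My plan is to prove the three inequalities in the order stated, reducing \eqref{m:D6.4} to \eqref{m:D6} and \eqref{m:D8} via a short induction on $n$. For \eqref{m:D6}, the one-step recursion \eqref{m:806} gives $\wt G_k = \w_k + G_k \vee \wt G_{k-1} \ge \w_k + \wt G_{k-1}$, so taking first differences yields $\wt I_k = \wt G_k - \wt G_{k-1} \ge \w_k$ coordinatewise. Equivalently, the first identity in \eqref{m:IJ5} reads $\wt I_k = \w_k + (I_k - J_{k-1})^+ \ge \w_k$.

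The heart of the argument is \eqref{m:D8}, which I would deduce from the monotonicity of the sojourn process $J$ in the arrival process $I$. Combining \eqref{m:800} with \eqref{m:J} and substituting $G_k - G_\ell = -\sum_{i=k+1}^\ell I_i$, I obtain the explicit representation
\[
J_\ell \;=\; \sup_{m\,\le\, \ell}\biggl\{\w_m + \sum_{i=m+1}^{\ell} (\w_i - I_i)\biggr\},
\]
in which the supremum is attained at some finite index thanks to \eqref{Iw}. Each term is nonincreasing in each $I_i$, so $I' \ge I$ forces $J' \le J$ coordinatewise, with the supremum remaining finite since enlarging $I$ only lowers the objective. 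Feeding this into the first identity of \eqref{m:IJ5}, whose right-hand side $\w_k + (I_k - J_{k-1})^+$ is nondecreasing in $I_k$ and nonincreasing in $J_{k-1}$, yields $\wt I'_k \ge \wt I_k$ for every $k$, which is \eqref{m:D8}.

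Finally, for \eqref{m:D6.4} I would induct on $n \ge 2$. The base case $n=2$ is exactly \eqref{m:D6} applied to $(I,\w)=(\zeta^1,\zeta^2)$: $\Dop^{(2)}(\zeta^1,\zeta^2) = \Dop(\zeta^1,\zeta^2) \ge \zeta^2 = \Dop^{(1)}(\zeta^2)$. For the inductive step, the hypothesis $\Dop^{(n-1)}(\zeta^1,\dotsc,\zeta^{n-1}) \ge \Dop^{(n-2)}(\zeta^2,\dotsc,\zeta^{n-1})$ combined with the monotonicity \eqref{m:D8} in the first argument of $\Dop$ and the definition \eqref{m:D67} gives
\[
\Dop^{(n)}(\zeta^1,\dotsc,\zeta^n) = \Dop\bigl(\Dop^{(n-1)}(\zeta^1,\dotsc,\zeta^{n-1}),\,\zeta^n\bigr) \ge \Dop\bigl(\Dop^{(n-2)}(\zeta^2,\dotsc,\zeta^{n-1}),\,\zeta^n\bigr) = \Dop^{(n-1)}(\zeta^2,\dotsc,\zeta^n).
\]
The only genuinely substantive point is the monotonicity $I' \ge I \Rightarrow J' \le J$ used in \eqref{m:D8}; everything else is direct manipulation of the identities \eqref{m:800}--\eqref{m:cons} already recorded in Section \ref{s:queue}.
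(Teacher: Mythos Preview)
Your proof is correct and follows essentially the same route as the paper: \eqref{m:D6} from \eqref{m:IJ5}, \eqref{m:D8} via the antitone dependence $I'\ge I\Rightarrow J'\le J$ read off from the explicit formula $J_\ell=\sup_{m\le\ell}\{G_m-G_\ell+\sum_{i=m}^\ell\w_i\}$ and then fed into \eqref{m:IJ5}, and \eqref{m:D6.4} by induction on $n$ with base case \eqref{m:D6} and inductive step \eqref{m:D8}. The only difference is cosmetic: you expand $G_m-G_\ell$ as $-\sum_{i=m+1}^\ell I_i$ before invoking monotonicity, whereas the paper leaves it in increment form.
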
 

\begin{proof}
  The first part of \eqref{m:IJ5}  implies \eqref{m:D6}.    For  \eqref{m:D8} observe that  
 \begin{align*}
J_k=  \sup_{j:\,j\le k}  \Bigl\{  G_j - G_k +\sum_{i=j}^k \w_i\Bigr\}  
\ge   \sup_{j:\,j\le k}  \Bigl\{  G'_j - G'_k +\sum_{i=j}^k \w_i\Bigr\} =J'_k.  
 \end{align*}
Now \eqref{m:IJ5} gives $\wt I'_k\ge \wt I_k$.  
 
 Inequality \eqref{m:D6.4} comes by induction on $n$. The case $n=2$ is \eqref{m:D6}. Then, by induction and \eqref{m:D8},  
\begin{align*}
 \Dop^{(n)}(\zeta^1, \dotsc,  \zeta^n) &= \Dop\bigl(\Dop^{(n-1)}(\zeta^1,\dotsc, \zeta^{n-1}), \zeta^n\bigr)  \ge  \Dop\bigl(\Dop^{(n-2)}(\zeta^2,\dotsc, \zeta^{n-1}), \zeta^n\bigr) \\
 &=\Dop^{(n-1)}(\zeta^2, \dotsc,  \zeta^n). 
\qedhere \end{align*}
\end{proof}

We record the most basic fact about M/M/1 queues.  The following notation will be used in the sequel.  Let  $\lambda=(\lambda_1,\dotsc,\lambda_n)\in(0,\infty)^n$  be an $n$-tuple of positive reals.   Let    $\zeta=(\zeta^1, \dotsc,\zeta^n)\in(\R_{\ge0}^\Z)^n$ with $\zeta^i=(\zeta^i_k)_{k\in\Z}$  denote  an $n$-tuple  of nonnegative bi-infinite random sequences.  Then $\zeta$  has distribution $\nu^\lambda$ if all the coordinates $\zeta^i_k$ are mutually independent with marginal distributions $\zeta^i_k\sim$ Exp$(\lambda_i^{-1})$. In other words, $\zeta^i$ is a sequence of i.i.d.\ mean $\lambda_i$ exponential variables, and the sequences are independent.  

\begin{lemma}\label{Dop-lm4} Let $n\ge 2$ and let $\lambda=(\lambda_1,\dotsc,\lambda_n)$ satisfy   $\lambda_1>\dotsm>\lambda_n>0$.  Let $\zeta$ have distribution $\nu^\lambda$.   Then $\Dop^{(n)}(\zeta^1,\dotsc,  \zeta^n)$ has distribution $\nu^{\lambda_1}$, in other words,  $\Dop^{(n)}(\zeta^1,\dotsc,  \zeta^n)$ is a sequence of i.i.d.\ mean $\lambda_1$ exponential random variables.  
\end{lemma}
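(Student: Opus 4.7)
The plan is to induct on $n$, reducing everything to the $n=2$ case, which is the classical Burke/Anantharam invariance theorem for the stationary M/M/1 queue acknowledged in the introduction.

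For the base case $n=2$, I would first check that the stability hypothesis \eqref{Iw} holds almost surely: by the strong law of large numbers applied to the i.i.d. sequence $\w_i - I_{i+1}$, whose mean is $\lambda_2 - \lambda_1 < 0$, the partial sums diverge to $-\infty$. I would then invoke (or prove from scratch) the fact that an i.i.d. Exp$(\lambda_1^{-1})$ arrival stream fed through a server with i.i.d. Exp$(\lambda_2^{-1})$ service times, $\lambda_2 < \lambda_1$, yields an i.i.d. Exp$(\lambda_1^{-1})$ departure stream $\Dop(\zeta^1,\zeta^2)$. A clean self-contained route is by reversibility: the M/M/1 queue-length process is a birth-death chain reversible in its geometric stationary distribution; under time reversal arrivals and departures are swapped, forcing the two Poisson processes to share a rate. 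I expect the paper's Appendix \ref{a:queue} to carry this calculation.

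For the inductive step, assume the claim for $n-1$ and let $\xi := \Dop^{(n-1)}(\zeta^1,\dots,\zeta^{n-1})$. Because $\xi$ is a Borel function of $(\zeta^1,\dots,\zeta^{n-1})$ alone, it is independent of $\zeta^n$, and by the induction hypothesis $\xi \sim \nu^{\lambda_1}$. Thus $(\xi,\zeta^n)$ has product law $\nu^{(\lambda_1,\lambda_n)}$ with $\lambda_1 > \lambda_n > 0$. The recursive definition \eqref{m:D67} gives
\[
\Dop^{(n)}(\zeta^1,\dots,\zeta^n) = \Dop(\xi,\zeta^n),
\]
and the base case applied to the pair $(\xi,\zeta^n)$ delivers $\Dop^{(n)}(\zeta^1,\dots,\zeta^n) \sim \nu^{\lambda_1}$.

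The only real obstacle is the base case; the induction is a soft iteration resting on the tower structure of \eqref{m:D67}. Two subtle points in the induction are worth flagging, though neither is serious: one must verify measurability of $\xi$ as a function of finitely many coordinates (immediate from \eqref{m:800}--\eqref{m:801}) to conclude independence from $\zeta^n$, and one needs only the marginal law of $\xi$ to invoke the $n=2$ statement — joint independence of $\xi$ from the consumed service sequences is not required, and indeed would fail.
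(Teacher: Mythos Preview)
Your proposal is correct and matches the paper's approach exactly: the paper cites Lemma~\ref{m:Lem-I} for the $n=2$ case and then says ``the general case follows by induction on $n$,'' which is precisely the argument you spell out. Your added remarks on verifying the stability condition \eqref{Iw} and on the independence of $\xi$ from $\zeta^n$ are the natural details the paper leaves implicit.
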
 

\begin{proof} The case $n=2$ is in Lemma \ref{m:Lem-I}.  The general case follows by induction on $n$.  
\end{proof}

\section{Joint distribution of the Busemann functions}\label{s:Bus}

This section contains the main results on 
  the joint distribution of the Busemann process $\Bus^{\rcbullet}=\{\Bus^\rho: 1<\rho<\infty\}$ defined in Theorem \ref{B-th-zero}.   Proofs are  in Section \ref{s:Bpf}.    The  distribution of the $n$-tuple  $\{(\Bus^{\rho_1}_{x-\evec_1,x} , \dotsc, \Bus^{\rho_n}_{x-\evec_1,x})\}_{x\cdot\evec_2=t}$ on a given lattice level $t\in\Z$ comes through a mapping of a product of  exponential distributions.  This mapping is developed next. 
  
 \subsection{Coupled exponential distributions}    Fix $n\in\Z_{>0}$ for the moment and 
define the following two  spaces of $n$-tuples of nonnegative real sequences. 
  %    For $n\in\Z_{>0}$ and a real parameter  $\paramb\ge0$ we define three spaces of $n$-tuples of nonnegative real sequences and natural  probability measures on them.   
      The sequences themselves are denoted by $I^i=(I^i_k)_{k\in\Z}$ and $\eta^i=(\eta^i_k)_{k\in\Z}$ for $i\in[n]$. 
  %\be\label{m:YYth} %\begin{aligned}   
% \cA_{n}=\Bigl\{I=(I^1, I^2, \dotsc, I^n)\in (\R_{\ge0}^\Z)^n :  \,\forall\, i\in[n] , \   \lim_{m\to-\infty} \frac1{\abs m}\sum_{k=-m}^0 I^i_k> \paramb\; \Bigr\} . 
%%\end{aligned}
% \ee
% The existence of the Ces\`aro limits as $m\to-\infty$  is part of the definition.   The next space is the subspace where the Ces\`aro limits are strictly ordered:
 \be\label{m:aYYth} \begin{aligned}   
 \caY_{n}=\Bigl\{  & I=(I^1, I^2, \dotsc, I^n)\in (\R_{\ge0}^\Z)^n : \\
 & \,\forall\, i\in\lzb 2,n\rzb , \   \lim_{m\to-\infty} \frac1{\abs m}\sum_{k=-m}^0 I^{i}_k\;>\,\lim_{m\to-\infty} \frac1{\abs m}\sum_{k=-m}^0 I^{i-1}_k> 0 \; \Bigr\} . 
\end{aligned}
 \ee
 \be\label{m:XXth}\begin{aligned}    \cX_{n}=\Bigl\{\eta=(\eta^1, \eta^2, \dotsc, \eta^n)\in (\R_{\ge0}^\Z)^n &:  \eta^{i}\ge\eta^{i-1} \; \,\forall\, i\in\lzb 2,n\rzb, \ \text{ and}  \\
&\qquad\qquad \varliminf_{m\to-\infty} \frac1{\abs m}\sum_{k=-m}^0 \eta^1_k> 0\;\Bigr\}.  
\end{aligned} \ee
  The existence of the Ces\`aro limits as $m\to-\infty$  is part of the definitions.  
%  \[  \cA_{n}^{\nearrow}\]   %% this symbol was for comparison 
%The second  space is one  where the $n$-tuples themselves are ordered. 
$\caY_{n}$ and $\cX_{n}$ are Borel subsets of $(\R_{\ge0}^\Z)^n$ and thereby separable metric spaces in the product topology. We endow them with their Borel $\sigma$-algebras.  
 %We switched notation   from $I$ to $\eta$ because  different dynamics will be defined later on these spaces. 

%Parameter $\paramb$ becomes relevant in Section \ref{s:m-class} when dynamics   are introduced.  Presently we can take $\paramb=0$.  
% Note that if $I\in\cY_{0,n}$ then also $I\in\cA_{n}$

%\note{Could make $\cX$ a further subspace so that means forced to be strictly separated.} 

Define a mapping  $\Daop^{(n)}:  \caY_{n}\to  \cX_{n}$ in terms of the multiqueue mappings $\Dop^{(k)}$ of \eqref{m:D67} as follows: for  $I=(I^1, I^2, \dotsc, I^n)\in \caY_{n}$,  the image   $\eta=(\eta^1, \eta^2, \dotsc, \eta^n)=\Daop^{(n)}(I)$ is defined by 
	\be\label{m:eta5}   \eta^i=\Dop^{(i)}(I^i, I^{i-1},  \dotsc, I^1) \quad \text{for} \ \  i=1,\dotsc, n.
	\ee
In particular, the first sequence is simply copied over: $\eta^1=I^1$.  Then $\eta^{2}  =\Dop(I^{2}, I^1)$,  $ \eta^3=\Dop^{(3)}(I^3,I^2,I^1)=\Dop(\Dop(I^3,I^2),I^1)$, and so on.  Iterated application of Lemma \ref{lm-D14} from  Appendix \ref{a:queue} together with the assumption $I \in \caY_{n}$ ensures that the mappings $\Dop^{(i)}(I^i, I^{i-1},  \dotsc, I^1)$ are well-defined.   Furthermore,   $\eta \in \cX_{n}$ follows from inequalities \eqref{m:D6} and \eqref{m:D6.4}.   
Lemma \ref{lm-D14}  implies also that  $\Daop^{(n)}$ maps  $\caY_{n}$ into itself.  We do not need this feature in the sequel, which is why we did not define   $ \cX_{n}$ as a subspace of $\caY_{n}$.  

%Next we define probability measures on the spaces $ \caY_{n}$ and $\cX_n$.  
Recall that
%, as   defined above Lemma \ref{Dop-lm4}, 
\be\label{nu5} \begin{aligned} 
&\text{for    $\rho=(\rho_1,\dotsc,\rho_n)\in (0,\infty)^n$,  $I=(I^1, I^2, \dotsc, I^n)$ has distribution   $\nu^\rho$ if}\\ 
&\text{all coordinates $I^i_k$ are independent and $I^i_k\sim$ Exp($\rho_i^{-1}$) for each $k\in\Z$ and  $i\in[n]$.}
\end{aligned} \ee
If $\rho$ satisfies   $0<\rho_1<\rho_2<\dotsm<\rho_n$  then  $\nu^\rho$   is supported on $\caY_{n}$.  For these $\rho$   define  the probability measure $\mu^\rho$ on $\cX_{n}$ as the image of $\nu^\rho$ under $\Daop^{(n)}$: 
\be\label{mu5} \begin{aligned} 
\text{for $\rho=(\rho_1,\rho_2,\dotsc, \rho_n)$ such that   $0<\rho_1<\rho_2<\dotsm<\rho_n$,  define   $\mu^\rho=\nu^\rho\circ(\Daop^{(n)})^{-1}$.}    
\end{aligned} \ee
   By Lemma \ref{Dop-lm4},   if $\eta$ has distribution $\mu^\rho$ with $0<\rho_1<\rho_2<\dotsm<\rho_n$, then  for each $i\in[n]$,  $\eta^i=(\eta^i_k)_{k\in\Z}$ is a sequence of i.i.d.\ mean $\rho_i$ exponential variables.    The mapping $\Daop^{(n)}$  couples  the variables $\eta^i_k$   together so that $ \eta^{i-1}_k\le\eta^{i}_k$ for all $i\in\lzb 2,n\rzb$ and  $k\in\Z$.
   
  Translations $\{\theta_\ell\}_{\ell\in\Z}$ act on  $n$-tuples  of sequences  by $(\theta_\ell\eta)^i_k=\eta^i_{k+\ell}$ for $i\in[n]$ and $k,\ell\in\Z$.  A {\it translation-ergodic} probability measure $Q$ on $\cX_{n}$ is invariant under $\{\theta_\ell\}$ and satisfies $Q(A)\in\{0,1\}$ for any Borel set $A\subset\cX_{n}$ that is invariant under $\{\theta_\ell\}$ (and similarly for any other sequence space).    
   
   \begin{theorem}\label{murho-th1}    The probability measures $\mu^\rho$ are translation-ergodic and  have the following  properties. 
   
  {\rm (Continuity.)}  The probability measure $\mu^\rho$ is weakly continuous as a function of $\rho$ on the set of vectors that satisfy $0<\rho_1<\rho_2<\dotsm<\rho_n$. 
  
 {\rm (Consistency.)}   If $(\eta^1, \dotsc, \eta^n)\sim\mu^{(\rho_1,\dotsc,\rho_n)}$, then  $(\eta^1, \dotsc,  \eta^{j-1}, \eta^{j+1},\dotsc, \eta^n)\sim\mu^{(\rho_1,\dotsc,  \rho_{j-1}, \rho_{j+1},\dotsc,  \rho_n)}$ for all $j\in[n]$. 
   
 \end{theorem}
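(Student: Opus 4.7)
My plan is to treat the three claims separately, relying on the defining formula $\mu^\rho = \nu^\rho \circ (\Daop^{(n)})^{-1}$ together with the queueing identities of Section \ref{s:queue}.

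For \emph{translation-ergodicity}, I would observe that $\nu^\rho$ is a product measure on $(\R_{\ge 0}^\Z)^n$ and hence ergodic under every non-trivial shift $\theta_\ell$, and that the queueing operator $\Dop$ commutes with $\theta_\ell$ because the representation \eqref{m:800}--\eqref{m:801} uses only shift-equivariant operations on $G$ and $\w$. Consequently $\Daop^{(n)}$ is shift-equivariant, and $\mu^\rho$, as a pushforward of a shift-ergodic measure under a shift-equivariant Borel map, is itself shift-ergodic.

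For \emph{continuity}, I would couple all the measures on one probability space: take i.i.d.\ $U^i_k \sim \mathrm{Exp}(1)$ and set $I^{\rho,i}_k = \rho_i U^i_k$, so that $I^\rho \sim \nu^\rho$ for every admissible $\rho$. For $\rho^{(m)}\to\rho$ with the strict inequalities preserved in the limit, one has $I^{\rho^{(m)}}\to I^\rho$ coordinatewise a.s., and the SLLN gives a uniform-in-$m$ gap for the Ces\`aro averages that appear in \eqref{m:aYYth} and enforce \eqref{Iw} at each stage. It then suffices to prove $\Daop^{(n)}(I^{\rho^{(m)}})_\ell \to \Daop^{(n)}(I^\rho)_\ell$ a.s.\ for each coordinate, which yields convergence of finite-dimensional distributions and hence weak convergence in the product topology. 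This reduces to continuity of a single $\Dop$: under \eqref{Iw} the supremum defining $\wt G_\ell$ in \eqref{m:800} is a.s.\ attained at a unique finite $k^\star(\ell)$ with a strictly negative drift outside a bounded window, and this window is stable under small uniform perturbations of the inputs, so $\wt G^{(m)}_\ell\to\wt G_\ell$ and therefore $\Dop(I^{(m)},\w^{(m)})_\ell\to\Dop(I,\w)_\ell$. Iterating this $n-1$ times up the ladder \eqref{m:D67} gives the required a.s.\ continuity of $\Daop^{(n)}$.

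\emph{Consistency} is the main obstacle. The easy direction is $j=n$: because $\eta^i=\Dop^{(i)}(I^i,\dots,I^1)$ does not depend on $I^n$ for $i\le n-1$, one has $(\eta^1,\dots,\eta^{n-1})=\Daop^{(n-1)}(I^1,\dots,I^{n-1})\sim\mu^{(\rho_1,\dots,\rho_{n-1})}$ by definition. The substantive case is removing a class in the interior, where no pointwise identity holds: on the left-hand side the component $\eta^i$ for $i>j$ passes through the extra server $I^j$ that is absent on the reduced right-hand side, even though both outputs have the same marginal by the Burke-type Lemma \ref{Dop-lm4}. My plan is to pull consistency back to a property of the multiclass dynamics developed later in the paper: $\mu^\rho$ is to be identified as the unique invariant distribution of the coupled multiclass LPP process, and the reduced-rate coupled process is the literal projection of the full one obtained by deleting the $j$-th class, so by uniqueness of the invariant law the projected marginal must coincide with $\mu^{(\rho_1,\dots,\rho_{j-1},\rho_{j+1},\dots,\rho_n)}$. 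The hardest single ingredient is the intertwining between the multiline process (whose invariant distribution is manifestly $\nu^\rho\circ(\Daop^{(n)})^{-1}$ by construction) and the coupled process (whose class-deletion structure is transparent from the dynamics); once that intertwining and the accompanying uniqueness of invariant law are established, consistency for $\mu^\rho$ follows as a short corollary.
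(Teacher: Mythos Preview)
Your proposal is correct and matches the paper's own approach on all three points: ergodicity via shift-equivariance of $\Daop^{(n)}$, continuity via the scaling coupling $I^{\rho,i}_k=\rho_i U^i_k$ followed by coordinatewise continuity of the queueing maps (the paper packages this as Lemma~\ref{lm-D33}), and consistency as a corollary of the uniqueness theorem for the coupled process (Theorem~\ref{m:thm-eta}), exactly as the paper itself indicates. The only superfluous detail is your claim that the maximizer $k^\star(\ell)$ in \eqref{m:800} is unique; only finiteness and stability under perturbation are needed.
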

 
 Continuity of $\rho\mapsto\mu^\rho$  is proved in Section \ref{s:Bpf}.  Translation-covariance of the queueing mappings ($\Dop(\theta_\ell I, \theta_\ell\w)=\theta_\ell\Dop(I,\w)$) implies that $\mu^\rho$ inherits the translation-ergodicity of $\nu^\rho$.     We  omit the proof of consistency.  Consistency  will be an  indirect consequence of the uniqueness of $\mu^\rho$ as the translation-ergodic invariant distribution of the so-called coupled process (Theorem \ref{m:thm-eta}). 
   
% \begin{remark}[Consistency of measures $\mu^\rho$]  It is true but not obvious that  if $(\eta^1, \dotsc, \eta^n)\sim\mu^{(\rho_1,\dotsc,\rho_n)}$, then  $(\eta^1, \dotsc,  \eta^{j-1}, \eta^{j+1},\dotsc, \eta^n)\sim\mu^{(\rho_1,\dotsc,  \rho_{j-1}, \rho_{j+1},\dotsc,  \rho_n)}$ for all $j\in[n]$.  We  omit the proof.  This property is an  indirect consequence of the uniqueness of $\mu^\rho$ as the translation-ergodic invariant distribution of the so-called coupled process (Theorem \ref{m:thm-eta}).   \qedrm \end{remark}  
 
%\medskip 
%\bigskip 

\subsection{Distribution of Busemann functions} 
Return to the Busemann functions $\Bus^{\rcbullet}$ defined in Theorem \ref{B-th-zero}.   For each level $t\in\Z$ define the   level-$t$ sequence  of weights  $\wb\Yw_t=(\Yw_{(k,t)})_{k\in\Z}$ and for a given $\rho\in(1,\infty)$, sequences of $\evec_1$ and $\evec_2$ Busemann variables at level $t$: 
\[  \Busv{\rho}{\evec_1}{t} =(\Bus^\rho_{(k-1,t),(k,t)})_{k\in\Z}
\quad\text{and}\quad 
\Busv{\rho}{\evec_2}{t} =(\Bus^\rho_{(k,t-1),(k,t)})_{k\in\Z}. \]

The next main result characterizes uniquely the distribution of the joint process $(\Yw, \Bus^{\rcbullet})$ of weights and Busemann functions.

\begin{theorem}\label{B-th1}  Let $Y=(Y_x)_{x\in\Z^2}$ be i.i.d.\ {\rm Exp(1)} variables  as in Section \ref{s:cgm}.   Let $1<\rho_1<\dotsm<\rho_n$.  Then at  each level  $t\in\Z$,  the $(n+1)$-tuple of sequences   $(\wb\Yw_t, \Busv{\rho_1}{\evec_1}{t}, \dotsc, \Busv{\rho_n}{\evec_1}{t})$ has distribution $\mu^{(1, \rho_1,\dotsc,\rho_n)}$. 

\end{theorem}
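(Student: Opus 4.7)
The plan is to identify the joint Busemann law at a fixed level as an invariant distribution of the coupled queueing dynamics of Section \ref{s:m-class}, and then to invoke the uniqueness result.

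First I would derive a one-step recursion in the vertical direction. Starting from the LPP induction \eqref{G-ind18} and the Busemann limit \eqref{B-Glim}, together with additivity \eqref{B-add}, a straightforward calculation yields for each $\rho \in (1,\infty)$ the identity
\[
\Busv{\rho}{\evec_1}{t} = \Dop\bigl(\Busv{\rho}{\evec_1}{t-1},\, \wb\Yw_t\bigr),
\]
which matches the queueing recursion \eqref{m:IJ5} with $\wb\Yw_t$ in the role of the service process and $\Busv{\rho}{\evec_2}{t-1}$ in the role of the sojourn $J$. Since $\Busv{\rho}{\evec_1}{t-1}$ is measurable with respect to weights at points $(j,s)$ with $s\le t-1$, it is independent of $\wb\Yw_t$. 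Applied simultaneously to $\rho_1<\dotsm<\rho_n$ with the common service sequence $\wb\Yw_t$, the recursion realizes the coupled process of Section \ref{s:m-class} acting on the vector of Busemann sequences.

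Let $\mu$ denote the law of $(\Busv{\rho_1}{\evec_1}{t}, \dotsc, \Busv{\rho_n}{\evec_1}{t})$. By Busemann stationarity in $t$ (Theorem \ref{B-th-zero}(ii)), $\mu$ is independent of $t$, so the recursion above certifies $\mu$ as an invariant distribution of the coupled process driven by $\nu^1$ services. The tuple lies in $\cX_n$ a.s.\ by the monotonicity \eqref{B-mono} and the ergodic theorem, has Exp$(\rho_i^{-1})$ marginals by Theorem \ref{B-th-zero}(ii), and is translation-ergodic under the spatial shift by Lemma \ref{B-lm-erg}. The uniqueness of the translation-ergodic invariant distribution (Theorem \ref{m:thm-eta}) then forces $\mu = \mu^{(\rho_1, \dotsc, \rho_n)}$.

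Finally, to adjoin the weights I apply the recursion once more. Using the independence of $\wb\Yw_t$ from the level-$(t-1)$ Busemann tuple together with the preceding identification of the latter as $\mu^{(\rho_1, \dotsc, \rho_n)}$, one gets
\[
\bigl(\wb\Yw_t,\, \Busv{\rho_1}{\evec_1}{t}, \dotsc, \Busv{\rho_n}{\evec_1}{t}\bigr) \deq \bigl(\omega,\, \Dop(\xi^1, \omega), \dotsc, \Dop(\xi^n, \omega)\bigr),
\]
with $\omega \sim \nu^1$ independent of $(\xi^1, \dotsc, \xi^n) \sim \mu^{(\rho_1, \dotsc, \rho_n)}$. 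Writing $\xi^i = \Dop^{(i)}(I^{i+1}, I^i, \dotsc, I^2)$ in terms of the primary independent exponential inputs $I^2, \dotsc, I^{n+1}$ realizing $\mu^{(\rho_1, \dotsc, \rho_n)}$, and setting $I^1=\omega$, the iterative definitions \eqref{m:D67} and \eqref{m:eta5} identify the right-hand side as $\mu^{(1, \rho_1, \dotsc, \rho_n)}$. The main obstacle is the uniqueness input in the middle step: the multiline-to-coupled intertwining behind Theorem \ref{m:thm-eta} is the substantive ingredient, and given that, what remains is the vertical LPP recursion above and routine bookkeeping with the construction of $\Daop^{(n+1)}$.
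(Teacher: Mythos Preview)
Your argument is correct and differs from the paper's in how the weight sequence $\wb\Yw_t$ is incorporated.  Both proofs rest on the queueing recursion of Lemma~\ref{B-lm3} and the uniqueness Theorem~\ref{m:thm-eta}.  The paper augments the parameter vector with an artificial $\rho_0\in(1,\rho_1)$, identifies the law of the $(n+1)$-tuple of Busemann sequences as $\mu^{(\rho_0,\rho_1,\dotsc,\rho_n)}$ via uniqueness, and then sends $\rho_0\searrow1$, using Lemma~\ref{B-reco-lm} to obtain $\Busv{\rho_0}{\evec_1}{t}\to\wb\Yw_t$ and the weak continuity of $\rho\mapsto\mu^\rho$ (Theorem~\ref{murho-th1}) for the limit on the other side.

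You instead apply uniqueness only to the $n$-tuple $(\Busv{\rho_1}{\evec_1}{t},\dotsc,\Busv{\rho_n}{\evec_1}{t})$ and then adjoin $\wb\Yw_t$ directly by running the recursion once more and reading off the definition of $\Daop^{(n+1)}$: since the first output coordinate of $\Daop^{(n+1)}$ is just its first input, and $\Dop^{(i+1)}(I^i,\dotsc,I^1,\omega)=\Dop\bigl(\Dop^{(i)}(I^i,\dotsc,I^1),\omega\bigr)$ by \eqref{m:D67}, the vector $(\omega,\Dop(\xi^1,\omega),\dotsc,\Dop(\xi^n,\omega))$, with $\omega\sim\nu^1$ independent of the $\nu^{(\rho_1,\dotsc,\rho_n)}$-inputs realizing $\xi$, is exactly $\Daop^{(n+1)}$ applied to a $\nu^{(1,\rho_1,\dotsc,\rho_n)}$-sample.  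This is a clean shortcut that bypasses both the continuity of $\mu^\rho$ and the limit in Lemma~\ref{B-reco-lm}.  Two minor remarks: the sojourn sequence in the recursion is $\Busv{\rho}{\evec_2}{t}$ (not level $t-1$; cf.\ the proof of Lemma~\ref{B-lm3}), and the phrase ``a straightforward calculation yields'' for that lemma undersells the work needed to rule out the nonstationary solutions of Lindley's recursion (Lemma~\ref{lind-lm3}).
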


Once the   process $ \{\Busv{\rho}{\evec_1}{t-1}\}_{1<\rho<\infty}$ 
%$(\wb\Yw_t,  \{\Busv{\rho}{\evec_1}{t}\}_{1<\rho<\infty})$ 
on a single level $t-1$ is given, the variables $\Bus^\rho_{x-\evec_i,x}$  at higher  levels $t, t+1, t+2, \dotsc$  can be deduced by drawing independent weights  $\wb\Yw_{t}, \wb\Yw_{t+1}, \dotsc$ and by applying queueing mappings.  By stationarity the full distribution will then have  been determined.  The next lemma describes the  single step of computing  the  $\evec_1$ and $\evec_2$ Busemann increments on  level $t$    from the process 
 $ \{\Busv{\rho}{\evec_1}{t-1}\}_{1<\rho<\infty}$   and independent level-$t$ weights $\wb\Yw_t$.  The mappings $\Dop$ and $\Sop$ were  specified in \eqref{m:DSR}.

\begin{lemma} \label{B-lm3}   There exists an event of full probability on which  
%For each $\rho\in(1,\infty)$ and  $t\in\Z$,   
\[ \Busv{\rho}{\evec_1}{t}=\Dop\bigl(\,\Busv{\rho}{\evec_1}{t-1}, \wb\Yw_t\bigr)\quad\text{and}\quad  \Busv{\rho}{\evec_2}{t}=\Sop\bigl(\,\Busv{\rho}{\evec_1}{t-1}, \wb\Yw_t\bigr)\quad
\text{for all $\rho\in(1,\infty)$ and  $t\in\Z$.} 
% \text{almost surely.} 
 \] 
%$\Busv{\rho}{\evec_1}{t}=\Dop(\Busv{\rho}{\evec_1}{t-1}, \wb\w_t)$ and  $\Busv{\rho}{\evec_2}{t}=\Sop(\Busv{\rho}{\evec_1}{t-1}, \wb\w_t)$.  
\end{lemma}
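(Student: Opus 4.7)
The plan is to derive, from the axiomatic properties of the Busemann functions in Theorem \ref{B-th-zero}, the coordinatewise queueing recursion \eqref{m:IJ5} for the level-$t$ Busemann increments in terms of the level-$(t-1)$ increments and the level-$t$ weights, and then identify that recursion with the outputs of $\Dop$ and $\Sop$ via a Loynes-type construction. Fix $\rho\in(1,\infty)$ and $t\in\Z$ for the moment and use the shorthand $I_k=\Bus^\rho_{(k-1,t-1),(k,t-1)}$, $\eta_k=\Bus^\rho_{(k-1,t),(k,t)}$, $J_k=\Bus^\rho_{(k,t-1),(k,t)}$ and $\omega_k=\Yw_{(k,t)}$. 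Additivity \eqref{B-add} applied to the two two-step up-right paths from $(k-1,t-1)$ to $(k,t)$ produces the conservation identity $I_k+J_k=J_{k-1}+\eta_k$, while weights recovery \eqref{B-reco} at $x=(k,t)$ yields $\omega_k=\eta_k\wedge J_k$. Solving these two relations in the cases $I_k\le J_{k-1}$ and $I_k>J_{k-1}$ separately produces exactly the pair of equations \eqref{m:IJ5}:
\[
\eta_k=\omega_k+(I_k-J_{k-1})^+,\qquad J_k=\omega_k+(J_{k-1}-I_k)^+.
\]

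To match $\eta$ with $\Dop(I,\omega)$ and $J$ with $\Sop(I,\omega)$, verify the Loynes stability hypothesis \eqref{Iw} for the pair $(I,\omega)$. By Lemma \ref{B-lm-erg} the joint process $\{(I_k,\omega_k)\}_{k\in\Z}$ is stationary and ergodic under the shift $\theta_{\evec_1}$, and the exponential marginals \eqref{B-distr} together with $E[\omega_k]=1$ give $E[\omega_k-I_{k+1}]=1-\rho<0$. Birkhoff's theorem therefore supplies \eqref{Iw}. A standard Loynes-type argument then identifies $(\eta,J)=(\Dop(I,\omega),\Sop(I,\omega))$: once \eqref{Iw} holds, the supremum in \eqref{m:800} is attained at a finite index and the iteration \eqref{m:806} has a unique nonnegative solution, which must coincide with the pair produced by \eqref{m:IJ5}.

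To obtain a single full-probability event on which the conclusion holds for all $\rho\in(1,\infty)$ and all $t\in\Z$ simultaneously, observe first that Step 1 only invokes \eqref{B-add} and \eqref{B-reco}, and by Theorem \ref{B-th-zero}(i) these hold simultaneously for every $\rho$ and every pair of lattice points on one event $\Omega_0$ of full probability. For each rational $\rho$ and each $t\in\Z$, Step 2 supplies one additional ergodic-theorem event; the countable intersection over rational $\rho$ and integer $t$ remains of full probability. To upgrade \eqref{Iw} from rational to arbitrary $\rho$, use the monotonicity \eqref{B-mono}: the partial sums $\sum_{i=m}^0 I^\rho_{i+1}$ are nondecreasing in $\rho$, and their Cesaro averages converge to $\rho$ for every rational $\rho$; sandwiching between rational bounds shows convergence to $\rho$ for every real $\rho\in(1,\infty)$. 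The identification from Step 2 then applies uniformly in $\rho$ on this common event.

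The most delicate point is the final uniform-in-$\rho$ extension: ensuring that \eqref{Iw} can be verified for every $\rho$ without enlarging the exceptional null set. The monotonicity \eqref{B-mono} is exactly the right tool to transfer the ergodic conclusion from a countable dense set to the full parameter interval. Once this is in place, the recursion derivation of Step 1 and the Loynes identification of Step 2 are pointwise in $\rho$ and require no further regularity.
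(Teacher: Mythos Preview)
Your derivation of the recursion \eqref{m:IJ5} from additivity and weights recovery is correct and matches the paper, as does your verification of the Loynes condition \eqref{Iw} via ergodicity and its extension to all $\rho$ via monotonicity.

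The gap is in the sentence ``the iteration \eqref{m:806} has a unique nonnegative solution, which must coincide with the pair produced by \eqref{m:IJ5}.'' This is false: Lindley's recursion $J_k=\w_k+(J_{k-1}-I_k)^+$ (equivalently \eqref{m:806} rewritten for $J$) does \emph{not} have a unique nonnegative bi-infinite solution under \eqref{Iw} alone. The Loynes solution \eqref{m:800} is only the \emph{minimal} one; the Remark following Lemma~\ref{lind-lm3} in Appendix~\ref{a:queue} explicitly constructs another solution that blows up as $k\to-\infty$. So from the recursion and \eqref{Iw} you cannot yet conclude that your Busemann $J$ equals $\Sop(I,\w)$.

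The paper closes this gap via Lemma~\ref{lind-lm} (or its stationary version Lemma~\ref{lind-lm3}): one needs the extra hypothesis \eqref{lind-a3} that $J_k=\w_k$ for infinitely many $k<0$. This is obtained from the stationarity of $\{(J_k,\w_k)\}_{k\in\Z}$, which you already have from Lemma~\ref{B-lm-erg} but did not use for this purpose. Once \eqref{lind-a3} is established for a countable dense set of $\rho$, the extension to all $\rho$ again uses monotonicity: if $J^{\rho_0}_k=\w_k$ for rational $\rho_0<\rho$, then $J^\rho_k\le J^{\rho_0}_k=\w_k$ by \eqref{B-mono}, while $J^\rho_k\ge\w_k$ by recovery \eqref{B-reco}, so $J^\rho_k=\w_k$ as well. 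With this additional ingredient your argument would be complete and essentially identical to the paper's.
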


The remainder of this section describes some distributional properties of $\Bus^\rcbullet$ restricted to horizontal edges and lines on $\Z^2$.  
The corresponding statements for vertical edges and lines are obtained by replacing $\rho$ with $\rho/(\rho-1)$.  This is due to  the distributional equality $\{\Bus^\rho_{x-\evec_2,x}\}_{x\in\Z^2}\deq  \{\Bus^{\rho/(\rho-1)}_{Rx-\evec_1,Rx}\}_{x\in\Z^2}$ where $R(x_1,x_2)=(x_2,x_1)$. This follows  from \eqref{u-rho}  and  the limits  \eqref{B-Glim},   by reflecting the lattice across the diagonal.  

\subsection{Marginal distribution on a single edge} 
%The first item is   the marginal distribution on a single edge of the lattice.  
Lemma \ref{B-reco-lm} implies that for a fixed horizontal edge $(x-\evec_1,x)$ we can extend $\{\Bus^\rho_{x-\evec_1,x}:1< \rho<\infty\}$ to a cadlag process  $\Bus^\rcbullet_{x-\evec_1,x}=\{\Bus^\rho_{x-\evec_1,x}:1\le \rho<\infty\}$ by setting $\Bus^1_{x-\evec_1,x}=\Yw_x$.   We  describe  the distribution of this process in terms of  a marked point process.  
Figure \ref{f:B} illustrates a sample path of this process. 

 \begin{figure}[t] 
	\begin{tikzpicture}[>=latex,scale=0.6]
	\draw[->] (0,0)--(10.5,0);
	\draw[->] (0,0)--(0,5.3);
	\draw(1,-0.35)node{$1$};
	\draw(10.85,0)node{\large$\rho$};
	\draw(0, 5.82)node{$B^{\rho}_{x-e_1,x}$};
	\shade[ball color=black](1,0.8)circle(1mm);
	\draw[line width=1pt](1,0.8)--(2.15,0.8);
	\draw[line width=1pt](2.2,2)--(3.75,2);
	\draw[line width=1pt](3.8,4)--(7.95,4);
	\draw[line width=1pt](8,5)--(9,5);
	\draw[dashed, line width=1pt](9,5)--(10.5,5);
	\shade[ball color=black](2.2,2)circle(1mm);
	\shade[ball color=black](3.8,4)circle(1mm);
	\shade[ball color=black](8,5)circle(1mm);
	%%%
	\draw(-0.6,0.75)node{$Y_x$};
	\draw(-1.3,2.09)node{$Y_x$+$Z_{\lambda}$};
	\draw [-] (-0.1,0.8) -- (0.1,0.8);
	\draw [-] (-0.1,2) -- (0.1,2);
	\draw (2.2,0.8) circle (3pt);
	\draw (3.8,2) circle (3pt);
	\draw (8,4) circle (3pt);
	%\draw[dashed] (2.2,2)--(2.2,0);
	\draw(2.2,-0.35)node{$\lambda$};
	%%% add a package to draw curly brace
	%\usetikzlibrary{decorations.pathreplacing}
	\draw [decoration={brace,amplitude=10pt},xshift=-4pt,yshift=0pt];
	\end{tikzpicture}
	\caption{\small A sample path of the pure jump process $\{B^{\rho}_{x-e_1,x}\}_{\rho\,\in\, [1,\infty)}$, with initial value  $B^1_{x-e_1,x}=Y_x$. The jump times are a Poisson point process on $(1,\infty)$ with intensity $s^{-1}ds$. Given that there is a jump at $\lambda$, the jump size  is an independent Exp$(\lambda^{-1})$ variable $Z_{\lambda}$.}
	\label{f:B} 
\end{figure}

 Let $N$ be the simple point process on the interval  $\{s:1\le s<\infty\}$  %$[1,\infty)$ 
  that has  a  point at  $s=1$ with probability 1, and on the open interval  $(1,\infty)$  $N$  is  a Poisson point process  with parameter measure $s^{-1}\,ds$.    (We use $N$ to denote both the random discrete set of locations and the resulting random point measure.)     To each point   $t\in[1,\infty)$ of $N$   attach an independent Exp($t^{-1}$) distributed weight $Z_t$.   
Define the nondecreasing cadlag process  $X(\cdot)=\{X(\rho): \rho\in[1,\infty)\}$ by  
% $X(1)=Z_1$ and   for $0<\rho<1$,  
\be\label{X1} X(\rho)=\sum_{s\in N\cap[1,\rho]} Z_s. \ee 

 \begin{theorem} \label{B-th5}   Fix $x\in\Z^2$.  The nondecreasing cadlag processes $\Bus^{\,\!\rcbullet}_{x-\evec_1,x}$ and $X(\abullet)$  indexed by  $[1,\infty)$ 
 %$\{\Bus^\rho_{x-\evec_1,x}:1\le \rho<\infty\}$ and  $\{X(\rho): \rho\in[1,\infty)\}$ 
 are equal in distribution.  
 \end{theorem}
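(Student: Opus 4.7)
The plan is to show that the two cadlag nondecreasing processes $\rho \mapsto \Bus^\rho_{x-\evec_1,x}$ and $\rho \mapsto X(\rho)$ on $[1,\infty)$ have identical finite-dimensional distributions; since both are monotone and right-continuous, this determines their laws.

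For $X$, the joint Laplace transform at $1=\rho_0<\rho_1<\cdots<\rho_n$ is a direct Campbell-formula computation. Writing the exponent $\sum_{i=0}^{n} s_i X(\rho_i)$ as a linear combination of the jumps $Z_t$ of $X$, the atom at $t=1$ contributes with coefficient $T_0:=\sum_{i=0}^n s_i$ (giving factor $1/(1+T_0)$ since $Z_1\sim\mathrm{Exp}(1)$), while atoms of the Poisson process $N$ on each interval $(\rho_{j-1},\rho_j]$ contribute with coefficient $T_j:=\sum_{i\ge j}s_i$. The Campbell formula together with $E[e^{-cZ_t}]=1/(1+ct)$ produce
\[
E\Big[\exp\Big(-\sum_{i=0}^n s_i X(\rho_i)\Big)\Big] \;=\; \frac{1}{1+T_0}\prod_{j=1}^n \frac{1+T_j\rho_{j-1}}{1+T_j\rho_j}.
\]
This product formula is the target expression.

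For the Busemann side, Theorem \ref{B-th1} identifies $(Y_x, \Bus^{\rho_1}_{x-\evec_1,x},\ldots,\Bus^{\rho_n}_{x-\evec_1,x})$ at a single edge with the marginal at one coordinate of $(\eta^1,\ldots,\eta^{n+1})\sim \mu^{(1,\rho_1,\ldots,\rho_n)}$, where $\eta^j=\Dop^{(j)}(I^j,\ldots,I^1)$ is built from independent i.i.d.\ exponential sequences $I^j$ of means $\rho_{j-1}$ (setting $\rho_0=1$). I would then invoke the triangular array representation from Section \ref{s:array}, which unfolds the nested queueing operators into an explicit last-passage-type supremum formula for each $\eta^j_0$ over paths through a triangular array indexed by the inputs $\{I^i_k\}$. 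Given this representation, the joint Laplace transform reduces to integration against the product exponential density on the array.

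The main obstacle is carrying out this integration and reducing the resulting sum to the target product formula. The suprema in $\Dop$ break the Laplace integral into a sum over combinatorial cases labeled by where the suprema are attained across the triangle; the simplification to the clean product above likely relies on Catalan-triangle identities, which are suggested by the paper's keyword list. As a sanity check the base case $n=1$ can be verified by classical M/M/1 analysis: the representation $V_1-V_0=(I^2_0-J_{-1})^+$, steady-state independence of the sojourn time $J_{-1}$ from $(I^1_0,I^2_0)$, memorylessness of $I^2_0$, and PASTA together give $P(V_1=V_0)=1/\rho_1$ and $V_1-V_0\mid V_1>V_0 \sim \mathrm{Exp}(\rho_1^{-1})$ independently of $V_0$, matching the jump-size distribution at the first atom of $N$ in $(1,\infty)$ (whose density $\lambda^{-2}$ already agrees with that of $\rho^*(x)$ from Lemma \ref{B-reco-lm}).
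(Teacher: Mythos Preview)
Your setup is right: reduce to finite-dimensional marginals, compute the $X$ side by Campbell's formula, and on the Busemann side invoke Theorem~\ref{B-th1} to pass to $\eta=\Daop^{(n+1)}(I)\sim\mu^{(1,\rho_1,\dotsc,\rho_n)}$. The product form you wrote for the Laplace transform of $X$ is exactly the statement that $X$ has independent increments, with the $j$th increment having Laplace transform $(1+T_j\rho_{j-1})/(1+T_j\rho_j)$.

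The gap is in how you plan to handle the Busemann side. The triangular array of Section~\ref{s:array} is \emph{not} an explicit last-passage supremum formula for $\eta^j_0$ that you can integrate against a product exponential density; it is a recursive construction \eqref{m:ar5} whose purpose is to carry independence through the queueing maps. The decisive statement is Lemma~\ref{lm:eta12}: for each fixed $k$, the increments $\eta^1_k,\ \eta^2_k-\eta^1_k,\ \dotsc,\ \eta^n_k-\eta^{n-1}_k$ are mutually independent. This is proved inductively, using at each stage the Burke-type output theorem for M/M/1 (Lemma~\ref{m:Lem-I}(a)) to peel off one increment $(\eta^{m,m-1}_k-J^{\eta^{m,m-1},\eta^{m-1}}_{k-1})^+$ as independent of what came before. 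Once independence is in hand, each increment $\eta^m_k-\eta^{m-1}_k$ is computed from a single two-line queue $\Daop^{(2)}(I^{m-1},I^m)$, which is precisely your $n=1$ sanity check, and yields the distribution \eqref{B-incr13}. Matching the increment Laplace transforms then gives your product formula on both sides.

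Your proposed route---break the supremum into cases and sum with Catalan-triangle identities---is not how the paper proceeds and would be very hard to execute; the Catalan triangle in this paper is used for Theorem~\ref{B-th10}, not here. The missing idea is exactly the independence-of-increments lemma; everything else in your outline is sound.
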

 
 A qualitative consequence of Theorem \ref{B-th5} is that for any given $\lambda\in(1,\infty)\setminus N$,  $\rho\mapsto\Bus^{\rho}_{x-\evec_1,x}$ is constant in an  interval around $\lambda$.  From identity \eqref{B-G15}, it is evident that this is due to the fact that the coalescence point function  $\rho\mapsto z^{\rho}(x-\evec_1,x)$ is constant in an interval.   This is an analogue of the local constancy of the difference weight profile in Theorem 1.1 of \cite{basu-gang-hamm-arxiv}. 
 Full implications of Theorem \ref{B-th5}  for the coalescence structure of the geodesics  of the CGM will be explored in the subsequent paper \cite{janj-rass-sepp-future}. 

% We make some observations that follow from this characterization. 
 Theorem \ref{B-th5} is proved by establishing  that 
  $\Bus^{\,\!\rcbullet}_{x-\evec_1,x}$ has independent increments and by deducing the distribution of an increment.  Independent increments means that   for $1=\rho_0<\rho_1<\dotsm<\rho_n$, the random variables $\Yw_x=\Bus^{\rho_0}_{x-\evec_1,x},  \Bus^{\rho_1}_{x-\evec_1,x}-\Bus^{\rho_0}_{x-\evec_1,x}, \dotsc, \Bus^{\rho_{n}}_{x-\evec_1,x}-\Bus^{\rho_{n-1}}_{x-\evec_1,x}$ are independent.    For $1\le\lambda<\rho<\infty$,  the distribution of the increment is 
 \be\label{B-incr13} \begin{aligned} 
 \P\{ \Bus^\rho_{x-\evec_1,x}-\Bus^\lambda_{x-\evec_1,x} =0\}&=\P\{ N(\lambda,\rho]=0\}= \tfrac\lambda\rho\\
  \P\{ \Bus^\rho_{x-\evec_1,x}-\Bus^\lambda_{x-\evec_1,x} >s\} &=  \bigl(1-\tfrac\lambda\rho\bigr) e^{-s/\rho} 
  \quad\text{for } \ s>0. 
\end{aligned}  \ee
 %The distribution of the process $\Bus^\rcbullet_{x-\evec_2,x}$ on a vertical edge is  obtained by reflecting the lattice across the diagonal.  The conclusion is the equality in distribution 

 For the process $\Bus^\rcbullet_{x-\evec_2,x}$ on a vertical edge, the result of  Theorem \ref{B-th5} is that 
\be\label{B-flip}  
\{\Bus^\rho_{x-\evec_2,x}:1< \rho<\infty\}\deq \{X\bigl((\tfrac{\rho}{\rho-1})+\bigr): 1< \rho<\infty\} . 
\ee

%Theorem \ref{B-th5} is the most significant consequence of Theorem \ref{B-th1} presented in this   paper and  the hardest to prove. 

% $\Bus^\rho_{x-\evec_1,x}-\Bus^\lambda_{x-\evec_1,x}   \sim {\rm Ber}\bigl(1-\tfrac\lambda\rho\bigr)\cdot {\rm Exp}\bigl(\tfrac1\rho\bigr)$, that is, the increment is distributed as the product of  
 
 \subsection{Marginal distribution on a level of the lattice} 
 A striking and useful property of the Busemann process $\{\Bus^\rho_{(k-1,t),(k,t)}\}_{k\in\Z}$ along a horizontal line in $\Z^2$  for a {\it fixed} value $\rho\in(1,\infty)$ is that the variables $\{\Bus^\rho_{(k-1,t),(k,t)}\}_{k\in\Z}$  are i.i.d.\ (part (ii) of Theorem \ref{B-th-zero}).  For example, article \cite{bala-cato-sepp} used this feature heavily to deduce the KPZ fluctuation exponents of the corner growth model. 
The next theorem shows that this property breaks down totally already for the joint process 
  $\{(\Bus^\lambda_{(k-1,t),(k,t)}, \Bus^\rho_{(k-1,t),(k,t)})\}_{k\in\Z}$ for two parameter values $\lambda<\rho$.  Namely, this pair process is not even a Markov chain and not reversible.  However, if we restrict attention to the differences $\Bus^\rho_{(k-1,t),(k,t)}-\Bus^\lambda_{(k-1,t),(k,t)}$, we can recover the reversibility.  The differences are of interest because they indicate a jump in the coalescence point $z^\bbullet((k-1,t),(k,t))$ in \eqref{B-G15} as a function of the direction. 
  
  For the statement of the theorem below,  the negative part of a real number is $x^-=(-x)\vee 0$.   The Markov chain $X_k$ in part (a) below has a queueing interpretation as the difference between the sojourn time of customer $k-1$ and the waiting time till the arrival of customer $k$. The details are in the proof in Lemma \ref{X-lm1}. 
 
 % implies that for a fixed value $\rho\in(1,\infty)$,  along a horizontal line in $\Z^2$  the variables $\{\Bus^\rho_{(k-1,t),(k,t)}\}_{k\in\Z}$  are i.i.d.\ mean $\rho$ exponentials.    In particular, this process  $\{\Bus^\rho_{(k-1,t),(k,t)}\}_{k\in\Z}$ is a stationary reversible Markov chain.   In the next theorem we ask this question of  
 
\begin{theorem}\label{B-th8}    Let $1\le\lambda<\rho<\infty$.  

{\rm(a)}  The sequence of differences $\{ \Bus^\rho_{(k-1,t),(k,t)}-\Bus^\lambda_{(k-1,t),(k,t)}\}_{k\in\Z}$ is not a Markov chain, but  there  exists a stationary reversible Markov chain $\{X_k\}_{k\in\Z}$ such that this distributional equality of processes  holds: 
% $\{ \Bus^\rho_{(k-1,t),(k,t)}-\Bus^\lambda_{(k-1,t),(k,t)}\}_{k\in\Z}\deq\{X^-_k\}_{k\in\Z}$. 
\[  \bigl\{ \Bus^\rho_{(k-1,t),(k,t)}-\Bus^\lambda_{(k-1,t),(k,t)}\bigr\}_{k\in\Z}\deq\{X^-_k\}_{k\in\Z}. \]
  In particular, the process of differences is reversible:  
\[   \bigl\{ \Bus^\rho_{(k-1,t),(k,t)}-\Bus^\lambda_{(k-1,t),(k,t)}\bigr\}_{k\in\Z}\deq
 \bigl\{\Bus^\rho_{(-k-1,t),(-k,t)}-\Bus^\lambda_{(-k-1,t),(-k,t)}\bigr\}_{k\in\Z}. \]

{\rm(b)} The sequence of pairs $\{(\Bus^\lambda_{(k-1,t),(k,t)}, \Bus^\rho_{(k-1,t),(k,t)})\}_{k\in\Z}$ is not a Markov chain.   The joint distribution of   two successive pairs  
\[   \bigl( (\Bus^\lambda_{(k-1,t),(k,t)}, \Bus^\rho_{(k-1,t),(k,t)}) \,, \, 
(\Bus^\lambda_{(k,t),(k+1,t)}, \Bus^\rho_{(k,t),(k+1,t)}) \bigr)  \]
is {\rm not} the same as the joint distribution of its transpose 
\[   \bigl( (\Bus^\lambda_{(k,t),(k+1,t)}, \Bus^\rho_{(k,t),(k+1,t)})   \,, \,  (\Bus^\lambda_{(k-1,t),(k,t)}, \Bus^\rho_{(k-1,t),(k,t)})
 \bigr).  \]
 In particular,  the process of pairs $\{(\Bus^\lambda_{(k-1,t),(k,t)}, \Bus^\rho_{(k-1,t),(k,t)})\}_{k\in\Z}$ is {\rm not}  reversible. 

\end{theorem}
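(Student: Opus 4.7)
I will reduce the theorem to a structural statement about the queueing output $(\eta^1,\eta^2)=(I^1,\Dop(I^2,I^1))$. By Theorem~\ref{B-th1} together with the consistency part of Theorem~\ref{murho-th1}, the pair $(\Busv{\lambda}{\evec_1}{t},\Busv{\rho}{\evec_1}{t})$ has distribution $\mu^{(\lambda,\rho)}$ and hence may be realized as $(I^1,\eta^2)$ with independent i.i.d.\ $\mathrm{Exp}(\lambda^{-1})$ and $\mathrm{Exp}(\rho^{-1})$ input sequences. The identities \eqref{m:IJ5} with arrivals $I^2$ and services $I^1$ yield
\[ \eta^2_k-I^1_k=(I^2_k-J_{k-1})^+, \qquad J_k=I^1_k+(J_{k-1}-I^2_k)^+, \]
where $\{J_k\}$ is the sojourn-time sequence of the stable $M/M/1$ queue (stable because $\lambda<\rho$).

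For Part (a), set $X_k=J_{k-1}-I^2_k$, so the difference sequence is exactly $\{X_k^-\}$. Substituting $J_k=I^1_k+X_k^+$ gives the Markov recursion $X_{k+1}=X_k^++W_k$ with i.i.d.\ two-sided-exponential noise $W_k=I^1_k-I^2_{k+1}$. Burke's theorem identifies $J_{k-1}\sim\mathrm{Exp}(\lambda^{-1}-\rho^{-1})$ as independent of $I^2_k$, producing an explicit atomless stationary density $\pi$ of $X_k$ that is exponential on each half-line. Reversibility of $\{X_k\}$ is then established by a direct detailed-balance verification $\pi(x)p(x,y)=\pi(y)p(y,x)$ using the transition density $p(x,dy)=f_W(y-x^+)\,dy$; reversibility of $\{X_k^-\}$ follows because it is a pointwise function of $\{X_k\}$. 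Non-Markovianity of $\{X_k^-\}$ is an information-loss argument: on $\{X_k^-=0\}$ the positive part $X_k^+$ is hidden but needed to predict $X_{k+1}$, and its conditional law depends on $X_{k-1}^-$. For instance, if $X_{k-1}^->0$ then $X_k=W_{k-1}$, so $(X_k\mid X_k\ge 0)\sim\mathrm{Exp}(\lambda^{-1})$; whereas if $X_{k-1}^-=0$ then $X_k=X_{k-1}+W_{k-1}$ carries an additional hidden nonnegative contribution from $X_{k-1}^+$.

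For Part (b), write $A_k=(I^1_k,\eta^2_k)$. Since $I^1_{k+1}$ is independent of $\eta^2_k$ (which is a function of $(I^i_j)_{i=1,2,\,j\le k}$), one has $E[I^1_{k+1}\eta^2_k]=\lambda\rho$. On the other hand, $\eta^2_{k+1}=I^1_{k+1}+U_{k+1}$ with idle time $U_{k+1}=(I^2_{k+1}-J_k)^+$ and $J_k=I^1_k+X_k^+$, so the map $I^1_k\mapsto U_{k+1}$ is strictly decreasing on the positive-probability event $\{U_{k+1}>0\}$; consequently $\Cov(I^1_k,U_{k+1})<0$ and
\[ E[I^1_k\eta^2_{k+1}]=\lambda^2+E[I^1_kU_{k+1}]<\lambda^2+\lambda\,E[U_{k+1}]=\lambda\rho, \]
using $E[U_{k+1}]=E[\eta^2_{k+1}]-E[I^1_{k+1}]=\rho-\lambda$. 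Since these two mixed moments disagree, the joint law of $(A_k,A_{k+1})$ differs from that of its transpose, forbidding reversibility. Non-Markovianity of $\{A_k\}$ follows from the identity $\eta^2_k=I^1_k+X_k^-$: $A_k$ determines $(I^1_k,X_k^-)$ but not $X_k^+$, so the obstruction from Part~(a) transfers to the pair process.

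The main obstacle is the strict inequality in Part (b): a merely nonincreasing coupling would not suffice, so one must isolate the positive-probability event $\{U_{k+1}>0\}$ on which $I^1_k\mapsto U_{k+1}$ is strictly decreasing. The detailed-balance computation in Part (a) is routine but requires attention across the four sign-combinations of $(x,y)$, and the interpretation of $X_k^-$ as the idle time between successive customers is what makes the reversibility statement natural.
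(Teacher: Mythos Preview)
Your reduction to the queueing picture is the same as the paper's, and the overall structure is correct. The methods diverge in two places.

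\smallskip

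\textbf{Reversibility of $\{X_k\}$.} You verify detailed balance $\pi(x)f_W(y-x^+)=\pi(y)f_W(x-y^+)$ across the four sign cases, using that $\pi$ is the law of $\mathrm{Exp}(\lambda^{-1}-\rho^{-1})-\mathrm{Exp}(\rho^{-1})$ (so exponential with rate $\lambda^{-1}-\rho^{-1}$ on the positive side and rate $\rho^{-1}$ on the negative side) and that $f_W$ has exponential tails with rates $\lambda^{-1}$ and $\rho^{-1}$. This works. The paper instead invokes the dual queueing equations \eqref{IJ101} to derive a \emph{backward} recursion $X_i = X_{i+1}^+ + \wt\w_{i+1} - \eta^2_i$ and checks, via Lemma~\ref{m:Lem-I}(a), that $(\wt\w_{i+1},\eta^2_i)$ is an independent $(\mathrm{Exp}(\lambda^{-1}),\mathrm{Exp}(\rho^{-1}))$ pair independent of $X_{i+1}$. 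So the backward transition has the same kernel as the forward one. Your route is more computational but self-contained; the paper's is more structural and explains \emph{why} reversibility holds (Burke-type time reversal of the queue).

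\smallskip

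\textbf{Non-reversibility in part (b).} The paper proves the qualitative asymmetry that $\eta^1_{k+1}=I^1_{k+1}$ is independent of $(\eta^1_k,\eta^2_k)$ whereas $\eta^1_k=I^1_k$ is \emph{not} independent of $(\eta^1_{k+1},\eta^2_{k+1})$, since $\eta^2_{k+1}-\eta^1_{k+1}=(I^2_{k+1}-I^1_k-(J_{k-1}-I^2_k)^+)^+$ visibly depends on $I^1_k$. Your covariance argument is a quantitative version of the same observation: conditioning on $V=I^2_{k+1}-X_k^+$ (independent of $I^1_k$), the map $I^1_k\mapsto (V-I^1_k)^+$ is nonincreasing and strictly decreasing on the positive-probability event $\{V>0\}$, so $\Cov(I^1_k,U_{k+1})<0$ and $E[I^1_k\eta^2_{k+1}]<\lambda\rho=E[I^1_{k+1}\eta^2_k]$. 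This is valid.

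\smallskip

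For the non-Markov claims your sketches are correct and match the paper's: in (a) the paper computes $P(X_{i+1}^-=0\mid X_i^-=0)$ and $P(X_{i+1}^-=0\mid X_i^-=0,\,X_{i-1}^->0)$ explicitly to exhibit the discrepancy you describe; in (b) the paper shows directly that conditioning on $\eta^1_k=\eta^2_k$ alone leaves $J_k$ unbounded, while adding $\eta^1_{k-1}<\eta^2_{k-1}$ forces $J_k\le \eta^1_{k-1}+\eta^1_k$, which is your ``hidden $X_k^+$'' obstruction rephrased.
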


\subsection{The initial segment of the Busemann geodesic}
As the last application of Theorem \ref{B-th1} we calculate the probability distribution of the length of the initial  horizontal run of a semi-infinite geodesic.  

Let $\arrowrx=\bgeodrx_1-x$ be the first step of the  $\Bus^\rho$ Busemann geodesic \eqref{bg6} started at $x$.    $\{\arrowrx\}_{x\in\Z^2}$ is a  random configuration with values in $\{-\evec_1,-\evec_2\}$.       
 By  weight recovery \eqref{B-reco},  $\arrowr{x}=-\evec_1$  iff $\Bus^\rho_{x-\evec_1, x}-\Yw_x=0$.   Hence by Theorem \ref{B-th8}(a) with $\lambda=1$, reversibility holds along a line: 
 $\{\arrowr{k\evec_i}\}_{k\in\Z}\deq\{\arrowr{-k\evec_i}\}_{k\in\Z}$. 
 
The first part of the theorem below   gives a queueing characterization for the process  $\{\arrowr{k\evec_1}\}_{k\in\Z}$.   
To that end, for the queueing mapping $\wt I=\Dop(I,\w)$ of \eqref{m:DSR}   define the indicator variables 
\be\label{wait-k}  \eta_k=\ind_{\wt I_k=\w_k}=\ind\{\text{customer $k$ has to wait before entering service}\} .\ee

Let 
\[ \counta_x=\inf\{k\in\Z_{\ge0}:   \arrowr{x-k\evec_1}=-\evec_2  \} 
% \counta_x=0\vee\max\{k\in\Z_{>0}:   \arrowr{x-i\evec_1}=-\evec_1\ \forall i\in\lzb0,k-1\rzb
%%=0,1,\dotsc,k-1
%\}.
  \]
denote the number of consecutive $-\evec_1$ steps that $\bgeodrx$ takes  from a  deterministic starting point $x$. 
Part (b)  of the theorem gives  the distribution of  $\counta_x$.
% With the convention $\max\varnothing=-\infty$,   define    
%For the statement below,  the
The  {\it Catalan triangle}   $\{C(n,k):\,0\leq k\leq n\}$ is given by 
\be\label{Cat-tr} C(n,k)=\frac{(n+k)!(n-k+1)}{k!(n+1)!}. \ee
Information about $C(n,k)$ is given above  Lemma \ref{lm:poi8} 
in Appendix \ref{a:exp}.  

\begin{theorem} \label{B-th10} $ $   Let $1<\rho<\infty$.  

{\rm (a)}   Let the service and arrival processes satisfy  $(\w,I)\sim\nu^{(1,\rho)}$ and define  $\eta_k$ by \eqref{wait-k}.  Then  we have the distributional equality 
$\{ \ind\{\arrowr{k\evec_1}=-\evec_1\}\}_{k\in\Z}\deq\{\eta_k\}_{k\in\Z}$. 

{\rm (b)}    Let $x\in\Z^2$.  Then $\P\{\counta_x=0\}= 1-\rho^{-1}$  and for  $n\in\Z_{>0}$, 
\be\label{distr4}   \P\{\counta_x=n\}= (1-\rho^{-1}) 
\sum_{k=0}^{n-1} C(n-1,k) \frac{\rho^k}{(\rho+1)^{n+k}}. \ee

\end{theorem}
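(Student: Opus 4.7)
My plan is to handle the two parts in sequence, using part~(a) as a bridge to a queueing description.

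For part~(a), specializing Theorem~\ref{B-th1} to $n=1$ shows that at each level $t$ the pair $(\wb Y_t,\Busv{\rho}{\evec_1}{t})$ has distribution $\mu^{(1,\rho)}$, which by \eqref{mu5}--\eqref{m:eta5} is the pushforward of $\nu^{(1,\rho)}$ under $(\w,I)\mapsto(\w,\Dop(I,\w))$. Weight recovery~\eqref{B-reco} and the Busemann-geodesic construction~\eqref{bg6} give $\arrowr{x}=-\evec_1\Leftrightarrow \Bus^\rho_{x-\evec_1,x}=Y_x$, and under the above identification this condition reads $\wt I_k=\w_k$, where $\wt I=\Dop(I,\w)$. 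By~\eqref{m:IJ5} this happens iff $I_k\leq J_{k-1}$, i.e.\ $\eta_k=1$, proving part~(a).

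For part~(b), by stationarity I take $x=0$. Part~(a), together with reversibility along a row of the Busemann-difference process $\{\Bus^\rho_{(k-1,t),(k,t)}-Y_{(k,t)}\}_{k\in\Z}$ (Theorem~\ref{B-th8}(a) with $\lambda=1$), gives $\counta_0\deq\inf\{k\geq 0:\eta_k=0\}$, where $\{\eta_k\}_{k\in\Z}$ are the waiting indicators of a stationary M/M/1 queue with service rate $1$ and arrival rate $1/\rho$. The $n=0$ case is direct: $\P(\counta_0=0)=\P(I_0>J_{-1})=1-\rho^{-1}$, using $I_0\sim\mathrm{Exp}(1/\rho)$ independent of the stationary sojourn time $J_{-1}\sim\mathrm{Exp}((\rho-1)/\rho)$. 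For $n\geq 1$, let $N$ be the size of the busy period containing customer~$0$ and $J\in\{0,\dotsc,N-1\}$ its position within that period. The standard size-biasing identity $\P(N=k,J=j)=\pi(k)/\E[N]$ for $k\geq 1,\;0\leq j\leq k-1$ (with $\pi$ the busy-period law and $\E[N]=\rho/(\rho-1)$), together with the identification $\{\counta_0=n\}=\{J\geq 1,\,N=n+J\}$, yields
\[
\P(\counta_0=n)\;=\;\frac{1}{\E[N]}\sum_{j\geq 1}\pi(n+j)\;=\;(1-\rho^{-1})\,\P(N>n),
\]
so it remains to show $\P(N>n)=\sum_{k=0}^{n-1}C(n-1,k)\,\rho^k/(\rho+1)^{n+k}$.

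For this combinatorial step, I describe the busy period as a random walk $(A_m,S_m)$ starting at $(0,0)$ that at each step takes a right step (new arrival) with probability $q=1/(\rho+1)$ or an up step (service completion) with probability $p=\rho/(\rho+1)$; the walk stays in $\{y\leq x\}$ (equivalently $Q=1+A-S\geq 1$) and the busy period ends at the first step that would violate this. The event $\{N>n\}$ is then precisely the event that the walk reaches the vertical line $A=n$ while remaining in $\{y\leq x\}$. Decomposing on the first state $(n,k)$ at which this happens, $k\in\{0,\dotsc,n-1\}$, the terminal right step comes from $(n-1,k)$; by the classical interpretation of $C(n-1,k)=\binom{n-1+k}{k}-\binom{n-1+k}{k-1}$ as the number of lattice paths from $(0,0)$ to $(n-1,k)$ weakly below the diagonal, each such trajectory contributes $C(n-1,k)\cdot q^{n-1}p^k\cdot q=C(n-1,k)\,\rho^k/(\rho+1)^{n+k}$. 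Summing over $k$ gives $\P(N>n)$, and multiplying by $(1-\rho^{-1})$ produces~\eqref{distr4}. The main obstacle I anticipate is the bookkeeping connecting the Busemann process (via part~(a) and the reversibility of Theorem~\ref{B-th8}(a)) to the size-biased sampling of a busy period in stationary M/M/1; once this identification is pinned down, the Catalan-triangle enumeration is a short combinatorial step.
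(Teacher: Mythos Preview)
Your proof of part~(a) matches the paper's reasoning. For part~(b) your argument is correct, but the route is genuinely different from the paper's.

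The paper avoids reversibility entirely: by stationarity it computes $\P\{\counta_{n\evec_1}=n\}$ rather than $\P\{\counta_0=n\}$. The event $\{\counta_{n\evec_1}=n\}$ translates in the queueing picture to $\{\eta_0=0,\,\eta_1=\dotsm=\eta_n=1\}$, which factors cleanly as $\P(I^2_0>J_{-1})\cdot\P\{S^{\rho^{-1}}_m\le\wt S^{1}_m\ \forall m\in[n]\}$: the condition $\eta_0=0$ forces $J_0=I^1_0$, so the subsequent waiting conditions reduce to a random-walk event independent of $\{J_{-1},I^2_0\}$. The second factor is then read off from Lemma~\ref{lm:poi8}, which is proved separately via conditioning on Poisson jump times and uniform order statistics. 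Your route instead invokes reversibility (Theorem~\ref{B-th8}(a)) to land on the event $\{\eta_0=\dotsm=\eta_{n-1}=1,\,\eta_n=0\}$, which does \emph{not} decouple from the past in the same way; you circumvent this by identifying it with $\{J\ge1,\,N=n+J\}$ for the busy period containing customer~$0$ and applying the standard size-biased sampling identity to obtain $\P(\counta_0=n)=(1-\rho^{-1})\,\P_\pi(\tilde N>n)$. You then compute $\P_\pi(\tilde N>n)$ by a direct ballot-type lattice-path count. The paper's approach is more self-contained (no appeal to renewal-theoretic size-biasing, and the factorization is immediate once the right starting point is chosen), while yours makes the busy-period structure explicit and replaces the order-statistics calculation behind Lemma~\ref{lm:poi8} with a shorter combinatorial argument.
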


The   distribution in \eqref{distr4} is  proper, that is,  $\sum_{n\in\Z_{\ge0}} \P\{\counta_x=n\}= 1$.    This follows for example  from Theorem  \ref{geod-thm-1}(i) according to which  the Busemann geodesic has direction strictly off the axes. 

\begin{remark}\label{B-rm10}  If we take $(\w,I)\sim\nu^{(\lambda,\rho)}$   for $1<\lambda<\rho$ in  Theorem \ref{B-th10} and define  $\eta_k$ again by \eqref{wait-k}, we get the distributional equality  $\{ \ind\{\Bus^\rho_{(k-1,t),(k,t)}=\Bus^\lambda_{(k-1,t),(k,t)}\}\}_{k\in\Z}\deq\{\eta_k\}_{k\in\Z}$.   The calculation that produced part (b)  gives the distribution $\P\{\counta^{\lambda,\rho}_x=0\}=\frac{\rho-\lambda}{\rho}$ and 
\[  \P\{\counta^{\lambda,\rho}_x=n\}=\frac{\rho-\lambda}{\rho} \sum_{k=0}^{n-1} C(n-1,k) \frac{\rho^k\lambda^n}{(\lambda+\rho)^{n+k}} \quad \text{ for }  n\in\Z_{>0},  \]
for the random variable 
\[  \counta^{\lambda,\rho}_x=\inf\{k\in\Z_{\ge0}:  \Bus^\rho_{x-(k+1)\evec_1, x-k\evec_1}>\Bus^\lambda_{x-(k+1)\evec_1, x-k\evec_1}\}. 
%0\vee\max\{k\in\Z_{>0}:  \Bus^\rho_{x-(i+1)\evec_1, x-i\evec_1}=\Bus^\lambda_{x-(i+1)\evec_1, x-i\evec_1}\ \forall i\in\lzb0,k-1\rzb
%%=0,1,\dotsc,k-1
%\}. 
 \]
Note that $\Bus^\rho_{x-\evec_1, x}=\Bus^\lambda_{x-\evec_1, x}$  tells us that $\bgeodrx_1=\bgeod{\lambda}x_1$ but not which step is chosen.  
\qedrm\end{remark}

\section{Properties of queueing mappings} \label{s:D} 

This section proves a property of the queueing mapping $\Dop$ (Lemma \ref{m:D-lm4} below) on which the intertwining property that comes in Section \ref{s:inv-cpl} rests.  To prove  Lemma \ref{m:D-lm4} we develop  a duality   in the queueing setting of Section \ref{s:queue}: namely, an LPP process defined in terms of weights $(I,\w)$ can be equivalently described in terms of weights $(\wt I, \wt\w)$ defined by \eqref{m:DSR}.   Routine facts about the queueing mappings are collected in Appendix \ref{a:queue}.

\medskip

Fix an origin  $m\in\Z$.  Assume given nonnegative real weights 
\be\label{IJ99}  
%\text{$J_m$, $(I_i)_{i\ge m+1}$ and $(\w_i)_{i\ge m+1}$. }  
J_m, \quad (I_i)_{i\ge m+1}, \quad\text{and}\quad (\w_i)_{i\ge m+1}. 
\ee 
%$J_m$, $(I_i)_{i\ge m+1}$ and $(\w_i)_{i\ge m+1}$.  
  From these define iteratively for $k=m+1,m+2,\dotsc$
\be\label{IJ100}  
\wt I_k=\w_k+(I_k-J_{k-1})^+, \quad 
J_k=\w_k+(J_{k-1}-I_k)^+, \quad\text{and}\quad \wt\w_k=I_k\wedge J_{k-1}.  
\ee

There is a duality or reversibility of sorts  here.    For a fixed $k$, equations \eqref{IJ100} are equivalent to 
\be\label{IJ101}  
 I_k= \wt \w_k + (\wt I_k-J_{k})^+, \quad 
J_{k-1}=  \wt \w_k + (J_{k}-\wt I_k)^+, \quad\text{and}\quad 
\w_k=\wt I_k\wedge J_{k}. 
\ee

We turn  this reversibility into a lemma as follows. 
%Upon restricting the weights to a finite interval and reversing   the ordering of the lattice,  the same mapping applied to the image weights in \eqref{IJ100}    yields back the original given weights.  
  Restrict the given $J$, $I$ and $\w$ weights in \eqref{IJ99}   to the interval $\lzb m, n\rzb$.  Then on the interval $\lzb-n,-m\rzb$ define the  given weights  $J'_{-n}$, $(I'_i)_{-n+1\le i\le -m}$ and $(\w'_i)_{-n+1\le i\le -m}$ as
\be\label{dual7}
I'_i=\wt I_{-i+1},\quad J'_{-n}=J_n, \quad\text{and}\quad  \w'_i=\wt\w_{-i+1}.  
\ee
Now   apply \eqref{IJ100} to these given weights to compute $(\wt I'_k, J'_k, \wt\w'_k)$ for $k\in\lzb-n+1,-m\rzb$. 
First assume by induction that $J'_{k-1}=J_{-k+1}$.  The base case $k-1=-n$   is covered by the definition in \eqref{dual7}.  Then   
\begin{align*}
J'_k&=\w'_k+(J'_{k-1}-I'_k)^+= \wt\w_{-k+1}+ (J_{-k+1}-\wt I_{-k+1})^+  \\
&=  I_{-k+1}\wedge J_{-k}+  (J_{-k}-I_{-k+1})^+ = J_{-k}. 
\end{align*} 
The third equality above used the definition of $\wt\w$ in \eqref{IJ100} and the conservation law 
\be\label{IJ104}  I_k+J_k=J_{k-1}+\wt I_k \ee
 that follows from \eqref{IJ100}.   Thus $J'_k=J_{-k}$ for all $k\in\lzb-n,-m\rzb$. 
Next 
\begin{align*}
\wt I'_k&=\w'_k+(I'_k-J'_{k-1})^+= \wt\w_{-k+1}+ (\wt I_{-k+1}-J_{-k+1})^+\\
&= I_{-k+1}\wedge J_{-k}+  (I_{-k+1}-J_{-k})^+ =  I_{-k+1}.  
\end{align*}
Finally 
\[  \wt\w'_k=I'_k\wedge J'_{k-1}=\wt I_{-k+1}\wedge J_{-k+1} =\w_{-k+1}  \]
as follows again from  \eqref{IJ100}.   We summarize this finding as follows.  

\begin{lemma}\label{lm-dual12}  Fix $m<n$.  Assume given $J_m$, $(I_i)_{m+1\le i\le n}$ and $(\w_i)_{m+1\le i\le n}$.    Compute $(\wt I_k, J_k, \wt\w_k)_{m+1\le k\le n}$ from \eqref{IJ100}.   Then define $J'_{-n}$, $(I'_i)_{-n+1\le i\le -m}$ and $(\w'_i)_{-n+1\le i\le -m}$ by \eqref{dual7} and apply  \eqref{IJ100}   to compute $(\wt I'_k, J'_k, \wt\w'_k)_{-n+1\le k\le-m}$.    The conclusion is that $(\wt I'_k, J'_k, \wt\w'_k)=(I_{-k+1}, J_{-k}, \w_{-k+1})$ for 
$k\in\lzb-n+1,-m\rzb$. 
\end{lemma}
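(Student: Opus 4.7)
The plan is to prove the lemma by induction on $k$ running from $-n$ up to $-m$, with the base case handled directly by the defining assignment $J'_{-n}=J_n$ in \eqref{dual7}. The heart of the argument is a pointwise algebraic reversibility: I first want to check that for a single fixed index $k$, the three equations \eqref{IJ100} (relating $J_{k-1}, I_k, \w_k$ to $\wt I_k, J_k, \wt\w_k$) are equivalent to the equations \eqref{IJ101} (relating $\wt I_k, J_k, \wt\w_k$ to $J_{k-1}, I_k, \w_k$). In other words, the transformation $(J_{k-1}, I_k, \w_k)\mapsto (\wt I_k, J_k, \wt\w_k)$ is an involution on $\R_{\ge0}^3$, modulo a harmless shift of the ``service'' slot. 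Establishing this reduces to a two-case analysis according to whether $I_k > J_{k-1}$ or $I_k \le J_{k-1}$, combined with the conservation identity \eqref{IJ104}, which I would verify first by adding the $\wt I_k$ and $J_k$ equations in \eqref{IJ100}.

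With the pointwise involution in hand, the inductive step proceeds as follows. Suppose $J'_{k-1}=J_{-k+1}$. Applying the defining queueing equations to the primed weights at index $k$ gives
\[
J'_k=\w'_k+(J'_{k-1}-I'_k)^+ = \wt\w_{-k+1} + (J_{-k+1}-\wt I_{-k+1})^+.
\]
I then substitute $\wt\w_{-k+1}=I_{-k+1}\wedge J_{-k}$ from \eqref{IJ100} and use the conservation law at index $-k+1$ to rewrite $J_{-k+1}-\wt I_{-k+1}=J_{-k}-I_{-k+1}$, collapsing the expression to $J_{-k}$. Thus $J'_k=J_{-k}$, which both closes the induction and sets up the remaining two identities. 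The computations for $\wt I'_k=I_{-k+1}$ and $\wt\w'_k=\w_{-k+1}$ are completely analogous: I substitute the definitions of $I'_k$ and $J'_{k-1}$ in terms of the original tilded quantities, then use \eqref{IJ100} (or equivalently \eqref{IJ101}) to collapse $\wt\w_{-k+1} + (\wt I_{-k+1}-J_{-k})^+$ down to $I_{-k+1}$, and $\wt I_{-k+1}\wedge J_{-k+1}$ down to $\w_{-k+1}$.

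I expect no serious obstacle here, since the lemma is essentially a bookkeeping identity. The only mild pitfall is getting the index shift right: the reversal map $i\mapsto -i+1$ sends the edge index $k$ to $-k+1$ but the vertex index $k-1$ to $-k$, so care is needed to line up $J$ (indexed by vertices, i.e.\ by $k-1$ and $k$) with $I$ and $\w$ (indexed by edges, i.e.\ by $k$). Once that bookkeeping is set, the verification reduces to the pointwise involution together with one application of the conservation law per coordinate, and the induction then delivers the full lemma on $\lzb -n+1,-m\rzb$.
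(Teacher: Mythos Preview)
Your proposal is correct and follows essentially the same approach as the paper: induction on $k$ with base case $J'_{-n}=J_n$, then at each step substituting the definitions \eqref{dual7} and the induction hypothesis, and using the conservation law \eqref{IJ104} together with $\wt\w_{-k+1}=I_{-k+1}\wedge J_{-k}$ to collapse each expression. The paper's proof is slightly terser (it simply carries out the three computations in order without separately framing the pointwise involution), but the content is identical.
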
 

%\tiny \scriptsize \footnotesize \small  \normalsize     %%%%%  sizes 
 
\begin{figure}[t] 
	\begin{tikzpicture}[>=latex,scale=1.0]
	\draw[step=1cm,gray, thin] (0,0) grid (5,1);
	\draw(0,-0.28)node{\scriptsize$(m,0)$};
	\draw(0,1.25)node{\scriptsize$(m,1)$};
	\draw(5,-0.28)node{\scriptsize$(n,0)$};
	\draw(5,1.25)node{\scriptsize$(n,1)$};
	\draw(1.9,-0.28)node{\scriptsize$(i$-$1,0)$};
	\draw(3.1,-0.28)node{\scriptsize$(i,0)$};
	\draw(3,1.35)node{\large$\omega_i$};
	\draw(2.5,0.30)node{\large$I_i$};
	\draw(-0.37,0.5)node{\large$J_m$};
	\draw[line width=3.0pt](0,0)--(0,1);
	\draw[line width=3.0pt](2,0)--(3,0);
	\shade[ball color=black](3,1)circle(1mm);
	\end{tikzpicture}
	\qquad\qquad
	%%%
	\begin{tikzpicture}[>=latex,scale=1.0]
	\draw[step=1cm,gray, thin] (0,0) grid (5,1);
	\draw(0,-0.28)node{\scriptsize$(m,0)$};
	\draw(0,1.25)node{\scriptsize$(m,1)$};
	\draw(5,-0.28)node{\scriptsize$(n,0)$};
	\draw(5,1.25)node{\scriptsize$(n,1)$};
	\draw(1.8,1.25)node{\scriptsize$(i$-$1,1)$};
	\draw(3.2,1.25)node{\scriptsize$(i,1)$};
	\draw(2,-0.33)node{\large$\widetilde{\omega}_i$};
	\draw(2.55,1.38)node{\large$\widetilde{I}_i$};
	\draw(5.37,0.5)node{\large$J_n$};
	\draw[line width=3.0pt](5,0)--(5,1);
	\draw[line width=3.0pt](2,1)--(3,1);
	\shade[ball color=black](2,0)circle(1mm);
	\end{tikzpicture}
		\caption{ \small  Illustration of the weights $(I,J,\w)$  on the left and weights $(\wt I,J,\wt\w)$  on the right. Pairs $(k,a)\in\lzb m,n\rzb\,\times\lzb0,1\rzb$ mark vertices of the two-level strip.}\label{fig:IJ4} 
\end{figure}

 Next we use the  weights   given in  \eqref{IJ99}  to construct a last-passage process on the two-level strip $\lzb m,\infty\lzb\,\times\lzb0,1\rzb$ in $\Z^2$.  In this construction $I_i$  serves as a weight on the horizontal edge $((i-1,0),(i,0))$  on the lower $0$-level,   $J_m$ is a weight on the vertical edge $((m,0), (m,1))$,  and $\w_i$  is a   weight  at vertex $(i,1)$ on the upper $1$-level.  (The left diagram of Figure \ref{fig:IJ4} illustrates.) 
The  last-passage values $H_{(m,0),(n,a)}$ are defined for   $(n,a)\in\lzb m,\infty\lzb\,\times\lzb0,1\rzb$ as follows: 
\be\label{H}\begin{aligned} 
H_{(m,0),(m,0)}&=0 \quad\text{and}\quad H_{(m,0),(n,0)}=\sum_{i=m+1}^n I_i \quad \text{for } n>m,   \\
H_{(m,0),(m,1)}&=J_m\,, \\
H_{(m,0),(n,1)}&=\Bigl\{ J_m+\sum_{i=m+1}^n \w_i\Bigr\} \vee \max_{m+1\le j\le n}\Bigl\{ \,\sum_{i=m+1}^j I_i +\sum_{i=j}^n  \w_i\Bigr\}, \quad n>m.  
\end{aligned} \ee
If the given  weights  \eqref{IJ99} come from the queueing setting of Section \ref{s:queue}, then   $H_{(m,0),(n,1)}=\wt G_n-G_m$.  But this connection is not needed for the present.  

The next lemma gives alternative formulas for $H$ in terms of the weights calculated  in \eqref{IJ100}. Pictorially, imagine $\wt I_i$ as a weight on the edge $((i-1,1),(i,1))$ and $\wt\w_i$ as a weight on the vertex $(i-1,0)$.  (The right diagram of Figure \ref{fig:IJ4} illustrates.)  In \eqref{H4} below,  a sum expression of the form $a_j+\dotsm+a_{j-1}$ is interpreted as zero.   \eqref{H7} makes sense also for $\ell=n$ in which case the right-hand side simplifies to $J_n$. 

\begin{lemma} \label{lm-H3}   Let $m\le n$.  Then  
\be\label{H4} 
H_{(m,0),(n,1)}= I_{m+1}+\dotsm+I_k+J_k+\wt I_{k+1}+\dotsm+\wt I_n
\quad \text{for each $k\in\lzb m,n\rzb$.}
\ee
For each $\ell\in\lzb m,n-1\rzb$, 
\be\label{H7} 
H_{(m,0),(n,1)}-H_{(m,0),(\ell,0)}=  
\max_{\ell+1\le j\le n}\Bigl\{ \,\sum_{i=\ell+1}^j \wt\w_i +\sum_{i=j}^n  \wt I_i\Bigr\}\vee
\Bigl\{ \,\sum_{i=\ell+1}^n \wt\w_i +J_n\Bigr\}. 
%\quad \text{for each $\ell\in\lzb m,n\rzb$.}
\ee
\end{lemma}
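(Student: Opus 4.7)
The plan is to prove \eqref{H4} by an induction on the length of the strip, and then derive \eqref{H7} by invoking it a second time in the index-reversed setting provided by Lemma \ref{lm-dual12}.

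For \eqref{H4}, it suffices to establish the special case $k=n$, namely
\[ H_{(m,0),(n,1)} = \sum_{i=m+1}^n I_i + J_n, \]
because the conservation law $I_k+J_k=J_{k-1}+\wt I_k$ from \eqref{IJ104} lets one convert the right-hand side of \eqref{H4} for $k$ into the one for $k-1$ (and vice versa) a single index at a time. I would prove the $k=n$ identity by induction on $n \ge m$. The base case $n=m$ reduces to the defining equality $H_{(m,0),(m,1)}=J_m$. For the step, split the maximum in the defining formula \eqref{H} for $H_{(m,0),(n+1,1)}$ into the cases $j\le n$ and $j=n+1$, factor out $\omega_{n+1}$, and recognize the remaining expressions as $H_{(m,0),(n,1)}$ and $H_{(m,0),(n+1,0)}$, yielding the recursion
\[ H_{(m,0),(n+1,1)} = \omega_{n+1}+\bigl[\,H_{(m,0),(n,1)}\vee H_{(m,0),(n+1,0)}\,\bigr]. \]
Substituting the induction hypothesis and $H_{(m,0),(n+1,0)}=\sum_{i=m+1}^{n+1} I_i$, the identity for $n+1$ reduces to $J_{n+1}=\omega_{n+1}+(J_n-I_{n+1})^+$, which is exactly the recursion for $J_{n+1}$ in \eqref{IJ100}.

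For \eqref{H7}, the idea is that the right-hand side is itself an instance of the LPP formula \eqref{H}, but on the strip $\lzb \ell,n\rzb$ using an index-reversed copy of the primed weights. Apply Lemma \ref{lm-dual12} on $\lzb \ell,n\rzb$ (with $\ell$ playing the role of $m$), so that the reversed data $J'_{\ell}=J_n$, $I'_i=\wt I_{n+\ell+1-i}$, $\omega'_i=\wt\omega_{n+\ell+1-i}$ generate, under the recursions \eqref{IJ100}, the weights $\wt I'_i=I_{n+\ell+1-i}$, $\wt\omega'_i=\omega_{n+\ell+1-i}$, and $J'_n=J_\ell$. A direct change of summation index $j\mapsto n+\ell+1-j$ shows that the right-hand side of \eqref{H7} coincides with the LPP $H'_{(\ell,0),(n,1)}$ built from the primed weights. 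Applying the already-proved case $k=n$ of \eqref{H4} to this primed system gives
\[ H'_{(\ell,0),(n,1)} = \sum_{i=\ell+1}^n I'_i + J'_n = \sum_{i=\ell+1}^n \wt I_i + J_\ell. \]
On the other hand, \eqref{H4} applied to the original weights with $k=\ell$ yields $H_{(m,0),(n,1)}-H_{(m,0),(\ell,0)} = J_\ell + \sum_{i=\ell+1}^n \wt I_i$, and the two expressions agree.

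The substantive content is all in the induction of \eqref{H4}; the main obstacle in \eqref{H7} is purely bookkeeping. Specifically, one must verify that the two ``extreme'' terms in \eqref{H7}---the maximum over $\ell+1\le j\le n$ and the separate term $\sum_{i=\ell+1}^n \wt\omega_i + J_n$---match, under the reversal $j\mapsto n+\ell+1-j$, the two analogous terms in the definition \eqref{H} of $H'_{(\ell,0),(n,1)}$ (the interior maximum, and the pure top-level path anchored at the starting vertical edge of weight $J'_\ell=J_n$). Once this index correspondence is checked, everything else is a direct consequence of Lemma \ref{lm-dual12} and the induction of \eqref{H4}.
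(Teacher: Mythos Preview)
Your proof is correct. For \eqref{H4} you are doing essentially what the paper does: both arguments rest on the one-step recursion $H_{(m,0),(n,1)}=\omega_n+H_{(m,0),(n-1,1)}\vee H_{(m,0),(n,0)}$ together with the iteration \eqref{IJ100}, and the only cosmetic difference is that the paper tracks the two increments $A_n=\wt I_n$ and $B_n=J_n$ simultaneously, while you prove the $k=n$ case first and then slide via the conservation law.

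For \eqref{H7} your route genuinely differs from the paper's. The paper proves \eqref{H7} by a direct decreasing induction on $\ell$, unpacking the right-hand side and using $\wt\omega_{\ell+1}=I_{\ell+1}\wedge J_\ell$ and \eqref{H10} to close the step; Lemma~\ref{lm-dual12} is not invoked at all. Your argument is more conceptual: you recognize the right-hand side of \eqref{H7} as the LPP value $H'_{(\ell,0),(n,1)}$ in the reversed (primed) system, and then Lemma~\ref{lm-dual12} plus the already-established \eqref{H4} in the primed system give the answer immediately. This nicely explains \emph{why} \eqref{H7} holds---it is literally \eqref{H4} read backwards---whereas the paper's induction verifies it without exposing that structure. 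The only care needed in your version is the bookkeeping point you flag: restricting to $\lzb\ell,n\rzb$ with initial value $J_\ell$ reproduces the same $(\wt I_k,J_k,\wt\omega_k)$ for $k\ge\ell+1$ because the recursion \eqref{IJ100} is forward-iterative, so Lemma~\ref{lm-dual12} applies on that sub-strip.
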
 

Some observations before the proof.   By the two  top lines of 
\eqref{H},   equivalent to  \eqref{H4} are the increment formulas (for all $n>m$)  
%Taking $k=n$ in \eqref{H4} and using also the top line of  \eqref{H}   give the increments 
\be\label{H10}   \wt I_n=H_{(m,0),(n,1)}-H_{(m,0),(n-1,1)}
\quad\text{and}\quad 
J_n=H_{(m,0),(n,1)}-H_{(m,0),(n,0)}. 
\ee
Taking $\ell=m$ in \eqref{H7} gives this dual representation for $H$: 
 \be\label{H12} 
H_{(m,0),(n,1)}=  
\max_{m+1\le j\le n}\Bigl\{ \,\sum_{i=m+1}^j \wt\w_i +\sum_{i=j}^n  \wt I_i\Bigr\}\vee
\Bigl\{ \,\sum_{i=m+1}^n \wt\w_i +J_n\Bigr\}. 
\ee

\begin{proof}[Proof of Lemma \ref{lm-H3}]  Let  $m<n$ and develop the definition \eqref{H}. As in \eqref{G-ind18}, 
 \be\label{H20} \begin{aligned}
H_{(m,0),(n,1)}&=\Bigl\{ J_m+\sum_{i=m+1}^{n-1} \w_i\Bigr\} \vee \max_{m+1\le j\le n-1}\Bigl\{ \,\sum_{i=m+1}^j I_i +\sum_{i=j}^{n-1}  \w_i\Bigr\} \vee \Bigl\{ \,\sum_{i=m+1}^n I_i  \Bigr\}  + \w_n \\
&= H_{(m,0),(n-1,1)} \vee H_{(m,0),(n,0)} +\w_n. 
\end{aligned}\ee
Set temporarily 
\begin{align*}  % \be\label{H22}  
 A_n=H_{(m,0),(n,1)}-H_{(m,0),(n-1,1)}
\quad\text{and}\quad 
B_n=H_{(m,0),(n,1)}-H_{(m,0),(n,0)} .   
\end{align*} %\ee
 Then    \eqref{H20} gives the iterative equations 
\[   A_n=\w_n+(I_n-B_{n-1})^+
\quad\text{and}\quad 
B_n=\w_n+(B_{n-1}-I_n)^+ . \]  
Definition   \eqref{H} gives  $B_m=J_m$.  This starts an  induction.  Apply the equations above together with  \eqref{IJ100}  to obtain  $A_n=\wt I_n$ and $B_n=J_n$ for all $n\ge m+1$.    This establishes \eqref{H10}.   \eqref{H4} follows.  
 
%From the increments we deduce 
%\begin{align*}
%H_{(m,0),(n,1)} = H_{(m,0),(m,1)} + H_{(m,0),(n,1)}  - H_{(m,0),(m,1)}
%= J_m+ \wt I_{m+1} +\dotsm+\wt I_n  
%\end{align*}
%which is \eqref{H4} for $k=m$.  The remaining cases of \eqref{H4} follow by repeated use of $J_k+\wt I_{k+1}=I_{k+1}+J_{k+1}$ from \eqref{m:cons}. 

We prove \eqref{H7} by induction as $\ell$ decreases.  The base case $\ell=n$ comes from the just proved $B_n=J_n$.  Assume \eqref{H7} for $\ell+1$. Then for $\ell$ the right-hand side of \eqref{H7} equals 
\begin{align*}
&\wt\w_{\ell+1}+  \Bigl\{ \, \sum_{i=\ell+1}^n  \wt I_i\Bigr\}\vee\max_{\ell+2\le j\le n}\Bigl\{ \,\sum_{i=\ell+2}^j \wt\w_i +\sum_{i=j}^n  \wt I_i\Bigr\}\vee
\Bigl\{ \,\sum_{i=\ell+2}^n \wt\w_i +J_n\Bigr\}\\
&\quad =H_{(m,0),(\ell+1,0)}\wedge H_{(m,0),(\ell,1)} - H_{(m,0),(\ell,0)}\\[4pt] 
&\qquad\qquad 
+  \bigl\{ H_{(m,0),(n,1)} - H_{(m,0),(\ell,1)}\bigr\} \vee \bigl\{  H_{(m,0),(n,1)}-H_{(m,0),(\ell+1,0)}\bigr\}  \\[4pt] 
&\quad=H_{(m,0),(n,1)}-H_{(m,0),(\ell,0)}. 
\end{align*}
In the first equality we used $\wt\w_{\ell+1}=I_{\ell+1}\wedge J_\ell$, \eqref{H10} and the induction assumption.  
\end{proof}

The last line of \eqref{H} and formula \eqref{H12} give dual representations of the quantity $H_{(m,0),(n,1)}$. The next lemma shows that equality persists if we drop the terms that involve $J$ from both formulas.  This statement is the crucial ingredient of Lemma \ref{m:D-lm4} below.
% which underlies the intertwining that comes in the proof of Theorem \ref{th:eta-ex} in Section \ref{s:inv-cpl}. 

 \begin{lemma} \label{m:T-lm}  Let  $m\le n$ in $\Z$.  Assume given nonnegative weights $J_{m-1}$, $(I_i)_{m\le i\le n}$ and $(\w_i)_{m\le i\le n}$.    Compute $(\wt I_k, J_k, \wt\w_k)_{m\le k\le n}$ from \eqref{IJ100}.    Define  
 \be\label{m:T1}
 T_{m,n}=\max_{m\le j\le n} \Bigl\{ \,\sum_{i=m}^j I_i + \sum_{i=j}^n \w_i\Bigr\} 
 \quad\text{and}\quad 
  \wt T_{m,n}=\max_{m\le j\le n} \Bigl\{ \, \sum_{i=m}^{j} \wt\w_i +  \sum_{i=j}^n \wt I_i \Bigr\} .
 \ee
 Then $T_{m,n}=\wt T_{m,n}$.  
 \end{lemma}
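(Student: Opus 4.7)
The plan is to derive two expressions for one common last-passage quantity and combine them through a short case analysis.

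Apply Lemma \ref{lm-H3} with the origin relabeled from $(m,0)$ to $(m-1,0)$, which is legitimate because under this shift the hypothesis of that lemma matches the data $J_{m-1}, (I_i)_{m \le i \le n}, (\w_i)_{m \le i \le n}$ and the computed sequences $(\wt I_k, J_k, \wt\w_k)_{m \le k \le n}$ appearing in Lemma \ref{m:T-lm}. The primal formula \eqref{H} then gives
\[
H_{(m-1,0),(n,1)} = \Bigl( J_{m-1} + \sum_{i=m}^n \w_i\Bigr) \vee T_{m,n},
\]
while the dual formula \eqref{H12} gives
\[
H_{(m-1,0),(n,1)} = \wt T_{m,n} \vee \Bigl( \sum_{i=m}^n \wt\w_i + J_n\Bigr).
\]
Summing the conservation law \eqref{IJ104} over $i\in\lzb m,n\rzb$ and using $\wt I_i + \wt\w_i = I_i + \w_i$ yields $J_{m-1}+\sum_{i=m}^n \w_i = J_n+\sum_{i=m}^n \wt\w_i$; denote this common value by $K$. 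Equating the two expressions for $H_{(m-1,0),(n,1)}$ then gives the key identity $T_{m,n}\vee K = \wt T_{m,n}\vee K$.

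To close the proof I would split into two cases. If $T_{m,n} > K$, then $\wt T_{m,n} \vee K = T_{m,n} > K$ forces $\wt T_{m,n} = T_{m,n}$. If $T_{m,n} \le K$, I would show by induction on $i\in\lzb m,n\rzb$ that $I_i \le J_{i-1}$. The inductive input is the bound $\sum_{\ell=m}^i I_\ell + \sum_{\ell=i}^n \w_\ell \le T_{m,n} \le K$ coming from the $j=i$ term in the definition of $T_{m,n}$, which rearranges to $\sum_{\ell=m}^i I_\ell \le J_{m-1} + \sum_{\ell=m}^{i-1} \w_\ell$. Under the induction hypothesis $I_\ell \le J_{\ell-1}$ for $\ell < i$, the recursion \eqref{IJ100} telescopes to $J_{i-1} = J_{m-1} + \sum_{\ell=m}^{i-1}\w_\ell - \sum_{\ell=m}^{i-1} I_\ell$, and subtracting gives $I_i \le J_{i-1}$. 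Once this pointwise inequality holds throughout $\lzb m,n\rzb$, the recursion \eqref{IJ100} collapses to $\wt\w_i = I_i$ and $\wt I_i = \w_i$, and the expressions defining $T_{m,n}$ and $\wt T_{m,n}$ coincide term by term.

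The principal obstacle is this second case: the identity $T_{m,n}\vee K = \wt T_{m,n}\vee K$ is strictly weaker than $T_{m,n}=\wt T_{m,n}$ when both sides lie below $K$, so the LPP representation cannot close the argument on its own. The inductive unpacking of the queueing recursion under the hypothesis $T_{m,n}\le K$ is what supplies the missing information, and pinpointing the right inductive statement (the pointwise inequality $I_i\le J_{i-1}$ that degenerates \eqref{IJ100}) is the crucial insight.
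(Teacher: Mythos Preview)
Your proof is correct and takes a genuinely different route from the paper's. The paper proves the one-sided inequality $\wt T_{m,n}\le T_{m,n}$ by induction on $n$, splitting the inductive step on whether $I_n\le J_{n-1}$ or $I_n>J_{n-1}$ (the latter case invoking the LPP process $H$), and then obtains the reverse inequality by appealing to the duality Lemma~\ref{lm-dual12}. You instead extract the identity $T_{m,n}\vee K=\wt T_{m,n}\vee K$ directly from the primal and dual formulas for $H_{(m-1,0),(n,1)}$ in Lemma~\ref{lm-H3}, and then close the degenerate case $T_{m,n}\le K$ with the clean structural observation that this hypothesis forces $I_i\le J_{i-1}$ throughout $\lzb m,n\rzb$, collapsing the recursion to $\wt\w_i=I_i$, $\wt I_i=\w_i$. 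Your argument is shorter and avoids both the outer induction on $n$ and the duality lemma; the paper's approach, on the other hand, keeps the two directions cleanly separated and makes the role of the reversibility in Lemma~\ref{lm-dual12} explicit.
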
 
  
 \begin{proof}   
 The case $m=n$ is the identity  $I_n+\w_n=\wt\w_n+\wt I_n$ 
 %\be\label{m:IJ32}    I_n+\w_n=\wt\w_n+\wt I_n\ee 
 that follows from    \eqref{IJ100}.  
 
 Let $n\ge m+1$ and assume by induction that $\wt T_{m,n-1}\le T_{m,n-1}$.  Develop the definitions.  
 \be\label{T3}  \begin{aligned}
 T_{m,n}=\max_{m\le j\le n-1} \Bigl\{ \,\sum_{i=m}^j I_i + \sum_{i=j}^{n-1} \w_i\Bigr\} \vee
 \Bigl\{ \,\sum_{i=m}^n I_i  \Bigr\}  + \w_n 
 = T_{m,n-1} \vee
 \Bigl\{ \,\sum_{i=m}^n I_i  \Bigr\}  + \w_n . 
 \end{aligned}\ee
Similarly 
 \be\label{T5}  \begin{aligned}
\wt T_{m,n}   & = \wt T_{m,n-1} \vee
 \Bigl\{ \,\sum_{i=m}^n \wt\w_i  \Bigr\}  + \wt I_n 
  =   \wt T_{m,n-1} \vee
 \Bigl\{ \,\sum_{i=m}^n (I_i\wedge J_{i-1})    \Bigr\}  + \w_n +(I_n-J_{n-1})^+ \\
 &\le  T_{m,n-1} \vee
 \Bigl\{ \,\sum_{i=m}^n I_i    \Bigr\}  + \w_n +(I_n-J_{n-1})^+.  
 \end{aligned}\ee
The induction assumption was used in the last step.
%  applied the definitions of $\wt\w$ and $\wt I$,  and   dropped $J_{i-1}$  from the minimum. 
 
 \medskip 
 
 {\bf Case 1.}  $I_n\le J_{n-1}$.  This assumption kills the last term of   \eqref{T5} and gives   
 \begin{align*}
 \wt T_{m,n} % &=  T_{m,n-1} \vee  \Bigl\{ \,\sum_{i=m}^n (I_i\wedge J_{i-1})    \Bigr\}  + \w_n \\ &
 \le  T_{m,n-1} \vee
 \Bigl\{ \,\sum_{i=m}^n I_i    \Bigr\}  + \w_n  =  T_{m,n} . 
 \end{align*}

 \medskip 
 
 {\bf Case 2.}  $I_n>J_{n-1}$.   For this case induction is not needed.  We use the last-passage process $H_{(m-1,0), (\,\rdbullet\,, \,\rdbullet\,)}$. % on the interval $\lzb m-1,n\rzb$.  
 Conservation law \eqref{IJ104} and \eqref{H10}  imply  
 \[  I_n>J_{n-1}   \ \Longleftrightarrow \   \wt I_n>J_n \ \Longleftrightarrow \  
 H_{(m-1,0),(n-1,1)} < H_{(m-1,0),(n,0)} .  \]  
 Then by \eqref{H20}  
 \[   H_{(m-1,0),(n,1)} =   H_{(m-1,0),(n,0)}+\w_n =  \sum_{i=m}^n I_i   +\w_n \le T_{m,n}. \]
On the other hand, by definition \eqref{H} 
 \[   H_{(m-1,0),(n,1)} =  \Bigl\{ J_{m-1}+\sum_{i=m}^n \w_i\Bigr\} \vee  T_{m,n}.  \]
 Hence $H_{(m-1,0),(n,1)} =T_{m,n}$.   
 By the dual formula \eqref{H12} 
 \[ H_{(m-1,0),(n,1)}=  
\wt T_{m,n} \vee
\Bigl\{ \,\sum_{i=m}^n \wt\w_i +J_n\Bigr\} \ge \wt T_{m,n}.  \]
We conclude that in Case 2, $\wt T_{m,n}\le T_{m,n}$.  

\medskip 

We have shown that $\wt T_{m,n}\le T_{m,n}$.   This suffices for the proof by the duality in Lemma \ref{lm-dual12} because the roles of $T_{m,n}$ and $\wt T_{m,n}$ can be switched around.  
 \end{proof}  
 
 The next lemma is the key property of the  queueing mapping $\Dop$ that underlies our results.  Its proof relies on  Lemma \ref{m:T-lm}.    Lemma \ref{m:T-lm} applies to the queueing setting  described in Section \ref{s:queue} because equations \eqref{m:R} and \eqref{m:IJ5} ensure  that the assumptions of  Lemma \ref{m:T-lm} are satisfied.

 \begin{lemma} \label{m:D-lm4}   Assume given three sequences  $I^2, I^1, \w^1\in\R_{\ge0}^\Z$  such that the queueing operations below 
 % $\Dop(I^1, I^2)$ and  $\Dop(I^2, \w)$  
 are well-defined.  
  Let $\w^2=\Rop(I^1, \w^1)$ as defined in \eqref{m:R}.    
Then  we have the identity 
 \be\label{m:D120}   \Dop\bigl( \Dop(I^2, \w^2), \Dop(I^1, \w^1)\bigr) = \Dop\bigl( \Dop(I^2, I^1), \w^1\bigr). % = \Dop(I^2, I^1, \w^1) . 
 \ee 
 \end{lemma}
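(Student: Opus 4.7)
The plan is to deduce \eqref{m:D120} from Lemma \ref{m:T-lm} applied to the inner queue $(I^1, \w^1)$. Since a sequence in $\R^\Z_{\ge 0}$ is determined by its partial sums, it suffices to show that for every $n \in \Z$ and every $m < n$ sufficiently negative,
\[
\sum_{i=m+1}^n \mathrm{LHS}_i \;=\; \sum_{i=m+1}^n \mathrm{RHS}_i.
\]
For any pair $(X, Y)$ for which $\Dop(X, Y)$ is well defined, Lemma \ref{lm-H3} (formula \eqref{H4} at $k = m$) together with \eqref{H} gives
\[
\sum_{i=m+1}^n \Dop(X, Y)_i \;=\; H^{X,Y}_{(m,0),(n,1)} - J^{X,Y}_m,
\]
and for $m$ sufficiently negative the ``initial-condition'' summand $J^{X,Y}_m + \sum_{i=m+1}^n Y_i$ in \eqref{H} is dominated by the ``pure-path'' maximum, so that $H^{X,Y}_{(m,0),(n,1)}$ coincides with the two-line LPP value analogous to the $T_{m+1,n}$ of \eqref{m:T1}.

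I would then apply this formula twice: once to the outer $\Dop$ and once to each of the two inner $\Dop$'s on both sides of \eqref{m:D120}. Set $A = \Dop(I^2, \w^2)$, $B = \Dop(I^1, \w^1)$, $C = \Dop(I^2, I^1)$. The double expansion turns the LHS sum $\sum_{i=m+1}^n \Dop(A, B)_i$ into a maximum over two breakpoints on a 3-row strip with weights $I^2$, $\w^2$, $B$; the same procedure turns the RHS sum $\sum_{i=m+1}^n \Dop(C, \w^1)_i$ into a maximum over two breakpoints on the 3-row strip with weights $I^2$, $I^1$, $\w^1$. Conditioning on the middle breakpoint $j$ (the column at which an optimal 3-row path leaves row~1 and enters row~2), the row-1 and row-2 contributions become exactly the two-row LPP values compared in Lemma \ref{m:T-lm}. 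Since Lemma \ref{m:T-lm} applied to $(I^1, \w^1)$ states that $T_{a,b}$ computed with weights $(I^1, \w^1)$ equals $T_{a,b}$ computed with weights $(\w^2, B)$ for every $a \le b$, substituting this equality into each inner slab makes the two triple maxima identical. The boundary $J$-terms on the two sides agree thanks to the conservation law \eqref{IJ104} applied to the inner queue $(I^1, \w^1)$, and in any case cancel when passing from partial sums back to the sequence itself.

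The main obstacle is the combinatorial bookkeeping: one must verify that the iterated $\Dop$'s really do unfold into a 3-row LPP — i.e., that the outer expansion properly nests the inner one, so that the ``middle'' two-row slab is exactly what Lemma \ref{m:T-lm} governs — and that the boundary $J$-terms line up so as to cancel. The crucial point enabling Lemma \ref{m:T-lm} to be applied inside the 3-row LPP is that the lemma delivers exact equality over \emph{every} finite interval $[a,b]$, not merely an asymptotic agreement; this rigidity lets one swap the $(I^1, \w^1)$ slab for the $(\w^2, B)$ slab pointwise in $j$ before taking the outer maximum, which is what forces the two sides of \eqref{m:D120} to coincide.
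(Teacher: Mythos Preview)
Your approach is essentially the paper's --- reduce both sides to a three-row LPP and invoke Lemma~\ref{m:T-lm} on the middle slab $(I^1,\w^1)$ --- but your execution is more complicated than necessary, and the boundary-term bookkeeping you flag as the ``main obstacle'' is never actually carried out.

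The paper avoids all boundary $J$-terms by working at the level of antiderivative (potential) functions rather than finite partial sums. Pick $G^t$ with increments $I^t$. For the right-hand side, set $H_j=\sup_{\ell\le j}\{G^2_\ell+\sum_{i=\ell}^j I^1_i\}$ (so $H$ has increments $\Dop(I^2,I^1)$) and then $\wt H_k=\sup_{j\le k}\{H_j+\sum_{i=j}^k\w^1_i\}$; pushing the inner sup through gives
\[
\wt H_k=\sup_{\ell\le k}\Bigl\{G^2_\ell+\max_{\ell\le j\le k}\Bigl[\,\textstyle\sum_{i=\ell}^j I^1_i+\sum_{i=j}^k\w^1_i\Bigr]\Bigr\}
=\sup_{\ell\le k}\bigl\{G^2_\ell+T_{\ell,k}\bigr\}.
\]
The analogous construction for the left-hand side (with $\w^2$ in place of $I^1$ and $\wt I^1=\Dop(I^1,\w^1)$ in place of $\w^1$) yields $\wh H_k=\sup_{\ell\le k}\{G^2_\ell+\wt T_{\ell,k}\}$. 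Lemma~\ref{m:T-lm} says $T_{\ell,k}=\wt T_{\ell,k}$ for \emph{every} $\ell\le k$, so $\wt H=\wh H$ identically and hence their increments agree. No truncation, no ``for $m$ sufficiently negative,'' no $J$-cancellation to verify.

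Your finite-strip expansion could in principle be completed, but truncating the outer sup at $m$ forces you to carry $J^{X,Y}_m$ corrections through two nested layers, and your assertion that these ``agree thanks to the conservation law'' and ``cancel when passing from partial sums back to the sequence'' is not substantiated. The semi-infinite suprema are exactly what makes the argument a three-line computation instead of a bookkeeping exercise.
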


\begin{proof}   Choose $G^1$ and $G^2$ so that $I^t_k=G^t_k-G^t_{k-1}$ for $t=1,2$.   Let 
\[  H_j=\sup_{\ell:\,\ell\le j}  \Bigl\{  G^2_\ell+\sum_{i=\ell}^j I^1_i\Bigr\}    \]  
and then 
\be\label{m:D121}  \wt H_k=\sup_{j:\,j\le k}  \Bigl\{  H_j+\sum_{i=j}^k \w^1_i\Bigr\} 
=  \sup_{\ell:\,\ell\le k}  \Bigl\{  G^2_\ell+ \max_{j: \ell\le j\le k}  \Bigl[  \,\sum_{i=\ell}^j I^1_i +\sum_{i=j}^k \w^1_i\Bigr]\,\Bigr\} .   \ee  
The sequence $(\wt H_k-\wt H_{k-1})_{k\in\Z}$ is the output $\Dop\bigl( \Dop(I^2, I^1), \w^1\bigr)$. 

For the left-hand side of \eqref{m:D120} define first for $\Dop(I^t, \w^t)$ the sequence
\[   \wt G^t_j=\sup_{\ell:\,\ell\le j}  \Bigl\{  G^t_\ell+\sum_{i=\ell}^j \w^t_i\Bigr\}, \quad t\in\{1,2\}.    \]
Set $\wt I^1_k=\wt G^1_k-\wt G^1_{k-1}$.  The  output $\Dop\bigl( \Dop(I^2, \w^2), \Dop(I^1, \w^1)\bigr) $ is given by the increments of the sequence 
\be\label{m:D122} 
\wh H_k= \sup_{j:\,j\le k}  \Bigl\{  \wt G^2_j+\sum_{i=j}^k \wt I^1_i\Bigr\}   
=   \sup_{\ell:\,\ell\le k}  \Bigl\{  G^2_\ell + \max_{j: \ell\le j\le k}  \Bigl[  \, \sum_{i=\ell}^j \w^2_i +\sum_{i=j}^k \wt I^1_i \Bigr]\,\Bigr\}   . 
\ee 
The rightmost members of lines \eqref{m:D121} and \eqref{m:D122} are equal  because the innermost maxima over the quantities in  square brackets $[\dotsm]$  agree, by  Lemma \ref{m:T-lm}.    We have shown that $\wt H=\wh H$ and thereby proved the lemma.  
\end{proof}

We extend Lemma \ref{m:D-lm4}  inductively.  

 \begin{lemma} \label{m:D-lm6}  Let $n\ge 2$ and  assume given  $n+1$  sequences  $ I^1, I^2, \dotsc, I^n, \w^1 \in\R_{\ge0}^\Z$  such that  all the queueing operations below  are well-defined.   Define iteratively  
   \be\label{m:R3}    \w^j=\Rop(I^{j-1}, \w^{j-1}) \quad\text{ for $j=2,\dotsc,n$. } \ee
Then  we have these identities for $1\le k\le n-1$: 
  \be\label{m:D124}  \begin{aligned}
  &  \Dop^{(n+1)}(I^n, I^{n-1},\dotsc, I^1 ,\w^1) \\
    &\qquad\qquad
    =\Dop^{(k+1)}\bigl( \Dop^{(n-k+1)}[ I^n, \dotsc, I^{k+1}  , \w^{k+1}],  \Dop(I^{k}, \w^{k}), \dotsc, \Dop(I^1, \w^1)\bigr).  
      \end{aligned}   \ee 
 \end{lemma}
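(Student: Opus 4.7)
The plan is to establish \eqref{m:D124} by induction on $k\in\lzb 1,n-1\rzb$, treating Lemma \ref{m:D-lm4} as the sole analytic input and using the recursive definition \eqref{m:D67} of $\Dop^{(m)}$ simply to peel off and repackage outermost arguments.

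For the base case $k=1$, I would unfold the LHS by two applications of \eqref{m:D67} to write
\[
\Dop^{(n+1)}(I^n,\dotsc,I^1,\w^1) \;=\; \Dop\bigl(\Dop(A,I^1),\w^1\bigr), \qquad A:=\Dop^{(n-1)}(I^n,\dotsc,I^2),
\]
and then apply Lemma \ref{m:D-lm4} with its triple $(I^2,I^1,\w^1)$ instantiated as $(A,I^1,\w^1)$. The compatibility $\w^2=\Rop(I^1,\w^1)$ required by that lemma is exactly the $j=2$ case of \eqref{m:R3}, so its output is $\Dop(\Dop(A,\w^2),\Dop(I^1,\w^1))$; refolding $\Dop(A,\w^2)=\Dop^{(n)}(I^n,\dotsc,I^2,\w^2)$ gives precisely the RHS at $k=1$.

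For the inductive step, assuming \eqref{m:D124} at level $k$, I would peel the recursion twice on the first argument of the outer $\Dop^{(k+1)}$ to write
\[
\Dop^{(n-k+1)}[I^n,\dotsc,I^{k+1},\w^{k+1}] \;=\; \Dop\bigl(\Dop(B,I^{k+1}),\w^{k+1}\bigr),
\]
with $B:=\Dop^{(n-k-1)}(I^n,\dotsc,I^{k+2})$, and apply Lemma \ref{m:D-lm4} again with $(B,I^{k+1},\w^{k+1})$ playing the roles of $(I^2,I^1,\w^1)$. The lemma's $\w^2$ now matches $\w^{k+2}=\Rop(I^{k+1},\w^{k+1})$ by \eqref{m:R3}, so refolding yields
\[
\Dop^{(n-k+1)}[I^n,\dotsc,I^{k+1},\w^{k+1}] \;=\; \Dop\bigl(\Dop^{(n-k)}[I^n,\dotsc,I^{k+2},\w^{k+2}],\,\Dop(I^{k+1},\w^{k+1})\bigr).
\]
Substituting back into the RHS at level $k$ and absorbing the freshly exposed binary $\Dop$ into the outer multiqueue by means of the elementary identity
\[
\Dop^{(k+1)}\bigl(\Dop(X,Y),\,Z^{(k)},\dotsc,Z^{(1)}\bigr) \;=\; \Dop^{(k+2)}\bigl(X,Y,Z^{(k)},\dotsc,Z^{(1)}\bigr),
\]
produces the RHS at level $k+1$. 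This absorption identity is itself a one-line induction on the number of trailing $Z^{(j)}$ directly from the recursion \eqref{m:D67}, so it does not require any separate analytical input.

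No genuine obstacle is anticipated: the argument is entirely index bookkeeping, and the only point demanding care is to check at each step that the compatibility condition of Lemma \ref{m:D-lm4} is met, i.e.\ that our iteratively defined $\w^{k+2}$ really equals $\Rop(I^{k+1},\w^{k+1})$ -- which is precisely \eqref{m:R3}.
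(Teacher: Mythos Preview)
Your proof is correct. Both you and the paper use Lemma \ref{m:D-lm4} as the only analytic input and otherwise only unfold and refold \eqref{m:D67}; the difference is purely in the induction scheme. The paper inducts on $n$: it treats $n=2$ as Lemma \ref{m:D-lm4}, proves $k=1$ for general $n$ exactly as you do, and then for $2\le k\le n-1$ peels off the outermost $\Dop(\,\cdot\,,\Dop(I^1,\w^1))$ and invokes the inductive hypothesis at level $n-1$ (applied to the sequences $I^2,\dotsc,I^n,\w^2$) to collapse the inner $\Dop^{(k)}$ back to the $k=1$ form. You instead fix $n$ and induct upward on $k$, transforming the first argument of the outer $\Dop^{(k+1)}$ via Lemma \ref{m:D-lm4} and then using the absorption identity $\Dop^{(k+1)}(\Dop(X,Y),Z^{(k)},\dotsc,Z^{(1)})=\Dop^{(k+2)}(X,Y,Z^{(k)},\dotsc,Z^{(1)})$ to promote $k$ to $k+1$. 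Your route is arguably a little more direct for the statement as written, since it avoids the secondary induction on $n$; the paper's route has the minor advantage that it never needs the absorption identity as a separate step. Either way the content is the same.
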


\begin{proof}   The case $n=2$ is Lemma \ref{m:D-lm4}.  

Let $n\ge 3$ and assume that  the claim of the lemma holds when $n$ is replaced by $n-1$, for $1\le k\le n-2$.  We prove the claim for $n$.  

First  the case $k=1$, beginning with the right-hand side of \eqref{m:D124}:
\begin{align*}
& \Dop^{(2)}\bigl( \Dop^{(n)}(I^n,\dotsc, I^{2}, \w^{2}), \Dop(I^1, \w^1)\bigr)\\
&=\Dop\bigl( \Dop\bigl[  \Dop^{(n-1)}(I^n, \dotsc, I^{2} ) , \w^{2}\bigr],  \Dop(I^1, \w^1)\bigr)  \\
&=\Dop\bigl( \Dop\bigl[  \Dop^{(n-1)}(I^n, \dotsc, I^{2} ) , I^{1}\bigr],    \w^1  \bigr)  \\
&= \Dop\bigl( \Dop^{(n)}(I^n, \dotsc, I^1), \w^1\bigr)= \Dop^{(n+1)}(I^n,\dotsc, I^1 ,\w^1\bigr) . 
\end{align*} 
The first and last two equalities above are from  definition \eqref{m:D67} of $\Dop^{(n)}$, and the middle equality is Lemma \ref{m:D-lm4}.  

Now let $2\le k\le n-1$.  
   The first equality below is definition \eqref{m:D67} for  $\Dop^{(k+1)}$.    The second equality is the induction assumption.  
\begin{align*}
&\Dop^{(k+1)}\bigl( \Dop^{(n-k+1)}( I^n, \dotsc, I^{k+1}  , \w^{k+1}),  \Dop(I^{k}, \w^{k}), \dotsc, \Dop(I^1, \w^1)\bigr)\\
&=D\bigl(\Dop^{(k)}\bigl[ \Dop^{(n-k+1)}( I^n, \dotsc, I^{k+1}  , \w^{k+1}),  \Dop(I^{k}, \w^{k}), \dotsc,  \Dop(I^2, \w^2)\bigr] ,  \Dop(I^1, \w^1)\bigr)\\
&=D\bigl(    \Dop^{(n)}[ I^n, \dotsc, I^{2} ,  \w^{2}  ] , \Dop(I^1, \w^1)\bigr).  
\end{align*}
The last line above is the same as the first  line of the previous display.  The   calculation is   completed  as was done there.  
\end{proof}

In particular, for $k=n-1$ \eqref{m:D124}    gives 
 \be\label{m:D126} \begin{aligned}
 \Dop^{(n+1)}(I^n,\dotsc, I^1 ,\w^1) 
    =\Dop^{(n)}\bigl( \Dop(I^{n}  , \w^{n}),    \dotsc, \Dop(I^1, \w^1)\bigr)
   \end{aligned}  \ee 
 and  for $k=1$ 
  \be\label{m:D127}  \Dop^{(n+1)}(I^n, \dotsc, I^1 ,\w^1) 
    =\Dop\bigl( \Dop^{(n)}\bigl[ I^n, \dotsc, I^{2}  , \w^{2}\bigr],   \Dop(I^1, \w^1)\bigr).   \ee

\section{Multiclass processes} \label{s:m-class}

The distribution $\mu^{(1,\rho_1,\dotsc,\rho_n)}$  of the   $(n+1)$-tuple    $(\wb\Yw_{\!t}\,, \Busv{\rho_1}{\evec_1}{t}, \dotsc, \Busv{\rho_n}{\evec_1}{t})$  given in Theorem \ref{B-th1}   is deduced through studying two  multiclass   LPP processes. 
  Fix a positive integer $n$, the number of levels or classes.  
  We define two discrete-time Markov processes    on $n$-tuples of sequences, the {\it multiline process}  and the {\it coupled process}.   
%We define two processes on $n$-tuples of sequences, the {\it multiline process}  on the state space $\cA_{n}$ of \eqref{m:YYth} and the {\it coupled process}  on the state space $\cX_{n}$ of \eqref{m:XXth}.   
Their state space is 
\be\label{m:YYth} %\begin{aligned}   
 \cA_{n}=\Bigl\{I=(I^1, I^2, \dotsc, I^n)\in (\R_{\ge0}^\Z)^n :  \,\forall\, i\in[n] , \   \lim_{m\to-\infty} \frac1{\abs m}\sum_{k=-m}^0 I^i_k> 1\; \Bigr\} . 
%\end{aligned}
 \ee
  At each step their evolution is driven by an independent sequence of  i.i.d.\ exponential weights: so   assume that 
  \be\label{m:exp}
\text{$\w=(\w_k)_{k\in\Z}$ is a sequence of i.i.d.\ variables $\w_k\sim$ Exp($1$). }  
 \ee
%There is no loss of generality in taking $\paramb=1$ in \eqref{m:YYth}, \eqref{m:XXth}, and in $\w_k\sim$ Exp($\paramb^{-1}$) because all our variables are exponentials. Their means can be altered by multiplication by a constant. 

\subsection{Multiline process}
%\subsection{Multiline process  and  coupled process}
At time $t\in\Z_{\ge0}$  the state of the   {\it multiline process} is denoted by   $I(t)=(I^1(t),\dotsc, I^n(t))\in\cA_{n}$.
% with time parameter  and where the superscripted elements  are nonnegative sequences:  $I^i(t)=(I^i_k(t))_{k\in\Z}\in(\R_{\ge0}^\Z)^n$.  
 The one-step evolution from time $t$ to $t+1$ is defined as follows in terms of the mappings \eqref{m:DSR}. 
    Given the time $t$ configuration $I(t)=I=(I^1,I^2,\dotsc, I^n)$ in the space $\cA_{n}$  and independent  driving weights  $\w$, define the time $t+1$  configuration $I(t+1)=\bar I=(\bar I^1,\bar I^2,\dotsc, \bar I^n)$ iteratively as follows: 
\be\label{m:I4}  \begin{aligned}
	&\text{first  set $\w^1=\w$ and  then $\bar I^1  = \Dop(I^1, \w^1)$; } \\
	&\text{then for each  $i=2, 3, \dotsc, n$: }\\
	&\qquad\qquad\text{ first $\w^i=\Rop(I^{i-1}, \w^{i-1})$ and then  $\bar I^i  = \Dop(I^i, \w^i)$. }
\end{aligned}\ee 
  Thus the driving sequence $\w$ acts on the first  line $I^1$ directly,  and is then  transformed at each stage before it is passed to the next line.  Lemma \ref{lm-D14} guarantees that, for almost every $\w$ from \eqref{m:exp},   the Ces\`aro limit  
${\ddd\lim_{m\to-\infty}} {\abs m}^{-1}  \sum_{k=m}^0\w^i_k =1$ holds for each $i\in[n]$ and    the new state $\bar I$ lies in $\cA_{n}$. 

%For  $\rho=(\rho_1,\dotsc,\rho_n)\in (0, \infty)^n$ let $\nu^\rho$ denote the probability measure on  $(\R_{\ge0}^\Z)^n$ under which all coordinates $I^i_k$ are independent and $I^i_k\sim$ Exp($\rho_i^{-1}$).  In particular,    $\rho_i$ is the mean of $I^i_k$.  

\begin{theorem} \label{m:thm-I} Assume \eqref{m:exp}.   Then for each $\rho=(\rho_1,\dotsc,\rho_n)\in (1,\infty)^n$,  the product measure  $\nu^\rho$ defined in \eqref{nu5}  is invariant for the multiline process $(I(t))_{t\in \Z_{\ge0}}$.  
%	\note{Uniqueness missing.}  	
\end{theorem}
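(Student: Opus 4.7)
The plan is to proceed by induction on the level index $i\in\lzb 0,n\rzb$, tracking not just the already processed variables $(\bar I^1,\dots,\bar I^i)$ but also the remaining input $(I^{i+1},\dots,I^n,\w^{i+1})$ that will drive the later stages. Writing $\w^1=\w$ for the independent driving sequence with $\w\sim\nu^1$, I would establish the invariant that after the first $i$ stages of \eqref{m:I4},
\[
(\bar I^1,\dots,\bar I^i)\sim\nu^{(\rho_1,\dots,\rho_i)},\qquad (I^{i+1},\dots,I^n,\w^{i+1})\sim\nu^{(\rho_{i+1},\dots,\rho_n,1)},
\]
and the two tuples are independent. Specializing to $i=n$ gives $\bar I\sim\nu^\rho$, which is the claim of the theorem.

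The engine of the induction is a Burke-type property of the memoryless $\cdot/M/1$ queueing operation, which is the key content of Lemma \ref{m:Lem-I} from Appendix \ref{a:queue}: if $I^i$ and $\w^i$ are independent with $I^i\sim\nu^{\rho_i}$ (using that $\rho_i>1$) and $\w^i\sim\nu^1$, then $\bar I^i=\Dop(I^i,\w^i)$ and $\w^{i+1}=\Rop(I^i,\w^i)$ form an independent pair with $\bar I^i\sim\nu^{\rho_i}$ and $\w^{i+1}\sim\nu^1$. The base case $i=0$ is the hypothesis on the initial data. For the inductive step, the inductive hypothesis ensures that $(I^i,\w^i)$ is independent of the processed output $(\bar I^1,\dots,\bar I^{i-1})$ and of the untouched inputs $(I^{i+1},\dots,I^n)$, with $I^i\sim\nu^{\rho_i}$ and $\w^i\sim\nu^1$ independent. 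Applying the Burke-type statement to $(I^i,\w^i)$ yields the independent pair $(\bar I^i,\w^{i+1})$ with the correct marginals; since both are measurable functions of $(I^i,\w^i)$, they remain jointly independent of $(\bar I^1,\dots,\bar I^{i-1})$ and of $(I^{i+1},\dots,I^n)$. Regrouping, $(\bar I^1,\dots,\bar I^i)$ is independent of $(I^{i+1},\dots,I^n,\w^{i+1})$ and both have the claimed product distributions, closing the induction.

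The hard part is exactly the joint independence of the departure sequence $\bar I^i$ and the recycled service sequence $\w^{i+1}$; this is the classical $\cdot/M/1$ output/reversibility theorem, and without it one would recover the marginals of $\bar I^i$ but lose the independence that is needed to iterate through the remaining levels. Every other ingredient is just propagation of independence through measurable functions, which is clean because each stage of \eqref{m:I4} only touches the single pair $(I^i,\w^i)$ and produces outputs fed separately into the final tuple and into the next stage.
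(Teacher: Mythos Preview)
Your proposal is correct and matches the paper's own argument essentially verbatim: the paper states that the theorem follows from Lemma \ref{m:Lem-I} by induction on $k$, showing that $\bar I^1,\dotsc,\bar I^k,\w^{k+1},I^{k+1},\dotsc,I^n$ are independent with the correct marginals, which is exactly the invariant you track. Your identification of the Burke-type independence of $(\bar I^i,\w^{i+1})$ as the crucial input is also the paper's emphasis.
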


Theorem \ref{m:thm-I}  follows from Lemma \ref{m:Lem-I} in Appendix \ref{a:exp}:  induction on $k$ shows that $\bar I^1,\dotsc, \bar I^k$, $\w^{k+1}$, $I^{k+1},\dotsc, I^n$ are independent with $\bar I^i\sim\nu^{\rho_i}$, $\w^{k+1}\sim\nu^{1}$, $I^j\sim\nu^{\rho_j}$.   We do not have proof that $\nu^\rho$ is the unique translation-ergodic stationary distribution with mean vector $\rho$, but have no reason to doubt this either. 
% It follows from the uniqueness of the $\cdot/M/1$ queueing equilibrium and Lemma \ref{m:Lem-I} that $\nu^\rho$ is the unique stationary measure among those  translation-ergodic distributions with mean $\rho$ under which the sequence-valued components $I^1,\dotsc, I^n$ are independent.  

%\medskip 

\subsection{Coupled process}
At time $t\in\Z_{\ge0}$  the state of the   {\it coupled process}  is denoted by $\eta(t)=(\eta^1(t),   \dotsc, \eta^n(t))\in\cA_{n}$  where again  $\eta^i(t)=(\eta^i_k(t))_{k\in\Z}$.    
The evolution is simple:  the queueing operator $\Dop$  acts on each sequence $\eta^i$ with service times $\w$:  
\be\label{m:eta4}  \eta(t+1)=\bigl( \Dop(\eta^1(t), \w) ,  \Dop(\eta^2(t), \w) ,  \dotsc, \Dop(\eta^n(t), \w)   \bigr) .  \ee 

 We call $\eta(t)$  the coupled process because it  lives also on the smaller state space   $\cX_{n}\cap\cA_{n}$  (recall \eqref{m:XXth})  where the  sequences $\eta^i$   are coupled  so that $\eta^{i-1}\le \eta^i$. 
This is the case  relevant for the Busemann  processes because the latter are monotone (recall \eqref{B-mono}).   
   Inequality \eqref{m:D8} and Lemma \ref{lm-D13}  ensure that 
   %if $\eta(t)$ lies in  $\cX_{n}$ then so does  $\eta(t+1)$ for almost every $\w$ from \eqref{m:exp}.  
     the Markovian evolution $\eta(\cdot)$  is well-defined on $\cX_{n}\cap\cA_{n}$.  However,  since the mapping \eqref{m:eta4} is well-defined for more general states,  we consider it on the larger state space $\cA_{n}$ of \eqref{m:YYth}.  

  To state an invariance and uniqueness    theorem  for  all parameter vectors $\rho\in(1,\infty)^n$ we extend $\mu^\rho$ of  \eqref{mu5},   by ordering $\rho$ and  by requiring that $\eta^i=\eta^{i+1}$ if $\rho_i=\rho_{i+1}$.   This is necessary because   the mapping $\Daop^{(n)}$ in \eqref{mu5} cannot be applied if some  $\rho_i=\rho_{i+1}$.  For if $I$ and $\w$ are both i.i.d.\ Exp$(\rho^{-1})$ sequences, then $\wt G$ in \eqref{m:800} is identically infinite because it equals  a random constant plus the supremum of a symmetric random walk.   
  
%\smallskip 

\begin{definition}\label{d:murho}  Let $\rho=(\rho_1,\rho_2,\dotsc, \rho_n)\in(0,\infty)^n$. The probability measure  $\mu^\rho$ on the space $(\R_{\ge0}^\Z)^n$  is defined as follows. 
 
\smallskip 

{\rm (i)}  If $0<\rho_1<\rho_2<\dotsm<\rho_n$ then apply \eqref{mu5}.  

\smallskip 

{\rm (ii)}  If $0<\rho_1\le\rho_2\le\dotsm\le\rho_n$,  there exist $m\in[n]$, 
 a vector  $\sigma=(\sigma_1,\dotsc,\sigma_m)$ such that  $0<\sigma_1<\dotsm<\sigma_m$,  and indices  $1=i_1<i_2<\dotsm<i_m<i_{m+1}=n+1$ such that  $\rho_{i_\ell}=\dotsm=\rho_{i_{\ell+1}-1}=\sigma_\ell$ for $\ell=1,\dotsc,m$.   Let $I\sim\nu^\sigma$, $\zeta=\Daop^{(m)}(I)$, and then define $\eta=(\eta^1,\dotsc,\eta^n)\in\cX_{n}$ by   $\eta^{i_\ell}=\dotsm=\eta^{i_{\ell+1}-1}=\zeta^\ell$ for $\ell=1,\dotsc,m$.   Define  $\mu^\rho$ to be the distribution of $\eta$.  

\smallskip 

{\rm (iii)}    For general $\rho=(\rho_1,\dotsc, \rho_n)\in(0,\infty)^n$, choose   a permutation $\pi$ such  that   $\pi\rho=(\rho_{\pi(1)},\dotsc, \rho_{\pi(n)})$ satisfies  $\rho_{\pi(1)}\le\rho_{\pi(2)}\le\dotsm\le\rho_{\pi(n)}$.  Let $\pi$ act on   weight configurations $\eta=(\eta^1,\dotsc,\eta^n) $  by $\pi\eta=(\eta^{\pi(1)},\dotsc,\eta^{\pi(n)})$.  Define $\mu^\rho=\mu^{\pi\rho}\circ\pi^{-1}$, or more explicitly 
\[   E^{\mu^\rho}[f]=    E^{\mu^{\pi\rho}}[f(\pi\eta)]= E^{\mu^{\pi\rho}\circ\pi^{-1}}[f] \] 
for bounded Borel functions $f$  on $(\R_{\ge0}^\Z)^n$, where the measure $\mu^{\pi\rho}$ is the one defined in step {\rm(ii)}.  

\end{definition} 

 If there is more than one ordering permutation in step (iii),   there are identical  sequences whose ordering among themselves is immaterial.  If $\rho\in (1,\infty)^n$ then $\mu^\rho$ is supported on the space $\cA_{n}$ of \eqref{m:YYth}.   The next existence and uniqueness theorem is proved in Section \ref{s:inv-cpl}. 

%\smallskip 

%Translations $\{\theta_\ell\}_{\ell\in\Z}$ act on $\cA_{n}$ by $(\theta_\ell\eta)^i_k=\eta^i_{k+\ell}$ for $i\in[n]$ and $k,\ell\in\Z$.  A {\it translation-ergodic} probability measure $Q$ on $\cA_{n}$ is invariant under $\{\theta_\ell\}$ and satisfies $Q(A)\in\{0,1\}$ for any Borel set $A\subset\cA_{n}$ that is invariant under $\{\theta_\ell\}$.   
 
 \begin{theorem} \label{m:thm-eta}  Assume \eqref{m:exp}. 
 
{\rm (i)  Invariance.}  Let    $\rho=(\rho_1,\rho_2,\dotsc, \rho_n)\in(1,\infty)^n$.   Then  the probability measure $\mu^\rho$  of Definition \ref{d:murho}  is invariant for the  Markov chain  $(\eta(t))_{t\in \Z_{\ge0}}$ defined by \eqref{m:eta4}.  
   
   %  Suppose      $\rho=(\rho_1,\rho_2,\dotsc, \rho_n)$ satisfies   $1<\rho_1<\rho_2<\dotsm<\rho_n$.   Then  the probability measure $\mu^\rho$  defined in \eqref{mu5}   is invariant for the coupled  process $(\eta(t))_{t\in \Z_{\ge0}}$ defined by \eqref{m:eta4}.  
  
 {\rm (ii) Uniqueness.}  
  Let $\wt\mu$ be a  translation-ergodic probability measure on $\cA_{n}$  under which  coordinates $\eta^i_k$ have  finite means $\rho_i=E^{\wt\mu}[\eta^i_k]>1$.  If $\wt\mu$ is invariant for the   process $\eta(t)$, then $\wt\mu=\mu^\rho$ for $\rho=(\rho_1,\dotsc,\rho_n)\in(1,\infty)^n$.  
 
%	Suppose $\eta=(\eta_k)_{k\in\Z}$ is a random variable in $(\R_{\ge0}^n)^{\Z}$ which is independent of $\w$, that $(\eta_k)_{k\in\Z}$ is a stationary and ergodic sequence in $\R_{\ge0}^n$ with finite mean and that	$\eta \overset{d}{=} \Dop(\eta,\w)$. Then 	$\eta \sim \mu^{\lambda}$ for some $\lambda\in (0,\theta)^n$.
\end{theorem}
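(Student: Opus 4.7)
The plan for part (i) is to exploit an intertwining between the multiline dynamics of \eqref{m:I4} and the coupled dynamics of \eqref{m:eta4}, mediated by the map $\Daop^{(n)}: I \mapsto \eta$ with $\eta^i = \Dop^{(i)}(I^i, I^{i-1}, \dotsc, I^1)$. Given $I=(I^1,\dotsc, I^n)$ and common driving $\w=\w^1$, set $\eta = \Daop^{(n)}(I)$ and let $\bar I^j = \Dop(I^j,\w^j)$ be the multiline updates, with $\w^j$ defined iteratively by \eqref{m:I4}. For each $i\in[n]$,
\begin{equation*}
\Dop(\eta^i, \w) \;=\; \Dop\bigl(\Dop^{(i)}(I^i,\dotsc,I^1), \w^1\bigr) \;=\; \Dop^{(i+1)}(I^i,\dotsc,I^1,\w^1) \;=\; \Dop^{(i)}(\bar I^i,\dotsc,\bar I^1),
\end{equation*}
where the second equality is \eqref{m:D67} and the third is identity \eqref{m:D126}, which follows from Lemma \ref{m:D-lm6}. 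Thus $\Daop^{(n)}$ carries the multiline update to the coupled update. When $1<\rho_1<\dotsm<\rho_n$, initialize the multiline chain at $I(0)\sim\nu^\rho$. Theorem \ref{m:thm-I} yields $I(t)\sim\nu^\rho$ for all $t$, so $\eta(t)=\Daop^{(n)}(I(t))\sim\mu^\rho$ is stationary for the coupled dynamics, proving invariance in the strictly ordered case.

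Non-strict orderings of $\rho$ are then handled by taking limits: the weak continuity of $\mu^\rho$ in $\rho$ from Theorem \ref{murho-th1}, combined with continuity of the coupled one-step map in its initial condition, transfers stationarity to the boundary. For arbitrary $\rho$, the coupled dynamics \eqref{m:eta4} apply the same operator $\Dop(\cdot,\w)$ to every coordinate and hence commute with coordinate permutations; this is consistent with the permutation step in Definition \ref{d:murho}(iii), so invariance extends to all $\rho\in(1,\infty)^n$.

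For part (ii), reduce first to the strictly ordered case by index permutation and a continuity argument as above. The key input is that for each fixed $i$, the marginal process $\eta^i(\cdot)$ under $\wt\mu$ is itself a stationary $\cdot/M/1$ queueing chain, with service sequence $\w\sim\nu^1$ and arrival mean $\rho_i>1$. The Anantharam--Chang uniqueness theorem for the M/M/1 fixed point, cited in the Past Work subsection of the introduction, forces each marginal to equal $\nu^{\rho_i}$. To upgrade marginal to joint information, the plan is to invert the intertwining: using the queueing duality of Lemma \ref{lm-dual12}, reconstruct from a sample $\eta\sim\wt\mu$ an auxiliary $n$-tuple $I$ by successively peeling off service streams via $\Rop$. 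The intertwining then shows that $I$ is stationary under the multiline dynamics, and its components inherit the marginal uniqueness; a componentwise application of memoryless-output independence, in the spirit of Lemma \ref{m:Lem-I}, should give $I\sim\nu^\rho$ and hence $\wt\mu=\nu^\rho\circ(\Daop^{(n)})^{-1}=\mu^\rho$.

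The main obstacle is the joint uniqueness step: marginal laws alone do not determine the joint distribution of the coupled process, and a priori there is no independence across the levels under $\wt\mu$. Implementing the inversion of $\Daop^{(n)}$ requires showing that the correlations present under $\wt\mu$ are precisely those encoded by the queueing cascade, and that the reconstructed configuration $I$ is genuinely independent across levels rather than merely marginally exponential. This is where a nontrivial translation-ergodicity plus monotone coupling argument, analogous to the Ferrari--Martin theory for multiclass TASEP, will be required.
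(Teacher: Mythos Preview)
Your argument for part (i) is essentially the paper's: the intertwining $\Taop^\w\circ\Daop^{(n)} = \Daop^{(n)}\circ\Saop^\w$ via \eqref{m:D126}, combined with Theorem \ref{m:thm-I}, gives invariance of $\mu^\rho$ in the strictly ordered case. The paper handles ties and permutations by observing directly that applying the same $\Dop(\cdot,\w)$ to each coordinate commutes with permuting or identifying coordinates, rather than by a limiting argument, but your continuity route is also fine.

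Part (ii), however, has a genuine gap---one you correctly flag but do not close. The paper does \emph{not} invert $\Daop^{(n)}$ or reconstruct a multiline configuration from $\wt\mu$. It uses instead a $\wb\rho$-distance contraction argument following Chang: couple samples $X\sim\mu^\rho$ and $Y\sim\wt\mu$ as a jointly stationary ergodic pair achieving the $\wb\rho$-infimum, apply the common service sequence $\w$ to both, and show $\mE\abs{\wt X_0 - \wt Y_0}_1 \le \mE\abs{X_0 - Y_0}_1$, with strict inequality unless $X\deq Y$. The ingredients are monotonicity \eqref{m:D8} (giving $\Dop(X^i\vee Y^i,\w) \ge \Dop(X^i,\w)\vee\Dop(Y^i,\w)$), mean preservation under the queue, and a crossing argument: if $X\not\deq Y$, ergodicity forces some pair $X^i, Y^i$ to cross, whereupon Chang's argument yields the strict inequality. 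Invariance of both measures makes the $\wb\rho$-distance constant along the evolution, forcing $\wt\mu=\mu^\rho$. This works directly for all $\rho\in(1,\infty)^n$ with no ordering reduction.

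Your inversion strategy faces two obstacles beyond the one you name. First, even granting a measurable inverse of $\Daop^{(n)}$ and that the pushforward of $\wt\mu$ is multiline-invariant, the paper explicitly notes (after Theorem \ref{m:thm-I}) that uniqueness of $\nu^\rho$ among translation-ergodic stationary measures for the multiline process is \emph{not} established---so you would be trading one uniqueness problem for another at least as hard. Second, the Anantharam--Chang marginal result you invoke is itself the $n=1$ case of the same $\wb\rho$ contraction; the paper's move is precisely to run that contraction on the full $n$-tuple, which sidesteps the joint-versus-marginal issue entirely.
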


\subsection{Stationary multi-class LPP on the upper half-plane}
We reformulate the coupled process as a   multiclass CGM 
  on the upper half-plane.  
  % as a process that evolves in discrete time represented by the $y$-axis.   
    Fix the number  $n$ of classes.   Assume given i.i.d.\ Exp(1)  random  weights $\{\w_x\}_{x\in\Z\times\Z_{>0}}$, and  an initial configuration $\eta(0)=(\eta^1(0),\dotsc,\eta^n(0))\in\cA_{n}$ independent of $\w$.    Define a vector of LPP processes  $G_x=(G^1_x,\dotsc, G^n_x)$   for $x\in\Z\times\Z_{\ge0}$ as follows.   First choose initial functions $\{G^i_{(k,0)}\}_{k\in\Z}$  with the property $\eta^i_k(0)=G^i_{(k,0)}-G^i_{(k-1,0)}$.  Then for $(k,t)\in\Z\times\Z_{>0}$ define 
\begin{equation}\label{Def:G^i}
G^i_{(k,t)}=\sup_{j:\,j\leq k}\{G^i_{(j,0)} + G_{(j,1),(k,t)}\},
% \qquad  (k,t)\in \Z\times \Z_{>0},
\end{equation}
  where $G_{x,y}$ is the usual LPP process of   \eqref{m:G807} with weights $\Yw_x(\w)=\w_x$.    Then lastly define the process $\eta(t)=(\eta^1(t),\dotsc,\eta^n(t))$  for $t\in\Z_{>0}$ as the increments:   
  \be\label{G-eta}  \eta^i_k(t)=G^i_{(k,t)}-G^i_{(k-1,t)}  \quad \text{ for $i\in[n]$ and $k\in\Z$. }
 \ee
    
 \begin{theorem} \label{m:thm-eta-2}      Let    $\rho=(\rho_1,\rho_2,\dotsc, \rho_n)\in(1,\infty)^n$.    Then $\mu^\rho$ of Definition \ref{d:murho} is an   invariant measure of the increment process $\eta(\cdot)$  defined above by \eqref{G-eta} in the multiclass exponential corner growth model.  Measure $\mu^\rho$ is the unique invariant measure for $\eta(\cdot)$ among  translation-ergodic probability measures on $\cA_{n}$ with means given by $\rho$. 
\end{theorem}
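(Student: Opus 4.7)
The plan is to reduce Theorem \ref{m:thm-eta-2} to Theorem \ref{m:thm-eta} by identifying the Markov chain defined by \eqref{Def:G^i}--\eqref{G-eta} with the coupled process \eqref{m:eta4}. Once the two Markov kernels are shown to coincide, both invariance and uniqueness transfer directly.

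First I would verify that the half-plane LPP process \eqref{Def:G^i} satisfies the standard CGM recursion
\[ G^i_{(k,t)} = \max\bigl(G^i_{(k-1,t)},\, G^i_{(k,t-1)}\bigr) + \w_{(k,t)} \qquad \text{for } t \geq 1. \]
For $t \geq 2$, this follows from applying \eqref{G-ind18} to the inner last-passage value $G_{(j,1),(k,t)}$ and interchanging the resulting max with the sup over $j \leq k$ in \eqref{Def:G^i}. The case $t = 1$ is handled by direct computation, since $G_{(j,1),(k,1)} = \sum_{\ell=j}^k \w_{(\ell,1)}$ (only $\evec_1$ steps are available), and both sides of the recursion reduce to $\sup_{j \leq k}\{G^i_{(j,0)} + \sum_{\ell=j}^k \w_{(\ell,1)}\}$.

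Next I would translate this recursion into queueing language. Fixing $t$ and writing $G^i_k := G^i_{(k,t-1)}$, $\wt G^i_k := G^i_{(k,t)}$, and $\wb\w_t := (\w_{(k,t)})_{k \in \Z}$, the recursion above becomes $\wt G^i_k = \w_{(k,t)} + G^i_k \vee \wt G^i_{k-1}$, which is precisely \eqref{m:806}. Taking first differences in $k$ yields
\[ \eta^i(t) = \Dop\bigl(\eta^i(t-1),\, \wb\w_t\bigr) \qquad \text{for each } i \in [n]. \]
Because the very same driving sequence $\wb\w_t$ is fed to every class $i$ (the weights $\w_x$ do not depend on $i$), the $n$-tuple $\eta(t) = (\eta^1(t),\dotsc,\eta^n(t))$ evolves exactly by the coupled-process rule \eqref{m:eta4}, with $\wb\w_t$ playing the role of the i.i.d.\ $\mathrm{Exp}(1)$ driving sequence $\w$ of \eqref{m:exp}. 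The hypothesis $\rho \in (1,\infty)^n$, together with translation-ergodicity, ensures the stability condition \eqref{Iw} holds almost surely under any candidate stationary law, so all queueing operations and suprema in \eqref{Def:G^i} are a.s.\ attained at finite indices.

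With the two Markov kernels identified, the theorem follows immediately: invariance of $\mu^\rho$ under \eqref{G-eta} is exactly Theorem \ref{m:thm-eta}(i), and uniqueness among translation-ergodic probability measures on $\cA_n$ with mean vector $\rho$ is exactly Theorem \ref{m:thm-eta}(ii). The only genuine (and minor) obstacle is the bookkeeping at the $t=1$ boundary in the CGM recursion, where the path from $(j,1)$ to $(k,1)$ is degenerate; once this edge case is cleanly handled the rest is a verbatim translation of the dynamics.
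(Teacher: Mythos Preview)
Your proposal is correct and follows essentially the same approach as the paper: both reduce the theorem to Theorem \ref{m:thm-eta} by observing that the increment dynamics \eqref{G-eta} coincide with the coupled-process evolution \eqref{m:eta4} driven by $\wb\w_t$. The paper states this identification in a single line (equation \eqref{m:eta4.8}) without further justification, whereas you spell out the CGM recursion and its translation to \eqref{m:806}; your additional detail is accurate but not a different argument.
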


This follows from   Theorem \ref{m:thm-eta} simply by noting that   \eqref{G-eta}    can be reformulated  inductively   as 
    \be\label{m:eta4.8}  \eta(t)=\bigl( \Dop(\eta^1(t-1), \wb\w_t) ,  \Dop(\eta^2(t-1), \wb\w_t) ,  \dotsc, \Dop(\eta^n(t-1), \wb\w_t)   \bigr) , 
  \quad t\in\Z_{>0},   
     \ee 
 where $\wb\w_t=\{\w_{(k,t)}\}_{k\in\Z}$ is the sequence of weights  on level $t$.

%\subsection{Proof of Theorem \ref{m:thm-I}}
%\subsection{Existence and uniqueness of the invariant distribution for the multiline  process}
%\subsection{Invariant distribution for the multiline process} \label{s:inv-mu-li}
%
%This section proves Theorem \ref{m:thm-I} on the invariance of $\nu^\rho$ for the multiline process $I(t)$ and the uniqueness of the invariant distribution.  
%% the existence and uniqueness of the invariant distributions for the multiline process $I(t)$ (Theorem \ref{m:thm-I}).    
%The  invariance of $\nu^\rho$ follows by repeated application of Lemma  \ref{m:Lem-I} from level to level.   
 
% Let $\wt\nu$ be a translation-ergodic probability measure on $\cA_{n}$ that is invariant for the multiline process $I(t)$ defined by \eqref{m:I4}. Let $A$    Let $I\sim\nu^\rho$

%\subsection{Proof of Theorem \ref{m:thm-eta}}
%\subsection{Existence and uniqueness of the invariant distribution for the coupled  process}
\subsection{Invariant distribution for the coupled  process}\label{s:inv-cpl}

This section proves Theorem \ref{m:thm-eta}.    We separate  the invariance of $\mu^\rho$  and the uniqueness in Theorems \ref{th:eta-ex} and \ref{th:eta-un} below.    Their combination establishes Theorem \ref{m:thm-eta}.   
%the existence and uniqueness of the invariant distributions for the coupled  process $\eta(t)$ (Theorem \ref{m:thm-eta}).  
   The proof of the next theorem shows how the invariance  of $\mu^\rho$ for $\eta(t)$ follows from the invariance of $\nu^\rho$ for $I(t)$ and the fact  that the mapping $\Daop^{(n)}$ intertwines the evolutions of $I(t)$ and $\eta(t)$.

\begin{theorem}  \label{th:eta-ex} Let 
   $\rho=(\rho_1,\rho_2,\dotsc, \rho_n)\in(1,\infty)^n$.     Then $\mu^\rho$  of Definition \ref{d:murho}  is an invariant distribution  for the $(\R_{\ge0}^\Z)^n$-valued  Markov chain $\eta(t)$ defined by \eqref{m:eta4}.  
\end{theorem}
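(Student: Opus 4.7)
The plan is to exploit an intertwining relation that conjugates the multiline dynamics into the coupled dynamics via the map $\Daop^{(n)}$, and then to deduce invariance of $\mu^\rho$ for $\eta(t)$ from the already-established invariance of $\nu^\rho$ for the multiline process $I(t)$ (Theorem \ref{m:thm-I}).

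First I would treat the strictly ordered case $1 < \rho_1 < \rho_2 < \dots < \rho_n$. Fix $I \in \caY_n$ and an independent Exp(1) driving sequence $\w$. Let $\bar I = (\bar I^1,\dots,\bar I^n)$ be produced from $I$ by the multiline update \eqref{m:I4}, with auxiliary sequences $\w^1 = \w$ and $\w^i = \Rop(I^{i-1},\w^{i-1})$ for $i \ge 2$. Set $\eta = \Daop^{(n)}(I)$, so that $\eta^i = \Dop^{(i)}(I^i, I^{i-1}, \dots, I^1)$, and let $\bar\eta^i = \Dop(\eta^i, \w)$ denote the coupled update applied to $\eta$ with the same driving $\w$. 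The pivot of the argument is the identity
\begin{equation}\label{int-twi}
\bar\eta = \Daop^{(n)}(\bar I).
\end{equation}
To verify \eqref{int-twi}, expand
\begin{equation*}
\bar\eta^i = \Dop\bigl(\Dop^{(i)}(I^i, \dots, I^1), \w\bigr) = \Dop^{(i+1)}(I^i, I^{i-1}, \dots, I^1, \w)
\end{equation*}
and apply the identity \eqref{m:D126} of Lemma \ref{m:D-lm6} (with $n$ there replaced by $i$) to rewrite the right-hand side as $\Dop^{(i)}(\Dop(I^i,\w^i), \dots, \Dop(I^1,\w^1)) = \Dop^{(i)}(\bar I^i, \dots, \bar I^1)$, which is precisely $[\Daop^{(n)}(\bar I)]^i$. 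Given \eqref{int-twi}, invariance in the ordered case is immediate: if $I \sim \nu^\rho$, then $\bar I \sim \nu^\rho$ by Theorem \ref{m:thm-I}, and hence $\bar\eta = \Daop^{(n)}(\bar I) \sim \nu^\rho \circ (\Daop^{(n)})^{-1} = \mu^\rho$.

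The remaining cases of Definition \ref{d:murho} I would then dispatch by structural arguments. For case (ii), with equalities $\rho_{i_\ell} = \dots = \rho_{i_{\ell+1}-1} = \sigma_\ell$, $\mu^\rho$ is supported on configurations whose coordinates are constant on each block; the coupled update preserves this blockwise equality because it applies the same operator $\Dop(\cdot, \w)$ to every line, so invariance reduces to the strictly ordered case applied to the distinct parameter vector $\sigma$. For case (iii) the coupled update is equivariant under permutations of the $n$ lines (each line evolves by the same mapping $\Dop(\cdot,\w)$), and the relation $\mu^\rho = \mu^{\pi\rho} \circ \pi^{-1}$ then transfers invariance from the ordered case to arbitrary $\rho \in (1,\infty)^n$.

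The only substantive obstacle is the intertwining identity \eqref{int-twi}, and it reduces directly to equation \eqref{m:D126} of Lemma \ref{m:D-lm6}, which was already established in Section \ref{s:D} through the duality/LPP arguments of Lemmas \ref{lm-dual12}--\ref{m:D-lm4}. Apart from invoking this identity, the proof is essentially mechanical; the conceptual content is simply that $\Daop^{(n)}$ is a Markov intertwiner between the multiline and coupled processes driven by a common $\w$.
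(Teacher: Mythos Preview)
Your proposal is correct and follows essentially the same route as the paper: reduce to the strictly ordered case, then verify the intertwining $\Taop^\w\circ\Daop^{(n)}=\Daop^{(n)}\circ\Saop^\w$ coordinate by coordinate via the identity \eqref{m:D126}, and conclude from Theorem \ref{m:thm-I}. The only cosmetic difference is that the paper dispatches the non-ordered cases in a single sentence up front, whereas you handle them afterward with the same observation (the coupled update acts on each line by the same $\Dop(\cdot,\w)$, so blockwise equalities and permutations are preserved).
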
 

\begin{proof}%[Proof of the existence part of Theorem \ref{m:thm-eta}]  
The general claim follows from  the case $1<\rho_1<\rho_2<\dotsm<\rho_n$ because  permuting the $\{\eta^i\}$ or setting $\eta^i=\eta^{j}$ produces the exact same change in the image of   the mapping in \eqref{m:eta4}.  

So assume $1<\rho_1<\rho_2<\dotsm<\rho_n$.  
Given a driving sequence $\w$, 
denote by  $\Saop^\w$ and $\Taop^\w$  the  mappings  on the state spaces that encode a single    temporal evolution step  of the processes $I(\cdot)$ and $\eta(\cdot)$.   In other words,  the mapping  from time $t$ to $t+1$ defined by \eqref{m:I4} for the multiline process  is encoded as    $I(t+1)=\Saop^\w( I(t))$.    For   the coupled process the step in \eqref{m:eta4} is encoded as   $\eta(t+1)=\Taop^\w( \eta(t))$.   Let   $\Daop=\Daop^{(n)}$ denote the mapping \eqref{m:eta5} that constructs the coupled configuration from the multiline configuration.   Let $\Daop_k$, $\Saop^\w_k$ and $\Taop^\w_k$ denote the $k$th $\R_{\ge0}^\Z$-valued  coordinates of  the images of these  mappings. 

 Let $I\sim\nu^\rho$ be a multiline configuration  with product exponential distribution $\nu^\rho$.   
We need to show that if $\eta$ has the distribution $\mu^\rho$  of $\Daop(I)$, then so does $ \Taop^\w( \eta)$ when $\w$ is an independent sequence of i.i.d.\ Exp($1$) weights.   For the argument we can assume that $\eta=\Daop(I)$.  
%Let $\bar I=\Saop( I, \w)$ be the result of evolving $I$ with $\w$.  
  As before let $\w^1=\w$ and   iteratively  
    $\w^j=\Rop(I^{j-1}, \w^{j-1})$ for $j=2, 3,\dotsc,n$.    The fourth  equality below is \eqref{m:D126}.  The other equalities are consequences of definitions.   
\begin{align*}
 %\Taop^\w_k(\Daop(I))= 
 \Taop^\w_k(\eta)&=\Dop(\eta^k,\w) =\Dop\bigl(   \Dop^{(k)}(I^k, \dotsc, I^1), \w^1 \bigr) 
  = \Dop^{(k+1)}(I^k, \dotsc, I^1, \w^1)\\
&\overset{\eqref{m:D126}}= \;  \Dop^{(k)}\bigl( \Dop(I^k, \w^k),   \Dop(I^{k-1}, \w^{k-1}), \dotsc,  \Dop(I^1, \w^1)\bigr)  \\
&= \Dop^{(k)} \bigl(\Saop^\w_k(I) ,\Saop^\w_{k-1}(I) ,\dotsc, \Saop^\w_1(I)\bigr)   = \Daop_k(\Saop^\w(I)).  
\end{align*} 
Since the above works for all coordinates $k\in[n]$,  we have $\Taop^\w(\eta) = \Daop(\Saop^\w(I))$.   Since $\eta=\Daop(I)$, we have verified the intertwining 
\be\label{int-twi} \Taop^\w(\Daop(I))= \Daop(\Saop^\w(I)).  \ee
 By Theorem \ref{m:thm-I},  
$\Saop^\w(I)\overset{d}=I\sim\nu^\rho$.  Consequently  $\Taop^\w(\eta)\overset{d}=\Daop(I)\sim\mu^\rho$. 
\end{proof}

  \begin{theorem}  \label{th:eta-un}  Assume \eqref{m:exp}. 
  Let $\wt\mu$ be a  translation-ergodic probability measure on $\cX_{n}$  under which each coordinate $\eta^i_k$ has a finite mean.  If $\wt\mu$ is invariant for the coupled process $\eta(t)$, then $\wt\mu=\mu^\rho$ for the mean vector $\rho$ of $\wt\mu$.  
%	Suppose $\eta=(\eta_k)_{k\in\Z}$ is a random variable in $(\R_{\ge0}^n)^{\Z}$ which is independent of $\w$, that $(\eta_k)_{k\in\Z}$ is a stationary and ergodic sequence in $\R_{\ge0}^n$ with finite mean and that	$\eta \overset{d}{=} \Dop(\eta,\w)$. Then 	$\eta \sim \mu^{\lambda}$ for some $\lambda\in (0,\paramb)^n$.
\end{theorem}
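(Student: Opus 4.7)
The plan is to proceed by induction on the number of classes $n$, combining the classical Anantharam--Chang uniqueness theorem for the $\cdot/M/1$ queueing fixed point with the intertwining identity \eqref{int-twi} that connects the multiline and coupled processes. The base case $n=1$ is immediate: the one-coordinate dynamics $\eta^1(t+1)=\Dop(\eta^1(t),\w)$ is precisely the memoryless-server queueing recursion, so its unique translation-ergodic invariant distribution with mean $\rho_1>1$ is the product exponential $\nu^{\rho_1}$, which coincides with $\mu^{\rho_1}$ by Definition~\ref{d:murho}.

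For the inductive step I would first reduce to strictly ordered means $\rho_1<\dotsm<\rho_n$; the equal-mean and unordered cases are then consequences of Definition~\ref{d:murho}(ii)--(iii) together with the translation-ergodic hypothesis. Two easy reductions come directly from the rule \eqref{m:eta4}: since each coordinate $\eta^i$ evolves autonomously under the shared driving sequence $\w$, the projection of $\wt\mu$ onto the first $n-1$ coordinates is translation-ergodic and invariant for the $(n-1)$-class coupled process, so by the induction hypothesis it equals $\mu^{(\rho_1,\dotsc,\rho_{n-1})}$; and applying the same observation to the single coordinate $\eta^n$ forces its marginal under $\wt\mu$ to be $\nu^{\rho_n}$, again by Anantharam--Chang.

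The remaining and central task is to identify the joint coupling between $\eta^n$ and $(\eta^1,\dotsc,\eta^{n-1})$. Here I would invoke the intertwining \eqref{int-twi} in reverse. From the known distribution $\mu^{(\rho_1,\dotsc,\rho_{n-1})}$ of the first $n-1$ coordinates, reconstruct a multiline configuration $(I^1,\dotsc,I^{n-1})\sim\nu^{(\rho_1,\dotsc,\rho_{n-1})}$ by inverting $\Daop^{(n-1)}$ on a set of full measure. Using the invariance of $\wt\mu$ together with the intertwining, this lift should extend to a full multiline configuration $(I^1,\dotsc,I^n)$ such that $\eta^n=\Dop^{(n)}(I^n,I^{n-1},\dotsc,I^1)$, and a separate argument should force $I^n$ to be independent of $(I^1,\dotsc,I^{n-1})$ with marginal $\nu^{\rho_n}$. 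Once these are in place, the identity $\eta=\Daop^{(n)}(I^1,\dotsc,I^n)$ together with Definition~\ref{d:murho} gives $\wt\mu=\mu^\rho$.

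The principal obstacle is precisely this reverse intertwining: producing a measurable, translation-covariant inverse of $\Daop^{(n-1)}$, and then verifying that the reconstructed $I^n$ is genuinely independent of the lower layers with the correct exponential marginal. This is where the triangular array representation developed in Section~\ref{s:array} should play its decisive role, providing an explicit combinatorial description of the intertwining mapping that makes both the inversion and the independence structure transparent. A secondary technical point is ruling out pathological invariant measures that are not obtained from the multiline construction; this should be handled by showing, via the ergodic hypothesis and the recurrence properties of the $\cdot/M/1$ queue used in the Anantharam--Chang proof, that any invariant law must lie in the image of $\Daop^{(n)}$.
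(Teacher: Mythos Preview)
Your proposal outlines an inductive strategy but leaves the central step entirely open, and that step does not go through as you suggest. The paper's proof takes a completely different and much more direct route.

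The genuine gap is in your ``reverse intertwining'' step. Knowing that the marginals of $(\eta^1,\dotsc,\eta^{n-1})$ and of $\eta^n$ are correct does not by itself pin down their joint law: there are many couplings of $\mu^{(\rho_1,\dotsc,\rho_{n-1})}$ with $\nu^{\rho_n}$ that respect the ordering $\eta^{n-1}\le\eta^n$, and you have given no mechanism to rule out all but the one coming from $\Daop^{(n)}$. The intertwining \eqref{int-twi} is a forward identity $\Taop^\w\circ\Daop=\Daop\circ\Saop^\w$; it does not provide a measurable inverse of $\Daop^{(n)}$, and there is no reason an arbitrary $\wt\mu$-distributed configuration must lie in the image of $\Daop^{(n)}$ at all. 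The triangular array of Section~\ref{s:array} does not supply such an inverse either; in the paper it is used for a different purpose (establishing independent increments of $\Bus^\rcbullet_{x-\evec_1,x}$ across a single edge), not for uniqueness. So what you call the ``principal obstacle'' is in fact the entire content of the theorem, and your sketch offers no argument for it.

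The paper's actual proof avoids induction and inversion altogether. Following Chang \cite{MR1303943}, it introduces the $\wb\rho$ distance \eqref{rhobar_s} on translation-invariant laws and shows (Proposition~\ref{P:Contract}) that one application of the coupled dynamics is a weak contraction in $\wb\rho$, and a \emph{strict} contraction whenever the two input laws differ. The strictness comes from a crossing argument: under a jointly ergodic optimal coupling $(X,Y)$ with equal means, if $X\ne Y$ in law then some pair $X^i,Y^i$ must cross, and then the coordinatewise maximum $Z^i=X^i\vee Y^i$ satisfies $\mE[\wt X^i_0\vee\wt Y^i_0]<\mE[\wt Z^i_0]$ after one queueing step. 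Since both $\mu^\rho$ and $\wt\mu$ are fixed points with the same mean vector, strict contraction forces $\wt\mu=\mu^\rho$. This argument handles all $n$ at once and needs no knowledge of the internal structure of $\mu^\rho$ beyond its existence and invariance.
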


We prove Theorem \ref{th:eta-un} following Chang  \cite{MR1303943}, by showing that the evolution contracts the $\wb\rho$  distance between stationary and ergodic sequences. Let $\eta=(\eta_k)_{k\in\Z}$ and $\xi=(\xi_k)_{k\in\Z}$ be stationary processes taking values  in $\R_{\ge0}^n$.  Their $\wb\rho$ distance is defined by 
\begin{equation}\label{rhobar_s}
\wb\rho(\eta,\xi)=\inf_{(X,Y)\in \mathcal{M}}\mE[\,\abs{X_0-Y_0}_{1}],
\end{equation}
where $\mathcal{M}$ is the set of jointly defined  stationary  sequences $(X,Y)=(X_k,Y_k)_{k\in\Z}$ such that $X\overset{d}{=} \eta$ and $Y\overset{d}{=} \xi$, $\mE$ is the  expectation on the probability space on which the coupling $(X,Y)$ is defined,
and $|\cdot |_{1}$ is the $\ell^1$ distance on $\R_{\ge0}^n$.
%$\|\eta_1-\xi_1\|_{1}=\sum_{i=1}^n|\eta^i_1-\xi^i_1|$ is the $L^1$ norm in $\R_{\ge0}^n$
 
From \cite[Theorem 9.2]{MR2840299}    we know   that (i) $\wb\rho$ induces a  metric on 
the space of translation-invariant distributions  and  (ii) if $\eta$ and $\xi$ are both ergodic, there exists a   jointly stationary and ergodic pair  $(X,Y)$ at which  the infimum in \eqref{rhobar_s} is attained. 

The following  is a straight-forward generalization of Theorem 2.4 of \cite{MR1303943} to $\R_{\ge0}^n$-valued   stationary and ergodic 
sequences  $\eta=(\eta^1, \dots, \eta^n)$ and $\xi=(\xi^1, \dots, \xi^n)$  where  $\eta^i=(\eta^i_k)_{k\in\Z}$ and $\xi^i=(\xi^i_k)_{k\in\Z}$ are random elements of $\R_{\ge0}^\Z$.  Let 
\[    \wt\eta= (\wt\eta^1, \dots, \wt\eta^n)=\bigl( \Dop(\eta^1,\w), \dotsc, \Dop(\eta^n,\w)\bigr) \]
and similarly  $\wt\xi=(\wt\xi^1, \dots, \wt\xi^n)$ denote the outcome of applying the queueing map $\Dop(\cdot, \w)$ to each sequence-valued coordinate.  

\begin{proposition}\label{P:Contract}%(Contractiveness of input-output map)
 Let  $\w$ satisfy \eqref{m:exp}.  Let the $\R_{\ge0}^n$-valued   stationary and ergodic processes 
 $\eta$ and $\xi$ be  
independent of $\w$ and have finite means that satisfy $\E[\eta^i_k]=\E[\xi^i_k]=\lambda_i>1$ for $i\in[n]$ and $k\in\Z$.  
 Then
\begin{equation}\label{contract}
\wb\rho(\wt\eta,\wt\xi)\leq \wb\rho(\eta,\xi).
\end{equation} 
 If  $\eta$ and $\xi$  have different distributions the inequality in \eqref{contract} is strict. %and $\w_k$ has unbounded support, i.e. $\P(\w_k>c)>0$ for all $c\geq 0$.
\end{proposition}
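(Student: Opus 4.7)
The plan is to generalize the $n=1$ argument of Chang \cite{MR1303943} coordinatewise. First, invoke \cite[Theorem 9.2]{MR2840299} to choose a jointly stationary and ergodic coupling $(X,Y) = (X_k, Y_k)_{k \in \Z}$ of $\eta$ and $\xi$ at which the infimum in \eqref{rhobar_s} is attained, so $\wb\rho(\eta, \xi) = \mE[|X_0 - Y_0|_1]$. Enlarge the probability space to support the $\mathrm{Exp}(1)$ i.i.d.\ driving sequence $\w$ independently of $(X,Y)$, and set $\wt X^i = \Dop(X^i, \w)$ and $\wt Y^i = \Dop(Y^i, \w)$ for each $i \in [n]$. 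Then $(\wt X, \wt Y)$ is a jointly stationary coupling of $\wt\eta$ and $\wt\xi$, hence $\wb\rho(\wt\eta, \wt\xi) \le \mE[|\wt X_0 - \wt Y_0|_1]$.

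The key coordinatewise bound comes from monotonicity and mean preservation. For each $i$ set $Z^i_{\vee,k} = X^i_k \vee Y^i_k$ and $Z^i_{\wedge,k} = X^i_k \wedge Y^i_k$, so $Z^i_{\vee,k} - Z^i_{\wedge,k} = |X^i_k - Y^i_k|$. Monotonicity \eqref{m:D8} of $\Dop$ sandwiches both $\wt X^i_k$ and $\wt Y^i_k$ between $\Dop_k(Z^i_\wedge, \w)$ and $\Dop_k(Z^i_\vee, \w)$, giving
\[
|\wt X^i_k - \wt Y^i_k| \; \le \; \Dop_k(Z^i_\vee, \w) - \Dop_k(Z^i_\wedge, \w).
\]
Since $\lambda_i > 1$, the stationary arrival processes $Z^i_\vee, Z^i_\wedge$ fed with service sequence $\w \sim \nu^1$ yield stable $\cdot/M/1$ queues, so the associated sojourn-time sequences are stationary with finite mean; taking expectations in the conservation law \eqref{m:cons} then yields $\mE[\Dop_0(Z^i_\vee, \w)] = \mE[Z^i_{\vee,0}]$ and the analogous identity for $Z^i_\wedge$. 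Expecting the displayed inequality and summing over $i$ gives
\[
\mE[|\wt X_0 - \wt Y_0|_1] \; \le \; \sum_{i=1}^n \mE\bigl[Z^i_{\vee,0} - Z^i_{\wedge,0}\bigr] \; = \; \mE[|X_0 - Y_0|_1] \; = \; \wb\rho(\eta, \xi),
\]
which proves \eqref{contract}.

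For the strict inequality, suppose $\wb\rho(\wt\eta,\wt\xi) = \wb\rho(\eta,\xi)$. Tracing back, every inequality above must be an almost sure equality, which forces, for every $i$ and $k$,
\[
\{\wt X^i_k, \wt Y^i_k\} = \{\Dop_k(Z^i_\wedge, \w), \Dop_k(Z^i_\vee, \w)\} \qquad \text{a.s.}
\]
In other words, the pair of output paths $(\wt X^i, \wt Y^i)$ coincides, up to a site-by-site swap, with the outputs of feeding the monotonically coupled pair $(Z^i_\wedge, Z^i_\vee)$ through the queue with service $\w$. One then exploits the memorylessness of the $\mathrm{Exp}(1)$ service sequence $\w$ together with the joint ergodicity of $(X, Y, \w)$ to argue, as in the scalar case of \cite{MR1303943}, that such a rigid sample-path constraint together with the common marginal mean $\mE[X^i_0] = \mE[Y^i_0] = \lambda_i$ leaves no possibility except $X^i = Y^i$ a.s. Carried out jointly over $i \in [n]$ this yields $\eta \overset{d}= \xi$, contradicting the assumption that their distributions differ.

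The main obstacle is the last step: converting the pathwise rigidity on $\{\wt X^i_k, \wt Y^i_k\}$ into the distributional equality $\eta \overset{d}= \xi$. The contraction bound in the first two paragraphs is routine once the monotonicity \eqref{m:D8} and mean-preservation consequence of \eqref{m:cons} are in hand, but the strict contraction is precisely where one must exploit the memoryless service distribution delicately. This is the technical heart of Chang's argument, and the work consists of transporting it faithfully to the $\R^n_{\ge 0}$-valued setting.
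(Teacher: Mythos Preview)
Your argument for the non-strict inequality \eqref{contract} follows essentially the paper's route, with one technical pitfall. The paper works only with $Z^i = X^i \vee Y^i$ and the identity $|\wt X^i_0 - \wt Y^i_0| = 2(\wt X^i_0 \vee \wt Y^i_0) - \wt X^i_0 - \wt Y^i_0 \le 2\wt Z^i_0 - \wt X^i_0 - \wt Y^i_0$, where $\wt Z^i = \Dop(Z^i,\w)$, and then takes expectations using mean preservation for $X^i$, $Y^i$, $Z^i$. Your two-sided sandwich with $Z^i_\wedge$ is equivalent in spirit, but nothing guarantees $\mE[Z^i_{\wedge,0}] > 1$: the minimum of two mean-$\lambda_i$ variables can have mean $\le 1$, in which case the queue $\Dop(Z^i_\wedge,\w)$ need not be stable and your appeal to \eqref{m:cons} for mean preservation of $\Dop(Z^i_\wedge,\w)$ is unjustified. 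The one-sided version avoids this since $\mE[Z^i_{\vee,0}] \ge \lambda_i > 1$ automatically.

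The real gap is in the strict inequality, as you yourself flag. You correctly note that equality would force $\wt X^i_0 \vee \wt Y^i_0 = \wt Z^i_0$ a.s.\ for every $i$, but then defer to Chang without saying how his argument is entered. The paper supplies the concrete bridge you are missing: a \emph{crossing} reduction. One first shows that if $\eta$ and $\xi$ have different laws then, under the optimal ergodic coupling $(X,Y)$, some coordinate pair $(X^i,Y^i)$ must cross, meaning $\mP(\exists\,k,\ell:\ X^i_k>Y^i_k,\ X^i_\ell<Y^i_\ell)=1$. This is proved by contraposition: if $X^i$ and $Y^i$ do not cross then almost surely $X^i\ge Y^i$ or $X^i\le Y^i$, and a short ergodicity-plus-equal-means argument (applied to the shift-invariant event $\{X^i\ge Y^i,\ X^i\ne Y^i\}$) forces $X^i=Y^i$ a.s.; if this held for every $i$ then $\eta\deq\xi$. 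Once a crossing coordinate $i$ is exhibited, Chang's one-dimensional computation on pp.\ 1131--1132 of \cite{MR1303943} applies verbatim to that coordinate and yields the strict inequality $\mE[\wt X^i_0 \vee \wt Y^i_0] < \mE[\wt Z^i_0]$, which suffices. Your ``pathwise rigidity'' formulation does not isolate this crossing step, and without it there is no entry point into Chang's argument.
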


Before the proof we complete the proof of Theorem \ref{th:eta-un}.  Let $\rho=E^{\wt\mu}[\eta_0]$ be the mean vector of $\wt\mu$.   Let $\eta\sim\mu^\rho$ and $\xi\sim\wt\mu$. By the known invariance of $\mu^\rho$ and the assumed invariance of $\wt\mu$,  $\wt\eta\deq\eta$ and $\wt\xi\deq\xi$.  Hence $\wb\rho(\wt\eta,\wt\xi)=\wb\rho(\eta,\xi)$.  The last statement of Proposition  \ref{P:Contract} forces $\wt\mu=\mu^\rho$.  

% \note{Can we weaken the assumtption on $\eta,\xi$ in Proposition \eqref{P:Contract} by omitting their ergodicity in $\R_{+}^n$? If so, then in \eqref{strict}, $(X,Y)$ may not be jointly ergodic. What can we say about it?}

\begin{proof}[Proof of Proposition \ref{P:Contract}] 
%The proof is a straightforward generalization of that of \cite[Theorem 2.4]{MR1303943}, under the choice of the  $\ell^1$ norm on $\R_{\ge0}^n$.  
%At the end of the proof we leave some details to be looked up from \cite{MR1303943}.  

Let $(X,Y)=((X^1,\dotsc,X^n), (Y^1,\dotsc,Y^n))$ be an arbitrary $\R_{\ge0}^{2n}$-valued    jointly stationary and ergodic process  with marginals $X\overset{d}{=} \eta$ and $Y\overset{d}{=} \xi$,   independent of the weights $\w$, with $(X,Y,\w)$ coupled together under a probability measure $\mP$ with expectation $\mE$.  As above, write   $\wt X^i=(\wt X^i_k)_{k\in\Z}=\Dop(X^i,\w)$ and $\wt Y^i=(\wt Y^i_k)_{k\in\Z}=\Dop( Y^i,\w)$ for the action of the queueing operator on the individual sequences $X^i=(X^i_k)_{k\in\Z}$ and   $Y^i=(Y^i_k)_{k\in\Z}$.     Inequality \eqref{contract} follows from showing  
\begin{equation}\label{contract_2}
\mE[|\wt X_0-\wt Y_0|_{1}] \leq \mE[|X_0- Y_0|_{1}]. 
\end{equation}

Define the process $Z$ by $ Z^i_k=X^i_k \vee  Y^i_k$.  Then
\begin{equation}\label{etaxi}
|X_0- Y_0|_{1}=\sum_{i=1}^n|X^i_0- Y^i_0|=\sum_{i=1}^n (2 Z^i_0-X^i_0- Y^i_0).
\end{equation}
Let   $\wt Z^i =\Dop( Z^i,\w)$. Then $\wt Z^i\geq \wt X^i\vee \wt Y^i$ by monotonicity \eqref{m:D8}. Hence
\be\label{wtetaxi}\begin{aligned}
|\wt X_0-\wt Y_0|_{1}&=\sum_{i=1}^n|\wt X^i_0-\wt Y^i_0|=\sum_{i=1}^n \bigl(2(\wt X^i_0\vee \wt Y^i_0)-\wt X^i_0-\wt Y^i_0\bigr)  
%\\  &
\leq \sum_{i=1}^n \bigl(2\wt Z^i_0-\wt X^i_0-\wt Y^i_0\bigr).  
\end{aligned}\ee

The triple $(X,Y,\w)$ is jointly stationary and ergodic because $\w$ is an i.i.d.\ process independent of the ergodic process $(X,Y)$.   Consequently, as translation-respecting mappings of ergodic processes,  both  $(X,Y,Z,\w)$ and   $(\wt X, \wt Y,\wt Z)$ are jointly stationary and ergodic.    The queueing stability condition $\mE (X^i_0)>\mE(\w_0)$ implies  $\mE (\wt X^i_0)=\mE (X^i_0)$, and   by the same token    $\mE (\wt Y_0^i)=\mE ( Y_0^i)$ and $\mE (\wt Z_0^i)=\mE ( Z_0^i)$. This goes back to Loynes \cite{loyn-62} and follows also from Lemma \ref{lm-D14} in Appendix \ref{a:queue}.  
 Taking expectations on both sides of \eqref{etaxi} and \eqref{wtetaxi} gives \eqref{contract_2}.

\medskip

For the strict inequality  assume that $\eta$ and $\zeta$ are not equal in distribution and let $(X,Y)$  be a jointly ergodic  pair that gives the minimum in \eqref{rhobar_s}.   To deduce  the strict inequality
\be \label{strict}
\sum_{i=1}^n\mE[\wt X^i_0\vee \wt Y^i_0] < \sum_{i=1}^n\mE(\wt Z^i_0). 
\ee
 we can tap directly into the  proof of  part (ii)   of  Theorem 2.4 in \cite{MR1303943}, once we show that  $X^i$ and $Y^i$ must cross for some $i\in[n]$.   $X^i$ and $Y^i$  {\it cross} if with probability one there exist $k,\ell\in\Z$ such that $X^i_k>Y^i_k$ and $X^i_\ell<Y^i_\ell$. 

  Suppose $X^i$ and $Y^i$ do not cross.  Then  
  $\mP(\{X^i\ge Y^i\}\cup\{X^i\le Y^i\})=1$. 
 % with probability one,  either $X^i_k\leq Y^i_k$ for all $k\in \Z$  or $X^i_k\geq Y^i_k$ for all $k\in \Z$.   
  We show that this implies 
  $X^i=Y^i$ a.s.   This gives us the contradiction needed, since  $X^i=Y^i$  for all $i\in[n]$ implies that $\eta\deq\xi$.  
  
To show  $X^i=Y^i$ a.s., write  $\{X^i=Y^i\}^c=A^+\cup A^-$ a.s.\  for 
\begin{align*} 
A^+&=\{X^i\geq Y^i \text{ and } X^i_k>Y^i_k \text{ for some }k\in \Z\} \\
\text{ and}\quad  A^-&=\{X^i\leq Y^i \text{ and } X^i_k<Y^i_k \text{ for some }k\in \Z\}.
\end{align*} 
%where $A^+=\{X^i\geq Y^i \text{ and } X^i_k>Y^i_k \text{ for some }k\in \Z\}$ and $A^-=\{X^i\leq Y^i \text{ and } X^i_k<Y^i_k \text{ for some }k\in \Z\}$. 
$A^+$ is a shift-invariant event.   By the joint ergodicity of $(X,Y)$  and $\mE(X^i_0-Y^i_0)=0$, 
\begin{align*}
0&= \ind_{A^+}\cdot  \lim_{n\to\infty}\frac{1}{2n+1}\,\sum_{-n \leq k\leq n}(X^i_k-Y^i_k)\\
&= \lim_{n\to\infty}\frac{1}{2n+1}\,\sum_{-n \leq k\leq n}(X^i_k-Y^i_k)\cdot  \ind_{\theta_{-k}A^+}  =   \mE[(X^i_0-Y^i_0)\cdot  \ind_{A^+}] \quad\text{a.s. }   
\end{align*} 
Thus $X^i_0=Y^i_0$ a.s.\ on $A^+$.   By the shift-invariance of $A^+$,  $X^i_k=Y^i_k$ a.s.\ on $A^+$ for all $k\in\Z$.  But then it must be that $\mP(A^+)=0$.  Similarly $\mP(A^-)=0$.  

To summarize, we have shown that some  $X^i$ and $Y^i$ must cross. Following the proof on p.~1131-1132 of \cite{MR1303943} gives the strict inequality \eqref{strict}.   The connection between the notation of \cite{MR1303943} and ours is $S_k=\w_k$, $(T^1_{1,k-1},\,T^1_{2,k-1})=(X^i_k, \wt X^i_k)$ and  $(T^2_{1,k-1},\,T^2_{2,k-1})=(Y^i_k, \wt Y^i_k)$.  
\end{proof}

%Since $\eta\in \cX_n$, we have $\lambda_1\leq\lambda_2\leq\cdots\leq\lambda_n$. 

\section{Proofs of the results for Busemann functions} 
\label{s:Bpf}

 We prove the theorems of Section \ref{s:Bus} in the order in which they were stated. 

\subsection{Continuity of $\mu^\rho$ and distribution of the Busemann process} 
  
 \begin{proof}[Proof of the continuity claim of Theorem \ref{murho-th1}]    Fix $\rho=(\rho_1,\dotsc,\rho_n)$ such that  $0<\rho_1<\dotsc<\rho_n$.   Let  $\{\rho^h\}_{h\in\Z_{>0}}$  be a sequence of parameter vectors such that  $\rho^h=(\rho^h_1,\dotsc,\rho^h_n)\to(\rho_1,\dotsc,\rho_n)$ as $h\to\infty$.   We construct variables $\eta^h\sim\mu^{\rho^h}$ and $\eta\sim\mu^{\rho}$ such that $\eta^h\to\eta$ coordinatewise almost surely.  
 
Let $I=(I^1,\dotsc,I^n)\sim\nu^\rho$ and define  $I^{h,i}_k=(\rho^h_i/\rho_i)I^i_k$.  Then $I^h=(I^{h,1},\dotsc,I^{h,n})\sim\nu^{\rho^h}$ and we have the pointwise limits $I^{h,i}_k\to I^i_k$ for all $i\in[n]$ and $k\in\Z$ as $h\to\infty$.   Furthermore,  the assumption in \eqref{D88} holds: 
\be\label{D88.88} 
\varlimsup_{\substack{m\to-\infty\\h\to\infty}}    \,\biggl\lvert \frac1{\abs m}\sum_{j=m}^0 I^{h,i}_j-\rho_i\,\biggr\rvert=0 \quad\text{almost surely}  \quad \forall i\in[n]. 
\ee

Let $\eta^h=\Daop^{(n)}(I^h)$ and  $\eta=\Daop^{(n)}(I)$. Apply Lemma \ref{lm-D33} repeatedly to show that $\eta^h\to\eta$ coordinatewise almost surely:     
%\begin{itemize}  \item 

(1)  $\eta^{h,1}=I^{h,1}\to I^1=\eta^1$ needs no proof.   
 
%\item  
(2) Lemma \ref{lm-D33} gives the limit  $\eta^{h,2}=\Dop(I^{h,2}, I^{h,1}) \to  \Dop(I^{2}, I^1)=\eta^2$ and that $\Dop(I^{h,2}, I^{h,1})$ satisfies the hypotheses of the lemma. 
  
% \item 
(3) For  $\eta^{h,3}=\Dop^{(3)}(I^{h,3},I^{h,2} , I^{h,1})=\Dop( \Dop(I^{h,3},I^{h,2}) , I^{h,1})$,  by case (2),  $\Dop(I^{h,3},I^{h,2})$ satisfies the hypotheses of Lemma \ref{lm-D33}. Then Lemma \ref{lm-D33}   gives   $\Dop( \Dop(I^{h,3},I^{h,2}) , I^{h,1})\to \Dop( \Dop(I^{3},I^{2}) , I^{1})$ and that  $\Dop^{(3)}(I^{h,3},I^{h,2} , I^{h,1})$ satisfies the hypotheses of Lemma \ref{lm-D33}. 
  
%\item  
(4) Proceed by induction.  From the case of $i-1$ sequences,  $\Dop^{(i-1)}(I^{h,i},I^{h,i-1} , \dotsc , I^{h,2})$ satisfies the hypotheses of Lemma \ref{lm-D33}.  Apply the Lemma to conclude that the mapping for $i$ sequences obeys the limit 
\[  \eta^{h,i}=   \Dop^{(i)}(I^{h,i} , \dotsc , I^{h,2}, I^{h,1}) =\Dop( \Dop^{(i-1)}(I^{h,i},I^{h,i-1} , \dotsc , I^{h,2}), I^{h,1}) \to  \eta^i \]
  and  also satisfies the assumptions of Lemma \ref{lm-D33}. This is then passed on to be used for the case of $i+1$ sequences.   
%  \end{itemize} 

   This completes the proof of $\eta^h\to\eta$.
\end{proof}

 \begin{proof}[Proof of Theorem \ref{B-th1}]
 Introduce an $(n+1)$st parameter value $\rho_0\in(1,\rho_1)$.  
 By Lemma \ref{B-lm-erg},   the $\R_{\ge0}^{n+1}$-valued  $\Z$-indexed process  
% $\Busv{\rho_1,\dotsc,\rho_n}{\evec_1}{t}=
% %(\Busv{\rho_1}{\evec_1}{t}, \dotsc, \Busv{\rho_n}{\evec_1}{t})=
% \{(\Bus^{\rho_1}_{(k-1,t),(k,t)},\dotsc, \Bus^{\rho_n}_{(k-1,t),(k,t)}\}_{k\in\Z}$
\be\label{B-666}    \Busv{\rho_0,\dotsc,\rho_n}{\evec_1}{t}=
 \{(\Bus^{\rho_0}_{(k-1,t),(k,t)}, \Bus^{\rho_1}_{(k-1,t),(k,t)},\dotsc, \Bus^{\rho_n}_{(k-1,t),(k,t)})\}_{k\in\Z}
\ee  
  is stationary and  ergodic under translation  of the $k$-index and furthermore  $\Busv{\rho_0,\dotsc,\rho_n}{\evec_1}{t}$ has the same distribution as the sequence $\Busv{\rho_0,\dotsc,\rho_n}{\evec_1}{t-1}$ on the previous level $t-1$.   Lemma \ref{B-lm3} gives $\Busv{\rho_0,\dotsc,\rho_n}{\evec_1}{t}=\Dop(  \Busv{\rho_0,\dotsc,\rho_n}{\evec_1}{t-1}, \wb\Yw_t)$.    By the uniqueness given in  Theorem \ref{m:thm-eta},  the distribution of $\Busv{\rho_0,\dotsc,\rho_n}{\evec_1}{t}$ must be the invariant distribution $\mu^{(\rho_0,\dotsc,\rho_n)}$. 
  
 Let $\rho_0\searrow 1$.  By Lemma \ref{B-reco-lm},  almost surely, 
 \[   \lim_{\rho_0\searrow 1}  \Busv{\rho_0, \rho_1, \dotsc,\rho_n}{\evec_1}{t} \;=\; 
 \bigl\{\bigl(\Yw_{(k,t)}, \Bus^{\rho_1}_{(k-1,t),(k,t)},\dotsc, \Bus^{\rho_n}_{(k-1,t),(k,t)}\bigr)\bigr\}_{k\in\Z}, 
\] 
while Theorem \ref{murho-th1} gives the weak convergence $\mu^{(\rho_0,\rho_1,\dotsc,\rho_n)}\to\mu^{(1,\rho_1,\dotsc,\rho_n)}$ as $\rho_0\searrow 1$.
%
%The theorem follows from showing the weak convergence $\mu^{(\rho_0,\rho_1,\dotsc,\rho_n)}\to\mu^{(1,\rho_1,\dotsc,\rho_n)}$ as $\rho_0\searrow 1$.   In \eqref{mu5} the measure  $\mu^{(\rho_0,\rho_1,\dotsc,\rho_n)}$ is defined   as the distribution of the $(n+1)$-tuple $(\eta^0,\dotsc,\eta^n)=\Daop^{(n+1)}(I^0,\dotsc,I^n)$ where $(I^0,\dotsc,I^n)$ has the product distribution $\nu^{(\rho_0,\dotsc,\rho_n)}$.  Couple the processes for different values of $\rho_0$ by defining $I^0_i=\rho_0X_i$ where $X=\{X_i\}_{i\in\Z}$ is an i.i.d.\ Exp(1) sequence, independent of  $I^1,\dotsc,I^n$.    By the construction,   $\eta^0=I^0=\rho_0X$ and  $\eta^i=D(D^{i}(I^i,\dotsc,I^1), \rho_0X)$ for $i=1,\dotsc,n$.    Then, as  $\rho_0\searrow 1$,  $\eta^0\to X$ and  Lemma \ref{lm-D17} implies that $\eta^i\to D(D^{i}(I^i,\dotsc,I^1), X)$.  Thus $(\eta^0,\dotsc,\eta^n)\to\Daop^{(n)}(X,I^1,\dotsc,I^n)$ which by definition has distribution $\mu^{(1,\rho_1,\dotsc,\rho_n)}$.  
 \end{proof}

  The proof of Lemma \ref{B-lm3} below relies  on the iterative equations \eqref{m:IJ5}.  Since these equations can have solutions other than the one coming  from the queuing mapping,  additional conditions are needed as specified in Lemma \ref{lind-lm} in Appendix \ref{a:queue}. 

 \begin{proof}[Proof of Lemma \ref{B-lm3}]   We show that there is an event $\Omega_0$ of full probability on which the assumptions of Lemma \ref{lind-lm} hold for   the sequences  $(\wt I, J, I, \w)=(\Busv{\rho}{\evec_1}{t}, \Busv{\rho}{\evec_2}{t}, \Busv{\rho}{\evec_1}{t-1}, \wb\Yw_t)$ for {\it all} $\rho\in(1,\infty)$ and  $t\in\Z$. 
 
Assumption \eqref{lind-a1} requires 
 \[ \lim_{m\to-\infty}  \sum_{k=m}^0 (\Yw_{(k,t)}-\Bus^\rho_{(k,t-1),(k+1,t-1)})=-\infty  
 \quad \forall t\in\Z.  \]
This  holds almost surely simultaneously for all   $\rho$ in a dense countable subset of $(1,\infty)$.  By the monotonicity \eqref{B-mono} this extends  to all $\rho\in(1,\infty)$ on a single event of full probability.

 Utilizing  the recovery property \eqref{B-reco} and additivity \eqref{B-add}, 
 \begin{align*}
 &\Yw_{(k,t)} + \bigl(  \Bus^\rho_{(k-1,t-1),(k,t-1)} -   \Bus^\rho_{(k-1,t-1),(k-1,t)} \bigr)^+ \\
 &\qquad 
 =  \Bus^\rho_{(k-1,t),(k,t)}\wedge\Bus^\rho_{(k,t-1),(k,t)}
 + \bigl(  \Bus^\rho_{(k-1,t),(k,t)} -   \Bus^\rho_{(k,t-1),(k,t)} \bigr)^+ \\
 &\qquad = \Bus^\rho_{(k-1,t),(k,t)}  
 \end{align*}
and 
  \begin{align*}
 &\Yw_{(k,t)} + \bigl(  \Bus^\rho_{(k-1,t-1),(k-1,t)}- \Bus^\rho_{(k-1,t-1),(k,t-1)}  \bigr)^+ \\
 &\qquad =  \Bus^\rho_{(k-1,t),(k,t)}\wedge\Bus^\rho_{(k,t-1),(k,t)}
 + \bigl(  \Bus^\rho_{(k,t-1),(k,t)}- \Bus^\rho_{(k-1,t),(k,t)}  \bigr)^+ \\
 &\qquad = \Bus^\rho_{(k,t-1),(k,t)}  . 
 \end{align*}
These equations are valid for all $\rho$ and all $(k,t)$  on a single event of full probability because this is true of properties \eqref{B-reco} and   \eqref{B-add}.   Assumption \eqref{lind-a2} has been verified. 

 Lemma \ref{lind-lm3} implies that with probability one,   for all   $\rho$ in a dense countable subset of $(1,\infty)$,   $\Yw_{(k,t)}= \Bus^\rho_{(k,t-1),(k,t)}$ for infinitely many $k<0$.  Monotonicity \eqref{B-mono} and recovery \eqref{B-reco}    extend this property  to all $\rho\in(1,\infty)$  on the same event.
 \end{proof}

 \subsection{Triangular arrays and independent increments} \label{s:array}
  To extract further properties of the distribution $\mu^\rho$, we  develop an alternative  representation for $\eta=\Daop^{(n)}(I)$ of \eqref{m:eta5}.  
 Assume given $I=(I^1,\dotsc,I^n)\in \caY_{n}$.  
 Define arrays  $\{\eta^{i,j}: 1\le j\le i\le n\}$ and $\{\xi^{i,j}: 1\le j\le i\le n\}$ of elements of $\R_{\ge0}^\Z$  as follows. 
 The $\xi$ variables are passed from one $i$ level to the next. 
 \begin{enumerate}[(i)] \itemsep=4pt
\item For $i=1$ set  $\eta^{1,1}=I^1=\xi^{1,1}$. 
\item For $i=2,3,\dotsc,n$, 
\be\label{m:ar5} \begin{aligned}
 &\ \eta^{i,1}=I^i\\
% (\eta^{i,j}, \xi^{i,j})&=(\Dop,\Rop) (\eta^{i,j+1}, \xi^{i-1,j-1}) 
 &\begin{cases}  \eta^{i,j} =\Dop(\eta^{i,j-1}, \xi^{i-1,j-1})\\[3pt]
  \xi^{i,j-1}= \Rop(\eta^{i,j-1}, \xi^{i-1,j-1})  \end{cases} 
 \quad\text{for } j=2, 3, \dotsc, i \\
 &\; \,\xi^{i,i}=\eta^{i,i}.  
 \end{aligned}\ee
 Step $i$ takes inputs from two sources:  from the outside it takes $I^i$, and from step $i-1$ it takes the    configuration  $\xi^{i-1,\, \rcbullet}=(\xi^{i-1, 1},  \xi^{i-1, 2}, \dotsc,  \xi^{i-1,i-2},  \xi^{i-1,i-1}=\eta^{i-1,i-1})$. 
 \end{enumerate}  
 
 Lemma \ref{lm-D14} ensures that the arrays are well-defined for $I\in \caY_{n}$.  
 The inputs $I^1,\dotsc,I^n$ enter the algorithm one by one in order.  If the process is stopped after the step  $i=m$ is completed for some $m<n$, it produces  the arrays for $(I^1,\dotsc,I^m)\in \caY_{m}$. 
 
 \begin{figure}
 \[
 \begin{matrix}
 \eta^{1,1} \\[2pt] 
 \eta^{2,1} &\eta^{2,2} \\[2pt]
  \eta^{3,1} &\eta^{3,2} & \eta^{3,3} \\
 \vdots& \vdots& \vdots&  \ddots \\[1pt]  
   \eta^{n,1} &\eta^{n,2} & \eta^{n,3}  & \dotsm&  \eta^{n,n}
  \end{matrix}  
  \qquad \qquad 
  \begin{matrix}
 \xi^{1,1} \\[2pt]
 \xi^{2,1} &\xi^{2,2} \\[2pt]
  \xi^{3,1} &\xi^{3,2} & \xi^{3,3} \\
 \vdots& \vdots& \vdots&  \ddots \\[1pt] 
   \xi^{n,1} &\xi^{n,2} & \xi^{n,3}  & \dotsm&  \xi^{n,n}
  \end{matrix}  
\] 
\caption{\small Arrays $\{\eta^{i,j}: 1\le j\le i\le n\}$ and $\{\xi^{i,j}: 1\le j\le i\le n\}$.   } 
\label{fig-ar} 
\end{figure}

  The arrays are illustrated in Figure \ref{fig-ar}.  The following properties of the arrays come from Lemmas  \ref{lm:eta5} and \ref{lm:ar4} and their proofs.  
 \begin{enumerate}[{\rm(i)}] \itemsep=4pt
 \item  The input of the $\Daop^{(n)}$-mapping  lies on the left edge of the $\eta$-array: $(\eta^{1,1},\dotsc, \eta^{n,1})=(I^1,\dotsc,I^n)$.   The output of the $\Daop^{(n)}$-mapping  lies on   the right-hand diagonal edges of both arrays: \[ (\eta^{1,1}, \eta^{2,2}, \dotsc, \eta^{n,n})=(\xi^{1,1}, \xi^{2,2}, \dotsc, \xi^{n,n})=\Daop^{(n)}(I^1,\dotsc,I^n)\;\sim\; \mu^{(\rho_1, \,\rho_{2},\dotsc, \,\rho_n)}. \]
 
% \item Row $(\eta^{i,1}, \eta^{i,2}, \dotsc, \eta^{i,i})$ of the $\eta$-array is constructed by letting   arrival process $\eta^{i,1}$ go through $i-1$ service stations labeled $j=1,2,\dotsc,i-1$.  Station $j$  uses service process $\xi^{i-1,j}$ and outputs  departure process $\eta^{i,j+1}$  which then serves as the input of station $j+1$ if $j\le i-2$.  
 \item The $j$th column $(\eta^{j,j}, \eta^{j+1,j},  \dotsc, \eta^{n,j})$ of the $\eta$-array  has the product distribution  $\nu^{(\rho_j, \,\rho_{j+1},\dotsc, \,\rho_n)}$.   It is obtained from the $(j-1)$st column  $(\eta^{j,j-1}, \eta^{j+1,j-1},  \dotsc, \eta^{n,j-1})$ by  the mapping   \eqref{m:I4}   with  $\eta^{j-1,j-1}=\xi^{j-1,j-1}$ as the external driving weights. 
 
 \item  Row $(\xi^{i,1}, \xi^{i,2},  \dotsc, \xi^{i,i})$  of the $\xi$-array   has the product   distribution 
$\nu^{(\rho_1, \,\rho_{2},\dotsc, \,\rho_i)}$. 
 
% \item  Row $i$ of the $\eta$-array and row $i$ of the $\xi$-array are produced by feeding the arrival process  $\eta^{i,1}=I^i$ through  $i-1$ service stations  whose service processes  come from the previous  row  $(\xi^{i-1,1},\dotsc, \xi^{i-1,i-1})$ of the $\xi$-array.    
 
 \end{enumerate}

 \begin{lemma}\label{lm:eta5}  Let   $I=(I^1,\dotsc,I^n)\in \caY_{n}$.  
Let  %$\eta^{n}=\Dop^{(n)}(I^{n}, I^{n-1}, \dotsc, I^1)$ 
$(\wt\eta^1,\dotsc,\wt\eta^n)=\Daop^{(n)}(I^1,\dotsc,I^n)$
be given by the   mapping   \eqref{m:eta5}.  Let $\{\eta^{i,j}\}$ be the array defined above.   Then $\wt\eta^i=\eta^{i,i}$ for $i=1,\dotsc,n$. 
 \end{lemma}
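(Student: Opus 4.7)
The plan is to show $\wt\eta^i = \Dop^{(i)}(I^i, \dotsc, I^1) = \eta^{i,i}$ by telescoping through the columns of the $\eta$-array, with \eqref{m:D126} of Lemma \ref{m:D-lm6} as the engine. A preliminary induction on $j$, using the array recursion $\eta^{i,j} = \Dop(\eta^{i,j-1}, \xi^{i-1,j-1})$ and definition \eqref{m:D67}, immediately gives the horizontal identity $\eta^{i,j} = \Dop^{(j)}(I^i, \xi^{i-1,1}, \dotsc, \xi^{i-1,j-1})$; this is not strictly needed below but it clarifies what $\eta^{i,i}$ represents.

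The heart of the proof is the following telescope identity, established by induction on $k \in \{1, 2, \dotsc, i-1\}$:
\begin{equation}
\Dop^{(i)}(I^i, I^{i-1}, \dotsc, I^1) = \Dop^{(i-k+1)}(\eta^{i,k}, \eta^{i-1,k}, \dotsc, \eta^{k,k}). \tag{$\ast$}
\end{equation}
The base case $k=1$ holds since $\eta^{j,1} = I^j$ for all $j$ by the array's definition. For the step from $k$ to $k+1$, apply \eqref{m:D126} to the right-hand side of $(\ast)$, identifying the lemma's parameters as $n_{\mathrm{lem}} = i-k$, $I^{j}_{\mathrm{lem}} = \eta^{k+j, k}$ for $j = 1, \dotsc, i-k$, and $\w^1_{\mathrm{lem}} = \eta^{k,k} = \xi^{k,k}$. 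A secondary induction on $j$, using the array recursion $\xi^{i,m} = \Rop(\eta^{i,m}, \xi^{i-1,m})$, then verifies $\w^{j}_{\mathrm{lem}} = \xi^{k+j-1, k}$: the base case uses $\xi^{k,k} = \eta^{k,k}$, and the step produces $\w^{j}_{\mathrm{lem}} = \Rop(\eta^{k+j-1,k}, \xi^{k+j-2,k}) = \xi^{k+j-1,k}$. With this identification, \eqref{m:D126} rewrites the right-hand side of $(\ast)$ as
\[
\Dop^{(i-k)}\bigl(\Dop(\eta^{i,k}, \xi^{i-1,k}), \dotsc, \Dop(\eta^{k+1,k}, \xi^{k,k})\bigr) = \Dop^{(i-k)}(\eta^{i,k+1}, \dotsc, \eta^{k+1,k+1}),
\]
the last equality being the array recursion $\eta^{j,k+1} = \Dop(\eta^{j,k}, \xi^{j-1,k})$. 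This is $(\ast)$ at step $k+1$.

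At $k = i-1$ identity $(\ast)$ reads $\Dop^{(i)}(I^i, \dotsc, I^1) = \Dop(\eta^{i,i-1}, \eta^{i-1,i-1})$. The final transition to $k = i$ is not covered by Lemma \ref{m:D-lm6} (which needs $n \ge 2$), but it is immediate from the array's terminal rule: $\eta^{i,i} = \Dop(\eta^{i,i-1}, \xi^{i-1,i-1}) = \Dop(\eta^{i,i-1}, \eta^{i-1,i-1})$, since $\xi^{i-1,i-1} = \eta^{i-1,i-1}$. Hence $\Dop^{(i)}(I^i, \dotsc, I^1) = \eta^{i,i}$, as desired. The main obstacle is purely bookkeeping: precisely aligning the abstract indices in Lemma \ref{m:D-lm6} with the array coordinates $(i, j)$, and matching the iterated $\w^j_{\mathrm{lem}}$ sequences with the column entries of the $\xi$-array. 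Well-definedness of every queueing composition encountered along the way is ensured by $I \in \caY_n$ together with Lemma \ref{lm-D14} in Appendix \ref{a:queue}.
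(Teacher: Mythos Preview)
Your proof is correct and follows essentially the same approach as the paper: both establish the telescoping identity $\Dop^{(i)}(I^i,\dotsc,I^1)=\Dop^{(i-k+1)}(\eta^{i,k},\dotsc,\eta^{k,k})$ by induction on $k$, using \eqref{m:D126} as the engine and the array recursions to identify the iterated $\w^j$ with the $\xi$-column entries. The paper's \eqref{m:ar8} with $\ell-1=k$ is exactly your induction step, and both arguments conclude the final equality $\Dop(\eta^{i,i-1},\eta^{i-1,i-1})=\eta^{i,i}$ directly from the array definition.
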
  
  
 \begin{proof}  
It suffices to prove $\wt\eta^n=\eta^{n,n}$  because the same proof applies to all $i$.  The construction of the array can be reimagined as follows.  Start with 
$(\eta^{1,1}, \eta^{2,1},\dotsc, \eta^{n,1})=(I^1, I^2,\dotsc, I^n)$.  
Then for  $\ell= 2, 3, \dotsc ,n-1$    %  $\ell\in\lzb 2,n-1\rzb$ 
iterate the following step that maps   the $(n-\ell+2)$-vector 
\[  (\eta^{n, \ell-1},  \eta^{n-1, \ell-1}, \dotsc, %\eta^{\ell+2,\ell-1}, \eta^{\ell+1,\ell-1}, 
\eta^{\ell, \ell-1}, \eta^{\ell-1,\ell-1}) \]
to  the $(n-\ell+1)$-vector 
 \begin{align*}
   &(\eta^{n, \ell},  \eta^{n-1, \ell}, \dotsc, \eta^{\ell+1,\ell}, \eta^{\ell, \ell} ) 
   \\[4pt]   &\qquad 
=   \bigl( \Dop(\eta^{n, \ell-1}, \xi^{n-1,\ell-1}),  \Dop(\eta^{n-1, \ell-1},\xi^{n-2,\ell-1}), 
%\\[4pt]    &\qquad \qquad \qquad \qquad
   \dotsc, % \Dop(\eta^{\ell+2,\ell-1}, \xi^{\ell+1,\ell-1}), 
   \Dop(\eta^{\ell+1,\ell-1}, \xi^{\ell,\ell-1}),  \Dop(\eta^{\ell, \ell-1}, \eta^{\ell-1,\ell-1}) \bigr). 
\end{align*}
The $\xi$-variables above satisfy 
\begin{align*}
\xi^{\ell,\ell-1}&=\Rop(\eta^{\ell, \ell-1}, \xi^{\ell-1,\ell-1})=\Rop(\eta^{\ell, \ell-1}, \eta^{\ell-1,\ell-1})\\
\xi^{\ell+1,\ell-1}&=\Rop(\eta^{\ell+1, \ell-1}, \xi^{\ell,\ell-1})\\
&\ \,\vdots\\
\xi^{n-1,\ell-1}&=  \Rop(\eta^{n-1, \ell-1},\xi^{n-2,\ell-1}). 
\end{align*}
Thus \eqref{m:D126}  % Lemma \ref{m:D-lm6} 
implies that 
\be\label{m:ar8} \begin{aligned}
&\Dop^{(n-\ell+2)}\bigl(\eta^{n, \ell-1},  \eta^{n-1, \ell-1}, \dotsc, %\eta^{\ell+2,\ell-1}, \eta^{\ell+1,\ell-1}, 
\eta^{\ell, \ell-1}, \eta^{\ell-1,\ell-1}\bigr) \\
&\qquad=  \Dop^{(n-\ell+1)}\bigl(\eta^{n, \ell},  \eta^{n-1, \ell}, \dotsc, \eta^{\ell+1,\ell}, \eta^{\ell, \ell}\bigr). 
\end{aligned}\ee
In the derivation below, use the first line of \eqref{m:ar5} to replace each $I^i$ with $\eta^{i,1}$.  Then iterate \eqref{m:ar8}  from $\ell=2$   to $\ell=n-1$ to obtain  
\begin{align*}
\wt\eta^{n}&=\Dop^{(n)}(I^{n}, I^{n-1}, \dotsc,  I^3,  I^2, I^1) \\
&=  \Dop^{(n)}\bigl(\eta^{n, 1},  \eta^{n-1, 1}, \dotsc, \eta^{3,1}, \eta^{2,1}, \eta^{1, 1} \bigr)\\
&=  \Dop^{(n-1)}\bigl(\eta^{n, 2},  \eta^{n-1, 2}, \dotsc,  \eta^{3,2},\eta^{2,2} \bigr)\\
&=\dotsm= \Dop^{(3)}(\eta^{n,n-2}, \eta^{n-1,n-2}, \eta^{n-2,n-2}) = \Dop(\eta^{n,n-1}, \eta^{n-1,n-1})=\eta^{n,n}. 
\qedhere \end{align*} 
  \end{proof}  
  
 The next two lemmas describe the distributions of the arrays.

\begin{lemma} \label{lm:ar4}    Fix  $0<\rho_1<\dotsm<\rho_n$ 
 and let the    multiline configuration $I=(I^1, \dotsc, I^n)$ have distribution 
$\nu^{(\rho_1,\dotsc, \rho_n)}$.   Let  $\{\eta^{i,j}\}_{1\le j\le i\le n}$ and $\{\xi^{i,j}\}_{1\le j\le i\le n}$  be the arrays defined above.  
%$\nu^\rho$ 
   Then for each   $1\le i, j\le n$,  configuration  $(\eta^{j,j}, \eta^{j+1,j},  \dotsc, \eta^{n,j})$  has   distribution 
$\nu^{(\rho_j, \,\rho_{j+1},\dotsc, \,\rho_n)}$ and configuration  $(\xi^{i,1}, \xi^{i,2},  \dotsc, \xi^{i,i})$  has   distribution 
$\nu^{(\rho_1, \,\rho_{2},\dotsc, \,\rho_i)}$.   In particular,    each $\eta^{i,j}$ has distribution $\nu^{\rho_i}$  and each    $\xi^{i,j}$ has distribution $\nu^{\rho_j}$.
%For each   $1\le j\le i\le n$,    $\eta^{i,j}$ has distribution $\nu^{\rho_i}$  and    $\xi^{i,j}$ has distribution $\nu^{\rho_j}$.
\end{lemma}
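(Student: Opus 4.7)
The plan is to prove the two distributional assertions by separate inductions; they do not rely on each other.

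\textbf{Column distribution.} Induct on $j$. The base case $j=1$ is immediate since column $1$ of $\eta$ equals $(I^1,\ldots,I^n)\sim\nu^{(\rho_1,\ldots,\rho_n)}$. For the inductive step, I would first verify by direct inspection of the recursions \eqref{m:ar5} and \eqref{m:I4} the claim indicated in observation (ii) preceding the lemma: column $j$ of the $\eta$-array is obtained from column $j-1$ by the multiline evolution \eqref{m:I4} applied to inputs $(\eta^{j,j-1},\dotsc,\eta^{n,j-1})$ with driving weight $\omega=\xi^{j-1,j-1}=\eta^{j-1,j-1}$, the intermediate residuals $\omega^k$ of \eqref{m:I4} being identified with $\xi^{j+k-2,\,j-1}$. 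Under the inductive hypothesis that column $j-1$ has product distribution $\nu^{(\rho_{j-1},\rho_j,\ldots,\rho_n)}$, the driving $\eta^{j-1,j-1}$ is independent of $(\eta^{j,j-1},\ldots,\eta^{n,j-1})$, so the extension of Theorem~\ref{m:thm-I} that permits a driving sequence of mean $\mu<\rho_j$ (provable by the same induction on $k$ as Theorem~\ref{m:thm-I} since Lemma~\ref{m:Lem-I} already accommodates general rates satisfying the $M/M/1$ stability condition) yields column $j$ with the claimed distribution.

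\textbf{Row distribution.} Induct on $i$, the base case $i=1$ being trivial since $\xi^{1,1}=I^1\sim\nu^{\rho_1}$. For the step I treat the row-$i$ construction as processing boxes $(i,2),(i,3),\ldots,(i,i)$ in sequence, where box $(i,j)$ takes inputs $(\eta^{i,j-1},\xi^{i-1,j-1})$ and produces outputs $(\eta^{i,j},\xi^{i,j-1})$. A secondary induction on $j$ would establish that after box $(i,j)$ the variables
\[
\xi^{i,1},\dots,\xi^{i,j-1},\;\eta^{i,j},\;\xi^{i-1,j},\xi^{i-1,j+1},\dots,\xi^{i-1,i-1}
\]
are mutually independent with marginals $\nu^{\rho_1},\ldots,\nu^{\rho_{j-1}},\nu^{\rho_i},\nu^{\rho_j},\ldots,\nu^{\rho_{i-1}}$. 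The outer inductive hypothesis on row $i-1$ of $\xi$, combined with the independence of $I^i$ from $I^1,\ldots,I^{i-1}$, supplies the initial state. Each secondary step applies Lemma~\ref{m:Lem-I} to the pair $(\eta^{i,j},\xi^{i-1,j})\sim\nu^{\rho_i}\otimes\nu^{\rho_j}$ (independent by the preceding state, with $\rho_i>\rho_j$ giving stability), producing $(\eta^{i,j+1},\xi^{i,j})\sim\nu^{\rho_i}\otimes\nu^{\rho_j}$ independent and independent of the remaining variables. After box $(i,i)$ we have $(\xi^{i,1},\ldots,\xi^{i,i-1},\eta^{i,i})\sim\nu^{(\rho_1,\ldots,\rho_{i-1},\rho_i)}$, and the identification $\xi^{i,i}=\eta^{i,i}$ concludes.

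\textbf{Main technical point.} The only delicate ingredient is the mild extension of Lemma~\ref{m:Lem-I} (hence of Theorem~\ref{m:thm-I}) to driving sequences of arbitrary mean $\mu<\lambda$ rather than the specific $\mu=1$ used in Section~\ref{s:m-class}; this is the classical $M/M/1$ Burke-type stationarity and the argument in Appendix~\ref{a:exp} applies verbatim. The slight bookkeeping subtlety in the row induction is that the horizontal outputs $\eta^{i,j}$ threading through successive boxes are not fresh inputs as in \eqref{m:I4}; however, at each step the two inputs to the current box form an independent product $\nu^{\rho_i}\otimes\nu^{\rho_j}$, which is exactly what Lemma~\ref{m:Lem-I} requires.
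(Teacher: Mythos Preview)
Your proposal is correct and follows essentially the same approach as the paper's own proof: induction on $j$ for the column distribution via the multiline invariance, and induction on $i$ for the row distribution via a left-to-right sliding application of Lemma~\ref{m:Lem-I}. The paper's proof is organized identically, with the row induction displayed as a sequence of boxed pairs $(\eta^{i,\ell},\xi^{i-1,\ell})\mapsto(\xi^{i,\ell},\eta^{i,\ell+1})$ that matches your ``boxes''; your observation that Theorem~\ref{m:thm-I} needs driving weights of mean $\rho_{j-1}$ rather than $1$ is handled implicitly in the paper since its proof already goes through Lemma~\ref{m:Lem-I}, which accommodates arbitrary $0<\tau<\rho$.
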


\begin{proof}   First we prove the claim for $(\eta^{j,j}, \eta^{j+1,j},  \dotsc, \eta^{n,j})$.   Recall that by definition $\xi^{j,j}=\eta^{j,j}$. 

  For $j=1$, the definitions give   $(\xi^{1,1}=\eta^{1,1}, \eta^{2,1},  \dotsc, \eta^{n,1})=(I^1, I^2,\dotsc, I^n)\sim\nu^{(\rho_1, \,\rho_{2},\dotsc, \,\rho_n)}$.   
  
  Let $j\in\lzb2,n\rzb$.  Assume inductively that  % $(\xi^{j-1,j-1}=\eta^{j-1,j-1}, \eta^{j,j-1},  \dotsc, \eta^{n,j-1})$  has   distribution   $\nu^{(\rho_{j-1}, \,\rho_{j},\dotsc, \,\rho_n)}$.  
\[  \bigl(\xi^{j-1,j-1}=\eta^{j-1,j-1}, \eta^{j,j-1},  \dotsc, \eta^{n,j-1}\bigr)\;\sim\; \nu^{(\rho_{j-1}, \,\rho_{j},\dotsc, \,\rho_n)}. \] 
  The mapping from  $(\eta^{j,j-1},  \dotsc, \eta^{n,j-1})$ to  $(\eta^{j,j},   \dotsc, \eta^{n,j})$   is   the mapping   \eqref{m:I4} of the multiline process,   with  $\xi^{j-1,j-1}$ as the external driving weights $\w$.   Namely, this mapping is carried out by iterating 
\[  \begin{cases}  
\eta^{j+k,j}=\Dop(\eta^{j+k,j-1}, \xi^{j+k-1,j-1}) \\
\xi^{j+k,j-1} =\Rop(\eta^{j+k,j-1}, \xi^{j+k-1,j-1})
\end{cases}
\quad \text{ for }  k=0,1,\dotsc, n-j.  \]
Then  $(\eta^{j,j}, \eta^{j+1,j},  \dotsc, \eta^{n,j})\sim \nu^{(\rho_j, \,\rho_{j+1},\dotsc, \,\rho_n)}$ follows  from the invariance in Theorem \ref{m:thm-I}.  

\medskip 

Next the proof for $(\xi^{i,1}, \xi^{i,2},  \dotsc, \xi^{i,i})$. 
 The claim is  immediate for $i=1$ because there is just one sequence $\eta^{1,1}=I^1=\xi^{1,1}\sim\nu^{\rho_1}$.  Let $i\in\lzb 2,n\rzb$ and assume inductively that 
 $(\xi^{i-1,1}, \xi^{i-1,2},  \dotsc, \xi^{i-1,i-1})\sim\nu^{(\rho_1, \,\rho_{2},\dotsc, \,\rho_{i-1})}$. 
 By construction,  $\eta^{i,1}=I^i\sim\nu^{\rho_i}$ is independent of $\xi^{i-1,\rcbullet}$, and hence 
 \[   (\eta^{i,1}, \xi^{i-1,1}, \xi^{i-1,2},  \dotsc, \xi^{i-1,i-1})\sim\nu^{(\rho_i, \,\rho_1, \,\rho_{2},\dotsc, \,\rho_{i-1})}. \]
 Now we transform the sequence above  by repeated application of  the mapping $(\eta^{i,\ell}, \xi^{i-1,\ell})\mapsto (\xi^{i,\ell}, \eta^{i,\ell+1})$ defined by \eqref{m:ar5}: 
 \[  \begin{cases}  
\eta^{i,\ell+1}=\Dop(\eta^{i,\ell}, \xi^{i-1,\ell}) \\
\xi^{i,\ell} =\Rop(\eta^{i,\ell}, \xi^{i-1,\ell})  \end{cases} 
\quad \text{ for }  \ell=1,\dotsc, i-1.   
 \]
 The  pair to be transformed next   slides successively  to the right.  The succession of sequences produced by this process is displayed below, beginning with the first one from above.    The pair to which the mapping is applied next is enclosed in the box.  The distribution follows from Lemma \ref{m:Lem-I}.
 \begin{align*}
&\bigl( \; \boxed{ \eta^{i,1}, \xi^{i-1,1} } \, , \xi^{i-1,2},  \xi^{i-1,3}, \dotsc, \,\xi^{i-1,i-1}\bigr)  \;  \sim\; \nu^{(\rho_i, \,\rho_1, \,\rho_{2}, \, \rho_3,\dotsc, \,\rho_{i-1})} \\
&\bigl( \,\xi^{i,1},\, \boxed{ \eta^{i,2}, \xi^{i-1,2} } \, , \, \xi^{i-1,3}, \dotsc, \,\xi^{i-1,i-1} \bigr)   \;  \sim\; \nu^{(\rho_1, \,\rho_i, \,\rho_{2},\, \rho_3, \dotsc, \,\rho_{i-1})}   \\
&   \qquad \qquad \qquad \  \  \ddots \\  % \ \,\vdots\\
&\bigl(\, \xi^{i,1}, \dotsc, % \xi^{i,j-4}, \xi^{i,j-3}, 
 \xi^{i,\ell-1}, \,\boxed{\eta^{i,\ell} ,\xi^{i-1, \ell}}\, , \, \xi^{i-1, \ell+1}, \dotsc, \,\xi^{i-1,i-1} \bigr)   \;  \sim\; \nu^{(\rho_1, \dotsc, \rho_{\ell-1}, \,\rho_i, \,\rho_{\ell},\,\rho_{\ell+1}\dotsc, \,\rho_{i-1})} \\
 &   \qquad \qquad   \ddots \\
&\bigl( \, \xi^{i,1}, \dotsc,  %\xi^{i,i-3}, \xi^{i,i-2}, 
 \, \xi^{i,i-1},\, \eta^{i,i} \,\bigr)  \;  \sim\; \nu^{(\rho_1, \dotsc,   \,\rho_{i-1} , \, \rho_i)}. 
 \end{align*}
 To complete the induction from $i-1$ to  $i$, set $\xi^{i,i}= \eta^{i,i}$. 
\end{proof}

\begin{remark}[Notation]  \label{rm:notn}   To keep track of the inputs when   processes are constructed by queueing mappings \eqref{m:DSR},    superscripts indicate the arrival and service processes used in the construction.    This works  as follows when the arrival process is $I$ and the service process is $\w$. 
\begin{itemize}
\item  $G^I$ denotes a function that satisfies $I_k=G^I_k-G^I_{k-1}$. 
\item $\wt G^{I,\,\w}$ is the process defined by \eqref{m:800} whose increments are the output $\wt I^{I,\,\w}_k=\wt G^{I,\,\w}_k-\wt G^{I,\,\w}_{k-1}$, and so   $\wt I^{I,\,\w}=\Dop(I,\w)$. 
\item  $J^{I,\,\w}=\Sop(I,\w)$ is the process defined by \eqref{m:J} as $J^{I,\,\w}_k=\wt G^{I,\,\w}_k-G^I_k$. 
\item $\wt\w^{I,\,\w}=\Rop(I,\w)$.  \qedrm
\end{itemize} 
\end{remark}

\smallskip 

\begin{lemma}\label{lm:eta12}    Fix  $0<\rho_1<\dotsm<\rho_n$ 
 and let the    multiline configuration $I=(I^1, \dotsc, I^n)$ have distribution 
$\nu^{(\rho_1,\dotsc, \rho_n)}$.    Let $\eta=(\eta^1,\dotsc,\eta^n)=\Daop^{(n)}(I)$ and let $\{\eta^{i,j}\}$ and $\{\xi^{i,j}\}$ be the arrays constructed above.    Then for each $m\in\lzb 2,n\rzb$ and $k\in\Z$, the following random variables are independent: 
\[ 
\{\xi^{m,1}_i\}_{i\le k}, \{\xi^{m,2}_i\}_{i\le k}, \dotsc, \{\xi^{m,m-1}_i\}_{i\le k}, \{\eta^m_i\}_{i\le k-1}, \,\eta^m_k-\eta^{m-1}_k, \eta^{m-1}_k-\eta^{m-2}_k, \dotsc,   \eta^2_k-\eta^1_k, \eta^1_k.
\]
%%%%%%%   In-text version 
% $\{\xi^{m,m-2}_i\}_{i\le k}$, $\{\xi^{m,m-3}_i\}_{i\le k}$, $\dotsc$, $\{\xi^{m,0}_i\}_{i\le k}$, $\{\eta^m_i\}_{i\le k-1}$, $\eta^m_k-\eta^{m-1}_k$, $\eta^{m-1}_k-\eta^{m-2}_k$, $\dotsc$,   $\eta^2_k-\eta^1_k$ are independent. 
\end{lemma}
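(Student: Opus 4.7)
The proof is by induction on $m \in \lzb 2, n \rzb$. The chief tool is a strong form of Burke's theorem for the memoryless queueing operator, deducible from Lemma \ref{m:Lem-I} together with stationarity, the reversibility of Lemma \ref{lm-dual12}, and the memoryless property of the exponential distribution: if $X \sim \nu^\alpha$ and $Y \sim \nu^\beta$ are independent with $\alpha > \beta$, and one writes $\wt X = \Dop(X,Y)$, $\wt Y = \Rop(X,Y)$, $J = \Sop(X,Y)$, then for each $k \in \Z$ the variables
\[
\{\wt Y_i\}_{i \le k-1},\ \{\wt X_i\}_{i \le k-1},\ Y_k,\ \wt Y_k = X_k \wedge J_{k-1},\ \wt X_k - Y_k = (X_k - J_{k-1})^+
\]
are jointly independent. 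The marginal distributions are those predicted by Lemma \ref{lm:ar4} and the queueing formulas \eqref{m:IJ5}--\eqref{m:R}: in particular $\wt Y_k$ and $Y_k$ are $\mathrm{Exp}(\beta^{-1})$, and $\wt X_k - Y_k$ is a two-point mixture with mass $\beta/\alpha$ at $0$ and the rest $\mathrm{Exp}(\alpha^{-1})$.

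\textit{Base case $m = 2$.} With $\eta^1 = I^1$, $\eta^2 = \Dop(I^2, I^1)$, and $\xi^{2,1} = \Rop(I^2, I^1)$, the claim is an immediate instance of the strong Burke statement applied to $(X, Y) = (I^2, I^1)$, upon observing that $\xi^{2,1}_k = I^2_k \wedge J_{k-1}$ and $\eta^2_k - \eta^1_k = (I^2_k - J_{k-1})^+$.

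\textit{Inductive step $m-1 \to m$.} Assume the claim for $m - 1$. Row $m$ of the $\xi$-array is built from $\eta^{m,1} = I^m$, which is independent of rows $1, \dots, m-1$, and from $\xi^{m-1, 1}, \dots, \xi^{m-1, m-1} = \eta^{m-1}$, via the queueing stages $j = 1, \dots, m-1$ of \eqref{m:ar5}. By Lemma \ref{lm:ar4} the row-$(m-1)$ sequences form a mutually independent collection of i.i.d.\ exponentials, and are jointly independent of $I^m$. Moreover, the differences $\eta^1_k,\, \eta^2_k - \eta^1_k,\, \dots,\, \eta^{m-1}_k - \eta^{m-2}_k$ are functions solely of rows $1, \dots, m-1$, and the inductive hypothesis yields their joint independence from $\{\xi^{m-1, j}_i\}_{i \le k,\, j \le m - 2}$ and $\{\eta^{m-1}_i\}_{i \le k-1}$. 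We process the stages sequentially: at stage $j$, the strong Burke statement applied to $(X, Y) = (\eta^{m, j}, \xi^{m-1, j})$---which are independent $\nu^{\rho_m}$ and $\nu^{\rho_j}$ sequences by a secondary induction on $j$ using Lemma \ref{Dop-lm4}---produces $\eta^{m, j+1}$ and $\xi^{m, j}$ with the required independences. At the final stage $j = m - 1$ the service is $\eta^{m-1}$, and the new increment produced is precisely $\eta^m_k - \eta^{m-1}_k = (\eta^{m, m-1}_k - J^{(m-1)}_{k-1})^+$, which Burke makes independent of $\eta^{m-1}_k$. Collecting the outputs of all stages yields the claim for $m$.

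\textit{Main obstacle.} The principal challenge is the combinatorial bookkeeping in the inductive step: at each stage $j$, one must track the independence of the newly produced $\xi^{m, j}_i$ and $\eta^{m, j+1}_i$ from (i) the previously constructed $\xi^{m, \ell}$ for $\ell < j$, (ii) the as-yet-unused services $\xi^{m-1, \ell}$ for $\ell > j$, and (iii) the inductively handled differences $\eta^\ell_k - \eta^{\ell-1}_k$ for $\ell < m$. The delicate point is (iii): although the differences and the row-$(m-1)$ services share underlying randomness in the first $m-1$ input sequences, the inductive hypothesis combined with Lemma \ref{lm:ar4} decouples them in the joint distribution, and each $\xi^{m-1, \ell}$ is used exactly once (at stage $\ell$) in the row-$m$ construction, which is what allows the strong Burke theorem to be applied without obstruction at every stage.
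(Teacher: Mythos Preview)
Your proposal is correct and follows essentially the same approach as the paper's proof: induction on $m$ with an inner induction on the stage index $j$, applying the Burke-type output independence of Lemma~\ref{m:Lem-I}(a) at each stage together with the exponential min/positive-part decoupling (Lemma~\ref{v:lm}(i)), and treating the final stage $j=m-1$ carefully so that the new increment $\eta^m_k-\eta^{m-1}_k$ emerges independent of the lower-order increments. Your ``strong Burke'' statement is a convenient repackaging of exactly these two ingredients; the paper instead invokes Lemma~\ref{m:Lem-I}(a) at level $k-1$ at the last stage and then applies Lemma~\ref{v:lm}(i) to the pair $(\eta^{m,m-1}_k,\,J^{\eta^{m,m-1},\eta^{m-1}}_{k-1})$, arriving at the same conclusion. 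Two minor remarks: the reference to Lemma~\ref{lm-dual12} (queueing reversibility) is unnecessary here, and the full-sequence independence of $(\eta^{m,j},\xi^{m-1,j})$ needed to apply Burke comes from Lemma~\ref{lm:ar4} rather than Lemma~\ref{Dop-lm4} alone.
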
 

\begin{proof}  Index $k$ is fixed throughout the proof.   We begin with the case $m=2$. 

\medskip

By the definitions,   $\eta^1=I^1$, 
\[  \xi^{2,1}=\Rop(\eta^{2,1}, \xi^{1,1})=\Rop(I^2, I^1)=\wt\w^{I^2,I^1} \ \text{ and }\ \eta^2=\Dop(I^2, I^1)=\wt I^{I^2,I^1}. \]   
Hence 
$\xi^{2,1}_i=I^2_i\wedge J^{I^2,I^1}_{i-1}$ and 
$\eta^2_k-\eta^1_k=(I^2_k-J^{I^2,I^1}_{k-1})^+$.   
By Lemma \ref{m:Lem-I}(a), 
$\{\wt I^{I^2,I^1}_i\}_{i\le k-1}$,  $J^{I^2,I^1}_{k-1}$,  $\{\wt\w^{I^2,I^1}_i\}_{i\le k-1}$, $I^2_k$, $I^1_k$ are independent.    To be precise,  Lemma \ref{m:Lem-I}(a) gives the independence of $\{\wt\w^{I^2, I^1}_i\}_{i\le k-1}$,  $\{\wt I^{I^2, I^1}_i\}_{i\le k-1}$, and  $J^{I^2, I^1}_{k-1}$.   These are functions of $\{I^2_i, I^1_i\}_{i\le k-1}$, and thereby  independent of $I^2_k, I^1_k$.   Properties of independent exponentials (Lemma \ref{v:lm}(i)) imply that 
\be\label{m:ar26} 
\begin{aligned} 
&\text{$\xi^{2,1}_k=I^2_k\wedge J^{I^2, I^1}_{k-1}$ and $ \eta^2_k-\eta^1_k=   \bigl(I^2_k-J^{I^2, I^1}_{k-1}\bigr)^+$ are mutually independent. } 
\end{aligned} \ee
Altogether we have  that  $\{\xi^{2,1}_i\}_{i\le k}$, $\{\eta^2_i\}_{i\le k-1}$,   $\eta^2_k-\eta^1_k$, $\eta^1_k$ are independent. 

\medskip

%Next we take $m=3$ and show that  
%\be\label{m:ar42} 
%\text{$\{\xi^{3,1}_i\}_{i\le k}$, $\{\xi^{3,0}_i\}_{i\le k}$, $\{\eta^3_i\}_{i\le k-1}$, $\eta^3_k-\eta^2_k$,   $\eta^2_k-\eta^1_k$ are independent. }
%\ee
%
%From the $m=2$ case we begin with the independence of 
%\[     I^3, \{\xi^{2,0}_i\}_{i\le k},  \{\eta^2_i\}_{i\le k-1},   \eta^2_k-\eta^1_k. \]
%Since $\xi^{3,1}=\Rop(I^3,\xi^{2,0})$ and $\eta^{3,1}=\Dop(I^3,\xi^{2,0})$, we turn this into the  independence of 
%\[       \{\xi^{3,1}_i\}_{i\le k},   \{\eta^{3,1}_i\}_{i\le k},   \{\eta^2_i\}_{i\le k-1},   \eta^2_k-\eta^1_k. \]
%Apply Lemma  \ref{m:Lem-I}(a) to the pair   $\eta^{3}=\Dop(\eta^{3,1},\eta^{2})$, $\xi^{3,0}=\Rop(\eta^{3,1},\eta^{2})$ in the range $-\infty<i\le k-1$ to get 
%the  independence of 
%\[       \{\xi^{3,1}_i\}_{i\le k}, \{\xi^{3,0}_i\}_{i\le k-1},   \{\eta^{3}_i\}_{i\le k-1},  J^{\eta^{3,1},\eta^{2}}_{k-1}, \eta^{3,1}_k,    \eta^2_k-\eta^1_k. \]
%%$\{\xi^{3,1}_i=\Rop_i(I^3,\xi^{2,0})\}_{i\le k}$, $\{\xi^{3,0}_i=\Rop_i(\eta^{3,1},\eta^{2})\}_{i\le k-1}$,   $\{\eta^{3}_i=\Dop_i(\eta^{3,1},\eta^{2})\}_{i\le k-1}$, $\eta^{3,1}_k$,  $J^{\eta^{3,1},\eta^{2}}_{k-1}$  are independent and independent of $\eta^2_k-\eta^1_k$.    Then
%Transform the independent pair $(\eta^{3,1}_k,  J^{\eta^{3,1},\eta^{2}}_{k-1})$  into 
%the  independent pair  of $\xi^{3,0}_k=\wt\w^{\eta^{3,1},\eta^{2}}_k=\eta^{3,1}_k\wedge J^{\eta^{3,1},\eta^{2}}_{k-1}$ and $\eta^3_k-\eta^2_k=(\eta^{3,1}_k-J^{\eta^{3,1},\eta^{2}}_{k-1})^+$.     Claim \eqref{m:ar42} has been verified. 
%
%\medskip

Let $m\ge 3$ and make an induction assumption: 
\be\label{m:ar44} 
\begin{aligned}
&\text{$\{\xi^{m-1,1}_i\}_{i\le k}, \dotsc,\{\xi^{m-1,m-2}_i\}_{i\le k}$, $\{\eta^{m-1}_i\}_{i\le k-1}$, } \\
 &\text{$\eta^{m-1}_k-\eta^{m-2}_k,\dotsc,\eta^2_k-\eta^1_k, \eta^1_k$ are independent. }
\end{aligned}\ee
  % Claim \eqref{m:ar42} 
 The previous paragraph verified   this assumption for $m=3$. 

Since $\eta^{m,1}=I^m$ is independent of all the variables in \eqref{m:ar44},  apply Lemma \ref{m:Lem-I}(a) to the pair  $\xi^{m,1}=\Rop(\eta^{m,1}, \xi^{m-1,1})$,  $\eta^{m,2}=\Dop(\eta^{m,1}, \xi^{m-1,1})$ to conclude the independence of 
\be\label{m:ar46} 
\begin{aligned}
& \{\xi^{m,1}_i\}_{i\le k},   \{\eta^{m,2}_i\}_{i\le k},  \{\xi^{m-1,2}_i\}_{i\le k}, \dotsc,\{\xi^{m-1,m-2}_i\}_{i\le k},   \\
 &\{\eta^{m-1}_i\}_{i\le k-1}, \eta^{m-1}_k-\eta^{m-2}_k,\dotsc,\eta^2_k-\eta^1_k, \eta^1_k. 
\end{aligned}\ee
This starts an  induction on $j=2, 3, \dotsc, m-1$, whose induction assumption is the independence of 
\be\label{m:ar48} 
\begin{aligned}
& \{\xi^{m,1}_i\}_{i\le k}, \dotsc, \{\xi^{m,j-1}_i\}_{i\le k},    \{\eta^{m,j}_i\}_{i\le k},  \{\xi^{m-1,j}_i\}_{i\le k}, \dotsc,\{\xi^{m-1,m-2}_i\}_{i\le k},  \\
 &\{\eta^{m-1}_i\}_{i\le k-1},  \eta^{m-1}_k-\eta^{m-2}_k,\dotsc,\eta^2_k-\eta^1_k, \eta^1_k. 
\end{aligned}\ee
The induction step is the application of Lemma \ref{m:Lem-I}(a) to the pair  $\xi^{m,j}=\Rop(\eta^{m,j}, \xi^{m-1,j})$,  $\eta^{m,j+1}=\Dop(\eta^{m,j}, \xi^{m-1,j})$ to conclude the independence of 
\be\label{m:ar50} 
\begin{aligned}
& \{\xi^{m,1}_i\}_{i\le k}, \dotsc, \{\xi^{m,j-1}_i\}_{i\le k},   \{\xi^{m,j}_i\}_{i\le k},   \{\eta^{m,j+1}_i\}_{i\le k},  \{\xi^{m-1,j+1}_i\}_{i\le k},  \\
 &\dotsc,\{\xi^{m-1,m-1}_i\}_{i\le k}, \{\eta^{m-1}_i\}_{i\le k-1},  \eta^{m-1}_k-\eta^{m-2}_k,\dotsc,\eta^2_k-\eta^1_k, \eta^1_k. 
\end{aligned}\ee
Thus the  induction assumption \eqref{m:ar48} for $j$ has been advanced to $j+1$ in \eqref{m:ar50}. 

At the end of the $j$-induction we have the independence of 
\be\label{m:ar52} 
\begin{aligned}
& \{\xi^{m,1}_i\}_{i\le k}, \dotsc, %\{\xi^{m,m-3}_i\}_{i\le k}, 
  \{\xi^{m,m-2}_i\}_{i\le k},   \{\eta^{m,m-1}_i\}_{i\le k},   \\
 &  \{\eta^{m-1}_i\}_{i\le k-1}, \, \eta^{m-1}_k-\eta^{m-2}_k,\dotsc,\eta^2_k-\eta^1_k, \eta^1_k. 
\end{aligned}\ee
Split $ \{\eta^{m,m-1}_i\}_{i\le k}$ into the independent pieces $ \{\eta^{m,m-1}_i\}_{i\le k-1}$ and $\eta^{m,m-1}_k$.     Combine the former with $\{\eta^{m-1}_i\}_{i\le k-1}$,  Lemma \ref{m:Lem-I}(a), and the transformations 
 $\xi^{m,m-1}=\Rop(\eta^{m,m-1}, \eta^{m-1})$,  $\eta^{m}=\Dop(\eta^{m,m-1}, \eta^{m-1})$   to form the independent variables $\{\xi^{m,m-1}_i\}_{i\le k-1}$, $\{\eta^{m}_i\}_{i\le k-1}$,  $J^{\eta^{m,m-1}, \eta^{m-1}}_{k-1}$.    Transform the independent pair $(\eta^{m,m-1}_k,  J^{\eta^{m,m-1}, \eta^{m-1}}_{k-1})$  into 
the  independent pair  of $\xi^{m,m-1}_k= \eta^{m,m-1}_k\wedge J^{\eta^{m,m-1}, \eta^{m-1}}_{k-1}$ and $\eta^m_k-\eta^{m-1}_k=(\eta^{m,m-1}_k-J^{\eta^{m,m-1}, \eta^{m-1}}_{k-1})^+$.    Attach $\xi^{m,m-1}_k$ to the sequence $\{\xi^{m,m-1}_i\}_{i\le k-1}$.  After these steps, we have  the independence of 
\be\label{m:ar54} 
\begin{aligned}
& \{\xi^{m,1}_i\}_{i\le k}, \dotsc,    \{\xi^{m,m-2}_i\}_{i\le k},   \{\xi^{m,m-1}_i\}_{i\le k},   \\
 &  \{\eta^{m}_i\}_{i\le k-1}, \,  \eta^m_k-\eta^{m-1}_k, \eta^{m-1}_k-\eta^{m-2}_k,\dotsc,\eta^2_k-\eta^1_k, \eta^1_k. 
\end{aligned}\ee
Thus the induction assumption \eqref{m:ar44} has been advanced from $m-1$ to $m$. 
\end{proof}

% By taking $m=n$ in Lemma \ref{lm:eta12}  we get this statement: 
% \be\label{mu14} 
% \text{} 
% \ee

\begin{proof}[Proof of Theorem \ref{B-th5}]
Fix  $1<\rho_1<\dotsm<\rho_n$ 
 and let  the   multiline configuration $I=(I^0, \dotsc, I^n)$ have distribution 
$\nu^{(1, \rho_1,\dotsc, \rho_n)}$.    Let $\eta=(\eta^0,\dotsc,\eta^n)=\Daop^{(n+1)}(I)$. By Theorem \ref{B-th1} proved above,  
$(\wb\Yw_t, \Busv{\rho_1}{\evec_1}{t}, \dotsc, \Busv{\rho_n}{\evec_1}{t})\deq\eta\sim\mu^{(1, \rho_1,\dotsc,\rho_n)}$.   Lemma \ref{lm:eta12} gives the independence of the components of the vector 
\begin{align*}
 &( \eta^1_k,    \eta^2_k-\eta^1_k, \dotsc, \eta^{n}_k-\eta^{n-1}_k)  \\
 &\qquad \deq \, \bigl(  \Yw_x,  \Bus^{\rho_1}_{x-\evec_1,x}-\Yw_x, \Bus^{\rho_2}_{x-\evec_1,x}-\Bus^{\rho_1}_{x-\evec_1,x}\,, \dotsc, \Bus^{\rho_{n}}_{x-\evec_1,x}-\Bus^{\rho_{n-1}}_{x-\evec_1,x} \bigr). 
 \end{align*} 
 (Above $k\in\Z$ and $x\in\Z^2$ are arbitrary.) 
 
 The distribution  of an increment $\eta^m_k-\eta^{m-1}_k$  can be  computed from the 2-component mapping $(\eta^{m-1}, \eta^m)=\Daop^{(2)}(I^{m-1}, I^m)=(I^{m-1}, \Dop(I^m, I^{m-1}))$ where  $(I^{m-1}, I^{m})\sim\nu^{\rho_{m-1},\rho_m}$.   The  first equation of \eqref{m:IJ5} gives
 \[   \eta^m_k-\eta^{m-1}_k =  \eta^m_k-I^{m-1}_k = (I^m_k- J^{I^m, I^{m-1}}_{k-1})^+ . \]  
The right-hand side has the   distribution in \eqref{B-incr13} with $(\lambda,\rho)=(\rho_{m-1},\rho_m)$ because,  by the structure of the queueing mapping,  $I^m_k$ and  $J^{I^m, I^{m-1}}_{k-1}$  are independent exponentials with parameters $\rho_m^{-1}$ and $\rho_{m-1}^{-1}-\rho_m^{-1}$.   

A computation  of the Laplace transform of the increment $X(\rho)-X(\lambda)$  of the process   defined by \eqref{X1} gives,  for  $\rho>\lambda\ge 1$ and  $\ttvar>0$, 
\begin{align*}
E[e^{-\ttvar (X(\rho)-X(\lambda))} ] =\frac{1+\lambda\ttvar}{1+\rho\ttvar} . 
\end{align*}
This is the Laplace transform of the  distribution in \eqref{B-incr13}.    
 Thus $\eta^m_k-\eta^{m-1}_k$ has the same distribution as   $X(\rho_{m})-X(\rho_{m-1})$.  

To summarize,  the  nondecreasing cadlag processes $\Bus^{\,\!\rcbullet}_{x-\evec_1,x}$ and $X(\abullet)$  have identically distributed   initial values (both $\Bus^1_{x-\evec_1,x}=\Yw_x$ and $X(1)$ are   Exp$(1)$-distributed) and   identically distributed  independent   increments.  Hence the processes are equal in distribution.    
\end{proof}

\subsection{Bivariate Busemann process on a line} 
The remainder of this section proves statements  for the sequence   $\{(\Bus^\lambda_{(k-1,t),(k,t)}, \Bus^\rho_{(k-1,t),(k,t)})\}_{k\in\Z}$ that has distribution   $\mu^{(\lambda,\rho)}$.   We use this notation. 
\be\label{nota-88}\begin{aligned}
&\text{Let  $\rho>\lambda>0$, $(I^1,I^2)\sim\nu^{(\lambda,\rho)}$ and $(\eta^1,\eta^2)=\Daop^{(2)}(I^1,I^2)=(I^1, \Dop(I^2,I^1))$.} \\
&\text{Then $(\eta^1,\eta^2)\sim\mu^{(\lambda,\rho)}$.    Let $J=J^{I^2,I^1}=\Sop(I^2,I^1)$. }
\end{aligned}\ee

\begin{proof}[Proof of Theorem \ref{B-th8}]
%The $\mu^{(\lambda,\rho)}$ so we use \eqref{nota-88}.  
%  Hence we can prove Theorem \ref{B-th8} by taking $(I^1,I^2)\sim\nu^{(\lambda,\rho)}$ and studying $(\eta^1,\eta^2)=\Daop^{(2)}(I^1,I^2)=(I^1, \Dop(I^2,I^1))\sim\mu^{(\lambda,\rho)}$.    Let $J=J^{I^2,I^1}=\Sop(I^2,I^1)$.    
The next auxiliary lemma identifies a reversible Markov chain.

\begin{lemma}\label{X-lm1}  Let $X_i=J_{i-1}-I^2_i$.   Then   $\{X_i\}_{i\in\Z}$ and  $\{X_i^+\}_{i\in\Z}$ are stationary reversible Markov chains.   $\{X_i^-\}_{i\in\Z}$ is not a  Markov chain.

\end{lemma}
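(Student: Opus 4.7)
The plan is to collapse the two-step queueing recursion into a one-step recursion for $X$. Specializing \eqref{m:IJ5} to $(I,\w)=(I^2,I^1)$ gives $J_k=I^1_k+(J_{k-1}-I^2_k)^+=I^1_k+X_k^+$, so
\[
X_{k+1}\;=\;J_k-I^2_{k+1}\;=\;X_k^++Z_k,\qquad Z_k:=I^1_k-I^2_{k+1}.
\]
The innovations $(Z_k)_{k\in\Z}$ are i.i.d.\ (a difference of independent exponentials of means $\lambda$ and $\rho$), and $Z_k$ is independent of $\sigma(X_j:j\le k)$ because $X_j$ is measurable with respect to $\{I^2_i:i\le j\}\cup\{I^1_i:i\le j-1\}$ while $Z_k$ depends only on $I^1_k, I^2_{k+1}$. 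This gives the Markov property of $\{X_k\}$ at once, and applying $(\,\cdot\,)^+$ to both sides (so $X_{k+1}^+=(X_k^++Z_k)^+$) gives the Markov property of $\{X_k^+\}$. Stationarity of both chains is inherited from stationarity of $(I^1,I^2)$ under the coordinate shift.

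For reversibility of $\{X_k\}$ I would verify detailed balance by a short density calculation. The transition kernel is $p(x,dy)=f_Z(y-x^+)\,dy$, and the stationary law of $X_k=J_{k-1}-I^2_k$ is the difference of two independent exponentials: the M/M/1 identity (compatible with Lemma \ref{m:Lem-I} and the invariance in Theorem \ref{m:thm-I}) gives $J_{k-1}\sim\mathrm{Exp}(\beta)$ with $\beta=\lambda^{-1}-\rho^{-1}$, independent of $I^2_k\sim\mathrm{Exp}(\rho^{-1})$. A short computation yields
\[
f_X(x)\propto\begin{cases}e^{-\beta x},&x>0,\\ e^{x/\rho},&x\le 0,\end{cases}\qquad f_Z(z)\propto\begin{cases}e^{-z/\lambda},&z>0,\\ e^{z/\rho},&z\le 0,\end{cases}
\]
with matching normalizing constants. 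The detailed-balance identity $f_X(x)f_Z(y-x^+)=f_X(y)f_Z(x-y^+)$ is then checked in the three sign patterns of $(x,y)$; using the algebraic identities $\beta+\rho^{-1}=\lambda^{-1}$ and $\lambda^{-1}-\beta=\rho^{-1}$, every case collapses to the manifestly symmetric expression $c\,e^{x/\rho-y/\lambda}$. Reversibility of $\{X_k^+\}$ is then automatic: detailed balance for $\{X_k\}$ is the assertion $(X_0,X_1)\deq(X_1,X_0)$, and applying $(\,\cdot\,)^+$ coordinatewise gives $(X_0^+,X_1^+)\deq(X_1^+,X_0^+)$, which is detailed balance for the Markov chain $\{X_k^+\}$.

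To see that $\{X_k^-\}$ fails to be Markov, note that $x\mapsto x^-$ collapses the half-line $\{x\ge 0\}$ to $0$, whereas the recursion $X_{k+1}=X_k^++Z_k$ genuinely distinguishes different positive values of $X_k$. The plan is to compare the conditional laws on the events $\{X_k^-=0,\,X_{k-1}^-=0\}$ and $\{X_k^-=0,\,X_{k-1}^->0\}$: on the first, $X_{k-1}\ge 0$ and $X_k=X_{k-1}+Z_{k-1}$, while on the second, $X_{k-1}<0$ and $X_k=Z_{k-1}$. Conditioning each of these on $\{X_k\ge 0\}$ produces distinct laws for $X_k$ (the first stochastically larger, since it inherits extra mass from $X_{k-1}\ge 0$), and hence distinct conditional laws for $X_{k+1}^-=(X_k+Z_k)^-$. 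The main obstacle is simply keeping the three-case detailed-balance computation orderly; the Markov property, its transfer to $X^+$, and the failure of the Markov property for $X^-$ are then quick consequences of the one-step recursion. As an alternative route to reversibility that avoids the density computation, Lemma \ref{lm-dual12} identifies the time-reversed $X$-process with $X$ itself once one knows $(\wt I,\wt\w)\deq(I,\w)$ in the M/M/1 case.
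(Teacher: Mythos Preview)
Your argument is correct. The recursion $X_{k+1}=X_k^++Z_k$, the Markov property, and its transfer to $X^+$ match the paper. The main difference is in the reversibility proof: the paper avoids the density computation by exhibiting a backward recursion $X_i=X_{i+1}^++\wt\w_{i+1}-\eta^2_i$ (via the dual equations \eqref{IJ101} and the conservation law \eqref{m:cons}) and then invoking Lemma~\ref{m:Lem-I}(a) to see that $(X_{i+1},\wt\w_{i+1},\eta^2_i)$ are independent with the same marginals as $(X_i,I^1_i,I^2_{i+1})$, so the backward transition coincides with the forward one. This is more structural and plugs directly into the queueing machinery already developed; your detailed-balance route is self-contained but requires the explicit stationary density and a case split. (One small slip: not every case collapses to $c\,e^{x/\rho-y/\lambda}$---when $x,y\le 0$ both sides equal $c\,e^{(x+y)/\rho}$---but detailed balance does hold there, so the conclusion stands.) Your non-Markov argument for $X^-$ is the same idea as the paper's, which makes it explicit by computing $P(X_{i+1}^-=0\mid X_i^-=0)$ versus $P(X_{i+1}^-=0\mid X_i^-=0,\,X_{i-1}^->0)$: on the first event the conditional law of $X_i$ is $\mathrm{Exp}(\lambda^{-1}-\rho^{-1})$, while on the second $X_i=Z_{i-1}$ and, given $Z_{i-1}\ge 0$, has law $\mathrm{Exp}(\lambda^{-1})$, yielding distinct values.
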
 

\begin{proof}
From the second equation of \eqref{m:IJ5}, 
\begin{align*}
X_{i+1} = J_i-I^2_{i+1} = I^1_i+(J_{i-1}-I^2_i)^+-I^2_{i+1} = X_i^+ +I^1_i -I^2_{i+1}. 
\end{align*}
Since $J_{i-1}$ is a function of $(I^1_k, I^2_k)_{k\le i-1}$,  $X_i$ is independent of $(I^1_i, I^2_{i+1})$.   Schematically, we can express the transition probability as 
$X_{i+1}=X_i^++ {\rm Exp}(\lambda^{-1})-{\rm Exp}(\rho^{-1})$, where the three terms on the right-hand side are independent. 

Similarly, using conservation \eqref{m:cons} and  the dual equations \eqref{IJ101}, 
\begin{align*}
X_{i} &=J_{i-1}-I^2_i=   J_i-\eta^2_{i} = \wt\w_{i+1}+(J_{i+1}-\eta^2_{i+1})^+-\eta^2_{i} 
%\\  &
= X_{i+1}^+ +\wt\w_{i+1} -\eta^2_{i}. 
\end{align*}
$J_i$ and  $\eta^2_i$ are independent by Lemma \ref{m:Lem-I}(a), and hence the triple 
$(J_i, \eta^2_i, I^2_{i+1})$ is independent.  Consequently so is the triple 
$(X_{i+1}, \wt\w_{i+1} , \eta^2_{i})=(J_i-I^2_{i+1}, J_i\wedge I^2_{i+1},  \eta^2_{i})$  and we can express the equation above as 
  $X_{i}=X_{i+1}^++ {\rm Exp}(\lambda^{-1})-{\rm Exp}(\rho^{-1})$ where again  the three terms on the right-hand side are independent. 
    The transitions from $X_i$ to $X_{i+1}$ and back are the same.
    
 From the equations above we obtain equations that show  $X_i^+$ as  a reversible Markov chain.
% \[     X_{i+1}^+=\bigl(X_i^++ {\rm Exp}(\lambda^{-1})-{\rm Exp}(\rho^{-1})\bigr)^+   \quad\text{and}\quad    X_{i}^+=\bigl(X_{i+1}^++ {\rm Exp}(\lambda^{-1})-{\rm Exp}(\rho^{-1})\bigr)^+.   \]
 
 Writing temporarily $U_i=I^1_{i-1} -I^2_{i}$, we get these equations for $X_{i+1}^-$: 
 \[  X_{i+1}^-   =( X_i^+ +U_{i+1})^- =  \bigl( ( X_{i-1}^+ +U_{i})^+ +U_{i+1}\bigr)^- .  
 \]
 
 Conditioned on $X_i\ge 0$,   $X_i\sim$ Exp$(\lambda^{-1}-\rho^{-1})$.  Thus 
 \be\label{X67} 
 P(X_{i+1}^-=0\,\vert\,X_i^-=0)= P(X_i^++U_{i+1}\ge 0\,\vert\,X_i\ge 0)=
 P\bigl\{{\rm Exp} (\lambda^{-1}-\rho^{-1}) +U_{i+1}\ge 0\bigr\}.  
 \ee 
For the next calculation, note that $X_{i-1}<0$ implies  $X_{i}  =X_{i-1}^+ +U_{i}=U_i$ and then $X_{i+1}   = U_i^+ +U_{i+1}$. 
 \be\label{X69} \begin{aligned}
& P(X_{i+1}^-=0\,\vert\,X_i^-=0, X_{i-1}^->0)
  = \frac{P(X_{i+1}^-=0, X_i^-=0, X_{i-1}^->0)}{P(X_i^-=0, X_{i-1}^->0)}\\
  &\quad
 =   \frac{P(U_i+U_{i+1}\ge0, U_i\ge 0, X_{i-1}<0)}{P(U_i\ge 0, X_{i-1}<0)}
 = P(U_i+U_{i+1}\ge0\,\vert\, U_i\ge 0) \\
 &\quad  =P\bigl\{{\rm Exp} (\lambda^{-1}) +U_{i+1}\ge 0\bigr\}.  
 \end{aligned}\ee
 We used  above the independence of $X_{i-1}$ from $(U_i, U_{i+1})$ and then the conditional distribution  $U_i\sim$ Exp$(\lambda^{-1})$, given that $U_i\ge0$.  
 The conditional distributions in \eqref{X67} and \eqref{X69} do not agree, and consequently 
 $X_i^-$ is not a Markov chain. 
\end{proof}

Since $\eta^2_k-\eta^1_k=\eta^2_k-I^1_k=(I^2_k-J_{k-1})^+=X_k^-$, we   conclude that $\eta^2_k-\eta^1_k$ is not a Markov chain, but it is a function of a reversible Markov chain.  %In particular, $\eta^2_k-\eta^1_k$ is a reversible process.   
Part (a) of Theorem \ref{B-th8} has been proved. 

\medskip 

Two more auxiliary lemmas.

\begin{lemma}   The process  $(\eta^1_k,\eta^2_k)_{k\in\Z}$ is not a Markov chain. 
\end{lemma}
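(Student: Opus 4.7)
The plan is to contradict the Markov property by exhibiting an event at time $k-1$ whose occurrence shifts the conditional law of the indicator $\ind\{\eta^1_{k+1}=\eta^2_{k+1}\}$ given the current state $(\eta^1_k,\eta^2_k)$. The hidden variable driving the queueing evolution is the sojourn sequence $J$ of \eqref{nota-88}, with $\eta^2_k=I^1_k+(I^2_k-J_{k-1})^+$ and $J_k=I^1_k+(J_{k-1}-I^2_k)^+$. Writing $X_k=J_{k-1}-I^2_k$ as in the proof of Lemma \ref{X-lm1}, one has $\eta^2_k-\eta^1_k=X_k^-$, and the pair $(\eta^1_k,\eta^2_k)$ fails to encode $J_k$ on the event $\{\eta^1_k=\eta^2_k\}=\{X_k\ge 0\}$.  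This lack of observability is what we exploit.

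Assume for contradiction that $\{(\eta^1_k,\eta^2_k)\}_{k\in\Z}$ is Markov, and set
\[  A=\{\eta^1_k=\eta^2_k\},\qquad B=A\cap\{\eta^1_{k-1}<\eta^2_{k-1}\}=\{X_k\ge 0,\ X_{k-1}<0\}. \]
Both events have positive probability and lie in $\sigma(I^1_j,I^2_j:j\le k-1)\vee\sigma(I^2_k)$, which is independent of $\eta^1_k=I^1_k\sim\mathrm{Exp}(\lambda^{-1})$. Under the Markov hypothesis, $\P(\eta^1_{k+1}=\eta^2_{k+1}\mid\sigma(\eta^1_j,\eta^2_j:j\le k))=h(\eta^1_k,\eta^2_k)$ almost surely for some measurable $h$; on $A$ (and a fortiori on $B$) the current state lies on the diagonal, so this simplifies to $g(\eta^1_k)$ with $g(x)=h(x,x)$.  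Integrating over $A$ and over $B$ and using independence from $I^1_k$ gives
\[ \P(\eta^1_{k+1}=\eta^2_{k+1}\mid A)\;=\;\E[g(I^1_k)]\;=\;\P(\eta^1_{k+1}=\eta^2_{k+1}\mid B). \]

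It remains to compute these two probabilities directly and exhibit a discrepancy. Since $\eta^1_{k+1}=\eta^2_{k+1}$ iff $I^2_{k+1}\le J_k$, and on $A$ we have $J_k=I^1_k+X_k$ with $I^1_k$ independent of $X_k$ and of $I^2_{k+1}\sim\mathrm{Exp}(\rho^{-1})$, it suffices to identify the conditional law of $X_k$ in each case. By Lemma \ref{m:Lem-I}, $J_{k-1}\sim\mathrm{Exp}(\lambda^{-1}-\rho^{-1})$ is independent of $I^2_k\sim\mathrm{Exp}(\rho^{-1})$, whence memorylessness yields $X_k\mid A\sim\mathrm{Exp}(\lambda^{-1}-\rho^{-1})$. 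On $B$ the condition $X_{k-1}<0$ collapses $J_{k-1}$ to $I^1_{k-1}$, so $X_k=I^1_{k-1}-I^2_k$, and a second application of memorylessness gives $X_k\mid B\sim\mathrm{Exp}(\lambda^{-1})$. Factoring $\E[e^{-J_k/\rho}\mid\,\rcbullet\,]=\E[e^{-I^1_k/\rho}]\,\E[e^{-X_k/\rho}\mid\,\rcbullet\,]$ and evaluating the three elementary transforms produces
\[ \P(\eta^1_{k+1}=\eta^2_{k+1}\mid A)=\frac{2\lambda}{\rho+\lambda}, \qquad \P(\eta^1_{k+1}=\eta^2_{k+1}\mid B)=\frac{\lambda(2\rho+\lambda)}{(\rho+\lambda)^2}, \]
whose difference $\lambda^2/(\rho+\lambda)^2$ is strictly positive, contradicting the equality forced by the Markov assumption.

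The main technical point is to choose $B$ so that two competing requirements hold simultaneously: both $A$ and $B$ must be measurable with respect to a $\sigma$-algebra independent of $I^1_k$, so that the Markov assumption actually equates the two conditional probabilities through the single value $\E[g(I^1_k)]$, yet the extra conditioning must genuinely alter the conditional law of $X_k$. The event $\{X_{k-1}<0\}$ does both: it involves only variables up to time $k-1$, while stripping away the memoryless build-up of $J_{k-1}$ present under $A$ alone and replacing it by the single service time $I^1_{k-1}$, which lowers the rate of the $X_k$-tail.
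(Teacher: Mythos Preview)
Your proof is correct and rests on the same two events as the paper's: $A=\{\eta^1_k=\eta^2_k\}$ and $B=A\cap\{\eta^1_{k-1}<\eta^2_{k-1}\}$, with the hidden sojourn variable $J_k$ (equivalently $X_k$) as the obstruction.  The paper's argument is qualitative: it observes that under $A$ alone $J_k$ has unbounded support (since $J_{k-1}-I^2_k\sim\mathrm{Exp}(\lambda^{-1}-\rho^{-1})$), whereas under $B$ one has the deterministic bound $J_k\le\eta^1_{k-1}+\eta^1_k$; this support discrepancy is enough to conclude that the conditional law of $\eta^2_{k+1}$ given the present depends on the past.  Your route is instead quantitative: you compute the two conditional probabilities of $\{\eta^1_{k+1}=\eta^2_{k+1}\}$ exactly and display their positive gap $\lambda^2/(\rho+\lambda)^2$.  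The device that makes your version clean is the observation that $A$ and $B$ are measurable in $\sigma(I^1_j,I^2_j:j\le k-1)\vee\sigma(I^2_k)$, hence independent of $\eta^1_k=I^1_k$; this lets you collapse the Markov hypothesis to the single equality $\E[g(I^1_k)]$ on both events, avoiding any conditioning on the continuous value of the current state.  Both arguments are valid; yours is more explicit and pins down the numerical discrepancy, while the paper's support argument is shorter.
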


\begin{proof}     The construction gives 
$  \eta^2_{k+1}=I^1_{k+1}+(I^2_{k+1}-J_k)^+ . $  
On the right, the variables $I^1_{k+1},I^2_{k+1}$ are independent and independent of $J_k, (\eta^1_j,\eta^2_j)_{j\le k}$. 
%The three variables on the right-hand side are independent.  
 The conclusion of the lemma follows from showing that conditioning on $\eta^1_k=\eta^2_k$ gives  $J_k$ an unbounded distribution,  while conditioning on $\eta^1_{k-1}<\eta^2_{k-1}$ and $\eta^1_k=\eta^2_k$ implies  $J_k\le \eta^1_{k-1}+\eta^1_{k}$.   Thus conditioning on $(\eta^1_k,\eta^2_k)$ does not completely decouple $\eta^2_{k+1}$ from the earlier past.  

From  the three independent variables  $(J_{k-1}, I^1_k, I^2_k)$ the queueing formulas  define 
\be\label{56:90}   \eta^1_k=I^1_k, \quad    \eta^2_{k}=I^1_{k}+(I^2_{k}-J_{k-1})^+ \quad\text{and}\quad 
J_k=I^1_{k}+(J_{k-1}-I^2_{k})^+  .  
\ee
The condition $\eta^1_k=\eta^2_k$ is equivalent to $J_{k-1}\ge I^2_k$,  and conditioning on this implies $J_{k-1}-I^2_{k}\sim$ Exp$(\lambda^{-1}-\rho^{-1})$. Thus  $J_k$ is unbounded. 

For the second scenario consider the five independent variables $(J_{k-2}, I^1_{k-1}, I^2_{k-1}, I^1_k, I^2_k)$ and augment \eqref{56:90} with the equations of the prior step: 
\be\label{56:93}   \eta^1_{k-1}=I^1_{k-1}, \quad    \eta^2_{k-1}=I^1_{k-1}+(I^2_{k-1}-J_{k-2})^+ \quad\text{and}\quad 
J_{k-1}=I^1_{k-1}+(J_{k-2}-I^2_{k-1})^+  . 
\ee
Now  $\eta^1_{k-1}<\eta^2_{k-1}$ implies  $J_{k-1}=I^1_{k-1}$  and then  $\eta^1_k=\eta^2_k$  implies  $J_k=I^1_{k}+J_{k-1}-I^2_{k} =I^1_{k}+I^1_{k-1}-I^2_{k}$.   Hence  $J_k\le I^1_{k}+I^1_{k-1}=\eta^1_{k}+\eta^1_{k-1}$.   The lemma is proved. 

With service process $I^1=\eta^1$, arrival process $I^2$ and departure process $\eta^2$, the queueing explanation of the proof is that    $\eta^1_k=\eta^2_k$ implies that  customer $k$ had to wait before entering service, and hence delays from the past can influence the  next interdeparture time $\eta^2_{k+1}$.  
\end{proof}

\begin{lemma}   The pair $((\eta^1_k,\eta^2_k),(\eta^1_{k+1},\eta^2_{k+1}))$ and its transpose $((\eta^1_{k+1},\eta^2_{k+1}),(\eta^1_k,\eta^2_k))$ are not equal in distribution. 
\end{lemma}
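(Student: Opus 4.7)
The plan is to exhibit a covariance that fails to be symmetric under time-swap. If the pair $((\eta^1_k,\eta^2_k),(\eta^1_{k+1},\eta^2_{k+1}))$ were equal in distribution to its transpose, then in particular
\[
\Cov(\eta^1_k,\eta^2_{k+1})=\Cov(\eta^2_k,\eta^1_{k+1}).
\]
The right-hand side vanishes because $\eta^1_{k+1}=I^1_{k+1}$ is independent of $(\eta^1_k,\eta^2_k)$, so it suffices to show that $\Cov(\eta^1_k,\eta^2_{k+1})\ne 0$.

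With the notation \eqref{nota-88} and recursion \eqref{m:IJ5}, write $\eta^1_k=I^1_k$ and $\eta^2_{k+1}=I^1_{k+1}+(I^2_{k+1}-J_k)^+$, where $J_k=I^1_k+(J_{k-1}-I^2_k)^+$. Since $I^1_{k+1}$ is independent of $(I^1_k,I^2_{k+1},J_k)$, the covariance reduces to
\[
\Cov(\eta^1_k,\eta^2_{k+1})=\Cov\bigl(I^1_k,\,(I^2_{k+1}-J_k)^+\bigr).
\]
Condition on $\eta^1_k=I^1_k=x$, which is independent of the triple $(J_{k-1},I^2_k,I^2_{k+1})$. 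By Lemma \ref{m:Lem-I}, $J_{k-1}\sim{\rm Exp}(1/\lambda-1/\rho)$ independently of $I^2_k\sim{\rm Exp}(1/\rho)$, and the memorylessness formulas of Lemma \ref{v:lm} show that $(J_{k-1}-I^2_k)^+$ is the mixture of an atom of mass $(\rho-\lambda)/\rho$ at $0$ with an independent ${\rm Exp}(1/\lambda-1/\rho)$ overshoot of mass $\lambda/\rho$. Combining this with $E[(I^2_{k+1}-c)^+]=\rho\,e^{-c/\rho}$ for deterministic $c\ge 0$ and $E[e^{-W/\rho}]=(\rho-\lambda)/\rho$ for $W\sim{\rm Exp}(1/\lambda-1/\rho)$, a short calculation yields
\[
E\bigl[(I^2_{k+1}-J_k)^+\,\big|\,\eta^1_k=x\bigr]=e^{-x/\rho}\,\frac{\rho^2-\lambda^2}{\rho}.
\]
Integrating against the ${\rm Exp}(1/\lambda)$ law of $\eta^1_k$, and using $E[(I^2_{k+1}-J_k)^+]=\rho-\lambda$ (which follows from stationarity $J_k\deq J_{k-1}$ and the analogous independence), produces
\[
\Cov(\eta^1_k,\eta^2_{k+1})=\frac{\lambda\rho(\rho-\lambda)}{\lambda+\rho}-\lambda(\rho-\lambda)=-\frac{\lambda^2(\rho-\lambda)}{\lambda+\rho},
\]
which is strictly negative for $\lambda<\rho$. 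This contradicts the equality of the two covariances and finishes the proof.

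The only delicate point is organizing the conditional law of $J_k$ given $\eta^1_k$ into the explicit two-component mixture above, so that the two layers of memorylessness (first for $(J_{k-1}-I^2_k)^+$, then for $(I^2_{k+1}-J_k)^+$) combine cleanly. Once this is in hand, the remainder of the argument is a routine integration against exponential densities.
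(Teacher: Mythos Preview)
Your proof is correct and rests on the same asymmetry as the paper's: $\eta^1_{k+1}=I^1_{k+1}$ is independent of $(\eta^1_k,\eta^2_k)$, whereas $\eta^1_k=I^1_k$ is not independent of $(\eta^1_{k+1},\eta^2_{k+1})$. The paper establishes the latter simply by writing $\eta^2_{k+1}-\eta^1_{k+1}=(I^2_{k+1}-I^1_k-[J_{k-1}-I^2_k]^+)^+$ with the four variables on the right independent, leaving the dependence on $I^1_k$ to inspection; your explicit covariance computation $\Cov(\eta^1_k,\eta^2_{k+1})=-\lambda^2(\rho-\lambda)/(\lambda+\rho)$ makes this dependence quantitative and removes any doubt that the formula's dependence on $I^1_k$ is genuine rather than illusory.
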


\begin{proof}   By the queueing construction, $\eta^1_{k+1}=I^1_{k+1}$ is independent of $(\eta^1_k,\eta^2_k)$ because the latter pair is a function of $(I^1_i,I^2_i)_{i\le k}$.   To see  that   $\eta^1_k=I^1_k$ is not independent of $(\eta^1_{k+1},\eta^2_{k+1})$, write 
\begin{align*}
\eta^2_{k+1}-\eta^1_{k+1} &= (I^2_{k+1}-J_k)^+
= \bigl(I^2_{k+1}- I^1_k-[J_{k-1} - I^2_k]^+\,\bigr)^+
\end{align*}
where all four variables in the last expression are independent. 
%By conditioning on a very large value of  $I^1_k$ we can  make the probability of  $\eta^2_{k+1}-\eta^1_{k+1}=0$ close to one. 
\end{proof}

Part (b) of Theorem \ref{B-th8} follows from the two lemmas above. 
\end{proof}

\begin{proof}[Proof of Theorem \ref{B-th10} and Remark \ref{B-rm10}]  
Part (a) comes from translating the condition $\Bus^\rho_{(k-1)\evec_1, k\evec_1}=\Yw_{k\evec_1}$ into a statement about the queueing mapping  $\wt I=\Dop(I,\w)$.

%  The distribution of $\counta_x$ is the same for each $x$. 
By \eqref{bg6},
\begin{align*}
\P\{\counta_x=0\}=\P\{ \Bus^\rho_{x-\evec_2, x} < \Bus^\rho_{x-\evec_1, x}\} 
=1-\rho^{-1}
\end{align*}
from the independence and exponential distributions in Theorem \ref{B-th-zero}(ii).  From \eqref{B-incr13}, 
\begin{align*}
\P\{\counta^{\lambda,\rho}_x=0\}=\P\{ \Bus^\rho_{x-\evec_1, x} > \Bus^\lambda_{x-\evec_1, x}\} 
=\frac{\rho-\lambda}\rho.  
\end{align*}

%To calculate  $\P\{\counta_x=n\}$ for $n\ge 1$ we put $x$ on the $x$-axis and  use Theorem \ref{B-th1} with  $(\wb\Yw_0,  \Busv{\rho}{\evec_1}{0})\deq (\eta^1,\eta^2)$ with $\lambda=1$ from \eqref{nota-88}.  
 
 To calculate  $\P\{\counta^{\lambda,\rho}_x=n\}$ for $n\ge 1$ we put $x$ on the $x$-axis and 
 use  the distribution  $(\Busv{\lambda}{\evec_1}{0},  \Busv{\rho}{\evec_1}{0})\deq (\eta^1,\eta^2)\sim\mu^{(\lambda,\rho)}$ given by Theorem \ref{B-th1}, with the convention   from \eqref{nota-88}.   By setting $\lambda=1$ the same calculation gives $\P\{\counta_x=n\}$ because  $\wb\Yw_0=  \Busv{1}{\evec_1}{0}$. 
% where $(\eta^1,\eta^2)=\Daop^{(2)}(I^1,I^2)=(I^1, \Dop(I^2,I^1))\sim\mu^{(\lambda,\rho)}$.   take $(I^1,I^2)\sim\nu^{(1,\rho)}$ and studying   Let $J=J^{I^2,I^1}=\Sop(I^2,I^1)$.  
\be\label{bg40} \begin{aligned}
\P\{\counta^{\lambda,\rho}_{n\evec_1}=n\}  
%\P\{\counta_{n\evec_1}=n\} 
%&=\P\bigl\{   \Bus^\rho_{-\evec_1, 0}>\Yw_{0}, \, \Bus^\rho_{0, \evec_1}=\Yw_{\evec_1},   \, \Bus^\rho_{\evec_1, 2\evec_1}=\Yw_{2\evec_1} \dotsc,  \Bus^\rho_{(n-1)\evec_1, n\evec_1}=\Yw_{n\evec_1}\bigr\}  \\
&=\P\bigl\{   \Bus^\rho_{-\evec_1, 0}>\Bus^\lambda_{-\evec_1, 0}, \, \Bus^\rho_{0, \evec_1}=\Bus^\lambda_{0, \evec_1},   \, \Bus^\rho_{\evec_1, 2\evec_1}=\Bus^\lambda_{\evec_1, 2\evec_1}, \\
&\qquad\qquad\qquad 
 \dotsc,  \Bus^\rho_{(n-1)\evec_1, n\evec_1}=\Bus^\lambda_{(n-1)\evec_1, n\evec_1}\bigr\}  \\
&=P\bigl\{   \eta^2_0>I^1_0, \,   \eta^2_1=I^1_1,  \,  \eta^2_2=I^1_2, \dotsc,   \eta^2_n=I^1_n
\bigr\} \\
&=P\bigl\{   I^2_0>J_{-1}, \,   I^2_1\le J_0,  \,  I^2_2\le J_1, \dotsc,   I^2_n\le J_{n-1} \bigr\} \\
\end{aligned}\ee
The last equality used  $\eta^2_i=I^1_i+(I^2_i-J_{i-1})^+$ repeatedly:   $\eta^2_i>I^1_i$ is equivalent to $I^2_i>J_{i-1}$.

Next apply repeatedly the equation  $J_i=I^1_i+(J_{i-1}-I^2_i)^+$ inside the last probability in \eqref{bg40}.     $ I^2_0>J_{-1}$ implies $J_0=I^1_0$.   Then  $I^2_1\le J_0$ implies  $J_1=I^1_1+J_0-I^2_1=I^1_1+I^1_0-I^2_1$. Assume inductively that 
\be\label{bg41}  J_i=I^1_i+\dotsm+I^1_0-I^2_1-\dotsm-I^2_i .  \ee
    Then $I^2_{i+1}\le J_i$  implies 
\begin{align*}
J_{i+1}= I^1_{i+1}+ J_i-I^2_{i+1} =  I^1_{i+1}+ (I^1_i+\dotsm+I^1_0-I^2_1-\dotsm-I^2_i)-I^2_{i+1}  
%\\  &=   I^1_i+\dotsm+I^1_0-I^2_1-\dotsm-I^2_i
\end{align*}
 and the induction goes from $i$ to $i+1$.     Substitute \eqref{bg41} for $J_0,\dotsc, J_{n-1}$ in the last probability in \eqref{bg40}. Use the independence of the variables $J_{-1}, \{ I^1_i, I^2_i\}_{i\ge0}$.   Let $S^\alpha_m$ denote the sum of $m$ i.i.d.\ Exp$(\alpha)$ random variables, with $S$ and $\wt S$  denoting independent sums. 
  \be\label{bg42} \begin{aligned}
\P\{\counta^{\lambda,\rho}_{n\evec_1}=n\}
&=P\bigl\{   I^2_0>J_{-1}, \,   I^2_1\le I^1_0,  \,  I^2_2 +I^2_1\le I^1_1+I^1_0, \\
 &\qquad\qquad \qquad  
  \dotsc,   I^2_n+\dotsm+I^2_1\le I^1_{n-1}+\dotsm+I^1_0 \bigr\} \\
&=P\bigl\{   I^2_0>J_{-1}\bigr\} \,P\bigl\{  S^{\rho^{-1}}_m\le \wt S^{\lambda^{-1}}_m\ \forall m\in[n]\bigr\}  \\
&=\frac{\rho-\lambda}{\rho} \sum_{k=0}^{n-1} C(n-1,k) \frac{\rho^k\lambda^n}{(\lambda+\rho)^{n+k}}. 
\end{aligned}\ee
The last line  comes from the independence of $I^2_0$ and $J_{-1}$, their distributions $I^2_0\sim$ Exp$(\rho^{-1})$ and $J_{-1}\sim$ Exp$(\lambda^{-1}-\rho^{-1})$, and Lemma \ref{lm:poi8}. 
\end{proof}

%%%%%%%%%%%%%    APPENDIX   %%%%%%%%%%%

\appendix

%\section{Queues and exponential distributions} 
\section{Queues}
\label{a:queue}

We prove elementary   lemmas about the  queueing  mappings.   Unless otherwise stated,   the weights   are real numbers without any probability distributions.  

\begin{lemma} \label{lm-D13}    
%\begin{enumerate}[{\rm(i)}] \itemsep=4pt  \item
 Fix $0\le a<b$. Let  $I=(I_k)_{k\in\Z}$ and $\w=(\w_j)_{j\in\Z}$ in $\R_{\ge0}^\Z$ satisfy 
\be\label{D88.2} 
\varliminf_{m\to-\infty} \frac1{\abs m} \sum_{i=m}^0 I_i \ge b  
\quad\text{and}\quad   
\lim_{m\to-\infty} \frac1{\abs m} \sum_{i=m}^0 \w_i =a.   
\ee
  Then  $\wt I=\Dop(I,\w)$ is well-defined and satisfies   $\varliminf_{m\to-\infty} {\abs m}^{-1}  \sum_{i=m}^0\wt  I_i \ge b$. 
\end{lemma}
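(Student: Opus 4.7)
The plan is to derive a closed-form expression for $\sum_{k=m}^0 \wt I_k$ in which the partial arrival sum $B_m := \sum_{i=m}^0 I_i$ cancels algebraically, leaving a quantity controllable directly by the Ces\`aro hypotheses. I would choose the function $G$ in \eqref{m:800} as $G_k := -B_{k+1}$ for $k\le 0$, so that $G_0=0$ and $G_k - G_{k-1}=I_k$. Setting $A_n := \sum_{i=n}^0 \omega_i$ and $U_k := A_k - B_{k+1}$, the identity $\sum_{i=k}^\ell \omega_i = A_k - A_{\ell+1}$ (valid for $\ell\le 0$) converts \eqref{m:800} into
\[
\wt G_\ell \;=\; \sup_{k\le \ell} U_k \;-\; A_{\ell+1} \;=:\; U^*_\ell - A_{\ell+1}.
\]
Telescoping $\sum_{k=m}^0 \wt I_k = \wt G_0 - \wt G_{m-1}$ and using $A_1=0$ yields the key identity
\[
\sum_{k=m}^0 \wt I_k \;=\; A_m \;+\; U^*_0 \;-\; U^*_{m-1}.
\]

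For the asymptotic bound, fix $\epsilon\in(0,(b-a)/2)$ and choose $M$ negative enough that $A_n\le (a+\epsilon)|n|$ and $B_n\ge (b-\epsilon)|n|$ for all $n\le M$. For $k\le m-1\le M-1$ we have $|k+1|=|k|-1$, so
\[
U_k \;\le\; (a+\epsilon)|k| - (b-\epsilon)(|k|-1) \;=\; -(b-a-2\epsilon)|k| + (b-\epsilon),
\]
which is strictly decreasing in $|k|$. Hence the supremum over $k\le m-1$ is attained at the smallest admissible $|k|=|m|+1$, giving $U^*_{m-1}\le -(b-a-2\epsilon)(|m|+1)+(b-\epsilon)$. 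Combined with $A_m\ge (a-\epsilon)|m|$ and $U^*_0\ge U_0 = \omega_0 \ge 0$, the identity above shows $\sum_{k=m}^0 \wt I_k \ge (b-3\epsilon)|m|+O(1)$, and dividing by $|m|$ and sending $m\to -\infty$ gives $\varliminf \ge b-3\epsilon$; letting $\epsilon\searrow 0$ concludes. The well-definedness of $\wt I$ (condition \eqref{Iw}) follows from the same estimate, since $\sum_{i=m}^0(\omega_i - I_{i+1}) = A_m - B_{m+1} - I_1 \le -(b-a-2\epsilon)|m|+O(1) \to -\infty$.

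The main obstacle is spotting the cancellation that eliminates $B_m$. The tempting first approach, telescoping the conservation law \eqref{m:cons} over $k=m,\dots,0$, writes $\sum_{k=m}^0 \wt I_k = B_m + J_0 - J_{m-1}$ and then tries to show $J_{m-1}/|m|\to 0$. But under only Ces\`aro hypotheses the sojourn time can genuinely be of order $|m|$ whenever $B_m$ oscillates well above its liminf rate $b|m|$, so this route stalls. The reformulation via $\wt G_\ell = U^*_\ell - A_{\ell+1}$ is equivalent to substituting $J_{m-1}=U^*_{m-1}-A_m + B_m$ into the conservation-law identity, at which point $B_m$ and $-J_{m-1}$ cancel exactly, replacing the need to bound $J_{m-1}$ sharply by the much easier task of bounding the supremum $U^*_{m-1}$ from above.
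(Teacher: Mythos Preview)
Your proof is correct and follows essentially the same approach as the paper. Both arguments set $G_0=0$, rewrite $\wt G_\ell$ as a supremum of differences of partial sums of $\omega$ and $I$, bound that supremum from above using the Ces\`aro hypotheses, and then telescope $\sum_{k=m}^0 \wt I_k = \wt G_0 - \wt G_{m-1}$ with $\wt G_0\ge 0$; your quantity $U^*_\ell$ is exactly the paper's $\wt G_\ell + A_{\ell+1}$, so the two computations differ only in bookkeeping.
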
 

\begin{proof}
Assumption \eqref{Iw} is obviously satisfied. 
Without loss of generality   assume $G_0=0$.    Let  $0<\e<(b-a)/3$.   Then for large enough $n$, 
\begin{align*}
\wt G_{-n} &= \sup_{k:\,k\le -n}  \Bigl\{  G_k+\sum_{i=k}^{-n}  \w_i\Bigr\}
=\sup_{k:\,k\le -n}  \Bigl\{  -\sum_{i=k+1}^0 I_i +\sum_{i=k}^{0} \w_i\Bigr\}  
- \sum_{i=-n+1}^{0} \w_i  \\
&\le  \sup_{k:\,k\le -n}  \bigl\{ -\abs k(b-\e) +\abs k(a+\e)\bigr\} -n(a-\e) = n(-b+3\e). 
  \end{align*}
Since $\sum_{i=m+1}^0\wt  I_i =  \wt G_0- \wt G_m$ this proves   $\ddd\varliminf_{m\to-\infty} {\abs m}^{-1}  \sum_{i=m}^0\wt  I_i \ge b$.  
 \end{proof} 

%%%%%%   The next lemma now comes as a consequence of the next one after that. 
%
%\begin{lemma} \label{lm-D14}    
%%\begin{enumerate}[{\rm(i)}] \itemsep=4pt  \item
% Fix $0\le a<b$. Let  $I=(I_k)_{k\in\Z}$ and $\w=(\w_j)_{j\in\Z}$ in $\R_{\ge0}^\Z$ satisfy 
%\be\label{D88.4} 
%\lim_{m\to-\infty} \frac1{\abs m} \sum_{i=m}^0 I_i = b  
%\quad\text{and}\quad   
%\lim_{m\to-\infty} \frac1{\abs m} \sum_{i=m}^0 \w_i =a.   
%\ee
%  Then  $\wt I=\Dop(I,\w)$ and $\wt\w=\Rop(I,\w)$ are  well-defined and satisfy 
%\be\label{D88.6} 
%\lim_{m\to-\infty} \frac1{\abs m} \sum_{i=m}^0 \wt I_i = b  
%\quad\text{and}\quad   
%\lim_{m\to-\infty} \frac1{\abs m} \sum_{i=m}^0 \wt\w_i =a.   
%\ee
%\end{lemma} 
%
%\begin{proof}    Lemma \ref{lm-D13} gives  ${\ddd\varliminf_{m\to-\infty}} {\abs m}^{-1}  \sum_{i=m}^0\wt  I_i \ge b$.  
%   For the opposite inequality,  first 
%\begin{align*}
%\wt G_{-n} \ge    G_{-n}+ \w_{-n}   \ge G_{-n}=  -\sum_{i=-n+1}^0 I_i   
% \end{align*}
% from which 
% \begin{align*}
% \sum_{i=-n}^0\wt  I_i =  \wt G_0- \wt G_{-n-1}  \le   \wt G_0 + \sum_{i=-n}^0 I_i .
% \end{align*}
% This gives    ${\ddd\varlimsup_{m\to-\infty}} {\abs m}^{-1}  \sum_{i=m}^0\wt  I_i \le b$.  
%%\[  \varlimsup_{m\to-\infty} {\abs m}^{-1}  \sum_{i=m}^0\wt  I_i \le b. \] 
%
%The first limit in \eqref{D88.6} has been proved.  The second limit in \eqref{D88.6}  follows from the three other limits in \eqref{D88.4}--\eqref{D88.6} and the identity 
%\be\label{m:IJ8} \w_k+I_k=\wt\w_k+\wt I_k .  \ee
%This  identity  comes from equations \eqref{m:IJ5} and \eqref{m:R}. 
%\end{proof} 
%
 
 \begin{lemma} \label{lm-D33}    
%\begin{enumerate}[{\rm(i)}] \itemsep=4pt  \item
 Fix $0\le a<b$.   Assume given nonnegative real  sequences   $I=(I_i)_{i\in\Z}$, $\w=(\w_i)_{i\in\Z}$,  $I^{(h)}=(I^{(h)}_i)_{i\in\Z}$ and $\w^{(h)}=(\w^{(h)}_i)_{i\in\Z}$ where    $h\in\Z_{>0}$ is an index.  Assume the following.  
 $I^{(h)}_i\to I_i$ and $\w^{(h)}_i\to\w_i$ as $h\to\infty$ for all $i\in\Z$, and furthermore 
\be\label{D88} 
\varlimsup_{\substack{m\to-\infty\\h\to\infty}}    \,\biggl\lvert \frac1{\abs m}\sum_{i=m}^0 I^{(h)}_i-b\,\biggr\rvert=0 
\quad\text{and}\quad   
%\varlimsup_{h\to\infty} 
\varlimsup_{\substack{m\to-\infty\\h\to\infty}}    \,\biggl\lvert\frac1{\abs m}\sum_{i=m}^0 \w^{(h)}_i -a\,\biggr\rvert=0.   
\ee
%and  \[   \sup_{h\in\cH}\abs{\w^{(h)}_0}<\infty. \]  
Then   $\wt I=\Dop(I,\w)$ and $\wt\w=\Rop(I,\w)$ are well-defined, as are 
   $\wt I^{(h)}=\Dop(I^{(h)},\w^{(h)})$ and  $\wt\w^{(h)}=\Rop(I^{(h)},\w^{(h)})$ for large enough $h$.   We have the limits   
   \be\label{D89} \lim_{h\to\infty}\wt I^{(h)}_i= \wt I_i 
\quad\text{and}\quad  \lim_{h\to\infty} \wt\w^{(h)}_i=\wt\w_i  
   \qquad  \forall i\in\Z \ee
   and   
\be\label{D90} 
\varlimsup_{\substack{m\to-\infty\\h\to\infty}}    \,\biggl\lvert \frac1{\abs m}\sum_{i=m}^0  \wt I^{(h)}_i-b\,\biggr\rvert=0 
\quad\text{and}\quad   
%\varlimsup_{h\to\infty} 
\varlimsup_{\substack{m\to-\infty\\h\to\infty}}    \,\biggl\lvert\frac1{\abs m}\sum_{i=m}^0 \wt\w^{(h)}_i -a\,\biggr\rvert=0.   
\ee
\end{lemma}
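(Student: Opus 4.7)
The plan is to proceed in three stages: uniform well-definedness of the queueing outputs for $h$ large, pointwise convergence via a uniform argmax-restriction, and the Cesàro bounds by adapting the estimate from Lemma \ref{lm-D13}.  For stage one, I would observe that \eqref{D88} together with $a<b$ yields, for any $0<\varepsilon<(b-a)/3$, constants $M_0$ and $H_0$ such that for $m\le-M_0$ and $h\ge H_0$ (and also for the limiting sequence $(I,\w)$),
\[
\sum_{i=m}^0\bigl(\w^{(h)}_i-I^{(h)}_{i+1}\bigr)\;\le\;-\abs m(b-a-2\varepsilon)+O(1)\;\xrightarrow[m\to-\infty]{}-\infty.
\]
Thus \eqref{Iw} holds uniformly, so $\wt I^{(h)}=\Dop(I^{(h)},\w^{(h)})$ and $\wt\w^{(h)}=\Rop(I^{(h)},\w^{(h)})$ are well-defined for every $h\ge H_0$.

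For \eqref{D89}, I would fix $\ell\in\Z$ and normalize $G^{(h)}_0=G_0=0$, so that $G^{(h)}_k=-\sum_{i=k+1}^0 I^{(h)}_i$ for $k\le 0$.  The key point to establish is that the supremum
\[
\wt G^{(h)}_\ell\;=\;\sup_{k\le\ell}\Bigl\{G^{(h)}_k+\sum_{i=k}^\ell\w^{(h)}_i\Bigr\}
\]
restricts to a finite window $k\in\lzb-N,\ell\rzb$ uniformly in $h\ge H_0$.  Indeed, the same $\varepsilon$ estimate shows the argument is $\le-\abs k(b-a-2\varepsilon)+C(\ell)$ for $k\le-N$, which for large $N=N(\ell)$ is smaller than the (bounded) value at $k=\ell$.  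On the window $\lzb-N,\ell\rzb$ only finitely many $I^{(h)}_i,\w^{(h)}_i$ intervene, and these converge pointwise by hypothesis, so $\wt G^{(h)}_\ell\to\wt G_\ell$.  Consecutive differences give $\wt I^{(h)}_\ell\to\wt I_\ell$; writing $\wt\w^{(h)}_k=I^{(h)}_k\wedge J^{(h)}_{k-1}$ with $J^{(h)}_{k-1}=\wt G^{(h)}_{k-1}-G^{(h)}_{k-1}\to J_{k-1}$ delivers the second half of \eqref{D89}.

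For \eqref{D90}, I plan to track the estimate from the proof of Lemma \ref{lm-D13} with constants uniform in $h$.  Combined with the trivial lower bound $\wt G^{(h)}_{m-1}\ge G^{(h)}_{m-1}\ge -\abs m(b+\varepsilon)-1$, this produces
\[
-\abs m(b+\varepsilon)-O(1)\;\le\;\wt G^{(h)}_{m-1}\;\le\;-\abs m(b-3\varepsilon)+O(1)
\]
for $m\le -M_0$ and $h\ge H_0$.  Since $\sum_{i=m}^0\wt I^{(h)}_i=\wt G^{(h)}_0-\wt G^{(h)}_{m-1}$ and the first term is $O(1)$ by the previous paragraph, dividing by $\abs m$ and letting $\varepsilon\downarrow 0$ yields the first half of \eqref{D90}.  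The corresponding statement for $\wt\w^{(h)}$ then follows from the pointwise conservation identity $\wt\w^{(h)}_k+\wt I^{(h)}_k=I^{(h)}_k+\w^{(h)}_k$ (a consequence of \eqref{m:IJ5}): summing over $i\in\lzb m,0\rzb$, dividing by $\abs m$, and invoking \eqref{D88} for the right-hand side converts the $\wt I^{(h)}$-bound into a $\wt\w^{(h)}$-bound with limit $a$.

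The main obstacle is the joint $(m,h)$ character of \eqref{D88}: the argmax restriction in stage two and the running-maximum estimate in stage three must each be uniform in $h$, even though neither the individual weights $\w^{(h)}_i$ nor the partial sums are uniformly bounded.  Both estimates go through precisely because \eqref{D88} is formulated as a simultaneous limit from the outset, so a single choice of $(M_0,H_0)$ controls every intermediate quantity.
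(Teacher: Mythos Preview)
Your proposal is correct and follows essentially the same approach as the paper: normalize $G^{(h)}_0=0$, use the uniform Ces\`aro bounds from \eqref{D88} to force the argmax in $\wt G^{(h)}_\ell$ into a finite window (the paper phrases this as a $\varliminf/\varlimsup$ argument with a contradiction on $k(h)\to-\infty$, but the content is identical), deduce the running bound $\wt G^{(h)}_{-n}\le n(-b+3\varepsilon)$ as in Lemma \ref{lm-D13} with constants uniform in $h$, combine with the trivial bound $\wt G^{(h)}_{m-1}\ge G^{(h)}_{m-1}$, and finish $\wt\w^{(h)}$ via the conservation identity $\w_k+I_k=\wt\w_k+\wt I_k$.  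The only cosmetic difference is that for the pointwise limit $\wt\w^{(h)}_k\to\wt\w_k$ in \eqref{D89} you go through $J^{(h)}_{k-1}\to J_{k-1}$ directly, whereas the paper invokes the conservation identity there as well.
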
 

 \begin{proof}
Assumption \eqref{Iw} is  satisfied to make   $\wt I^{(h)}=\Dop(I^{(h)},\w^{(h)})$  well-defined for large enough $h$. 

We can assume  $G^{(h)}_0=0$.  
 Compute $\wt I^{(h)}=\Dop(I^{(h)},\w^{(h)})$ as the increments of the function 
\be\label{D96}  % \begin{align*}
  \wt G^{(h)}_\ell =\sup_{k\le\ell}  \Bigl\{   G^{(h)}_{k}+ \sum_{i=k}^\ell \w^{(h)}_i\Bigr\} .   
\ee %\end{align*}

Let $k_0$ be a maximizer in \eqref{m:800} for $\wt G_\ell$.  Then 
\be\label{D98} %\begin{align*}
\varliminf_{h\to\infty} \wt G^{(h)}_\ell \ge \varliminf_{h\to\infty}  \Bigl\{   G^{(h)}_{k_0}+ \sum_{i=k_0}^\ell \w^{(h)}_i\Bigr\}  =   G_{k_0}+ \sum_{i=k_0}^\ell \w_i= \wt G_\ell. 
\ee %\end{align*}

Let $k(h)$ be a maximizer in \eqref{D96}.  If  $\varlimsup_{h\to\infty} \wt G^{(h)}_\ell \le \wt G_\ell$ fails then it must be that $k(h)\to-\infty$ along a subsequence.    But we can write 
\be\label{D98.8}\begin{aligned}
 \wt G^{(h)}_\ell  &= G^{(h)}_{k(h)}+ \sum_{i=k(h)}^\ell \w^{(h)}_i \\ 
 &=   -  \sum_{i=k(h)+1}^0  I^{(h)}_{i}  +  \sum_{i=k(h)}^0 \w^{(h)}_i    + \biggl( \ind_{\ell>0} \sum_{i=1}^\ell \w^{(h)}_i  -  \ind_{\ell<0} \sum_{i=\ell+1}^0 \w^{(h)}_i \biggr) 
\end{aligned}\ee
which converges to $-\infty$ as $k(h)\to-\infty$  by the assumptions and thereby contradicts \eqref{D98}.    We have now proved that  
\be\label{D99}   \lim_{h\to\infty}  \wt G^{(h)}_\ell =  \wt G_\ell \qquad\forall \ell\in\Z \ee
and thereby verified \eqref{D89} for $\wt I^{(h)}$.

  Let  $0<\e<(b-a)/3$.   By assumption \eqref{D88}  there exist  finite $n_1(\e)$ and   $h_1(\e)$  such that, when  $n\ge n_1(\e)$ and  $h\ge h_1(\e)$, 
\begin{align*}
\wt G^{(h)}_{-n} &= \sup_{k:\,k\le -n}  \Bigl\{  G^{(h)}_k+\sum_{i=k}^{-n}  \w^{(h)}_i\Bigr\}
=\sup_{k:\,k\le -n}  \Bigl\{  -\sum_{i=k+1}^0 I^{(h)}_i +\sum_{i=k}^{0} \w^{(h)}_i\Bigr\}  
- \sum_{i=-n+1}^{0} \w^{(h)}_i  \\
&\le  \sup_{k:\,k\le -n}  \bigl\{ -\abs k(b-\e) +\abs k(a+\e)\bigr\} -n(a-\e) = n(-b+3\e). 
  \end{align*}
  From this
  \be\label{D98.2}   \varlimsup_{m\to-\infty}  \sup_{h\ge h_1(\e)} \frac{\wt G^{(h)}_m}{\abs m}  \le -b+3\e. \ee
Since $\sum_{i=m}^0\wt  I^{(h)}_i =  \wt G^{(h)}_0- \wt G^{(h)}_{m-1}$ and $\wt G^{(h)}_0\ge \w^{(h)}_0\ge 0$,  this proves   
  \be\label{D98.4} \varliminf_{m\to-\infty} \;  \inf_{h\ge h_1(\e)} \; \frac1{\abs m}\sum_{i=m}^0\wt  I^{(h)}_i \ge b-3\e.\ee 

For the complementary upper bound,  get a lower bound for $\wt G^{(h)}_{m-1}$ by taking $k=\ell$ in \eqref{D96}. 
\begin{align*}
\sum_{i=m}^0\wt  I^{(h)}_i &=  \wt G^{(h)}_0- \wt G^{(h)}_{m-1} \le  \wt G^{(h)}_0- G^{(h)}_{m-1}  %\\ &
= \wt G^{(h)}_0+  \sum_{i=m}^0  I^{(h)}_i . 
\end{align*}
Apply  limit \eqref{D99} and  assumption \eqref{D88}.  Limit \eqref{D90}  has been proved for $\wt I^{(h)}$. 

The limits for $\wt\w^{(h)}$ follow from the other limits and the  generally valid identity 
\be\label{m:IJ8} \w_k+I_k=\wt\w_k+\wt I_k   \ee
that   comes from equations \eqref{m:R} and  \eqref{m:IJ5}. 
\end{proof} 

For reference elsewhere in the paper we state the simple consequence of Lemma \ref{lm-D33}  where the  sequences are constant functions of $h$.

\begin{lemma} \label{lm-D14}    
%\begin{enumerate}[{\rm(i)}] \itemsep=4pt  \item
 Fix $0\le a<b$. Let  $I=(I_k)_{k\in\Z}$ and $\w=(\w_j)_{j\in\Z}$ in $\R_{\ge0}^\Z$ satisfy 
\be\label{D88.4} 
\lim_{m\to-\infty} \frac1{\abs m} \sum_{i=m}^0 I_i = b  
\quad\text{and}\quad   
\lim_{m\to-\infty} \frac1{\abs m} \sum_{i=m}^0 \w_i =a.   
\ee
  Then  $\wt I=\Dop(I,\w)$ and $\wt\w=\Rop(I,\w)$ are  well-defined and satisfy 
\be\label{D88.6} 
\lim_{m\to-\infty} \frac1{\abs m} \sum_{i=m}^0 \wt I_i = b  
\quad\text{and}\quad   
\lim_{m\to-\infty} \frac1{\abs m} \sum_{i=m}^0 \wt\w_i =a.   
\ee
\end{lemma}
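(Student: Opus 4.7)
The plan is to obtain Lemma \ref{lm-D14} as a direct specialization of Lemma \ref{lm-D33}, applied to the constant approximating sequences $I^{(h)} = I$ and $\w^{(h)} = \w$ for every $h \in \Z_{>0}$. With this choice, the pointwise convergences $I^{(h)}_i \to I_i$ and $\w^{(h)}_i \to \w_i$ are trivially true, so the first block of hypotheses of Lemma \ref{lm-D33} is free.

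Next I would verify the Cesàro hypotheses \eqref{D88}. Since the sequences are constant in $h$, the double $\varlimsup$ in $(m,h)$ collapses to the single $\varlimsup_{m \to -\infty}$, and both quantities inside the absolute values converge to zero by the hypothesis \eqref{D88.4} of Lemma \ref{lm-D14}. Thus \eqref{D88} holds with the same constants $a$ and $b$, and in particular $b > a \ge 0$ as required.

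Invoking Lemma \ref{lm-D33} now gives at once that $\wt I = \Dop(I,\w)$ and $\wt\w = \Rop(I,\w)$ are well-defined (this is exactly the well-definedness assertion contained in \eqref{D89}, since $\wt I^{(h)} = \wt I$ and $\wt\w^{(h)} = \wt\w$ here). The conclusion \eqref{D90}, again with both sequences constant in $h$, reduces precisely to
\[
\lim_{m \to -\infty} \frac{1}{\abs m} \sum_{i=m}^{0} \wt I_i = b
\quad\text{and}\quad
\lim_{m \to -\infty} \frac{1}{\abs m} \sum_{i=m}^{0} \wt\w_i = a,
\]
which is \eqref{D88.6}.

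There is no serious obstacle: the only thing to check is that specializing to constant $h$-sequences makes all the $h$-uniform statements of Lemma \ref{lm-D33} degenerate correctly into the single-sequence statements of Lemma \ref{lm-D14}, and this is immediate from inspection. Alternatively, one could give a self-contained proof by repeating the argument used for Lemma \ref{lm-D33} without the index $h$: choose $G_0 = 0$, use the representation $\wt G_{-n} \le \sup_{k \le -n}\{-\sum_{i=k+1}^{0}I_i + \sum_{i=k}^{0}\w_i\} - \sum_{i=-n+1}^{0}\w_i$ together with \eqref{D88.4} to get the upper bound $\varlimsup_m \wt G_m/\abs m \le -b$, the matching lower bound from taking $k = \ell$ in the supremum defining $\wt G$, and the identity \eqref{m:IJ8} to transfer the Cesàro limit from $\wt I$ to $\wt\w$. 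But invoking Lemma \ref{lm-D33} directly is cleaner.
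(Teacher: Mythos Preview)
Your proposal is correct and matches the paper's approach exactly: the paper states Lemma~\ref{lm-D14} as ``the simple consequence of Lemma~\ref{lm-D33} where the sequences are constant functions of $h$'' and gives no separate proof. Your alternative self-contained sketch is also fine, but as you note, invoking Lemma~\ref{lm-D33} directly is what the paper does.
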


For the purpose of verifying that Busemann functions obey the queueing operation $\wt I=\Dop(I,\w)$, it is convenient to have a lemma that deduces this from assuming the iterative equations \eqref{m:IJ5}.   The first lemma below makes a statement without randomness. 

\begin{lemma}\label{lind-lm}   Let $\{\wt I_k, J_k, I_k, \w_k\}_{k\in\Z}$ be  nonnegative real numbers  that satisfy the three assumptions   below: 
\begin{align}
\label{lind-a1} &\lim_{m\to-\infty}  \sum_{i=m}^0 (\w_i-I_{i+1})=-\infty. \\
 \label{lind-a2}  
 &\wt I_k=\w_k+(I_k-J_{k-1})^+
 \quad\text{and}\quad 
 J_k=\w_k+(J_{k-1}-I_k)^+  
\quad \forall k\in\Z.  \\[3pt] 
 \label{lind-a3}  &J_k=\w_k \quad\text{ for infinitely many $k<0$.}  
 \end{align}  
 
Then $\wt I=\Dop(I,\w)$ and $J=\Sop(I,\w)$.  
\end{lemma}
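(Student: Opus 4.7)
\medskip

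\noindent\textbf{Proof plan.} Recall that $\wt I=\Dop(I,\w)$ and $J=\Sop(I,\w)$ are defined through $\wt G_\ell=\sup_{k\le\ell}\{G_k+\sum_{i=k}^\ell\w_i\}$ by $\wt I_\ell=\wt G_\ell-\wt G_{\ell-1}$ and $J_k=\wt G_k-G_k$. Setting  $G_k-G_\ell=\sum_{i=\ell+1}^k I_i$ (which is all that is needed since $G$ enters only through its increments), this is equivalent to
\[
J^*_k:=\sup_{j\le k}\Bigl\{-\!\!\sum_{i=j+1}^k I_i+\sum_{i=j}^k\w_i\Bigr\},
\qquad \wt I^*_k:=I_k+J^*_k-J^*_{k-1}.
\]
The goal is to prove $J_k=J^*_k$ for all $k\in\Z$; then the conservation law \eqref{m:cons}, which is a consequence of \eqref{lind-a2}, yields $\wt I_k=I_k+J_k-J_{k-1}=\wt I^*_k$.

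The key step is the following claim, proved by induction on $k-\ell$ using only \eqref{lind-a2}:
\begin{equation}\label{plan:rep}
\text{if } J_\ell=\w_\ell,\ \text{ then}\qquad J_k=\sup_{\ell\le j\le k}\Bigl\{-\!\!\sum_{i=j+1}^k I_i+\sum_{i=j}^k\w_i\Bigr\} \quad\forall\, k\ge\ell.
\end{equation}
The base case $k=\ell$ is trivial. For the step, use $J_k=\w_k+(J_{k-1}-I_k)^+$ from \eqref{lind-a2}, insert the inductive formula for $J_{k-1}$, and rewrite the outer maximum with $0$ as the inclusion of $j=k$ into the supremum. This is a straightforward manipulation.

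To finish, observe that assumption \eqref{lind-a1} forces the unrestricted supremum $J^*_k$ to be attained at some finite index $j_0(k)>-\infty$: the quantity $-\sum_{i=j+1}^k I_i+\sum_{i=j}^k\w_i$ tends to $-\infty$ as $j\to-\infty$. By assumption \eqref{lind-a3} we may choose $\ell\le j_0(k)$ with $J_\ell=\w_\ell$; plugging this $\ell$ into \eqref{plan:rep} shows that the restricted and unrestricted suprema coincide, so $J_k=J^*_k$. Hence $J=\Sop(I,\w)$, and $\wt I=\Dop(I,\w)$ follows from the conservation identity mentioned above.

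\medskip

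\noindent\textbf{Main obstacle.} The point to be careful about is that the forward recursion \eqref{lind-a2} alone does not pin down a unique bi-infinite solution: any Lindley-type recursion admits spurious solutions coming from arbitrary ``initial conditions at $-\infty$.'' Both extra hypotheses are needed---\eqref{lind-a1} to guarantee that the Loynes supremum $J^*_k$ is finite and attained, and \eqref{lind-a3} to provide arbitrarily far back regeneration times at which the given $J$ matches the Loynes construction. The induction in \eqref{plan:rep} is the mechanism that propagates equality from such a regeneration epoch forward to $k$, and the interplay of \eqref{lind-a1} and \eqref{lind-a3} is what makes the regeneration epoch reachable for every target $k$.
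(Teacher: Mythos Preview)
Your argument is correct and follows essentially the same route as the paper: both iterate the Lindley recursion forward from a regeneration index $\ell$ with $J_\ell=\w_\ell$ (the paper phrases this via the waiting-time variables $W_k=J_k-\w_k$ and $U_k=\w_k-I_{k+1}$, while you work directly with $J_k$), use \eqref{lind-a1} to match the resulting finite-range maximum with the unrestricted Loynes supremum, and then recover $\wt I=\Dop(I,\w)$ from the conservation identity. The only cosmetic difference is that the paper carries the boundary term $W_\ell$ in the iterated formula and eliminates it afterward, whereas you specialize to $W_\ell=0$ from the start of the induction.
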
 

\begin{proof}  Rewrite the second equation of \eqref{lind-a2} as follows.  Let $W_k=J_k-\w_k$ and  $U_k=\w_k-I_{k+1}$.  Then 
\be\label{m:IJ5.78}  W_k=(W_{k-1}+U_{k-1})^+. \ee
This is  Lindley's recursion from queueing theory and  $W_k$ is the {\it waiting time} of customer $k$.  
Equation \eqref{m:IJ5.78}  iterates inductively to give 
\be\label{m:IJ5.80} 
W_k=\biggl\{ \biggl( W_\ell+ \sum_{i=\ell}^{k-1} U_i\biggr)\bigvee  \biggl( \max_{m:\, \ell+1\le m\le k-1} \sum_{i=m}^{k-1} U_i\biggr)\biggr\}^+ \qquad \forall \,\ell< k.  
\ee 
%Here is the inductive argument, assuming true for $\ell$: 
%\begin{align*}
%W_k&=\biggl\{ \biggl( [W_{\ell-1}+U_{\ell-1}]^++ \sum_{i=\ell}^{k-1} U_i\biggr)\bigvee  \biggl( \max_{m:\, \ell+1\le m\le k-1} \sum_{i=m}^{k-1} U_i\biggr)\biggr\}^+ \\
%&=\biggl\{ \biggl( W_{\ell-1} + \sum_{i=\ell-1}^{k-1} U_i\biggr)\bigvee\biggl( \;\sum_{i=\ell}^{k-1} U_i\biggr)\bigvee  \biggl( \max_{m:\, \ell+1\le m\le k-1} \sum_{i=m}^{k-1} U_i\biggr)\biggr\}^+ \\
%&=\biggl\{ \biggl( W_{\ell-1} + \sum_{i=\ell-1}^{k-1} U_i\biggr) \bigvee  \biggl( \max_{m:\, \ell\le m\le k-1} \sum_{i=m}^{k-1} U_i\biggr)\biggr\}^+ 
%\end{align*}
%which is the formula for $\ell-1$. 
%
We claim that 
\be\label{m:IJ5.85} 
W_k=   \biggl( \,\sup_{m: \, m\le k-1} \sum_{i=m}^{k-1} U_i\biggr)^+ \qquad \forall \,  k\in\Z.    
\ee 
Dropping the first term on the right in \eqref{m:IJ5.80} and letting $\ell\to-\infty$  gives $\ge$ in \eqref{m:IJ5.85}.  By assumption  \eqref{lind-a3} $W_\ell=0$ for some $\ell<k$.  Then \eqref{m:IJ5.80} gives also $\le$ in \eqref{m:IJ5.85}.   

The proof is completed by making explicit the content of \eqref{m:IJ5.85}.  Let $G$ and $\wt G$ be as defined in the definition of the mappings $\Dop$ and $\Sop$.  Then  from \eqref{m:IJ5.85} and \eqref{m:806} deduce 
\begin{align*}
J_k&=\w_k + \biggl( \,\sup_{m: \, m\le k-1} \sum_{i=m}^{k-1} (\w_i-I_{i+1})\biggr)^+ 
=  \w_k + \biggl( \,\sup_{m: \, m\le k-1}  \Bigl\{  G_m-G_k+  \sum_{i=m}^{k-1} \w_i \Bigr\}  \biggr)^+\\
&=\w_k + (\wt G_{k-1}-G_k)^+ = \w_k + \wt G_{k-1}\vee G_k - G_k =  \wt G_{k}-G_k. 
\end{align*} 
Thus $J=\Sop(I,\w)$.    Finally, from the first  equation of \eqref{lind-a2} and $I_k=G_k-G_{k-1}$, 
\begin{align*}
\wt I_k&=\w_k+(I_k-J_{k-1})^+ =   I_k-J_{k-1} + \w_k+ (J_{k-1}-I_k)^+ =  I_k +J_k-J_{k-1}\\
&= I_k +(\wt G_{k}-G_k) -(\wt G_{k-1}-G_{k-1}) =  \wt G_{k}- \wt G_{k-1}. 
\qedhere
\end{align*}
\end{proof} 

Here is a version for a random sequence.  

\begin{lemma}\label{lind-lm3}   Let $\{\wt I_k, J_k, I_k, \w_k\}_{k\in\Z}$ be finite nonnegative random variables that satisfy assumptions {\rm(i)--(iii)} below: 
\begin{enumerate}[{\rm(i)}]  \itemsep=3pt 
\item   
$\ddd\lim_{m\to-\infty}  \sum_{i=m}^0 (\w_i-I_{i+1})=-\infty$  almost surely. 

\item  $\{J_k, \w_k\}_{k\in\Z}$ is a stationary process. 
 
\item   Equations
  \be\label{m:IJ5.55}  
 \wt I_k=\w_k+(I_k-J_{k-1})^+
 \quad\text{and}\quad 
 J_k=\w_k+(J_{k-1}-I_k)^+   
  \ee
are valid for all $k\in\Z$, almost surely.

\end{enumerate}

Then  $J_k=\w_k$  for infinitely many $k<0$ with probability 1, and  $\wt I=\Dop(I,\w)$ and $J=\Sop(I,\w)$  almost surely.
\end{lemma}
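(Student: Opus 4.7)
My plan is to reduce the statement to Lemma \ref{lind-lm}. Hypotheses \eqref{lind-a1} and \eqref{lind-a2} of that lemma are supplied directly by (i) and (iii), so the only missing ingredient is \eqref{lind-a3}, namely that $J_k=\w_k$ for infinitely many $k<0$. Once that recurrence is proven to hold almost surely, I will apply Lemma \ref{lind-lm} pathwise on the intersection of the three full-probability events to obtain $\wt I=\Dop(I,\w)$ and $J=\Sop(I,\w)$ almost surely.

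To prove the recurrence, set $W_k=J_k-\w_k$ and $U_k=\w_k-I_{k+1}$. Exactly as in the proof of Lemma \ref{lind-lm}, the second equation of (iii) rewrites as Lindley's recursion $W_k=(W_{k-1}+U_{k-1})^+$, and in particular $W_k\ge 0$ for all $k$. By assumption (ii) the sequence $\{W_k\}_{k\in\Z}$ is stationary with finite nonnegative entries. Consider the bad event $B_M=\{W_k>0\text{ for every }k\le M-1\}$. On $B_M$ the positive part in Lindley's recursion never truncates for $k\le M-1$, so $W_k=W_{k-1}+U_{k-1}$ there, and iterating backwards gives
\[
W_m=W_{M-1}-\sum_{i=m}^{M-2}U_i\qquad(m\le M-1).
\]
Assumption (i) forces $\sum_{i=m}^{M-2}U_i\to-\infty$ as $m\to-\infty$, hence $W_m\to+\infty$ pointwise on $B_M$.

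This divergence is incompatible with stationarity. For any $C>0$, dominated convergence applied to $\ind_{B_M}\ind_{\{W_m\le C\}}\to 0$ yields $\P(B_M\cap\{W_m>C\})\to\P(B_M)$, whence $\P(W_0>C)=\P(W_m>C)\ge\P(B_M)$ for every $C$. Since $W_0$ is almost surely finite, sending $C\to\infty$ forces $\P(B_M)=0$. Taking a countable union over $M\in\Z$ shows that the event $\{W_k=0\text{ for infinitely many }k<0\}$ has full probability, completing the reduction.

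The main obstacle is the middle step: extracting a pathwise divergence $W_m\to+\infty$ on the bad event (from the decay condition (i)) and then annihilating that event via a Poincar\'e-type recurrence argument driven by the stationarity of $\{W_k\}$. The two ingredients push against each other, but neither is strong enough on its own; only the combination rules out $B_M$. Once this recurrence property is in hand, the remainder of the lemma follows immediately by invoking the already-proved deterministic Lemma \ref{lind-lm}.
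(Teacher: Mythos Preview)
Your proof is correct and follows essentially the same route as the paper: reduce to Lemma \ref{lind-lm}, set $W_k=J_k-\w_k$, show that on the bad event the Lindley recursion untruncates and forces $W_m\to\infty$ as $m\to-\infty$, and then use stationarity of $\{W_k\}$ together with finiteness of $W_0$ to kill the bad event. The only cosmetic difference is that the paper works with the single shift-invariant event $B=\bigcup_M B_M$ and uses $\P(W_0\ge c,\,B)=\P(W_k\ge c,\,B)$ directly, whereas you handle each $B_M$ separately and take a countable union at the end; both arguments are equivalent.
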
 

\begin{proof}      Lemma \ref{lind-lm}  gives the conclusion once  we  verify  that assumption \eqref{lind-a3} holds almost surely. 
Using the waiting time notation $W$ from the previous proof,   it suffices to show that 
\be\label{m:IJ5.87}
\P\{\text{$W_\ell=0$ for infinitely many $\ell<0$}\}=1 
\ee
The complementary event is $B=\{\exists m<0\text{ such that } W_k>0\,\forall k\le m\}$. $B$ is a shift-invariant event.   On the event $B$, the right-hand side of \eqref{m:IJ5.78} is strictly positive for all $k\le m$ (for a random $m$).  This   implies, for all $k<m$, 
\begin{align*}
  0 &< W_m= W_{m-1}+U_{m-1}=W_{m-2}+U_{m-2}+U_{m-1}=\dotsm= W_k + \sum_{i=k}^{m-1} U_i \\
  &= W_k + \sum_{i=k}^{m-1} (\w_i  - I_{i+1}) . 
\end{align*} 
By assumption (i) of the lemma,  $W_k\to\infty$ a.s.\ on the event $B$ as $k\to-\infty$.   Let  $c<\infty$.    By the shift-invariance of $B$ and the stationarity of the process $\{W_k=J_k-\w_k\}_{k\in\Z}$,  
\begin{align*}
\P( W_0\ge c, B)= \P( W_k\ge c, B)\to  \P(B)   \quad\text{as }  k\to-\infty.  
\end{align*}
We conclude that $W_0=\infty$ a.s.\ on the event $B$, and hence $\P(B)=0$.  Claim \eqref{m:IJ5.87}  has been verified. 
\end{proof} 

\begin{remark}[Non-stationary solution to Lindley's recursion]   Some result such as Lemma \ref{lind-lm3} is needed, for there can be  another solution to Lindley's recursion that blows up as $n\to-\infty$.  Suppose $\{U_k\}$ is ergodic and   $\E U_k<0$.  Pick any random $N$ such that 
$\sum_{k=m}^N U_k<0$ for all $m\le N$.    Set 
\begin{align*}
W_n&=-\sum_{k=n}^N U_k\qquad\text{for }n\le N\\
W_{N+1}&=0\\
W_n&=(W_{n-1}+U_{n-1})^+ \qquad\text{for } n\ge N+2.
\end{align*}
One can check that $W_n=(W_{n-1}+U_{n-1})^+$ holds for all $n\in\Z$. 
\qedrm\end{remark}

\section{Exponential distributions}
\label{a:exp} 

The next lemma is elementary. 
% and  proved by calculating a joint transform such as the Laplace transform or characteristic function,  or by transforming the joint density.   
  The mapping $( I,J,W)\mapsto(I', J', W')$ in the lemma   is an involution, that is, its own inverse. 

 \begin{lemma}\label{v:lm}  Let $\alpha, \beta>0$.  
 Assume given independent variables $W\sim$ {\rm Exp$(\alpha+\beta)$},  $I\sim$ {\rm Exp$(\alpha)$}, and 
 $J\sim$ {\rm Exp$(\beta)$}.   Define 
 \be\label{IJw19}\begin{aligned}
I'&=W+(I-J)^+ \\
J'&=W+(I-J)^- \\
W'&=I\wedge J   .  
\end{aligned}\ee 
\begin{enumerate}[{\rm(i)}] \itemsep=3pt 
\item $I-J$ and $I\wedge J$ are independent. 
\item $(I-J)^+\sim$ {\rm Ber}$(\frac\beta{\alpha+\beta})\cdot {\rm Exp}(\alpha)$, that is, the product of a Bernoulli with success probability $\frac\beta{\alpha+\beta}$ and an independent  rate $\alpha$ exponential. 
\item The  triple  $( I', J', W')$  has the same distribution as  $(I,J,W)$. 

\end{enumerate}
 \end{lemma}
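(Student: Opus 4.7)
My plan is to first prove (i) and (ii) by direct computation exploiting the memoryless property of exponentials, and then deduce (iii) as a structural consequence by recognizing that $(I,J) \mapsto (I\wedge J,\, I-J)$ is a deterministic bijection from $\R_{\ge0}^2$ onto $\R_{\ge0}\times\R$.

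For (i), I would compute the joint tail $\P\{I\wedge J>t,\ I-J>s\}$ for $s,t\ge 0$ by conditioning on $\{I>J\}$ and integrating against the densities; on that event $I\wedge J=J$, and I'll get $\frac{\beta}{\alpha+\beta}\, e^{-\alpha s}\,e^{-(\alpha+\beta)t}$. The symmetric case $s<0$ comes from swapping the roles of $\alpha$ and $\beta$. Matching this with the product of marginals $\P\{I-J>s\}\cdot \P\{I\wedge J>t\}$ yields independence. Part (ii) then falls out of the same computation: $\P\{I>J\}=\beta/(\alpha+\beta)$, and the conditional distribution of $I-J$ given $I>J$ is $\mathrm{Exp}(\alpha)$ by memorylessness, which is exactly the description of $\mathrm{Ber}(\beta/(\alpha+\beta))\cdot\mathrm{Exp}(\alpha)$.

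For (iii), the key observation is that $I'\wedge J' = W$ and $I'-J' = (I-J)^+ - (I-J)^- = I-J$, so the map defining $(I',J',W')$ can be reparametrized as follows: set $\Phi(w, d) = (w + d^+,\, w + d^-)$, which is the inverse of $(x,y)\mapsto (x\wedge y,\, x-y)$. Then $(I,J) = \Phi(I\wedge J, I-J)$ and $(I',J') = \Phi(W, I-J)$, while $W' = I\wedge J$. Now $W$, $I\wedge J$, and $I-J$ are mutually independent (by (i) and the a priori independence of $W$ from $(I,J)$), and $W \deq I\wedge J \sim \mathrm{Exp}(\alpha+\beta)$. Therefore the triple $(W,\ I\wedge J,\ I-J)$ is exchangeable in its first two coordinates in distribution, which forces $(I',J',W') = (\Phi(W,I-J),\, I\wedge J) \deq (\Phi(I\wedge J, I-J),\, W) = (I,J,W)$.

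The only real work is (i); parts (ii) and (iii) are essentially bookkeeping once the independence $I\wedge J\perp I-J$ is established. I don't anticipate any obstacle beyond the routine computation of the joint tail, and in fact I would keep that computation short by writing it out only for the $s>0$ case and invoking $\alpha\leftrightarrow\beta$ symmetry for $s<0$.
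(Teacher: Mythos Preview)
Your proposal is correct. The paper does not give a proof of this lemma at all---it simply calls it ``elementary'' and remarks that the mapping $(I,J,W)\mapsto(I',J',W')$ is an involution. Your argument for (iii) makes this involution explicit: your observations $I'\wedge J'=W$ and $I'-J'=I-J$ are precisely what show that applying the map a second time returns $(I,J,W)$, and combined with part (i) and $W\deq I\wedge J$ this yields the distributional identity cleanly. So your approach is fully in line with the paper's hint, just with the details filled in.
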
 
 
 We use the previous lemma to establish some facts about the queueing operators.  To be consistent with the queueing discussion we   parametrize exponentials with their means  $\paramb$ and $\rho$.   
  
 \begin{lemma}\label{m:Lem-I}  Let $0<\paramb<\rho$.  Let $(I_k)_{k\in\Z}$ and $(\w_j)_{j\in\Z}$  be mutually independent random variables such that $I_k\sim$ {\rm Exp}$(\rho^{-1})$ and $\w_j\sim$ {\rm Exp}$(\paramb^{-1})$.  Let $\wt I=\Dop(I,\w)$ as defined by \eqref{m:800} and \eqref{m:801},    $\wt\w=R(I,\w)$ as defined by \eqref{m:R}, and $J_k=\wt G_k-G_k$ as in  \eqref{m:J}.      Let $\Lambda_k= (\{\wt I_j\}_{j\leq k}, \,J_k, \,\{\wt \w_j\}_{j\leq k})$.  
  	\begin{enumerate}[{\rm(a)}] \itemsep=3pt
 		 		\item   $\{\Lambda_k\}_{k\in\Z}$ is a stationary, ergodic process.  
 		    For each $k\in\Z$, the random variables $\{\wt I_j\}_{j\leq k}, \,J_k, \,\{\wt \w_j\}_{j\leq k} $ are mutually independent with marginal distributions  
	\[  \text{ $\wt I_j\sim$ {\rm Exp}$(\rho^{-1})$, $\wt\w_j\sim$ {\rm Exp}$(\paramb^{-1})$ and   $J_k\sim$ {\rm Exp}$(\paramb^{-1}-\rho^{-1})$.   } \] 

\item  $\wt I$ and $\wt\w$  are    independent sequences of i.i.d.\ variables.  
 	\end{enumerate}
 \end{lemma}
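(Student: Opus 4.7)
My plan is to deduce both parts from the single-step distributional identity in Lemma~\ref{v:lm}, propagated by induction and combined with a fixed-point identification of the marginal law of $J_k$. The starting observation is that the iterative equations \eqref{m:IJ5} together with \eqref{m:R} package the one-step update as
\[ (\wt I_k, J_k, \wt\w_k) = \bigl(\w_k + (I_k - J_{k-1})^+, \, \w_k + (J_{k-1} - I_k)^+, \, I_k \wedge J_{k-1}\bigr), \]
which is exactly the involution $(I,J,W)\mapsto (I', J', W')$ of Lemma~\ref{v:lm} applied with $W = \w_k$, $I = I_k$, $J = J_{k-1}$ and parameters $\alpha = \rho^{-1}$, $\beta = \paramb^{-1} - \rho^{-1}$ (so that $\alpha + \beta = \paramb^{-1}$ matches the rate of $\w_k$). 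I would run an induction on $k$ whose invariant is precisely the joint statement in part~(a).

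For the induction step, suppose that at time $k-1$ the triple of blocks $\{\wt I_j\}_{j\le k-1}, J_{k-1}, \{\wt\w_j\}_{j\le k-1}$ is mutually independent with the stated exponential marginals. Let $\mathcal F_{k-1}=\sigma(I_j,\w_j:j\le k-1)$, which contains all three blocks. Independence of the driving sequence gives that $(I_k,\w_k)$ is independent of $\mathcal F_{k-1}$, hence $(I_k, J_{k-1}, \w_k)$ is independent of $(\{\wt I_j\}_{j\le k-1}, \{\wt\w_j\}_{j\le k-1})$ and distributed as three independent exponentials with rates $\rho^{-1}$, $\paramb^{-1}-\rho^{-1}$, $\paramb^{-1}$ — exactly the input to Lemma~\ref{v:lm}. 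Conclusion (iii) of that lemma then says $(\wt I_k, J_k, \wt\w_k)$ has the same joint distribution, namely three independent exponentials with the same marginals; being a measurable function of $(I_k,J_{k-1},\w_k)$, it remains independent of $(\{\wt I_j\}_{j\le k-1}, \{\wt\w_j\}_{j\le k-1})$. Splicing the new triple onto the induction hypothesis advances the invariant from $k-1$ to $k$.

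The main obstacle — and the only place where the bi-infinite setup is nontrivial — is launching the induction by identifying the marginal law of $J_k$ as $\mathrm{Exp}(\paramb^{-1}-\rho^{-1})$. Since $(I,\w)$ is stationary and $J=\Sop(I,\w)$ is translation-covariant, the law of $J_k$ is independent of $k$ and must be a fixed point of the Markov kernel $\nu\mapsto\Phi\nu$, where $\Phi\nu$ is the law of $W+(J-I)^+$ for independent $J\sim\nu$, $W\sim\mathrm{Exp}(\paramb^{-1})$, $I\sim\mathrm{Exp}(\rho^{-1})$. Lemma~\ref{v:lm}(iii) directly exhibits $\mathrm{Exp}(\paramb^{-1}-\rho^{-1})$ as such a fixed point, so it remains to argue uniqueness of this fixed point among distributions with finite mean. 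I would invoke the uniqueness result of Anantharam and Chang cited in the introduction; alternatively, a $\wb\rho$-contraction argument identical in structure to the one used in Proposition~\ref{P:Contract} gives a self-contained derivation. With the marginal of $J_k$ in hand we initialize the induction at any $k_0$ and propagate forward. Stationarity and ergodicity of $\{\Lambda_k\}$ then follow because $\Lambda_k$ is a translation-covariant measurable function of the stationary ergodic field $(I,\w)$.

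Part (b) is a straightforward corollary of (a). Part (a) says that for every $k$ the initial segments $\{\wt I_j\}_{j\le k}$ and $\{\wt\w_j\}_{j\le k}$ are independent i.i.d.\ sequences with the stated marginals. Letting $k\to\infty$ and using a standard $\pi$-$\lambda$ argument to pass from cylinder sets to the full Borel $\sigma$-algebra on $\R_{\ge 0}^\Z$, the bi-infinite sequences $\wt I$ and $\wt\w$ are themselves independent i.i.d.\ sequences with the claimed marginals.
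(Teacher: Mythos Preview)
Your argument is correct, but it is organized differently from the paper's. You first pin down the marginal law of $J_k$ by identifying it as the unique stationary distribution of the one-dimensional recursion $J_k=\w_k+(J_{k-1}-I_k)^+$, and then run a forward induction (starting from any $k_0$ and letting $k_0\to-\infty$) to obtain the full independence of $\{\wt I_j\}_{j\le k},J_k,\{\wt\w_j\}_{j\le k}$ via Lemma~\ref{v:lm}. The paper instead avoids any appeal to abstract fixed-point uniqueness: it builds an auxiliary process $(\wh I_k,\wh J_k,\wh\w_k)_{k\ge1}$ from the same driving weights but launched from an independent $\wh J_0\sim\mathrm{Exp}(\paramb^{-1}-\rho^{-1})$, verifies your induction for that process, and then shows by a short monotone-coupling argument that $J_k=\wh J_k$ for all $k$ beyond some random index (otherwise $J_k$ would be forced to $-\infty$). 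Stationarity of $\Lambda_k$ transfers the distributional statement from the hatted process back to the original one.

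What each buys: the paper's coupling is fully self-contained and elementary. Your route is more modular but outsources the key step. One caution on your citation: Anantharam and Chang prove uniqueness of the \emph{interarrival} fixed point for the map $I\mapsto\Dop(I,\w)$, not of the sojourn-time law, so that reference does not apply directly. The uniqueness you actually need is much easier: $W_k=J_k-\w_k$ satisfies Lindley's recursion $W_k=(W_{k-1}+\w_{k-1}-I_k)^+$ with i.i.d.\ negative-mean increments, so $\{W_k\}$ is a positive-recurrent Markov chain (it hits $0$ a.s.) and has a unique stationary law. With that substitution your proof is complete; and note that this recurrence argument, unwound, is essentially the same successful-coupling observation the paper makes.
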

 
\begin{proof}   Part (b) follows from part (a) by dropping the $J_k$ coordinate and letting $k\to\infty$.    Stationarity and ergodicity of $\{\Lambda_k\}$ follow from its construction as a mapping applied to the independent i.i.d.\ sequences $I$ and $\w$.  

%  As in \eqref{m:J}   let   $ J_k=\wt G_k-G_k$.     Then  we have the iterative equations  \eqref{m:IJ5}.
The distributional claims in part (a) are proved   by coupling $(\wt I_k, J_{k-1}, \wt\w_k)_{k\in\Z}$ with another sequence whose distribution we know.  
%\w_k+(J_{k-1}-I_k)^+.  
Construct a process $( \wh I_k, \wh J_{k-1}, \wh\w_k)_{k\ge 1}$ as follows.    First let $\wh J_0$ be an Exp$(\paramb^{-1}-\rho^{-1})$ variable that is independent of $(I,\w)$.    Then for $k=1,2,3,\dotsc$  iterate the steps 
\be\label{98.3} \begin{aligned}
\wh I_k&=\w_k+(I_k-\wh J_{k-1})^+  \\
\wh J_k&=\w_k+(\wh J_{k-1}-I_k)^+\\
\wh\w_k&=I_k\wedge\wh J_{k-1}.  
\end{aligned}\ee

We prove the following claim  by induction  for each $m\ge 1$: 
\be\label{98.7} \begin{aligned} 
&\text{variables $\wh I_1,\dotsc, \wh I_m, \wh J_m, \wh\w_1,\dotsc, \wh\w_m$ are mutually independent with}\\
 &\text{marginal distributions  $\wh I_k\sim$ {\rm Exp}$(\rho^{-1})$, $\wh J_m\sim$ {\rm Exp}$(\paramb^{-1}-\rho^{-1})$ and $\wh\w_j\sim$ {\rm Exp}$(\paramb^{-1})$. } 
\end{aligned} \ee

By construction the variables  $(I_1, \wh J_{0}, \w_1)$ are independent with distributions  $({\rm Exp}(\rho^{-1}),  {\rm Exp}(\paramb^{-1}-\rho^{-1}), {\rm Exp}(\paramb^{-1}))$.   The base case $m=1$ of \eqref{98.7}  comes by applying Lemma \ref{v:lm} to  the mapping \eqref{98.3} with $k=1$.   Now assume  \eqref{98.7}   holds for $m$.  Then $(I_{m+1}, \wh J_{m}, \w_{m+1})$ are independent with distributions  $({\rm Exp}(\rho^{-1}),  {\rm Exp}(\paramb^{-1}-\rho^{-1}), {\rm Exp}(\paramb^{-1}))$ because, by construction, $\wh J_{m}$ is a function of  $( I_1,\dotsc,  I_m, \wh J_0, \w_1,\dotsc, \w_m)$ and thereby independent of $(I_{m+1}, \w_{m+1})$.     By Lemma \ref{v:lm}, mapping \eqref{98.3} turns triple $(I_{m+1}, \wh J_{m}, \w_{m+1})$ into the  triple  $(\wh I_{m+1}, \wh J_{m+1}, \wh\w_{m+1})$  of  independent variables, which is also independent of $\wh I_1,\dotsc, \wh I_m,   \wh\w_1,\dotsc, \wh\w_m$.  Statement \eqref{98.7} has been extended  to $m+1$.  

%The triples $(I_k, \wh J_{k-1}, \w_k)$ and  $(\wh I_k, \wh J_{k}, \wh\w_k)$, for $k\ge 1$,  consist of independent variables with distributions  $({\rm Exp}(\rho^{-1}),  {\rm Exp}(\paramb^{-1}-\rho^{-1}), {\rm Exp}(\paramb^{-1}))$.    This is true for $(I_k, \wh J_{k-1}, \w_k)$

%In particular,  \eqref{98.7} implies that 
%\be\label{98.5} \begin{aligned} 
%&\text{$(\wh I_k,\wh\w_k)_{k\ge 1}$ are mutually independent with}\\  
%&\text{marginal distributions $\wh I_k\sim$ {\rm Exp}$(\rho^{-1})$  and $\wh\w_k\sim$ {\rm Exp}$(\paramb^{-1})$. }  
%\end{aligned}\ee    

Our next claim is that 
\be\label{98.8} \begin{aligned} 
&\text{there exists (almost surely a random index) $m_0\ge 0$ such that $J_{m_0}=\wh J_{m_0}$.  }  
\end{aligned}\ee  

Suppose first that $J_0\ge \wh J_0$.   Then   \eqref{m:IJ5}  and \eqref{98.3} imply that  $J_k\ge\wh J_k$ for all $k\ge 0$.   If \eqref{98.8} fails then  $J_k>\wh J_k$ for all $k\ge 0$.  But then  for all $k> 0$, 
\[  J_k=J_{k-1}+ \w_k-I_k=\dotsm= J_0+\sum_{j=1}^k (\w_j-I_j) \to -\infty 
\quad \text{almost surely, as }\; k\to\infty  
\]
which contradicts the fact that $J_k\ge 0$ for all $k$.   Thus in this case \eqref{98.8} happens.  
The case  $J_0\le \wh J_0$ is symmetric. 

Through equations   \eqref{m:IJ5}  and \eqref{98.3},  \eqref{98.8} implies that $\wt I_k=\wh I_k$,  $J_k=\wh J_k$,  and $\wt\w_k=\wh\w_k$ for all $k>m_0$.   Part (a) follows from \eqref{98.7}, because  for any $\ell$,  $(\wt I_{\ell-n},\dotsc, \wt I_\ell, J_\ell, \wt\w_{\ell-n},\dotsc, \wt\w_\ell)$ has the same distribution as $(\wt I_{k-n},\dotsc, \wt I_k, J_k, \wt\w_{k-n},\dotsc, \wt\w_k)$ which agrees with $(\wh I_{k-n},\dotsc, \wh I_k, \wh J_k, \wh\w_{k-n},\dotsc, \wh\w_k)$ with probability tending to one as $k\to\infty$. 
\end{proof}

Next we compute a competition   probability  % Fix $0<\alpha, \beta<\infty$.  
for  two independent homogeneous Poisson processes on $[0,\infty)$ with rates $\alpha$ and $\beta$. Let $\{\sigma_i\}_{i\geq 1}$ be the jump times of  the rate $\alpha$ Poisson process  and $\{\tau_i\}_{i\geq 1}$  the jump times of the  rate $\beta$ Poisson process.    For $n\ge 1$ define the events
\begin{align*}
A_n&=\{\sigma_i< \tau_i\,\text{ for all } i\in[n]\}\\
\text{ and }\quad B_n&=\{\sigma_i< \tau_i\,\text{ for all }  i\in[n-1], \; \sigma_n>\tau_n\}.
\end{align*}
The {\it Catalan numbers}  $\{C_n:\,n\geq 0\}$ are defined by 
\begin{equation}\label{Cat-nr}
C_n=\frac{1}{n+1}{2n\choose n}.
% = \frac{(2n)!}{(n+1)!\,n!} = \prod\limits_{k=2}^{n}\frac{n+k}{k}
\end{equation}
The following properties of the Catalan triangle $\{C(n,k):\,0\leq k\leq n\}$ given in  \eqref{Cat-tr}  can be deduced with elementary arguments.    $C(n,0)=1$,  $C(n,k)=\binom{n+k}{k}-\binom{n+k}{k-1}$ for $k>0$,   $C(n,k)=C(n,k-1)+C(n-1,k)$, 
\be\label{Cat-tr3}   \sum_{k=0}^i C(n,k) = C(n+1,i) \quad \text{ for } 0\le i\le n, 
\ee
and 
\be\label{Cat-tr4}   \sum_{k=0}^n C(n,k) = C(n+1,n) = C(n+1,n+1)=C_{n+1}.   
\ee

\begin{lemma}  \label{lm:poi8}
For $n\geq 1$,  
\begin{align} \label{eq:A_n}   P(A_n)  %P\bigl\{\sigma_i< \tau_i\,\text{ for all } i\in[n]\bigr\}   
%=\biggl(\frac{\alpha}{\alpha+\beta}\biggr)^n  \sum_{k=0}^{n-1}C(n-1,k)\,\biggl(\frac{\beta}{\alpha+\beta}\biggr)^k   . 
&= \sum_{k=0}^{n-1}C(n-1,k)\,\frac{\alpha^n\beta^k}{(\alpha+\beta)^{n+k}}  \\
 \label{eq:B_n}
\text{and} \qquad  P(B_n)
&= C_{n-1}\,  \frac{\alpha^{n-1}\beta^n}{(\alpha+\beta)^{2n-1}}. 
\end{align}
\end{lemma}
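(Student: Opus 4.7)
The plan is to thin the merged Poisson process: combine the two arrival streams into a single Poisson process of rate $\alpha+\beta$, and independently mark each point as $\sigma$ with probability $p:=\alpha/(\alpha+\beta)$ or as $\tau$ with probability $q:=\beta/(\alpha+\beta)$. Since $A_n$ and $B_n$ depend only on the order of the marks, each probability reduces to a combinatorial count of mark-strings weighted by $p^{\#\sigma}q^{\#\tau}$. I encode a mark-string as a lattice path starting at the origin, with each $\sigma$ contributing a right step and each $\tau$ an up step, so that after $m$ arrivals the path sits at $(\#\sigma,\,\#\tau)$.

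For $A_n$ I decompose over the number $k$ of $\tau$s arriving before $\sigma_n$. The event $A_n$ forces $k\in\{0,1,\ldots,n-1\}$, and on the subevent indexed by $k$ the path up through $\sigma_n$ runs from $(0,0)$ to $(n,k)$ with last step right; each such mark-string contributes weight $p^{n}q^{k}=\alpha^{n}\beta^{k}/(\alpha+\beta)^{n+k}$. The constraints $\sigma_i<\tau_i$ for $i\in[n]$ say that immediately after the $i$-th right step the $y$-coordinate is strictly less than the new $x$-coordinate. A short argument shows this is equivalent to the global inequality $y\leq x$ at every lattice point on the path: once the walk visits a point with $y\geq x+1$, no further right step preserves the constraint, so the walk cannot reach an endpoint with $y\leq x$. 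I then count lattice paths from $(0,0)$ to $(n-1,k)$ staying in the region $y\leq x$ by the standard reflection across the line $y=x+1$, obtaining $\binom{n-1+k}{k}-\binom{n-1+k}{k-1}$; an elementary factorial manipulation identifies this with $C(n-1,k)$. Summing the weights over $k\in\{0,\ldots,n-1\}$ yields \eqref{eq:A_n}.

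For $B_n$, combining $A_{n-1}$ with $\tau_n<\sigma_n$ pins down the number of $\sigma$s arriving before $\tau_n$: the first constraint forces this count to be $\geq n-1$ and the second forces it to be $<n$, so it equals $n-1$ exactly. Hence the path up through $\tau_n$ runs from $(0,0)$ to $(n-1,n)$ with last step up, contributing weight $p^{n-1}q^n=\alpha^{n-1}\beta^n/(\alpha+\beta)^{2n-1}$. The constraint $A_{n-1}$ is again the global condition $y\leq x$, which applies to the subpath from $(0,0)$ to $(n-1,n-1)$ (the final up step is unconstrained). Such subpaths are Dyck paths and are counted by the Catalan number $C_{n-1}=\binom{2n-2}{n-1}-\binom{2n-2}{n-2}$, giving \eqref{eq:B_n}.

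The main delicacy is the translation of the probabilistic constraints $\sigma_i<\tau_i$ into a single lattice-path inequality. A priori the conditions fire only at right steps, not at up steps, so it is not immediate that they collapse to the global inequality $y\leq x$ throughout the path. Once that reduction is in place, the rest of the argument is routine reflection-principle combinatorics together with the factorial identity matching the reflection count to $C(n-1,k)$.
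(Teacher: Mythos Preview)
Your argument is correct, and it is genuinely different from the paper's. The paper first computes $P(B_n)$ by conditioning on the pair $(\sigma_n,\tau_n)$: given $\sigma_n=a>\tau_n=b$, the earlier $\sigma$- and $\tau$-points are i.i.d.\ uniform on $[0,a]$ and $[0,b]$ respectively, and the conditional probability that the order statistics interlace is evaluated via a Catalan-number identity and an explicit double integral. Then $P(A_n)$ is obtained inductively from $P(A_n)=P(A_{n-1})-P(B_n)$ together with algebraic identities \eqref{Cat-tr3}--\eqref{Cat-tr4} for the Catalan triangle. Your route instead passes through the thinning/coloring representation of the superposed Poisson process, so that both $A_n$ and $B_n$ become pure lattice-path events with Bernoulli weights $p^{\#\sigma}q^{\#\tau}$; the reflection principle then gives each count in one stroke, with $C(n-1,k)=\binom{n-1+k}{k}-\binom{n-1+k}{k-1}$ matching the paper's definition. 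Your approach is more combinatorial and yields both formulas by the same mechanism without induction or integration; the paper's approach is more analytic but has the side benefit of exhibiting the recursion $P(A_n)=P(A_{n-1})-P(B_n)$ explicitly. The one point worth writing out carefully in your version is the equivalence you flag: the per-right-step constraint ``$y<x$ immediately after each $\sigma$'' collapses to the global constraint $y\le x$ only because the relevant subpath ends at a point with $y\le x$, which you do use (at $(n-1,k)$ for $A_n$ and at $(n-1,n-1)$ for $B_n$).
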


%\medskip

\begin{remark}
The generating function of the Catalan numbers  is  
$$f(x)=\sum_{n\geq 0}C_nx^n=\frac{1-\sqrt{1-4x}}{2x},\quad  \text{ for }  |x|\leq \tfrac{1}{4}.$$
Hence from \eqref{eq:B_n},
$$\sum_{n=1}^{\infty}P(B_n)=\tfrac{\beta}{\alpha+\beta}\,f\bigl(\tfrac{\alpha\beta}{(\alpha+\beta)^2}\bigr)  %=\frac{1-|2p-1|}{2(1-p)}
=
\begin{cases}
1, & \text{ if }\beta\geq \alpha\\
\frac{\beta}{\alpha}, &\text{ if }\beta< \alpha.
\end{cases}
$$
In other words, the rate $\alpha$ process stays forever ahead of the rate $\beta$ process with probability $(1-\frac\beta\alpha)^+$.  
\qedrm\end{remark}

\begin{proof}   We compute $P(B_n)$ first and then obtain $P(A_n)$ by inclusion-exclusion.  

  Since $C_0=1$,    \eqref{eq:B_n} holds for $n=1$.  For   $n\geq 2$ condition on $(\sigma_n,\tau_n)$:  
\begin{align}\label{Cond_last}
P(B_n)= \int_{a>b>0}P_{(a,b)}\bigl\{U_i \leq V_i \text{ for }i\in[n-1]\bigr\}\,P((\sigma_n,\tau_n)\in d(a,b)),
\end{align}
where   under $P_{(a,b)}$, $0<U_1<\cdots<U_{n-1}$ are  the order statistics of $n-1$ i.i.d.\ uniform random variables on $[0,a]$ and $0<V_1<\cdots<V_{n-1}$ are the same on $[0,b]$, independent of the $\{U_i\}$.
We calculate the probability inside the integral. 

Below,  first use the equal probability of the permutations of $\{x_i\}$ among themselves and $\{y_j\}$ among themselves.  Note that  $a>b$ and the conditions  $x_i<y_i$ force all  $\{x_i, y_j\}$ to lie in $[0,b]$.   Then use the equal probability of all permutations of $\{x_i, y_j\}$ together.    The Catalan number $C_{k}$ is the number of permutations of $\{x_1,\dotsc, x_k, y_1,\dotsc, y_k\}$    such that 
%\begin{equation}\label{GoodOrder}
$x_1<\cdots<x_{k}$, $ y_1<\cdots<y_{k}$  and  $x_i<y_i$ for all $i$  (see Cor.~6.2.3 and item {\bf dd} on Page 223 of \cite{stan-II}).
%\end{equation}
\begin{align*}
&P_{(a,b)}\bigl(U_i \leq V_i \text{ for }i\in[n-1]\bigr)
= \frac{((n-1)!)^2}{(ab)^{n-1}}  \int\limits_{\substack{x_1<\dotsm<x_{n-1}<a\\y_1<\dotsm<y_{n-1}<b}}  \ind_{x_i<y_i \,\forall i\in[n-1]} \,d\xvec \,d\yvec \\
&\qquad 
=   C_{n-1}  \frac{((n-1)!)^2}{(ab)^{n-1}}    \int\limits_{ x_1<\dotsm<x_{n-1}< y_1<\dotsm<y_{n-1}<b}    \,d\xvec \,d\yvec =  C_{n-1}  \frac{((n-1)!)^2}{(ab)^{n-1}}  \cdot \frac{b^{2(n-1)}}{(2n-2)!} . 
\end{align*}
Substitute this back into \eqref{Cond_last}.  Use the gamma densities of $\sigma_n$ and $\tau_n$.  
\begin{align*} 
P(B_n)
%&= \int_{a>b>0}P_{(a,b)}(U_i \leq V_i \text{ for }i\in[n-1])\,P((\sigma_n,\tau_n)\in d(a,b))\\
&= C_{n-1}  \frac{((n-1)!)^2}{(2n-2)!} \int_{0<b<a<\infty}\frac{b^{n-1}}{a^{n-1}}\cdot \frac{(\alpha a)^{n-1} }{\Gamma(n)}\,\alpha e^{-\alpha a}\cdot\frac{(\beta b)^{n-1} }{\Gamma(n)} \,\beta e^{-\beta b}\,da\,db\\[3pt]
&=  C_{n-1}  \frac{ \alpha^n\beta^{n}}{(2n-2)!}  \int_{0<b<a<\infty}b^{2n-2}\,e^{-\alpha a -\beta b}\,da\,db   %\\  &
\;= \; C_{n-1}\,\frac{\alpha^{n-1}\,\beta^{n}}{(\beta+\alpha)^{2n-1}}. 
\end{align*}

 \medskip 
 
We  prove \eqref{eq:A_n}.   The case $n=1$ is elementary.   Let $n\ge 2$ and assume \eqref{eq:A_n} for $n-1$. 
Abbreviate  $p=\frac{\beta}{\alpha+\beta}$ and $q=\frac{\alpha}{\alpha+\beta}$.    Use below \eqref{Cat-tr3} and \eqref{Cat-tr4}.
\begin{align*}
P(A_n) &= P(A_{n-1})-P(B_n)  %\\ &
= q^{n-1}\sum_{k=0}^{n-2}C(n-2,k)\,p^k-C_{n-1}q^{n-1}\,p^n\\
&= q^{n-1} \sum_{k=0}^{n-2}C(n-2,k)\,(p^k -p^n)  %\\ &
 %= q^{n}\sum_{k=0}^{n-2}C(n-2,k)\,p^k\sum_{j=0}^{n-k-1}p^j\\
= q^{n} \sum_{k=0}^{n-2}\,\sum_{j=k}^{n-1}C(n-2,k)\,p^{j}\\
&= q^{n} \sum_{j=0}^{n-1}\,\sum_{k=0}^{j\wedge(n-2)}C(n-2,k)\,p^{i}  %\\ &
= q^n \sum_{j=0}^{n-1}C(n-1, j\wedge(n-2))p^j =  q^n \sum_{j=0}^{n-1}C(n-1, j)p^j. 
\qedhere 
\end{align*}
\end{proof}

\bigskip

\small  
%\footnotesize

\bibliographystyle{abbrv}

\bibliography{PaperCGM}

\begin{thebibliography}{10}

\bibitem{anan+corr}
V.~Anantharam.
\newblock {U}niqueness of stationary ergodic fixed point for a {$\cdot/M/K$}
  node.
\newblock {\em Ann. Appl. Probab.}, 3(1):154--172, 1993.
\newblock {C}orrection {\it {A}nn.\ {A}ppl.\ {P}robab.}, 4(2): 607, 1994.

\bibitem{bacc-boro-mair}
F.~Baccelli, A.~Borovkov, and J.~Mairesse.
\newblock Asymptotic results on infinite tandem queueing networks.
\newblock {\em Probab. Theory Related Fields}, 118(3):365--405, 2000.

\bibitem{baik-deif-joha-99}
J.~Baik, P.~Deift, and K.~Johansson.
\newblock On the distribution of the length of the longest increasing
  subsequence of random permutations.
\newblock {\em J. Amer. Math. Soc.}, 12(4):1119--1178, 1999.

\bibitem{Bak-Cat-Kha-14}
Y.~Bakhtin, E.~Cator, and K.~Khanin.
\newblock Space-time stationary solutions for the {B}urgers equation.
\newblock {\em J. Amer. Math. Soc.}, 27(1):193--238, 2014.

\bibitem{bala-cato-sepp}
M.~Bal{\'a}zs, E.~Cator, and T.~Sepp{\"a}l{\"a}inen.
\newblock Cube root fluctuations for the corner growth model associated to the
  exclusion process.
\newblock {\em Electron. J. Probab.}, 11:no. 42, 1094--1132 (electronic), 2006.

\bibitem{bala-komj-sepp-12}
M.~Bal{\'a}zs, J.~Komj{\'a}thy, and T.~Sepp{\"a}l{\"a}inen.
\newblock Microscopic concavity and fluctuation bounds in a class of deposition
  processes.
\newblock {\em Ann. Inst. Henri Poincar\'e Probab. Stat.}, 48(1):151--187,
  2012.

\bibitem{bala-quas-sepp}
M.~Bal{\'a}zs, J.~Quastel, and T.~Sepp{\"a}l{\"a}inen.
\newblock Fluctuation exponent of the {KPZ}/stochastic {B}urgers equation.
\newblock {\em J. Amer. Math. Soc.}, 24(3):683--708, 2011.

\bibitem{bala-rass-sepp-18-arxiv}
M.~Bal{\'a}zs, F.~Rassoul-Agha, and T.~Sepp{\"a}l{\"a}inen.
\newblock Large deviations and wandering exponent for random walk in a dynamic
  beta environment.
\newblock {\em {\tt arXiv:1801.08070}}, 2018.

\bibitem{bala-sepp-aom}
M.~Bal{\'a}zs and T.~Sepp{\"a}l{\"a}inen.
\newblock Order of current variance and diffusivity in the asymmetric simple
  exclusion process.
\newblock {\em Ann. of Math. (2)}, 171(2):1237--1265, 2010.

\bibitem{basu-gang-hamm-arxiv}
R.~Basu, S.~Ganguly, and A.~Hammond.
\newblock Fractal geometry of {Airy$_2$} processes coupled via the {Airy}
  sheet.
\newblock {\em {\tt arXiv:1904.01717}}, 2019.

\bibitem{cato-groe-06}
E.~Cator and P.~Groeneboom.
\newblock Second class particles and cube root asymptotics for {H}ammersley's
  process.
\newblock {\em Ann. Probab.}, 34(4):1273--1295, 2006.

\bibitem{Cat-Pim-12}
E.~Cator and L.~P.~R. Pimentel.
\newblock Busemann functions and equilibrium measures in last passage
  percolation models.
\newblock {\em Probab. Theory Related Fields}, 154(1-2):89--125, 2012.

\bibitem{Cat-Pim-13}
E.~Cator and L.~P.~R. Pimentel.
\newblock Busemann functions and the speed of a second class particle in the
  rarefaction fan.
\newblock {\em Ann. Probab.}, 41(4):2401--2425, 2013.

\bibitem{MR1303943}
C.~S. Chang.
\newblock On the input-output map of a {$G/G/1$} queue.
\newblock {\em J. Appl. Probab.}, 31(4):1128--1133, 1994.

\bibitem{corw-12-rev}
I.~Corwin.
\newblock The {K}ardar-{P}arisi-{Z}hang equation and universality class.
\newblock {\em Random Matrices Theory Appl.}, 1(1):1130001, 76, 2012.

\bibitem{corw-16-rev}
I.~Corwin.
\newblock Kardar-{P}arisi-{Z}hang universality.
\newblock {\em Notices Amer. Math. Soc.}, 63(3):230--239, 2016.

\bibitem{corw-18-bull}
I.~Corwin.
\newblock Commentary on ``{L}ongest increasing subsequences: from patience
  sorting to the {B}aik--{D}eift--{J}ohansson theorem'' by {D}avid {A}ldous and
  {P}ersi {D}iaconis.
\newblock {\em Bull. Amer. Math. Soc. (N.S.)}, 55(3):363--374, 2018.

\bibitem{coup-11}
D.~Coupier.
\newblock Multiple geodesics with the same direction.
\newblock {\em Electron. Commun. Probab.}, 16:517--527, 2011.

\bibitem{dauv-ortm-vira-arxiv}
D.~Dauvergne, J.~Ortmann, and B.~Vir{\'a}g.
\newblock The directed landscape.
\newblock {\em {\tt arXiv:1812.00309}}, 2018.

\bibitem{drai-mair-ocon}
M.~Draief, J.~Mairesse, and N.~O'Connell.
\newblock Queues, stores, and tableaux.
\newblock {\em J. Appl. Probab.}, 42(4):1145--1167, 2005.

\bibitem{fan-sepp-future}
W.-T.~L. Fan and T.~Sepp{\"a}l{\"a}inen.
\newblock Work in progress.

\bibitem{ferr-kipn-95}
P.~A. Ferrari and C.~Kipnis.
\newblock Second class particles in the rarefaction fan.
\newblock {\em Ann. Inst. H. Poincar\'e Probab. Statist.}, 31(1):143--154,
  1995.

\bibitem{ferr-mart-06}
P.~A. Ferrari and J.~B. Martin.
\newblock Multi-class processes, dual points and {$M/M/1$} queues.
\newblock {\em Markov Process. Related Fields}, 12(2):175--201, 2006.

\bibitem{ferr-mart-07}
P.~A. Ferrari and J.~B. Martin.
\newblock Stationary distributions of multi-type totally asymmetric exclusion
  processes.
\newblock {\em Ann. Probab.}, 35(3):807--832, 2007.

\bibitem{ferr-mart-09}
P.~A. Ferrari and J.~B. Martin.
\newblock Multiclass {H}ammersley-{A}ldous-{D}iaconis process and
  multiclass-customer queues.
\newblock {\em Ann. Inst. Henri Poincar\'e Probab. Stat.}, 45(1):250--265,
  2009.

\bibitem{Fer-Mar-Pim-09}
P.~A. Ferrari, J.~B. Martin, and L.~P.~R. Pimentel.
\newblock A phase transition for competition interfaces.
\newblock {\em Ann. Appl. Probab.}, 19(1):281--317, 2009.

\bibitem{ferr-pime-05}
P.~A. Ferrari and L.~P.~R. Pimentel.
\newblock Competition interfaces and second class particles.
\newblock {\em Ann. Probab.}, 33(4):1235--1254, 2005.

\bibitem{geor-rass-sepp-17-geod}
N.~Georgiou, F.~Rassoul-Agha, and T.~Sepp\"al\"ainen.
\newblock Geodesics and the competition interface for the corner growth model.
\newblock {\em Probab. Theory Related Fields}, 169(1-2):223--255, 2017.

\bibitem{geor-rass-sepp-17-buse}
N.~Georgiou, F.~Rassoul-Agha, and T.~Sepp\"al\"ainen.
\newblock Stationary cocycles and {B}usemann functions for the corner growth
  model.
\newblock {\em Probab. Theory Related Fields}, 169(1-2):177--222, 2017.

\bibitem{geor-rass-sepp-yilm-15}
N.~Georgiou, F.~Rassoul-Agha, T.~Sepp{\"a}l{\"a}inen, and A.~Yilmaz.
\newblock Ratios of partition functions for the log-gamma polymer.
\newblock {\em Ann. Probab.}, 43(5):2282--2331, 2015.

\bibitem{glyn-whit}
P.~W. Glynn and W.~Whitt.
\newblock Departures from many queues in series.
\newblock {\em Ann. Appl. Probab.}, 1(4):546--572, 1991.

\bibitem{MR2840299}
R.~M. Gray.
\newblock {\em Probability, random processes, and ergodic properties}.
\newblock Springer, Dordrecht, second edition, 2009.

\bibitem{Hof-05}
C.~Hoffman.
\newblock Coexistence for {R}ichardson type competing spatial growth models.
\newblock {\em Ann. Appl. Probab.}, 15(1B):739--747, 2005.

\bibitem{Hof-08}
C.~Hoffman.
\newblock Geodesics in first passage percolation.
\newblock {\em Ann. Appl. Probab.}, 18(5):1944--1969, 2008.

\bibitem{howa-newm-01}
C.~D. Howard and C.~M. Newman.
\newblock Geodesics and spanning trees for {E}uclidean first-passage
  percolation.
\newblock {\em Ann. Probab.}, 29(2):577--623, 2001.

\bibitem{janj-rass-sepp-future}
C.~Janjigian, F.~Rassoul-Agha, and T.~Sepp{\"a}l{\"a}inen.
\newblock Web of shocks (work in progress).

\bibitem{joha}
K.~Johansson.
\newblock Shape fluctuations and random matrices.
\newblock {\em Comm. Math. Phys.}, 209(2):437--476, 2000.

\bibitem{lice-newm-96}
C.~Licea and C.~M. Newman.
\newblock Geodesics in two-dimensional first-passage percolation.
\newblock {\em Ann. Probab.}, 24(1):399--410, 1996.

\bibitem{lind-52}
D.~V. Lindley.
\newblock The theory of queues with a single server.
\newblock {\em Proc. Cambridge Philos Soc.}, 48:277--289, 1952.

\bibitem{loyn-62}
R.~M. Loynes.
\newblock The stability of a queue with non-independent interarrival and
  service times.
\newblock {\em Proc. Cambridge Philos. Soc.}, 58:497--520, 1962.

\bibitem{mair-prab-03}
J.~Mairesse and B.~Prabhakar.
\newblock The existence of fixed points for the {$\cdot/GI/1$} queue.
\newblock {\em Ann. Probab.}, 31(4):2216--2236, 2003.

\bibitem{mart-04}
J.~B. Martin.
\newblock Limiting shape for directed percolation models.
\newblock {\em Ann. Probab.}, 32(4):2908--2937, 2004.

\bibitem{MR1325044}
T.~Mountford and B.~Prabhakar.
\newblock On the weak convergence of departures from an infinite series of
  {$\cdot/M/1$} queues.
\newblock {\em Ann. Appl. Probab.}, 5(1):121--127, 1995.

\bibitem{newm-icm-95}
C.~M. Newman.
\newblock A surface view of first-passage percolation.
\newblock In {\em Proceedings of the {I}nternational {C}ongress of
  {M}athematicians, {V}ol.\ 1, 2 ({Z}\"urich, 1994)}, pages 1017--1023, Basel,
  1995. Birkh\"auser.

\bibitem{pime-16}
L.~P.~R. Pimentel.
\newblock Duality between coalescence times and exit points in last-passage
  percolation models.
\newblock {\em Ann. Probab.}, 44(5):3187--3206, 2016.

\bibitem{pime-17-arxiv}
L.~P.~R. Pimentel.
\newblock Local behaviour of {A}iry processes.
\newblock {\em {\tt arXiv:1704.01903}}, 2017.

\bibitem{MR2016618}
B.~Prabharkar.
\newblock The attractiveness of the fixed points of a {$\cdot/GI/1$} queue.
\newblock {\em Ann. Probab.}, 31(4):2237--2269, 2003.

\bibitem{rost}
H.~Rost.
\newblock Nonequilibrium behaviour of a many particle process: density profile
  and local equilibria.
\newblock {\em Z. Wahrsch. Verw. Gebiete}, 58(1):41--53, 1981.

\bibitem{sepp-12-aop-corr}
T.~Sepp{\"a}l{\"a}inen.
\newblock Scaling for a one-dimensional directed polymer with boundary
  conditions.
\newblock {\em Ann. Probab.}, 40(1):19--73, 2012.
\newblock Corrected version available at {\tt http://arxiv.org/abs/0911.2446}.

\bibitem{sepp-cgm-18}
T.~Sepp\"{a}l\"{a}inen.
\newblock The corner growth model with exponential weights.
\newblock In {\em Random growth models}, volume~75 of {\em Proc. Sympos. Appl.
  Math.}, pages 133--201. Amer. Math. Soc., Providence, RI, 2018.
\newblock {\tt arXiv:1709.05771}.

\bibitem{sepp-arxiv-18}
T.~Sepp{\"a}l{\"a}inen.
\newblock Existence, uniqueness and coalescence of directed planar geodesics:
  proof via the increment-stationary growth process.
\newblock 2018.
\newblock {\tt arXiv:1812.02689}.

\bibitem{stan-II}
R.~P. Stanley.
\newblock {\em Enumerative combinatorics. {V}ol. 2}, volume~62 of {\em
  Cambridge Studies in Advanced Mathematics}.
\newblock Cambridge University Press, Cambridge, 1999.
\newblock With a foreword by Gian-Carlo Rota and appendix 1 by Sergey Fomin.

\end{thebibliography}
%\bibliography{geodesics,firasbib2010,growthrefscopy}

\end{document}